\definecolor{Mylinkclr}{HTML}{0080FF}
   \def\AA{\ensuremath{\mathcal A}}
\def\BB{\ensuremath{\mathcal B}}
\def\CC{\ensuremath{\mathcal C}}
\def\DD{\ensuremath{\mathcal D}}
\def\EE{\ensuremath{\mathcal E}}
\def\FF{\ensuremath{\mathcal F}}
\def\HH{\ensuremath{\mathcal H}}
\def\NN{\ensuremath{\mathcal N}}
\def\OO{\ensuremath{\mathcal O}}
\def\PP{\ensuremath{\mathcal P}}
\def\TT{\ensuremath{\mathcal T}}
\def\UU{\ensuremath{\mathcal U}}
\def\ch{\mathop{\mathrm{ch}}\nolimits}
\def\mc {\mathcal}
\def\mk {\mathfrak}
\def\mb {\mathbb}
\def\ol {\overline}
\newcommand{\xrightarrowdbl}[2][]{%
  \xrightarrow[#1]{#2}\mathrel{\mkern-14mu}\rightarrow
}
\newtheorem*{rep@theorem}{\rep@title}
\newcommand{\newreptheorem}[2]{%
\newenvironment{rep#1}[1]{%
 \def\rep@title{#2 \ref{##1}}%
 \begin{rep@theorem}}%
 {\end{rep@theorem}}}
\tikzset{
  symbol/.style={
    draw=none,
    every to/.append style={
      edge node={node [sloped, allow upside down, auto=false]{$#1$}}}
  }
}
\newtheorem{lemma}{Lemma}[section]
\newtheorem{theorem}[lemma]{Theorem}
\newtheorem{corollary}[lemma]{Corollary}
\newtheorem{prop}[lemma]{Proposition}
\theoremstyle{definition}
\newtheorem{defn}[lemma]{Definition}
\newtheorem{rem}[lemma]{Remark}
\theoremstyle{remark}
\newtheorem*{rem*}{Remark}
\newtheorem*{note*}{Note}
\newcommand\restr[2]{{
  \left.\kern-\nulldelimiterspace 
  #1 
  \vphantom{\big|} 
  \right|_{#2} 
  }}
\def\temp{&} \catcode`&=\active \let&=\temp
\newcommand{\Knum}{\mathcal N}
\DeclareMathOperator{\rank}{rk}
\begin{document}
\title{The stability manifold of $E{\times} E{\times} E$}

\author{Fabian Haiden}
\address{FH: Centre for Quantum Mathematics, Department of Mathematics and Computer Science, University of Southern Denmark, Campusvej 55, 5230 Odense, Denmark}
\email{fab@sdu.dk}

\author{Benjamin Sung}
\address{BS: Department of Mathematics, University of California, Santa Barbara, CA 93106, USA}
\email{bsung@ucsb.edu}

\date{}
\begin{abstract}
We determine a full component of the space of stability conditions on $D^b(E^3)$ where $E$ is an elliptic curve without complex multiplication. The component has complex dimension 14 and a very concrete description in terms of alternating trilinear forms.
This confirms a conjecture of Kontsevich, motivated by homological mirror symmetry, in the case of dimension $3$. 
\end{abstract}
\maketitle
\tableofcontents

\section{Introduction}

\subsection{Background}

Stability conditions on triangulated categories, introduced by Bridgeland in the seminal work~\cite{bridgeland07}, have attracted considerable interest due to their connections with a variety of subjects, including algebraic geometry, representation theory, and dynamics.
The main result of~\cite{bridgeland07} is the construction of a complex manifold structure on the space $\mathrm{Stab}(\mc T)$ of stability conditions on a triangulated category $\mc T$.
Determining the structure of this space, or just one of its components, is typically a difficult problem.

Of particular interest, because of their importance in Donaldson--Thomas theory and mathematical physics, is the case of 3-d Calabi--Yau triangulated categories $\mc T$.
Essentially the only cases where a full component of $\mathrm{Stab}(\mc T)$ has been determined are those related to spaces of quadratic differentials~\cite{BS15,haiden24,CHQ2}. 
In those cases, $\mc T$ is not of the form $D^b(X)$ for a Calabi--Yau 3-fold $X$.
On the other hand, in the influential work~\cite{MR3573975} on abelian 3-folds and crepant resolutions of their finite quotients, the authors restrict to those central charges depending only on the Chern numbers, which does not give the full space of stability conditions unless the Picard rank is 1.
In \cite{li19}, Li determines an open subset of $\mathrm{Stab}(D^b(X))$ for $X$ a smooth quintic 3-fold.

In this work we determine a connected component of the space $\mathrm{Stab}(D^b(X))$ when $X$ is an abelian 3-fold which is the cube of an elliptic curve without complex multiplication. (This space has $\dim_{\mb C}=14$.)
This is, to our knowledge, the first time a component of $\mathrm{Stab}(D^b(X))$ has been determined for a CY 3-fold $X$ of non-minimal Picard rank.
The statement is based on mirror symmetry and a proposal of Kontsevich on ``non-geometric'' stability conditions, i.e. those not coming from the complex structure/complexified K\"ahler moduli space.

\subsection{Main result}

In~\cite{haiden20}, based on a proposal of Kontsevich, a certain set $\mc U^+(V)$ of middle-degree complex-valued forms on a symplectic vector space $(V,\omega)$ was studied. We recall its definition.
First, let
\[
\mc U(V)\coloneqq\left\{\Omega\in{\bigwedge}^{n}V^*\otimes\mb C\ \middle| \ \Omega\wedge\omega=0,\Omega|_L\neq 0\text{ for any Lagrangian subspace }L\subset V\right\}
\]
where $2n=\dim V$.
Then $\mc U(V)$ has two components and we denote by $\mc U^+(V)$ the one which contains the forms of type $(n,0)$ with respect to some compatible complex structure. We write $\mc U^+(n)\coloneqq \mc U^+(\mb C^n)$.

A conjecture of Kontsevich~\cite{kontsevich_lille} proposes that $\mc U^+(n)$ is essentially the space of stability conditions on the category $D^b(E^n)$ where $E$ is an elliptic curve over $\mb C$ without complex multiplication.
We establish this conjecture in the case $n=3$ in the following form: 

\begin{theorem}\label{thm:main}
Let $E$ be an elliptic curve over $\mathbb{C}$ without complex multiplication and denote by $\Lambda\coloneqq\Knum(D^b(E^3))\cong\mb Z^{14}$ the numerical Grothendieck group.
Then there is a commutative diagram
\[
\begin{tikzcd}
\widetilde{\mc U^{+}(3)} \arrow[r, hook, "i"] \arrow[d, "\pi"] & \mathrm{Stab}\left(D^b(E^3)\right) \arrow[d]\\
\mc U^+(3) \arrow[r, hook, "j"] & \mathrm{Hom}(\Lambda, \mathbb{C})
\end{tikzcd}
\]
where $\pi$ is the universal covering and $i$ is a homeomorphism onto a connected component.
\end{theorem}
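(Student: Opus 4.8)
The plan is, first, to make the map $j$ explicit and to reduce the statement to a concrete assertion about a single connected component. Since $E$ has no complex multiplication, $\mathrm{NS}(E^3)\cong\mathrm{Sym}^2(\mb Z^3)$ has rank $6$ and the algebraic part of $H^{\mathrm{even}}(E^3)$ is $\mb Z\oplus\mathrm{NS}\oplus\mathrm{NS}^{\vee}\oplus\mb Z$, so $\Lambda\cong\mb Z^{14}$; dually, fixing a Lagrangian decomposition $V=W\oplus W^{*}$ of the polarized symplectic space $V=H_{1}(E^3,\mb R)$, the space of primitive forms $\{\Omega\in{\bigwedge}^{3}V^{*}\otimes\mb C:\Omega\wedge\omega=0\}$ decomposes under $\mathrm{GL}(W)$ as $\mb C\oplus\mathrm{Sym}^{2}W^{*}\oplus\mathrm{Sym}^{2}W\oplus\mb C$, again of dimension $14$. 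Matching Mukai vectors with primitive forms via the $\mb C$-linear isomorphism dictated by mirror symmetry (rank, $c_1$, $\ch_{2}$, $\ch_{3}$ corresponding to the four summands), $j$ becomes the inclusion of the open set $\mc U^{+}(3)$ into $\mathrm{Hom}(\Lambda,\mb C)\cong\{\Omega\in{\bigwedge}^{3}V^{*}\otimes\mb C:\Omega\wedge\omega=0\}$. As $j$ is then an open immersion, it suffices to produce one stability condition with central charge in $j(\mc U^{+}(3))$, to let $\mathrm{Stab}^{\dagger}$ be the connected component containing it, and to show that the central charge map $\mc Z$ restricts to the universal covering $\mathrm{Stab}^{\dagger}\to\mc U^{+}(3)$; the commutative diagram and the fact that $i$ is a homeomorphism onto the component $\mathrm{Stab}^{\dagger}$ then follow formally.

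For existence, I would take the stability conditions on $E^{3}$ constructed in~\cite{MR3573975} (now unconditional, since the generalized Bogomolov--Gieseker inequality is known for abelian threefolds), whose central charges $-\int e^{-(B+i\omega)}(-)$ with $B\in\mathrm{NS}_{\mb R}$ and $\omega$ ample correspond exactly to the forms attached to complexified Kähler classes $B+i\omega$, i.e.\ the ``geometric'' part of $\mc U^{+}(3)$; by Bridgeland's deformation theorem~\cite{bridgeland07} these fill out a connected open $\Sigma_{0}\subset\mathrm{Stab}$ with $\mc Z(\Sigma_{0})\subset j(\mc U^{+}(3))$ open, and we set $\mathrm{Stab}^{\dagger}$ to be its component. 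To see $\mc Z(\mathrm{Stab}^{\dagger})\supseteq j(\mc U^{+}(3))$ I would use autoequivalences: beyond shifts, line bundle twists and the $\widetilde{\mathrm{GL}^{+}}(2,\mb R)$-action, $\mathrm{Aut}(D^{b}(E^{3}))$ contains the Fourier--Mukai transforms along abelian subvarieties, acting on $\Lambda$ through a cover of $\mathrm{Sp}_{6}(\mb Z)$ — mirror-symmetrically the mapping class group of $T^{6}$ — and the structural results on $\mc U^{+}(3)$ in~\cite{haiden20} show that this action together with the Kähler directions is transitive on $\mc U^{+}(3)$ with the geometric locus as a fundamental region. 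Hence the $\mathrm{Aut}$-translates of $\Sigma_{0}$ (each connected to $\Sigma_{0}$ inside $\mathrm{Stab}^{\dagger}$, since geometric chambers for nearby polarizations overlap) have central charges covering all of $j(\mc U^{+}(3))$.

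The crux — and the step I expect to be the main obstacle — is the reverse inclusion $\mc Z(\mathrm{Stab}^{\dagger})\subseteq j(\mc U^{+}(3))$, an a priori bound excluding the discriminant $\{\Omega\ \text{primitive}:\Omega|_{L}=0\ \text{for some Lagrangian}\ L\subset V\}$, whose complement in the primitive forms is exactly $\mc U^{+}(3)\sqcup\mc U^{-}(3)$. For each rational Lagrangian $L$ there is a mirror class $\gamma_{L}\in\Lambda$ — up to autoequivalence the Mukai vector of the structure sheaf of an abelian subvariety $Z_{L}\subseteq E^{3}$, with $[\mc O_{\mathrm{pt}}]$ the standard model — and $j(\Omega)(\gamma_{L})=\pm\int_{L}\Omega$; one must show $Z_{\sigma}(\gamma_{L})\neq0$ for all $\sigma\in\mathrm{Stab}^{\dagger}$ and all such $L$, after which the support property together with the density of rational Lagrangians keeps $\mc Z(\sigma)$ uniformly off the whole discriminant, forcing $\mc Z(\mathrm{Stab}^{\dagger})\subset j(\mc U^{+}(3)\sqcup\mc U^{-}(3))$ and then into $j(\mc U^{+}(3))$ by connectedness with $\Sigma_{0}$. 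Over the geometric chamber the bound is just Hodge-theoretic positivity (for instance $-\int e^{-(B+i\omega)}\ch(\mc O_{Z})=\tfrac12\int_{Z}(B+i\omega)^{2}\neq0$ for $Z$ an abelian surface, as $\omega|_{Z}$ is a Kähler class, while $-\int e^{-(B+i\omega)}[\mathrm{pt}]\equiv-1$), so the real content is to propagate it across all walls of $\mathrm{Stab}^{\dagger}$: I would combine the uniform estimate $\|Z_{\sigma}(\gamma)\|\geq C_{\sigma}\|\gamma\|$ on semistable classes with a global control of the Harder--Narasimhan filtrations of $\gamma_{L}$ along paths in $\mathrm{Stab}^{\dagger}$, using the transitivity of the autoequivalence action on the classes $\gamma_{L}$ to reduce to a single model — but this transport out of the geometric chamber is genuinely non-local and is the heart of the proof.

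Granting the bound, $\mc Z\colon\mathrm{Stab}^{\dagger}\to j(\mc U^{+}(3))$ is a local homeomorphism onto an open set with the path lifting property (no escape of phases along lifted paths, by the support property), hence a covering map; the continuous lift $\widetilde{\mc U^{+}(3)}\to\mathrm{Stab}^{\dagger}$ coming from the construction is then a covering of the connected space $\mathrm{Stab}^{\dagger}$ by a simply connected one, so it is the universal cover of $\mathrm{Stab}^{\dagger}$. It remains to see that $\mathrm{Stab}^{\dagger}$ is simply connected, equivalently that the deck group of $\mathrm{Stab}^{\dagger}\to\mc U^{+}(3)$ is all of $\pi_{1}(\mc U^{+}(3))$: here one would compute $\pi_{1}(\mc U^{+}(3))$ (as in~\cite{haiden20}) — generated by the loops encircling the vanishing loci $\{\Omega|_{L}=0\}$ and by the loop of the $\mb C$-action — and identify its generators with autoequivalences acting trivially on $\Lambda$ (the shift $[2]$, translations, and $\mathrm{Pic}^{0}$-twists), which realize exactly these deck transformations. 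Since the lift is then a homeomorphism intertwining $\pi$ with $\mc Z|_{\mathrm{Stab}^{\dagger}}$, taking $i$ to be its composition with $\mathrm{Stab}^{\dagger}\hookrightarrow\mathrm{Stab}(D^{b}(E^{3}))$ yields the asserted diagram.
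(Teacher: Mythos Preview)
Your outline has the right overall shape (construct some stability conditions, spread via symmetries, bound the image, identify the cover), and your sketch of the reverse inclusion $\mc Z(\mathrm{Stab}^{\dagger})\subseteq j(\mc U^{+}(3))$ via rational Lagrangians plus the support property is essentially the argument the paper uses (Proposition~\ref{prop:central}, relying on Polishchuk's Lagrangian-invariant objects and \cite[Proposition~4.1]{haiden20}). However, two of your other steps contain genuine gaps.

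First, the surjectivity step is wrong as stated. You claim that the $\mathrm{Sp}_6(\mb Z)$-action together with the ``K\"ahler directions'' is transitive on $\mc U^{+}(3)$ with the geometric locus as a fundamental region. This is false: the moduli of $(3,0)$-forms (equivalently, the locus of central charges $-\int e^{-(B+i\omega)}$) has complex dimension $7$, while $\mc U^{+}(3)$ has dimension $14$, and $\mathrm{Sp}_6(\mb Z)$ is discrete, so its translates of a $7$-dimensional set cannot cover. The paper shows (Proposition~\ref{prop:u3normalform}) that even the full $\mathrm{Sp}(6,\mb R)\times\mathrm{GL}^{+}(2,\mb R)$-action has a $3$-real-parameter orbit space, the simplex $\{\gamma_1+\gamma_2+\gamma_3<1\}$. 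To cover $\mc U^{+}(3)$ one must (i) produce stability conditions over the \emph{entire} simplex, which requires the quadratic-form deformation analysis of Section~\ref{sec:quadratic} starting from Liu's product stability conditions rather than the constructions of~\cite{MR3573975}, and (ii) use $\mathrm{Sp}(6,\mb Q)$ (not $\mb Z$) acting via endofunctors, together with its density in $\mathrm{Sp}(6,\mb R)$. Relatedly, the stability conditions of~\cite{MR3573975} satisfy the support property only with respect to a rank-$4$ quotient of $\Lambda$; extending to the full rank-$14$ lattice is nontrivial and is the content of Proposition~\ref{prop:fullsupport}, which averages the product quadratic form over six autoequivalences.

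Second, your argument that $\widetilde{\mc U^{+}(3)}\to\mathrm{Stab}^{\dagger}$ is a homeomorphism is incomplete. You assert a ``continuous lift coming from the construction'' before having established that $\mathrm{Stab}^{\dagger}\to\mc U^{+}(3)$ is a covering, and your proposed identification of $\pi_1(\mc U^{+}(3))$ with loops around vanishing loci is incorrect: by \cite[Proposition~2.5]{haiden20} the map $\Omega\mapsto\arg(\Omega|_L)$ is a homotopy equivalence $\mc U^{+}(3)\simeq S^1$, so $\pi_1\cong\mb Z$ is generated solely by the $U(1)$-rotation. The paper sidesteps this entirely by invoking the external result (Theorem~\ref{thm:injective}, from~\cite{LPMSZ}) that on products of curves of genus $\geq 1$ a stability condition is determined by its central charge together with the phase of a skyscraper sheaf; this gives injectivity of $\mathrm{Stab}\to\mathrm{Hom}(\Lambda,\mb C)\times\mb R$ directly and makes the identification with $\mc U^{+}(3)_{\varphi}\times\mb R$ immediate. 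Without that input (or a substitute), your covering argument does not close.
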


The analogous statement in the case $n=1$ is \cite[Theorem 9.1]{bridgeland07}, since $\mc U^+(1)=\mathrm{GL}^+(2,\mb R)$.
The case $n=2$ is also due to Bridgeland, a special case of~\cite[Theorem 15.2]{bridgeland08}.

\subsection{Mirror symmetry motivations}

The connection between $E^n$ and $\mc U^+(n)$ comes about via mirror symmetry.
The mirror of $E^n$ is the torus $T^{2n}=\mb R^{2n}/\mb Z^{2n}$ with standard symplectic form $\sum_{i=1}^ndx_i\wedge dy_i$, up to a scalar (the parameter $\tau$ of $E$).
Under the assumption that $E$ has no complex multiplication, there is an isomorphism $\Knum(D^b(E^n))\cong H_{n,\mathrm{pr}}(T^{2n};\mb Z)$, where the subscript, $\mathrm{pr}$, means we restrict to primitive classes --- those coming from Lagrangian cycles.
$H_{n,\mathrm{pr}}(T^{2n};\mb Z)$ is spanned by classes of linear Lagrangian sub-tori, of which there are many, thanks to our choice of symplectic form.
For $n=1,2,3,4,\ldots$ the rank of this lattice is equal to $2,5,14,42,\ldots$ --- the Catalan numbers.

It is a general expectation that for a compact Calabi--Yau $M$, there is a map from the complex structure moduli space to $\mathrm{Stab}(\mc F(M,\omega))$, where $\mc F(M,\omega)$ is the Fukaya category of $(M,\omega)$.
In particular, this assigns to a holomorphic volume form $\Omega$, i.e. a complex-valued middle-degree form which is of type $(n,0)$ with respect to some compatible complex structure, the central charge
\[
Z(L)\coloneqq\int_L\Omega.
\]
For $M=T^{2n}$, the space of choices of compatible complex structure and holomorphic volume form has dimension $n(n+1)/2+1$, which grows much more slowly than the exponentially growing Catalan numbers.
Thus, not all, in fact most stablility conditions on $D^b(E^n)=\mc F(T^{2n})$ can come from holomorphic volume forms.
For example, for $n=3$, the moduli space of $\Omega^{3,0}$'s has complex dimension $7$, only half of the expected dimension of the space of stability conditions.

The idea of Kontsevich is to replace the complex structure moduli space by $\mc U^+(n)$, whose dimension is equal to the rank of $\Knum(D^b(E^n))$, and thus has a chance to give all stability conditions on $D^b(E^n)$, or at least a component.
Going beyond the case of tori, one could consider closed complex-valued middle-degree forms which pointwise belong to $\mc U^+(n)$ as a soft replacement for holomorphic volume forms.

\subsection{Outline of argument}
This paper is organized into four parts, the sum of which culminates in a proof of Theorem~\ref{thm:main}.

Part~\ref{part:mirror} develops the mirror correspondence for abelian varieties following~\cite{msabelian}, and focuses on powers, $E^n$, of elliptic curves without complex multiplication. The combination of Proposition~\ref{prop:HodgePrimitive} and Lemma~\ref{lem:Umirror} establishes an $\mathrm{Sp}(2n,\mb Z)$-equivariant isomorphism $\Knum(D^b(E^n))\cong H_{n,\mathrm{pr}}(T^{2n};\mb Z)$, whose complexification identifies the space $\UU^+(n)$ as a subset of $\Knum(D^b(E^n))$. This clarifies the set of desired central charges, whose corresponding stability conditions will be constructed using algebro-geometric techniques in Part~\ref{part:bmodel}.

Part~\ref{part:amodel} exploits the symmetry group $\mathrm{Sp}(6) \times \mathrm{GL}^+(2,\mathbb{R})$ to describe a fundamental domain for the action on $\UU^+(3)$. Combined with the results of Part~\ref{part:mirror}, Proposition~\ref{prop:u3normalform} gives a simple description for the mirror of the fundamental domain as the following set of central charges:
\begin{equation}\label{eq:fund}\begin{split}
\Pi \coloneqq \{ \mathrm{exp}(i (D_1+ D_2 + D_3)) + &\alpha\,  \mathrm{exp}(i (-D_1 - D_2 + D_3)) + \beta\,  \mathrm{exp}(i (D_1-  D_2 - D_3)) \\ + &\gamma\,  \mathrm{exp}(i (-D_1+ D_2 - D_3)) 
\ \vert \ \alpha, \beta, \gamma \geq 0 , \ \alpha + \beta + \gamma < 1 \}.
\end{split}
\end{equation}
where $D_i$ denotes the divisor class of a point on the $i$th factor of $E^3$.

Part~\ref{part:bmodel} constructs, in earnest, a corresponding numerical stability condition satisfying the full support property for every central charge in $\Pi$. Proposition~\ref{prop:fullsupport} demonstrates that the construction of product stability conditions in~\cite{liu21}, combined with the action of auto-equivalences, gives a stability condition with central charge $\mathrm{exp}(i(D_1 + D_2 + D_3))$ satisfying the full support property. By relying on the deformation argument of~\cite{MR4023385}, Proposition~\ref{prop:quadratic} demonstrates that the associated quadratic forms are strong enough to imply the existence of a corresponding stability condition satisfying the full support property for every central charge in $\Pi$.

Part~\ref{part:proof} concludes the proof of Theorem~\ref{thm:main}. The combination of Parts~\ref{part:amodel} and~\ref{part:bmodel} imply the existence of a connected open neighborhood of stability conditions, whose set of central charges contain the fundamental domain $\Pi$. Proposition~\ref{prop:spendo} implies that a dense subgroup of $\mathrm{Sp}(2n,\mb R)$ lifts to an action on $\mathrm{Stab}(D^b(E^n))$, whose action on $\NN(D^b(E^n))$ agrees with the mirror of the natural $\mathrm{Sp}(2n,\mb R)$ action on $H_{n,\mathrm{pr}}(T^{2n};\mb Z)$. Combined with the result of~\cite{LPMSZ} asserting that numerical stability conditions on products of higher genus curves are uniquely characterized by their central charges up to the phase of the skyscraper sheaf, as summarized in Theorem~\ref{thm:injective}, the conclusion follows from a straightforward gluing argument.

\subsection{Relation to other works}
The results of~\cite[Theorem 1.4]{MR3573975} and~\cite[Theorem 1.3]{MR4469233} describe a $4$-complex dimensional submanifold of $\mathrm{Stab}(D^b(X))$, for $X$ any complex abelian threefold. In our case of $X = E^3$ and in terms of the fundamental domain of~(\ref{eq:fund}), described more succinctly as the cone
\[
\CC \coloneqq \{ (\alpha, \beta, \gamma) \in \mathbb{R}^3 \ \vert\  \alpha, \beta, \gamma \geq 0 , \ \alpha + \beta + \gamma < 1 \},
\]
this $4$-dimensional submanifold is mapped into the subset of $\CC$ satisfying the condition $\alpha = \beta = \gamma$. The result of~\cite[Lemma 3.19]{Oberdieck:2018uqa} then gives a quadratic form satisfying the full support property, yielding the existence of stability conditions in an open neighborhood of this line.

In contrast, our results in Part~\ref{part:bmodel} apply the construction of product stability conditions in~\cite[Corollary 1.2]{liu21}. We identify these stability conditions with the three outer rays of $\CC$ individually satisfying the conditions $\alpha = \beta = 0$, $\alpha = \gamma = 0$, and $\beta = \gamma = 0$. These subsets, in turn, are mirror to the product of primitive forms coinciding with the image of the map:
\begin{align*}
\UU^+(2) \times \UU^+(1) &\rightarrow \UU^+(3) \\
(\Omega_2, \Omega_1) &\mapsto p^*\Omega_2 \wedge q^* \Omega_1
\end{align*}
where $p$ and $q$ are the projection maps of the product $\mb C^3=\mb C^2\times\mb C$. To prove the full support property, we use the quadratic form of~\cite{liu21} together with a number of auto-equivalences to establish the existence of open neighborhoods of stability conditions containing these three rays. We prove that these quadratic forms are strong enough to establish the existence of stability conditions throughout the entire fundamental domain $\CC$.

We note that both our proof of the full support property in Proposition~\ref{prop:fullsupport} and our results in Part~\ref{part:proof} rely on Theorem~\ref{thm:injective} from~\cite{LPMSZ} which is unpublished at the time of writing, though the proof of it is complete.

\subsection{Further directions}

The problem of identifying a component of $\mathrm{Stab}(D^b(E^n))$ with $\widetilde{\UU^+(n)}$ remains open for $n\geq 4$. While some steps in our proof work for arbitrary $n$, some seem more specific to $n=3$ and new ideas are needed.

Staying in dimension three, we expect that one can generalize Theorem~\ref{thm:main} to those abelian varieties whose Hodge group contains an $\mathrm{SL}_2$ mirror to the Lefschetz $\mathrm{SL}_2$, since this ensures that Hodge cycles correspond to Lagrangian cycles in the mirror.
In the absence of such a condition, pursuing the doubling approach of~\cite{qin2022coisotropicbranestorihomological} could be fruitful. 

Our results provide a proof of concept for the following strategy towards establishing~\cite[Conjecture 1.7]{MR3573975} for higher dimensional abelian varieties whose Hodge group contains an $\mathrm{SL}_2$ mirror dual to the Lefschetz $\mathrm{SL}_2$.
\begin{enumerate}
    \item 
    Establish the existence of a stability condition on $E^n$ satisfying the support property with respect to a sufficiently high-dimensional subspace $\Lambda \subseteq \NN(E^n)\otimes \mathbb{R}$.
    \item 
    Use the results of~\cite{MR4292740,LPMSZ} to deform to a stability condition on the generic fiber of a deformation of $E^n$.
    \item 
    Use a dense subgroup of $\mathrm{Sp}(2n)$ and the results of Section~\ref{sec:extendspn} to generate the full stability manifold as a submanifold of $\UU^+(n)$.
\end{enumerate}

In addition, there are several aspects of our analysis that would be worth further investigating in detail. Our results in Section~\ref{sec:quadratic} establish the existence of stability conditions in the $3$-dimensional fundamental domain, $\Pi$, using only the existence of a stability condition corresponding to the vertex of the domain. This is consistent with the expectation that there are no wall crossings of the first kind in the sense of~\cite{kontsevich2008stabilitystructuresmotivicdonaldsonthomas} for abelian varieties. In particular, it strongly suggests that every stability condition on $D^b(E^3)$ has the same heart of bounded t-structure up to an $\mathrm{Sp}(6) \times \mathrm{GL}^+(2,\mathbb{R})$ action. Finally, it would be interesting to investigate the induced non-geometric stability conditions of $\UU^+(3)$ on Kummer-type constructions such as crepant resolutions of quotients of $E^{ 3}$ described in~\cite{Perry2024StabilityCO}.

\subsection{Acknowledgements}
We thank Dave Morrison, Xiaolei Zhao, and Emanuele Macr\`i for inspiring discussions. We thank Arend Bayer for comments on a preliminary version of this manuscript.
FH is supported by the VILLUM FONDEN, VILLUM Investigator grant 37814 and Sapere Aude grant 3120-00076B from the Independent Research Fund Denmark (DFF).
This paper is partly a result of the ERC-SyG project Recursive and Exact New Quantum Theory (ReNewQuantum) which received funding from the European Research Council (ERC) under the European Union's Horizon 2020 research and innovation programme under grant agreement No 810573.

\part{Mirror symmetry for powers of elliptic curves}\label{part:mirror}

\section{Mirrors of \texorpdfstring{$(p,q)$}{(p,q)}-classes}

We will be interested in the mirror, $(E^n)^\circ$, of $E^n$, where $E$ is an elliptic curve without complex multiplication.
For our purposes, we primarily view $E^n$ as a complex analytic torus and $(E^n)^\circ$ as a symplectic torus (with \textit{complexified} symplectic form $\omega_{\mb C}=B+i\omega$).

According to~\cite{msabelian}, the mirror relation can, in the case of tori, be defined in the following elementary terms.

\begin{defn}
A complex torus $V/\Gamma_V$ is \textit{mirror dual} to a torus $W/\Gamma_W$ with constant 2-form $B+i\omega$, where $\omega$ is non-degenerate, if the following holds.
There is an isomorphism of lattices 
\[
\alpha:\Gamma_V\oplus\Gamma_V^*\longrightarrow\Gamma_W\oplus\Gamma_W^*
\]
under which the complex structure on $V\oplus V^*$ corresponds to the complex structure on $W\oplus W^*$ given by
\[
J_{B+i\omega}\coloneqq\begin{pmatrix}
    \omega^{-1}B & -\omega^{-1} \\ \omega+B\omega^{-1}B & -B\omega^{-1}
\end{pmatrix}
\]
and where $\Gamma_V\oplus\Gamma_V^*$ is given the quadratic form coming from the natural pairing between $\Gamma_V$ and $\Gamma_V^*$, and likewise for $\Gamma_W\oplus\Gamma_W^*$.
\end{defn}

In the case where the complex torus is given in terms of normalized period coordinates, a mirror can be described more explicitly.

\begin{lemma}\label{lem:mirrorTorus}
Let $C=(c_{ij})$ be a complex $n$-by-$n$ matrix with $\mathrm{Im}(C)$ invertible. 
Then the complex torus $\mb C^n/\mb Z^n\oplus C\mb Z^n$ is mirror dual to the symplectic torus $\mb R^{2n}/\mb Z^{2n}$ with $\omega_{\mb C}=\sum_{i,j}c_{ij}dx_i\wedge dy_j$.
\end{lemma}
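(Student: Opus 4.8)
The plan is to unwind the definition of "mirror dual" by producing an explicit lattice isomorphism $\alpha \colon \mathbb{Z}^n \oplus (\mathbb{Z}^n)^* \to \mathbb{Z}^{2n} \oplus (\mathbb{Z}^{2n})^*$ and checking that it intertwines the two complex structures while respecting the quadratic forms. On the $V$-side, the complex torus is $\mathbb{C}^n / (\mathbb{Z}^n \oplus C\mathbb{Z}^n)$, so the natural real lattice is $\Gamma_V = \mathbb{Z}^n \oplus C\mathbb{Z}^n \subset \mathbb{C}^n$ and the complex structure on $V \oplus V^*$ is multiplication by $i$ on $\mathbb{C}^n$ (on the $V^*$-factor, the dual complex structure). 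On the $W$-side, $\Gamma_W = \mathbb{Z}^{2n}$ with coordinates $x_1,\dots,x_n,y_1,\dots,y_n$, and $J_{B+i\omega}$ is the matrix displayed in the definition, built from $\omega_{\mathbb C} = \sum_{i,j} c_{ij}\, dx_i \wedge dy_j$; here I would first read off $B = \mathrm{Re}\,C$ (symmetrized appropriately) and $\omega = \mathrm{Im}\,C$ from this expression, being careful about the convention for turning the matrix $(c_{ij})$ into an honest 2-form.

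\smallskip
\noindent\textbf{Key steps.} First, I would write $C = B' + i\omega'$ with $B' = \mathrm{Re}(C)$, $\omega' = \mathrm{Im}(C)$ (both real $n \times n$ matrices, $\omega'$ invertible by hypothesis), and note that the 2-form $\sum_{i,j} c_{ij}\, dx_i \wedge dy_j$ has "$B$-part" and "$\omega$-part" given by $B'$ and $\omega'$ respectively, once one fixes how a non-symmetric matrix pairs $dx_i$ against $dy_j$. Second, I would write down $\alpha$ in block form: the obvious guess is that the standard basis vectors $e_1,\dots,e_n$ of $\mathbb{Z}^n \subset \Gamma_V$ map to the $x$-basis of $\Gamma_W$, the vectors $Ce_1,\dots,Ce_n$ map to the $y$-basis, and dually on the starred factors (possibly with a transpose-inverse twist so that the pairing is preserved). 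Third, I would verify that under this $\alpha$ the complex structure $i \oplus i^\vee$ on $V \oplus V^*$ is carried to exactly $J_{B+i\omega}$: this reduces to the matrix identity expressing that multiplication by $i$ on $\mathbb{C}^n$, written in the real basis $\{e_j, Ce_j\}$, has the block form $\begin{pmatrix} \omega'^{-1}B' & -\omega'^{-1} \\ \omega' + B'\omega'^{-1}B' & -B'\omega'^{-1}\end{pmatrix}$, which is a direct computation using $C = B' + i\omega'$ and $i(B' + i\omega') = -\omega' + iB'$. Fourth, I would check that $\alpha$ intertwines the hyperbolic quadratic forms on $\Gamma_V \oplus \Gamma_V^*$ and $\Gamma_W \oplus \Gamma_W^*$ — both are the canonical pairing of a lattice with its dual, so this is automatic provided $\alpha$ is built from a lattice isomorphism $\mathbb{Z}^n \xrightarrow{\sim} \mathbb{Z}^n$ on the first factor and its inverse-transpose on the second.

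\smallskip
\noindent\textbf{Main obstacle.} The genuine content is the third step: confirming that the real matrix of "multiplication by $i$" in the basis adapted to the period matrix $C$ equals the prescribed $J_{B+i\omega}$. The bookkeeping here is delicate because one must track three separate sign/transpose conventions simultaneously — the convention turning $(c_{ij})$ into a 2-form, the convention for the dual complex structure on $V^*$, and the block ordering in $J_{B+i\omega}$ — and a mismatch in any one of them produces a formula that is off by a sign or a transpose. Concretely, the computation hinges on the identity $(\mathrm{Im}\,C)^{-1}(\mathrm{Re}\,C)$ and $(\mathrm{Im}\,C) + (\mathrm{Re}\,C)(\mathrm{Im}\,C)^{-1}(\mathrm{Re}\,C)$ appearing as the diagonal and lower-left blocks, which one recognizes as the real and imaginary parts of $C \cdot (\mathrm{Im}\,C)^{-1}$ and $C \cdot (\mathrm{Im}\,C)^{-1} \cdot \bar{C}$; once the conventions are pinned down this is a short linear-algebra verification, but getting all conventions mutually consistent is where care is required. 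I would therefore fix notation for the 2-form convention explicitly at the outset and carry out the $2\times 2$-block computation by expressing everything in terms of $C$ and $\bar C$ rather than $B'$ and $\omega'$ separately, since the holomorphic/antiholomorphic decomposition makes the identity transparent.
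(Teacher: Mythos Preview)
Your overall strategy --- write both complex structures as $4n\times 4n$ matrices and find the lattice isomorphism $\alpha$ conjugating one to the other --- is the same as the paper's, but your guess for $\alpha$ cannot work, and this is a genuine gap rather than a bookkeeping issue.

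The problem is this. You propose an $\alpha$ that sends $\Gamma_V\to\Gamma_W$ (the $e_j$ to the $x$-basis, the $Ce_j$ to the $y$-basis) and then, ``dually on the starred factors'', $\Gamma_V^*\to\Gamma_W^*$ by inverse-transpose. Any such $\alpha$ is block-diagonal with respect to the splittings $\Gamma_V\oplus\Gamma_V^*$ and $\Gamma_W\oplus\Gamma_W^*$. But the complex structure $i\oplus i^\vee$ on $V\oplus V^*$ \emph{is} block-diagonal (it preserves the $V/V^*$ splitting), whereas $J_{B+i\omega}$ is \emph{not}: with $\omega_{\mb C}=\sum c_{ij}\,dx_i\wedge dy_j$ the $2n\times 2n$ blocks $B,\omega$ are themselves off-diagonal in the $(x,y)$-basis, and when you expand $J_{B+i\omega}$ into $n\times n$ blocks it has nonzero entries in the $(1,4)$, $(2,3)$, $(3,2)$, $(4,1)$ positions. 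A block-diagonal conjugation cannot turn a block-diagonal matrix into one that is not. The correct $\alpha$ in the paper swaps (with a sign) the first and third $n\times n$ blocks, i.e.\ it exchanges part of $\Gamma_V$ with part of $\Gamma_W^*$; this is precisely what makes the quadratic-form check nontrivial rather than ``automatic''.

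There is also a dimension confusion in your third step: you write that the verification ``reduces to the matrix identity'' that multiplication by $i$ on $\mb C^n$, in the basis $\{e_j,Ce_j\}$, has the block form $\begin{pmatrix}\omega'^{-1}B' & -\omega'^{-1}\\ \omega'+B'\omega'^{-1}B' & -B'\omega'^{-1}\end{pmatrix}$ with $B'=\mathrm{Re}\,C$, $\omega'=\mathrm{Im}\,C$. But that displayed matrix is a $2n\times 2n$ matrix built from $n\times n$ blocks, while $J_{B+i\omega}$ is a $4n\times 4n$ matrix built from $2n\times 2n$ blocks $B,\omega$ (the B-field and symplectic form on $W=\mb R^{2n}$). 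The two formulas have the same schematic shape but live on spaces of different dimension; you cannot compare them directly. What you need instead is to compute (i) the $4n\times 4n$ block-diagonal matrix for $i\oplus i^\vee$ in the lattice basis of $\Gamma_V\oplus\Gamma_V^*$, (ii) the $4n\times 4n$ matrix $J_{B+i\omega}$ after expanding $B,\omega,\omega^{-1}$ into $n\times n$ blocks, and then (iii) spot the permutation-with-signs that conjugates one into the other.
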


\begin{proof}
With respect to the canonical basis of the abelian group $\mb Z^n\oplus C\mb Z^n$ given by the standard $\mb C$-basis vectors of $\mb C^n$ together with the columns of $C=a+ib$, the complex structure is given by the matrix
\[
\begin{pmatrix} -ab^{-1} & -b-ab^{-1}a \\ b^{-1} & b^{-1}a \end{pmatrix}
\]
so the complex structure on $V\oplus V^*$, $V=\mb C^n$ is given by
\[
\begin{pmatrix}
    -ab^{-1} & -b-ab^{-1}a & 0 & 0 \\
    b^{-1} & b^{-1}a & 0 & 0 \\
    0 & 0 & b^{-T}a^T & -b^{-T} \\
    0 & 0 & b^T+a^Tb^{-T}a^T & -a^Tb^{-T} 
\end{pmatrix}.
\]
On the other hand, $J_{\omega_{\mb C}}$ is 
\[
\begin{pmatrix}
    b^{-T}a & 0 & 0 & b^{-T} \\
    0 & b^{-1}a & -b^{-1} & 0 \\
    0 & b + ab^{-1}a & -ab^{-1} \\
    -b^T-a^Tb^{-T}a^T & 0 & 0 & -a^Tb^{-T}
\end{pmatrix}
\]
The required isomorphism $\alpha$ is thus given by interchanging and switching the signs of the coordinates which make up the first and third blocks.
\end{proof}

As an example, if $E=\mb C/\mb Z\oplus\tau\mb Z$ is an elliptic curve, then a mirror $E^\circ$ is the torus $\mb R^2/\mb Z^2$ with $\omega_{\mb C}=\tau dx\wedge dy$.
Moreover, $(E^\circ)^n$ is mirror to $E^n$.

Let $A$ be a complex torus and $A^\circ$ its mirror in the above sense. Then there is a canonical, up to sign, isomorphism 
\[
\beta:H^\bullet(A;\mb Z)\to H^\bullet(A^\circ;\mb Z)
\]
of parity $\dim A$, see~\cite[Sections 9.3, 10.6]{msabelian}.
In the simplest case, $A=E=\mb C/\mb Z\oplus\tau\mb Z$, $\tau=a+bi\in\mb H$, the correspondence looks as follows:
\begin{center}
\renewcommand{\arraystretch}{1.4}\setlength{\tabcolsep}{8pt}
\begin{tabular}{c|cccc}
 $H^\bullet(E;\mb Z)$       & $1$  & $dx-\frac{a}{b}dy$ & $\frac{1}{b}dy$ & $\frac{1}{b}dx\wedge dy$\\
 \hline
 $H^\bullet(E^\circ;\mb Z)$ & $dx$ & $1$                & $dx\wedge dy$     & $dy$ 
\end{tabular}    
\end{center}
Taking the $n$-th tensor power composed with the K\"unneth isomorphism then gives an explicit description of the isomorphism
\begin{equation}\label{eq:totalHiso}
\beta:H^\bullet(E^n;\mb Z)\cong \left(H^\bullet(E;\mb Z)\right)^{\otimes n}\longrightarrow \left(H^\bullet(E^\circ;\mb Z)\right)^{\otimes n}\cong H^\bullet((E^n)^\circ;\mb Z).
\end{equation}
Assuming $E$ has no complex multiplication, this restricts to a correspondence between Hodge classes on $E^n$ and primitive middle degree classes on $(E^\circ)^n$.

\begin{prop}\label{prop:HodgePrimitive}
The isomorphism \eqref{eq:totalHiso} restricts to an isomorphism
\begin{equation}\label{eq:invHiso}
\bigoplus_{p=0}^nH^{2p}(E^n;\mb Q)\cap H^{p,p}(E^n)\longrightarrow H^n_{\mathrm{pr}}((E^\circ)^n;\mb Q)
\end{equation}
over $\mb Q$.
Moreover, the left-hand-side is isomorphic to $\Knum(D^b(E^n))\otimes\mb Q$ under the Chern character.
\end{prop}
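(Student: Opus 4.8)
The plan is to verify the two assertions separately: first the Hodge-theoretic identification \eqref{eq:invHiso}, and then the comparison with the numerical Grothendieck group via the Chern character.

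For the first part, I would begin with the Künneth description $H^\bullet(E^n) \cong (H^\bullet(E))^{\otimes n}$ and the fact that, since $E$ has no complex multiplication, the Hodge structure on $H^1(E)$ has endomorphism algebra $\mb Q$, so that the Hodge classes in $H^\bullet(E^n)$ are exactly the $\mathrm{Sp}(2,\mb Q)$-invariant classes under the diagonal action on $(H^1(E))^{\otimes \bullet}$ — equivalently, by classical invariant theory for $\mathrm{SL}_2$, the subspace generated by the symplectic form $\langle\,,\rangle \in \bigwedge^2 H^1(E) \subset H^2(E)$. This is a standard description (and is presumably what Lemma~\ref{lem:Umirror} or the surrounding discussion exploits). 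On the mirror side, $H^\bullet((E^\circ)^n)$ carries the Lefschetz $\mathrm{SL}_2$ action coming from the symplectic form $\omega$ on $T^{2n}$, and the primitive middle-degree cohomology $H^n_{\mathrm{pr}}$ is by definition the lowest-weight part. The explicit dictionary in the $n=1$ table above shows that $\beta$ sends the symplectic form $\langle\,,\rangle$ on $H^1(E)$ to a generator related to $\omega$ on $H^\bullet(E^\circ)$ (up to the normalization by $b = \mathrm{Im}\,\tau$), so $\beta$ intertwines the Hodge $\mathrm{SL}_2$ with the Lefschetz $\mathrm{SL}_2$ after this identification. Hence $\beta$ carries the invariants of one to the invariants of the other; tracking degrees, $\bigoplus_p H^{p,p}(E^n) \cap H^{2p}(E^n;\mb Q)$ lands in the middle degree $n$ on the mirror and, being $\mathrm{SL}_2$-invariant, in the primitive part. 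A dimension count — both sides have $\mb Q$-dimension equal to the $n$-th Catalan number, as recalled in the introduction — shows the map is an isomorphism. The main obstacle here is bookkeeping: one must check that $\beta$, as the $n$-fold tensor power of the explicit $n=1$ correspondence, really does intertwine the two $\mathrm{SL}_2$-actions (including scalars), rather than merely matching graded pieces; the cleanest route is to identify both actions with the $\mathrm{Sp}(2n)$-action via the mirror isomorphism of Lemma~\ref{lem:mirrorTorus} and invoke $\mathrm{Sp}(2n,\mb Z)$-equivariance of $\beta$, reducing everything to the representation theory of the single factor.

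For the second part, I would use the fact that for an abelian variety $X$ the Chern character
\[
\ch: \Knum(D^b(X)) \otimes \mb Q \longrightarrow \bigoplus_p H^{2p}(X;\mb Q)
\]
is injective with image contained in the algebraic (hence Hodge) classes, since the Todd class of an abelian variety is trivial and the Chern character of any coherent sheaf is algebraic. Surjectivity onto $\bigoplus_p H^{2p}(X;\mb Q) \cap H^{p,p}(X)$ follows because every rational Hodge class on an abelian variety that is a product of elliptic curves is algebraic — indeed, for $X = E^n$ the Hodge ring is generated in degree $2$ by divisor classes (by the invariant-theory description above, the Hodge classes are generated by $\langle\,,\rangle$, i.e. by divisors pulled back from the factors $E \times E$), so every Hodge class is a $\mb Q$-polynomial in divisor classes, and divisor classes are visibly in the image of $\ch$ (as $\ch$ of line bundles, modulo lower-order corrections which are themselves polynomials in divisors). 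Therefore $\ch$ restricts to an isomorphism onto $\bigoplus_p H^{2p}(E^n;\mb Q) \cap H^{p,p}(E^n)$. Combining with the first part gives the stated chain of isomorphisms. I do not expect this half to present serious difficulties — the only point requiring care is that $\Knum$, not the full $K$-theory, is what maps isomorphically, which is precisely arranged by passing to numerical equivalence and using injectivity of $\ch$ on $\Knum \otimes \mb Q$.

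In summary, the proof is: (i) describe both sides of \eqref{eq:invHiso} as $\mathrm{SL}_2$-invariants and check $\beta$ intertwines the relevant $\mathrm{SL}_2$-actions, using $\mathrm{Sp}(2n)$-equivariance to reduce to $n=1$; (ii) a Catalan-number dimension count upgrades the inclusion to an isomorphism; (iii) the Chern character identifies the left-hand side with $\Knum(D^b(E^n)) \otimes \mb Q$ because on $E^n$ every Hodge class is a polynomial in divisors, hence algebraic and in the image of $\ch$, which is injective on $\Knum \otimes \mb Q$. The step I expect to be most delicate is (i).
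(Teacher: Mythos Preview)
Your approach is essentially the same as the paper's: both sides of \eqref{eq:invHiso} are identified as the $\mathrm{SL}_2$-invariants (Hodge group on the left, Lefschetz $\mathrm{SL}_2$ on the right), and $\beta$ intertwines these two actions; the second assertion is the Hodge conjecture for $E^n$.

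Two small simplifications relative to your outline. First, the dimension count is unnecessary: once you know $\beta$ is an isomorphism intertwining the two $\mathfrak{sl}_2$-actions, it automatically restricts to an isomorphism on invariants, so step (ii) is redundant. Second, the paper avoids the detour through $\mathrm{Sp}(2n)$-equivariance you propose for step (i): it simply writes down the $\mathfrak{sl}_2$-action on $H^\bullet(E^n;\mb C)$ explicitly on the generators $dx_i, dy_i$ (as derivations with $e(dx_i)=dy_i$, etc.), observes this is precisely the Hodge group $\mathrm{SL}_2$ (citing Lewis/Murty), and then checks directly from the $n=1$ table that under $\beta$ the operator $e$ becomes $\alpha\mapsto\mathrm{Im}(\omega^\circ)\wedge\alpha$ and $h$ becomes multiplication by $(\text{degree}-n)$. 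This is exactly your ``reduce to $n=1$'' idea, but executed concretely rather than via an equivariance argument; your route through $\mathrm{Sp}(2n,\mb Z)$-equivariance would be circuitous here since the Hodge $\mathrm{SL}_2$ is not naturally a subgroup of that $\mathrm{Sp}(2n,\mb Z)$. For the second part the paper just cites the Hodge conjecture for $E^n$ from the literature, whereas your first-fundamental-theorem argument (invariants generated by the symplectic form, hence by divisor classes) is a valid self-contained proof of the same fact.
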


\begin{proof}
Consider the Lie algebra $\mk{sl}_2=\mk{sl}(2,\mb C)$ with the usual basis $\{e,f,h\}$ such that $[e,f]=h$, $[h,e]=2e$, $[h,f]=-2f$.
This acts on $H^\bullet(E^n;\mb C)$ by algebra derivations defined on generators by $e(dx_i)=dy_i$, $e(dy_i)=0$, $f(dy_i)=dx_i$, $f(dx_i)=0$, $h(dx_i)=-dx_i$, $h(dy_i)=dy_i$.
Since the corresponding Lie group $\mathrm{SL}_2$ is precisely the Hodge group of $E^n$ (see~\cite[Appendix B.3]{Lewis1999ASO}, after \cite{Murty}), and the classes fixed under the Hodge group are the Hodge classes~\cite[Theorem 17.3.3]{Birkenhake1992ComplexAV}, the invariant rational classes are the left-hand-side of~\eqref{eq:invHiso}.

Under the isomorphism $\beta$, $\mk{sl}_2$ also acts on $H^\bullet((E^n)^\circ;\mb C)$ and this action is such that $e$ acts by the Lefschetz operator $\alpha\mapsto\mathrm{Im}(\omega^\circ)\wedge\alpha$ and $h$ acts on forms of degree $k$ by multiplication with $k-n$.
In particular, the subspace fixed under the $\mk{sl}_2$-action is by definition $H^n_{\mathrm{pr}}((E^\circ)^n;\mb C)$, whose intersection with rational cohomology is precisely the right-hand-side of~\eqref{eq:invHiso}.

The second part of the proposition, i.e. the Hodge conjecture for $E^n$, is also found in~\cite[Appendix B.3]{Lewis1999ASO}.
\end{proof}

Note that 
\begin{align*}
    \rank H^n_{\mathrm{pr}}((E^\circ)^n;\mb Z)&=\rank H^n((E^\circ)^n;\mb Z)-\rank H^{n+2}((E^\circ)^n;\mb Z) \\
    &=\binom{2n}{n}-\binom{2n}{n+2}=\frac{1}{n+2}\binom{2n+2}{n+1}
\end{align*}
which is the $(n+1)$-st Catalan number.

\subsection{The correspondence for $E\times E$}
By the assumption that $E$ has no CM, the lattice $\Knum(E\times E)$ has rank five.
The isomorphism from Proposition~\ref{prop:HodgePrimitive} is explicitly given as follows.
\begin{center}
\renewcommand{\arraystretch}{1.4}\setlength{\tabcolsep}{8pt}
\begin{tabular}{ccc}
$H_{\mathrm{even}}(E\times E;\mb Z)$ & $\bigoplus_{p}H^{p,p}(E\times E)$ & $H^2_{\mathrm{pr}}(E^\circ\times E^\circ)$ \\ 
\hline
 $\{(0,0)\}$ & $\frac{1}{b^2}dx_1\wedge dy_1\wedge dx_2\wedge dy_2$ & $dy_1\wedge dy_2$ \\
 $D_1\coloneqq 0\times E$ & $\frac{1}{b}dx_1\wedge dy_1$ & $dy_1\wedge dx_2$ \\
 $D_2\coloneqq E\times 0$ & $\frac{1}{b}dx_2\wedge dy_2$ & $dx_1\wedge dy_2$ \\
 $\Delta$ & \begin{tabular}{c}
$\frac{1}{b}(dx_1\wedge dy_1+dx_2\wedge dy_2$ \\ $+dx_1\wedge dy_2-dy_1\wedge dx_2)$  \end{tabular} & \begin{tabular}{c} $dy_1\wedge dx_2+dx_1\wedge dy_2$ \\ $+dx_2\wedge dy_2-dx_1\wedge dy_1$ \end{tabular} \\
 $E\times E$ & $1$ & $dx_1\wedge dx_2$
\end{tabular}
\end{center}

Here, $\Delta\subset E\times E$ is the diagonal. We can re-write this in another basis to restore compatibility with the decomposition of $H^2_{\mathrm{pr}}(E^\circ\times E^\circ;\mb C)$ into types $(2,0)$, $(1,1)$, and $(0,2)$.
\begin{center}
\renewcommand{\arraystretch}{1.4}\setlength{\tabcolsep}{8pt}
\begin{tabular}{cc}
$H_{\mathrm{even}}(E\times E;\mb C)$ & $H^2_{\mathrm{pr}}(E^\circ\times E^\circ;\mb C)$ \\ 
\hline
$\exp(i(D_1+D_2))$ & $dz_1\wedge dz_2$ \\
$\exp(i(D_1-D_2))$ & $dz_1\wedge d\bar{z}_2$ \\
$\exp(i(D_2-D_1))$ & $d\bar{z}_1\wedge dz_2$ \\
$2i(\Delta-D_1-D_2)$ & $dz_1\wedge d\bar{z}_1+d\bar{z}_2\wedge dz_2$ \\
$\exp(-i(D_1+D_2))$ & $d\bar{z}_1\wedge d\bar{z}_2$ 
\end{tabular}
\end{center}

\subsection{The correspondence for $E^3$}\label{subsec:E3corr}

The lattice $\Knum(E^3)$ has rank 14 and is generated by products of classes coming from $E$ and $E\times E$.
The isomorphism from Proposition~\ref{prop:HodgePrimitive} is explicitly given as follows. 
The table below only includes classes of types $(3,0)$ and $(1,2)$, the others are obtained by complex conjugation.
\begin{center}
\renewcommand{\arraystretch}{1.4}\setlength{\tabcolsep}{8pt}
\begin{tabular}{cc}
$H_{\mathrm{even}}(E^3;\mb C)$ & $H^3_{\mathrm{pr}}((E^\circ)^3;\mb C)$ \\ 
\hline
$\exp(i(D_1+D_2+D_3))$ & $dz_1\wedge dz_2\wedge dz_3$ \\
$\exp(i(D_1-D_2-D_3))$ & $dz_1\wedge d\bar{z}_2\wedge d\bar{z}_3$ \\
$\exp(i(-D_1+D_2-D_3))$ & $d\bar{z}_1\wedge dz_2\wedge d\bar{z}_3$ \\
$\exp(i(-D_1-D_2+D_3))$ & $d\bar{z}_1\wedge d\bar{z}_2\wedge dz_3$ \\
$2i F_{12} + 2D_3 \cdot F_{12}$ & $(dz_1\wedge d\bar{z}_1+d\bar{z}_2\wedge dz_2)\wedge d\bar{z}_3$ \\
$2i F_{13} + 2D_2 \cdot F_{13}$ & $(dz_1\wedge d\bar{z}_1+d\bar{z}_3\wedge dz_3)\wedge d\bar{z}_2$ \\
$2i F_{23} + 2D_1 \cdot F_{23}$ & $(dz_2\wedge d\bar{z}_2+d\bar{z}_3\wedge dz_3)\wedge d\bar{z}_1$
\end{tabular}
\end{center}
Here, $D_i\coloneqq\{z_i=0\}\subset E^3$, $\Delta_{ij}\coloneqq \{z_i = z_j\} \subset E^3$, and $F_{ij}\coloneqq \Delta_{ij} - D_i - D_j$.

\section{Mirror of \texorpdfstring{$\mathrm{Sp}(2n)$}{Sp(2n)}}\label{sec:mirrorspn}

Let $A$ be an abelian variety over an algebraically closed field $\mathbf k$, $\mathrm{char}(\mathbf k)=0$, and $\hat{A}=\mathrm{Pic}^0(A)$ its dual abelian variety.
According to a theorem of Orlov~\cite{orlov02}, the group $\mathrm{Aut}(D^b(A))$ fits into a short exact sequence
\begin{equation}\label{eq:AutAseq}
0\longrightarrow \mb Z\times A\times \hat{A}\longrightarrow\mathrm{Aut}(D^b(A))\longrightarrow U(A)\to 1
\end{equation}
where $\mb Z$ acts by shift, $A$ acts by translation, $\hat A$ acts by tensoring with the corresponding line bundles, and $U(A)$ is the group of isometric automorphisms of $A\times \hat{A}$.
More concretely,
\[
U(A)\coloneqq \left\{T=\begin{pmatrix} a & b \\ c & d \end{pmatrix}\middle|\: T^{-1}=\begin{pmatrix} \hat d & -\hat b \\ -\hat c & \hat a \end{pmatrix}\right\}
\]
where $a\in\mathrm{End}(A)$, $b\in\mathrm{Hom}(\hat A,A)$, $c\in\mathrm{Hom}(A,\hat A)$, $d\in\mathrm{End}(\hat A)$.

The homomorphism $\mathrm{Aut}(D^b(A))\to U(A)$ maps $F\mapsto f$ iff
\[
\Phi_{f(x,y)}\circ F=F\circ\Phi_{(x,y)},\qquad (x,y)\in A\times\hat A
\]
where $\Phi_{(x,y)}(\mc E)=t_x(\mc E)\otimes P_y$, $t_x$ being translation by $x$ and $P_y$ being the line bundle corresponding to $y$.
In particular, for $g\in\mathrm{Aut}(A)$ and $L\in\mathrm{Pic}(A)$ 
\[
g^*\mapsto \begin{pmatrix} g^{-1} & 0 \\ 0 & \hat{g} \end{pmatrix},\qquad -\otimes L\mapsto \begin{pmatrix} 1 & 0 \\ -\phi(L) & 1 \end{pmatrix}
\]
with $\phi(L):A\to\hat A$, $x\mapsto t^*_xL\otimes L^{-1}$ depending only on the class of $L$ in $NS(A)$.

As shown in~\cite[Proposition 4.3.4]{msabelian}, $\mathrm{Aut}(D^b(A))$ also naturally acts (by even automorphisms) on $H^\bullet(A;\mb Z)$ in a way compatible with $\mathrm{ch}:K_0(D^b(A))\to H^{\mathrm{ev}}(A;\mb Z)$. 
Define
\[
\mathrm{Spin}(A)\coloneqq \mathrm{im}\left(\mathrm{Aut}D^b(A)\to \mathrm{GL}(H^\bullet(A;\mb Z))\right)
\]
then there is a short exact sequence
\[
0 \longrightarrow \mb Z/2\longrightarrow\mathrm{Spin}(A)\longrightarrow U(A) \longrightarrow 1,
\]
see \cite[Corollary 4.3.9]{msabelian}.
In particular, $U(A)$ acts on $H^\bullet(A;\mb Z)$ up to a sign ambiguity, which will not present an issue in our considerations.

We specialize to the case $A=E^n$ where $E=\mb C/\mb Z\oplus\tau\mb Z$ is an elliptic curve without complex multiplication, and thus $\mathrm{End}(E)=\mb Z$.
There is a standard principal polarization $\varphi:A\to\hat A$ represented by the block matrix $\begin{pmatrix} 0 & 1 \\ -1 & 0 \end{pmatrix}$.

\begin{lemma}\label{lem:Umirror}
Under the mirror symmetry isomorphism $\alpha$ of Lemma~\ref{lem:mirrorTorus}, the action of $U(A)$ on $H_1(A\times\hat{A};\mb Z)=\mb Z^{4n}$ is intertwined with the action of $\mathrm{Sp}(H_1(A^\circ;\mb Z))=\mathrm{Sp}(2n,\mb Z)$ on $H_1(A^\circ\times\hat{A}^\circ;\mb Z)$ under an appropriate identification of these two groups.
\end{lemma}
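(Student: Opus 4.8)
The plan is to make the claimed isomorphism concrete in three steps: first I would identify $U(A)$ with $\mathrm{Sp}(2n,\mb Z)$ using the standard polarization together with $\mathrm{End}(E)=\mb Z$; then observe that the map $\alpha$ of Lemma~\ref{lem:mirrorTorus} transports isometric automorphisms of $A\times\hat A$ onto isometric automorphisms of $A^\circ\times\hat{A}^\circ$ equipped with the complex structure $J_{B+i\omega}$; and finally check, on generators, that this transport reproduces the tautological action of $\mathrm{Sp}(2n,\mb Z)$ on $H_1(A^\circ\times\hat{A}^\circ;\mb Z)$.

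\emph{Identifying the two groups.} Since $E$ has no complex multiplication, $\mathrm{End}(A)=\mathrm{End}(\hat A)=\mathrm{Mat}_n(\mb Z)$, and the standard principal polarization $\varphi$ identifies $\hat A$ with $A$ in such a way that dualization $\hat{(\cdot)}$ becomes matrix transposition on all of $\mathrm{End}(A)$, $\mathrm{End}(\hat A)$, $\mathrm{Hom}(\hat A,A)$ and $\mathrm{Hom}(A,\hat A)$ --- this is the Rosati involution of the product polarization. Under these identifications the defining relation $T^{-1}=\bigl(\begin{smallmatrix}\hat d & -\hat b\\ -\hat c & \hat a\end{smallmatrix}\bigr)$ of $U(A)$ becomes $T^{-1}=J_0^{-1}T^{T}J_0$ with $J_0=\bigl(\begin{smallmatrix}0 & I_n\\ -I_n & 0\end{smallmatrix}\bigr)$; that is, $U(A)=\mathrm{Sp}(2n,\mb Z)$, and this (up to the canonical choices just made) is the ``appropriate identification'' in the statement. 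In particular $U(A)$ is generated by the elementary symplectic matrices $\bigl(\begin{smallmatrix}g^{-1} & 0\\ 0 & g^{T}\end{smallmatrix}\bigr)$ with $g\in\mathrm{Aut}(A)=\mathrm{GL}_n(\mb Z)$, $\bigl(\begin{smallmatrix}I & 0\\ -S & I\end{smallmatrix}\bigr)$ with $S=S^{T}$, and $J_0$, which are the images under~\eqref{eq:AutAseq} of, respectively, pullback along $g$, tensoring with a line bundle $L$ (so that $\varphi^{-1}\phi(L)=S$), and the Fourier--Mukai transform along the Poincar\'e bundle.

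\emph{Transporting along $\alpha$ and matching on generators.} By Lemma~\ref{lem:mirrorTorus}, $\alpha$ is, in the bases used there, the signed permutation interchanging the first and third $\mb R^n$-blocks of $\Gamma_A\oplus\Gamma_A^*$ and $\Gamma_{A^\circ}\oplus\Gamma_{A^\circ}^*$; it is an isometry of the hyperbolic pairings and conjugates $J_{V\oplus V^*}$ into $J_{B+i\omega}$. Hence $g\mapsto\alpha\, g_*\,\alpha^{-1}$ is a group isomorphism from the isometric automorphisms of $A\times\hat A$ --- which by Orlov's theorem~\cite{orlov02} are precisely $U(A)$ --- onto those of $(A^\circ\times\hat{A}^\circ,J_{B+i\omega})$. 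On the mirror side, $\mathrm{Sp}(2n,\mb Z)=\mathrm{Sp}(H_1(A^\circ;\mb Z))$ acts on $H_1(A^\circ\times\hat{A}^\circ;\mb Z)=H_1(A^\circ;\mb Z)\oplus H_1(A^\circ;\mb Z)^{*}$ by $h\mapsto\bigl(\begin{smallmatrix}h & 0\\ 0 & (h^{T})^{-1}\end{smallmatrix}\bigr)$; since $B$ and $\omega$ are scalar multiples of $\sum_i dx_i\wedge dy_i$, the matrix $J_{B+i\omega}$ has scalar diagonal blocks and off-diagonal blocks proportional to $J_0$, so a short computation shows this block-diagonal action preserves $J_{B+i\omega}$ and the hyperbolic pairing, hence lands in the same target group. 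It then remains to compute $\alpha\, g_*\,\alpha^{-1}$ for the three generators. The action $g_*$ on $H_1(A\times\hat A;\mb Z)$ is read off from the block matrix of $g$ in $U(A)$ using that an endomorphism of $E^n$ given by an integer matrix $M$ acts on $H_1(E^n;\mb Z)$ as $\mathrm{diag}(M,M)$ (the period matrix $\tau I_n$ being central), and conjugating the resulting $4n\times4n$ matrix by the block-swap $\alpha$ produces a block-diagonal $\bigl(\begin{smallmatrix}h & 0\\ 0 & (h^{T})^{-1}\end{smallmatrix}\bigr)$ with $h\in\mathrm{Sp}(2n,\mb Z)$: one finds $h=\bigl(\begin{smallmatrix}g^{T} & 0\\ 0 & g^{-1}\end{smallmatrix}\bigr)$ for pullback along $g$, $h=\bigl(\begin{smallmatrix}I & S\\ 0 & I\end{smallmatrix}\bigr)$ for $-\otimes L$, and $h=J_0$ for the Fourier--Mukai transform. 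Since these generate $U(A)$ and both assignments are homomorphisms, the transported $U(A)$-action is contained in, and hence (by the containment just noted) equal to, the block-diagonal $\mathrm{Sp}(2n,\mb Z)$-action; both actions being faithful, $g\mapsto\alpha\, g_*\,\alpha^{-1}$ is the required group isomorphism $U(A)\cong\mathrm{Sp}(2n,\mb Z)$ --- differing from the one of the first step by a fixed inner automorphism --- intertwining the two actions.

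\emph{Expected main obstacle.} The conceptual half of the middle step --- that $\alpha$ conjugates the two isometric-automorphism groups --- is immediate from Lemma~\ref{lem:mirrorTorus}. The real work is the bookkeeping: extracting the $H_1$-action of an element of $U(A)$ from its $\Phi$-compatibility description in Orlov's theorem while correctly threading the Poincar\'e bundle, the principal polarization and the Rosati involution, and then matching every sign against the explicit form of $\alpha$, so that what comes out is a genuine group isomorphism intertwining the two actions rather than merely a matching of generating sets.
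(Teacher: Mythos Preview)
Your plan is sound and would yield a correct proof, but the paper's argument is considerably more direct. Rather than checking three families of generators, the paper simply writes down the action of a \emph{general} element $T=\bigl(\begin{smallmatrix}a&b\\c&d\end{smallmatrix}\bigr)\in U(A)$ on $H_1(A\times\hat A;\mb Z)=\mb Z^{4n}$ as a single $4\times4$ block matrix (each block $n\times n$, using that an integer matrix $M\in\mathrm{End}(E^n)$ acts on $H_1(E^n;\mb Z)$ as $\mathrm{diag}(M,M)$ just as you note), conjugates once by the explicit block-swap $\alpha$, and observes that the result is exactly the block-diagonal matrix $\mathrm{diag}\bigl(\bigl(\begin{smallmatrix}a&b\\c&d\end{smallmatrix}\bigr),\bigl(\begin{smallmatrix}d&-c\\-b&a\end{smallmatrix}\bigr)\bigr)$, i.e.\ the natural representation of $\mathrm{Sp}(2n,\mb Z)$ on $H_1(A^\circ;\mb Z)\oplus H_1(A^\circ;\mb Z)^*$. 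Your generator-by-generator verification is logically equivalent and perhaps more conceptually motivated, but it triples the bookkeeping. Also, your ``expected main obstacle'' overstates the difficulty: the $H_1$-action of $U(A)$ is tautological---$U(A)$ is by definition a subgroup of $\mathrm{Aut}(A\times\hat A)$, so there is nothing to extract from Orlov's $\Phi$-compatibility description; that map is only needed later to compute the images of specific autoequivalences, not to set up the action itself.
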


\begin{proof}
Under the natural faithful representation
\[
U(A)\longrightarrow GL\left(H_1(A\times\hat{A};\mb Z)\right)
\]
elements of $U(A)$ correspond to block matrices of the form
\[
\begin{pmatrix}
    d & 0 & 0 & c \\
    0 & d & -c & 0 \\
    0 & -b & a & 0 \\
    b & 0 & 0 & a
\end{pmatrix}
\]
such that
\[
a^Tc=c^Ta,\qquad b^Td=d^Tb,\qquad a^Td-c^Tb=1.
\]
After conjugation by the isomorphism $\alpha$ of Lemma~\ref{lem:mirrorTorus}, given by exchanging the first and third blocks and switching their sign, these go to
\[
\begin{pmatrix}
    a & b & 0 & 0 \\
    c & d & 0 & 0 \\
    0 & 0 & d & -c \\
    0 & 0 & -b & a
\end{pmatrix}
\]
which is precisely the image of $\begin{pmatrix} a & b \\ c & d \end{pmatrix}\in\mathrm{Sp}(2n,\mb Z)$ under the natural representation on $H_1(A^\circ\times\hat{A}^\circ;\mb Z)$.
\end{proof}

Since $A$ is principally polarized by $\varphi:A\to\hat A$, we have an autoequivalence $\Phi\in\mathrm{Aut}(D^b(A))$ given by composition of the Fourier--Mukai functor $D^b(A)\to D^b(\hat{A})$ with $\varphi^*$ (see also Section~\ref{subsec:poincarebundle}).
Then 
\[
\Phi\mapsto\begin{pmatrix} 0 & -\hat\varphi^{-1} \\ \varphi & 0 \end{pmatrix}
\]
under the map $\mathrm{Aut}(D^b(A))\to U(A)$.

Since we have already determined the image of various autoequivalences in $U(A)$, we can combine this with Lemma~\ref{lem:Umirror} to obtain the following:

\begin{prop}
The autoequivalences $\otimes L$, $f^*$, $f_*$, and $\Phi$ of $D^b(A)$, where $L\in\mathrm{Pic}(A)$ and $f\in\mathrm{Aut}(A)$, have the following image in $\mathrm{Sp}(2n,\mb Z)$:
\[
\otimes L\mapsto \begin{pmatrix}
    1 & c_1(L) \\ 
    0 & 1
\end{pmatrix}, 
\quad 
f^* \mapsto \begin{pmatrix}
    f^T & 0 \\
    0 & f^{-1}
\end{pmatrix},
\quad 
f_* \mapsto \begin{pmatrix}
    f^{-T} & 0 \\
    0 & f
\end{pmatrix},
\quad
\Phi\mapsto \begin{pmatrix}
0 & -1\\
1 & 0
\end{pmatrix}.
\]
\end{prop}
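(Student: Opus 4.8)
The plan is to first compute the image of each autoequivalence in $U(A)$, and only then transport the answer to $\mathrm{Sp}(2n,\mb Z)$ using Lemma~\ref{lem:Umirror}. The first part is essentially already done in the text: the homomorphism $\mathrm{Aut}(D^b(A))\to U(A)$ sends $-\otimes L\mapsto\begin{pmatrix}1&0\\-\phi(L)&1\end{pmatrix}$ and $f^*\mapsto\begin{pmatrix}f^{-1}&0\\0&\hat f\end{pmatrix}$ (the case $g=f$ of the formula recorded above), and we are given $\Phi\mapsto\begin{pmatrix}0&-\hat\varphi^{-1}\\\varphi&0\end{pmatrix}$. For the pushforward I would use $f_*=(f^{-1})^*$ to get $f_*\mapsto\begin{pmatrix}f&0\\0&(\hat f)^{-1}\end{pmatrix}$. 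So the whole content of the proposition lies in understanding how Lemma~\ref{lem:Umirror} rewrites these "block matrices with entries in $\mathrm{End}(A)$, $\mathrm{End}(\hat A)$, $\mathrm{Hom}(A,\hat A)$, $\mathrm{Hom}(\hat A,A)$" as genuine integral $2\times 2$-block matrices in $\mathrm{Sp}(2n,\mb Z)$ acting on $H_1(A^\circ;\mb Z)=\mb Z^{2n}$.

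The crucial point, and the source of the transposes in the statement, is that the mirror isomorphism $\alpha$ of Lemma~\ref{lem:mirrorTorus} does \emph{not} identify $H_1(A;\mb Z)$ with $H_1(A^\circ;\mb Z)$: by construction (the sign-twisted block swap in the proof of Lemma~\ref{lem:mirrorTorus}) it exchanges one Lagrangian half of $H_1(A;\mb Z)$ with the corresponding half of $H_1(\hat A;\mb Z)=H^1(A;\mb Z)$ while fixing the complementary halves. Consequently, tracing $T=\begin{pmatrix}a&b\\c&d\end{pmatrix}\in U(A)$ through $\alpha$, the resulting automorphism of $H_1(A^\circ;\mb Z)=\mb Z^n\oplus\mb Z^n$ reads off the action of $d\in\mathrm{End}(\hat A)$ on one summand and of $a\in\mathrm{End}(A)$ on the other; since $A=E^n$ with $E$ non-CM, $\mathrm{End}(A)=\mathrm{End}(\hat A)=M_n(\mb Z)$, and the dual endomorphism $\hat f$ acts on $H^1(A)$ as pullback, i.e. by $f^T$. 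This produces exactly $f^*\mapsto\begin{pmatrix}f^T&0\\0&f^{-1}\end{pmatrix}$, and the substitution $f\mapsto f^{-1}$ then gives $f_*\mapsto\begin{pmatrix}f^{-T}&0\\0&f\end{pmatrix}$. The off-diagonal blocks transport by the same dictionary (a $\mathrm{Hom}(A,\hat A)$-entry landing in the upper-right position and conversely), together with two special identifications: the principal polarization $\varphi:A\to\hat A$ is precisely the datum used in Lemma~\ref{lem:Umirror} to present $H_1(A^\circ;\mb Z)$ as the standard symplectic lattice, so $\varphi$ and $\hat\varphi^{-1}$ both go to $1$ and $\Phi\mapsto\begin{pmatrix}0&-1\\1&0\end{pmatrix}$; and $\phi(L)\in\mathrm{Hom}^{\mathrm{sym}}(A,\hat A)=NS(A)$ is carried to the symmetric matrix $c_1(L)$, which is the cohomological content of the tables in Section~\ref{subsec:E3corr} (and is also forced by the fact that $-\otimes L$ multiplies $\mathrm{ch}$ by $e^{c_1(L)}$, mirror to the $B$-field shear $\begin{pmatrix}1&c_1(L)\\0&1\end{pmatrix}$).

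The step I expect to be the real obstacle is entirely one of conventions: reconciling the sign-twisted block permutation defining $\alpha$ in Lemma~\ref{lem:mirrorTorus} with the ordering of blocks in the faithful representation $U(A)\hookrightarrow\mathrm{GL}(H_1(A\times\hat A;\mb Z))$ used in Lemma~\ref{lem:Umirror}, and then checking that under the composite one gets $\varphi\mapsto1$ and $\phi(L)\mapsto c_1(L)$ with the correct signs (a possible transpose on $\phi(L)$ being harmless since $\phi(L)$ is symmetric). Once the conventions of those two lemmas are fixed, everything else is a mechanical check on $2\times2$-block matrices.
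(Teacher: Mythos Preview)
Your proposal is correct and follows exactly the paper's approach: the paper's proof is the single sentence ``Since we have already determined the image of various autoequivalences in $U(A)$, we can combine this with Lemma~\ref{lem:Umirror} to obtain the following,'' and you have spelled out precisely how that combination works. Your identification of the source of the transposes (the dual endomorphism $\hat f$ acting on $H^1(A)$ as $f^T$), the role of the principal polarization in normalizing $\varphi\mapsto 1$, and the correspondence $\phi(L)\mapsto c_1(L)$ are all on target, and your honest flag that the only real work is reconciling the block-ordering and sign conventions between Lemma~\ref{lem:mirrorTorus} and Lemma~\ref{lem:Umirror} is accurate---that bookkeeping is exactly what the proof of Lemma~\ref{lem:Umirror} encodes in its $4\times 4$ block form, from which one reads off the $\mathrm{Sp}$-matrix directly.
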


Following Polishchuk, we will also consider the bigger group
\[
U(A,\mb Q)\coloneqq \left\{T=\begin{pmatrix} a & b \\ c & d \end{pmatrix}\middle|\: T^{-1}=\begin{pmatrix} \hat d & -\hat b \\ -\hat c & \hat a \end{pmatrix}\right\}
\]
where $a\in\mathrm{End}(A)\otimes\mb Q$, $b\in\mathrm{Hom}(\hat A,A)\otimes\mb Q$, $c\in\mathrm{Hom}(A,\hat A)\otimes\mb Q$, $d\in\mathrm{End}(\hat A)\otimes\mb Q$.
Thus, $U(A,\mb Q)$ is defined like $U(A)$ but with the category of abelian varities replaced by its localization at isogenies.
In the case $A=E^n$ where $E$ is without CM we have $U(A,\mb Q)=\mathrm{Sp}(2n,\mb Q)$.

Elements of $U(A,\mb Q)$ correspond to certain endofunctors of $D^b(A)$, see \cite{polishchuk2011lagrangianinvariantsheavesfunctorsabelian} and \cite[Section 2.1]{Polishchuk_2014}.
We will need the following special cases.

\begin{prop}\label{prop:endomirror}
The endofunctors $\otimes V$, $f^*$, $f_*$ of $D^b(A)$, where $V$ is a semi-homogeneous bundle of rank $r$, $f\in\mathrm{Hom}(A,A)$ an isogeny, map to the following transformations in $\mathrm{Sp}(2n,\mb Q)$:
\[
\otimes V\mapsto \begin{pmatrix}
    1 & c_1(V)/r \\ 
    0 & 1 
\end{pmatrix}, 
\quad 
f^* \mapsto \begin{pmatrix}
    f^{T} & 0 \\
    0 & f^{-1}
\end{pmatrix},
\quad 
f_* \mapsto \begin{pmatrix}
    f^{-T} & 0 \\
    0 & f
\end{pmatrix}.
\]
\end{prop}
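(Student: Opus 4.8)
The plan is to deduce the three formulas from those of the preceding Proposition for genuine autoequivalences, using the multiplicativity of the assignment ``kernel endofunctor $\mapsto$ element of $U(A,\mathbb{Q})$'' established by Polishchuk \cite{polishchuk2011lagrangianinvariantsheavesfunctorsabelian,Polishchuk_2014}. Recall that $U(A,\mathbb{Q})$ is the group of isometric automorphisms of $A\times\widehat A$ in the category of abelian varieties localized at isogenies, that this localization of $D^b(A)$ carries an action of $U(A,\mathbb{Q})$ compatible with composition, and that on the honest autoequivalences $g^{*}$, $g_{*}$ (for $g\in\mathrm{Aut}(A)$), $\otimes L$ (for $L\in\mathrm{Pic}(A)$) and $\Phi$ this action agrees with the homomorphism of~\eqref{eq:AutAseq}. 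Granting this, I will express each of $f^{*}$, $f_{*}$, $\otimes V$ as a composite of such operations --- introducing an auxiliary isogeny where needed --- and read off the matrix; overall scalars (a factor $r$ for $\otimes V$, or the degree of an auxiliary isogeny) act trivially in $\mathrm{Sp}(2n,\mathbb{Q})$.

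For $f^{*}$ and $f_{*}$ with $f$ an isogeny: since $E$ has no complex multiplication, $\mathrm{End}(E^{n})=M_{n}(\mathbb{Z})$ and $f$ is an integer matrix of nonzero determinant, so by Smith normal form $f=g_{1}\,D\,g_{2}$ with $g_{1},g_{2}\in\mathrm{GL}_{n}(\mathbb{Z})=\mathrm{Aut}(E^{n})$ and $D=\mathrm{diag}(d_{1},\dots,d_{n})$. Then $f^{*}=g_{2}^{*}\circ D^{*}\circ g_{1}^{*}$, and by multiplicativity together with the known image of $g^{*}$ it suffices to compute $D^{*}$; since $D^b(E^{n})$ is the external tensor product of $n$ copies of $D^b(E)$ and the assignment is compatible with such products, it suffices in turn to compute the image of $[d]^{*}$ for a single multiplication map $[d]\colon E\to E$. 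That is the computation carried out for automorphisms in the preceding Proposition, now performed in the isogeny category --- where $[d]$ is invertible with $[d]^{-1}=[1/d]$, so the several-valuedness of ``$[d]^{-1}$'' on points causes no trouble --- and it gives $[d]^{*}\mapsto\left(\begin{smallmatrix} d & 0\\ 0 & 1/d\end{smallmatrix}\right)$; reassembling over the $n$ factors yields $f^{*}\mapsto\left(\begin{smallmatrix} f^{T} & 0\\ 0 & f^{-1}\end{smallmatrix}\right)$. The functor $f_{*}$ is handled in exactly the same way, and the resulting matrix $\left(\begin{smallmatrix} f^{-T} & 0\\ 0 & f\end{smallmatrix}\right)$ is, as it must be, the inverse of the one for $f^{*}$, since $f_{*}$ and $f^{*}$ are mutually inverse in $U(A,\mathbb{Q})$.

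For $\otimes V$ with $V$ semi-homogeneous of rank $r$: by Mukai's structure theory there is an isogeny $\pi\colon B\to A$ and a line bundle $M$ on $B$ with $\pi^{*}V\cong M^{\oplus r}$, so that $\pi^{*}c_{1}(V)=r\,c_{1}(M)$, that is, $c_{1}(M)=\pi^{*}\bigl(c_{1}(V)/r\bigr)$ in $\mathrm{NS}(B)\otimes\mathbb{Q}$; in particular $\mathrm{ch}(V)=r\exp\!\bigl(c_{1}(V)/r\bigr)$. Hence, up to the scalar $r$, the functor $\otimes V$ is identified in the isogeny category with $\otimes M'$, where $M'$ is the $\mathbb{Q}$-line bundle on $A$ with $c_{1}(M')=c_{1}(V)/r$, and the integral formula for $\otimes M'$ gives $\otimes V\mapsto\left(\begin{smallmatrix} 1 & c_{1}(V)/r\\ 0 & 1\end{smallmatrix}\right)$. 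Alternatively, write $\otimes V=\tfrac1r\,\pi_{*}\circ(\otimes M)\circ\pi^{*}$ and multiply the three matrices from the previous paragraph to obtain $\left(\begin{smallmatrix} 1 & \pi^{-T}c_{1}(M)\pi^{-1}\\ 0 & 1\end{smallmatrix}\right)$, then invoke the identification of a N\'eron--Severi class with a symmetric homomorphism $A\to\widehat A$, under which $c_{1}(M)=\pi^{*}c_{1}(M')$ becomes $\pi^{T}\,c_{1}(M')\,\pi$, to conclude $\pi^{-T}c_{1}(M)\pi^{-1}=c_{1}(M')=c_{1}(V)/r$.

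I expect the main obstacle to lie not in any single computation but in the bookkeeping around the isogeny category: confirming that $f^{*}$, $f_{*}$, $\otimes V$ define elements of $U(A,\mathbb{Q})$ at all, that composition and the scalar- and torsion-translation ambiguities behave as used above (all of which is the content of the Polishchuk references and may simply be invoked), and --- above all --- fixing, consistently with the preceding Proposition, the dictionary relating N\'eron--Severi classes (where ``$c_{1}(V)/r$'' naturally lives), symmetric homomorphisms $A\to\widehat A$, and the off-diagonal blocks of $\mathrm{Sp}(2n,\mathbb{Q})$-matrices, so that the asserted matrix entries come out literally correct. Once that dictionary is in place, the remainder is the routine matrix arithmetic sketched above.
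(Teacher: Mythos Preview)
Your approach is correct in spirit and considerably more detailed than the paper's own proof, which consists of two sentences: the case of $\otimes V$ is simply attributed to \cite[Example~2.1.3]{Polishchuk_2014}, and the cases of $f^*$ and $f_*$ are declared ``clear''. The intended reading of ``clear'' is that the matrix displayed in the preceding Proposition for $f\in\mathrm{Aut}(A)$ makes sense verbatim for $f$ an isogeny once one passes to $U(A,\mathbb{Q})$, and Polishchuk's framework assigns exactly this element to the endofunctor; no decomposition of $f$ is needed. Your Smith normal form reduction is a valid alternative route that makes the verification explicit and self-contained, at the cost of some extra bookkeeping.

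One small imprecision: for a simple semi-homogeneous bundle $V$ of rank $r$, Mukai's theory gives an isogeny $\pi$ with $\pi^*V\cong\bigoplus_{i=1}^r M_i$ where the $M_i$ are line bundles that pairwise differ by elements of $\mathrm{Pic}^0$, not literally $M^{\oplus r}$ for a single $M$. This does not affect your argument, since what you use is only that all summands share the same $c_1$, so that numerically $\pi^*c_1(V)=r\,c_1(M_1)$ and $\mathrm{ch}(V)=r\exp(c_1(V)/r)$. Your alternative computation via $\pi_*\circ(\otimes M)\circ\pi^*$ and the identification of N\'eron--Severi classes with symmetric homomorphisms is also correct and is essentially how Polishchuk organizes the calculation. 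The concern you flag at the end---getting the dictionary between $\mathrm{NS}$, symmetric maps $A\to\hat A$, and the off-diagonal $\mathrm{Sp}$-blocks consistent with the paper's conventions---is real but is resolved by Lemma~\ref{lem:Umirror}, which fixes that identification once and for all.
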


\begin{proof}
For the functor $\otimes V$, this is \cite[Example 2.1.3]{Polishchuk_2014}. For the functors $f^*$ and $f_*$ this is clear.
\end{proof}

\section{The action of \texorpdfstring{$\mathrm{Sp}(2n,\mathbb{Q})$}{Sp(2n,Q)} on the stability manifold}\label{sec:extendspn}

In Section~\ref{sec:mirrorspn}, we considered the group of auto-equivalences $\mathrm{Aut}(D^b(E^n))$, whose quotient $ U(E^n) = \mathrm{Sp}(2n,\mathbb{Z})$, admits a well-defined action on $\Knum(E^n)$ up to sign. 
Furthermore, the short exact sequence~\eqref{eq:AutAseq} shows that $U(E^n)$ acts on the space of numerical stability conditions up to a shift ambiguity.
The purpose of this section is to extend this action to $\mathrm{Sp}(2n,\mathbb{Q})$.

Let $F \in \mathrm{End}(D^b(E^n))$ be an endo-functor and let $\sigma' = (Z', \PP') \in \mathrm{Stab}(D^b(E^n))$ be a stability condition invariant under the functors $t_a^*$ and $ ( - ) \otimes L$, with $L \in \mathrm{Pic}^0(E^n)$. We define $\sigma = F^{-1}\sigma' \coloneqq (Z,\PP)$ as follows:
\[
Z \coloneqq Z' \circ F, \quad \PP(\varphi) \coloneqq \left\{ \mc E \in D^b(E^n) \ \middle\vert\  F(\mc E) \in \PP'(\varphi) \right\}
\]
\begin{prop}\label{prop:fstability}
Assume that $F$ is one of the following:
\begin{itemize}
\item
$(-) \otimes V \colon D^b(E^n) \rightarrow D^b(E^n)$, where $V$ is a semi-homogeneous bundle.
\item 
$f^*, f_* \colon D^b(E^n) \rightarrow D^b(E^n)$, where $f$ is an isogeny.
\end{itemize}
Then $F^{-1} \sigma' \in \mathrm{Stab}(D^b(E^n))$.
\end{prop}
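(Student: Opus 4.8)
The plan is to verify that $\sigma = F^{-1}\sigma'$ satisfies the three axioms of a stability condition: the existence of a slicing with the Harder--Narasimhan property, the compatibility of $Z$ with the slicing (i.e. $Z(\mc E) \in \mb R_{>0} e^{i\pi\varphi}$ for $0 \neq \mc E \in \PP(\varphi)$), and the support property. The essential point is that each of the functors $F$ in question is an autoequivalence or, more generally, an exact equivalence after passing to the isogeny category, so that it preserves the triangulated structure and sends the given slicing $\PP'$ to a slicing $\PP$ by the formula above. First I would check that $\PP(\varphi)$ defines a slicing: since $F$ is an exact equivalence (for $f^*, f_*$ with $f$ an isogeny, $F$ is an equivalence onto a full subcategory, but after tensoring with $\mb Q$ it is an equivalence of the relevant categories, and in any case $f_* f^* $ and $f^* f_*$ are isomorphic to multiplication by $\deg f$, so the HN filtrations transport), the axioms $\PP(\varphi)[1] = \PP(\varphi+1)$, $\mathrm{Hom}(\PP(\varphi_1),\PP(\varphi_2)) = 0$ for $\varphi_1 > \varphi_2$, and the existence of HN filtrations all follow formally from the corresponding statements for $\PP'$ by applying $F$ and $F^{-1}$.

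Next I would verify the central charge compatibility. For $0 \neq \mc E \in \PP(\varphi)$ we have $F(\mc E) \in \PP'(\varphi)$, hence $Z(\mc E) = Z'(F(\mc E)) \in \mb R_{>0} e^{i\pi\varphi}$ by the corresponding property of $\sigma'$; note this also uses that $F(\mc E) \neq 0$, which holds since $F$ is faithful (or becomes an equivalence after tensoring with $\mb Q$). The fact that $Z$ is a group homomorphism $\Knum(E^n) \to \mb C$ follows because $F$ acts on $\Knum(E^n)\otimes\mb Q$ by the linear transformations computed in Proposition~\ref{prop:endomirror}, so $Z' \circ F$ is linear; one should check that $Z$ is still defined over $\mb Z$ on $\Knum(E^n)$ (not just over $\mb Q$), which is automatic since $F$ preserves the integral lattice $\Knum(E^n)$ when $F$ is an autoequivalence, and for the isogeny functors $f^*, f_*$ one has genuine functors $D^b(E^n) \to D^b(E^n)$ acting integrally.

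For the support property, I would argue as follows: $\sigma'$ satisfies the support property with respect to some norm $\|\cdot\|$ on $\Knum(E^n)\otimes\mb R$, i.e. there is $C > 0$ with $\|\mc E\| \leq C |Z'(\mc E)|$ for all $\sigma'$-semistable $\mc E$. Since $\mc E$ is $\sigma$-semistable iff $F(\mc E)$ is $\sigma'$-semistable, and $F$ acts on $\Knum(E^n)\otimes\mb R$ by an invertible linear map $M$ (from Proposition~\ref{prop:endomirror}), we get $\|\mc E\| \leq \|M^{-1}\|_{\mathrm{op}} \|F(\mc E)\| \leq \|M^{-1}\|_{\mathrm{op}} \, C \, |Z'(F(\mc E))| = \|M^{-1}\|_{\mathrm{op}} \, C \, |Z(\mc E)|$, so $\sigma$ satisfies the support property with respect to the same norm and constant $\|M^{-1}\|_{\mathrm{op}} C$. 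This also shows $\sigma$ is a \emph{numerical} stability condition, i.e. $Z$ factors through $\Knum(E^n)$.

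The main obstacle is dealing carefully with the functors $f^*$ and $f_*$ for $f$ an isogeny, since these are not autoequivalences of $D^b(E^n)$ but only become invertible in the isogeny category; in particular one must make sure that the slicing $\PP$ defined via $F(\mc E) \in \PP'(\varphi)$ still has HN filtrations \emph{for every object of $D^b(E^n)$}, not merely for objects in the essential image. The key input here is precisely the hypothesis that $\sigma'$ is invariant under $t_a^*$ and $-\otimes L$ for $L \in \mathrm{Pic}^0(E^n)$: the composition $f^* f_*$ (resp. $f_* f^*$) is isomorphic to $\bigoplus_{a \in \ker f} t_a^* (-) \otimes L_a$ for suitable $L_a \in \mathrm{Pic}^0$, so this invariance guarantees that an object $\mc E$ and its image under such a composition have the same HN behavior, which is what lets one transport HN filtrations back along $f^*$ or $f_*$. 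I would spell this out as the crux of the argument; the remaining verifications are formal.
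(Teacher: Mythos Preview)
Your strategy is broadly right and you correctly identify the Harder--Narasimhan property as the crux when $F$ is not an autoequivalence, but several claims are inaccurate and the key step is not carried out. None of the functors is an autoequivalence in general: $(-)\otimes V$ for $\rank V>1$ is not, nor are $f^*,f_*$ for a nontrivial isogeny, and the phrase ``equivalence after passing to the isogeny category'' does not help at the level of objects in $D^b(E^n)$ (and says nothing about $(-)\otimes V$). Your assertion that $f_*f^*$ and $f^*f_*$ are ``isomorphic to multiplication by $\deg f$'' is false on objects: one has $f_*f^*\mc E\cong\mc E\otimes f_*\mc O_X$ with $f_*\mc O_X$ an iterated extension of degree-$0$ line bundles, and $f^*f_*\mc E\cong\bigoplus_{a\in\ker f}t_a^*\mc E$. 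For $F=(-)\otimes V$ one has $GF(\mc E)=\mc E\otimes V\otimes V^\vee$ with $V\otimes V^\vee$ a successive \emph{extension} (not direct sum) of elements of $\mathrm{Pic}^0$; your final paragraph does not treat this case. You also assert but do not prove the equivalence ``$\mc E$ is $\sigma$-semistable $\Leftrightarrow$ $F(\mc E)$ is $\sigma'$-semistable''; the nontrivial direction requires the adjoint $G$.

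The paper takes a different and cleaner route: rather than slicings it works with hearts. It invokes Polishchuk's theorem that the preimage under $F$ of an $\mathbf{H}$-invariant bounded t-structure is again a bounded t-structure, with both $F$ and its adjoint $G$ t-exact. This immediately yields the heart $\mc A=\PP(0,1]$ on which $Z=Z'\circ F$ is a stability function. The semistability equivalence is then proved using t-exactness of \emph{both} $F$ and $G$: if $A\hookrightarrow F(\mc E)$ is $Z'$-destabilizing with $\mc E$ semistable, applying $G$ gives a $Z$-destabilizing subobject $G(A)\hookrightarrow GF(\mc E)$; but by Lemma~\ref{lem:compzero} and $\mathbf{H}$-invariance, $GF(\mc E)$ is an extension of $Z$-semistables of the same slope as $\mc E$, hence itself semistable---a contradiction. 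The HN property then follows as in~\cite{Macr2007InducingSC}, and the support property is exactly as you wrote (equivalently, $Q(v)\coloneqq Q'(Fv)$). The t-structure route is cleaner precisely because Polishchuk's theorem absorbs the delicate work of producing the bounded t-structure (and hence HN filtrations), whereas in your slicing approach you would have to explicitly descend the HN filtration of $F(\mc E)$ through $F$, which you flag as the main obstacle but do not actually do.
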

Consider the following subgroups and the element $J$ of $\mathrm{Sp}(2n,\mathbb{Q})$:
\begin{gather*}
N = \left\{\begin{pmatrix}
1 & A \\
0 & 1
\end{pmatrix} \ \middle\vert \ A = A^{T} \right\}, \quad
H = \left\{\begin{pmatrix}
    A & 0 \\
    0 & (A^{T})^{-1}
\end{pmatrix} \ \middle\vert \ A \in \mathrm{GL}(n,\mathbb{Q})\right\},\quad J = \begin{pmatrix}
    0 & 1 \\
    - 1 & 0 
\end{pmatrix}
\end{gather*}
and we note that by~\cite[Section 2.2]{omeara}, $\mathrm{Sp}(2n,\mathbb{Q})$ is generated by $N$, $H$, and $J$. 
Recall the isomorphism induced by the mirror map
\[
\beta \colon \NN(E^n) \otimes \mathbb{Q} \simeq H^n_{\rm pr}((E^{\circ})^n; \mathbb{Q})
\]
of Proposition~\ref{prop:HodgePrimitive}.
\begin{prop}\label{prop:spendo}
Let $\sigma \in \mathrm{Stab}(D^b(E^n))$ with central charge $Z$ and $g \in \mathrm{Sp}(2n,\mathbb{Q})$. Then there exists an endo-functor $F \in \mathrm{End}(D^b(E^n))$ such that $F^{-1}\sigma$ is a stability condition and the induced action on $\NN (E^n)$ satisfies
\[
F = \beta^{-1}\circ g_* \circ\beta.
\]
\end{prop}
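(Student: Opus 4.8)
The plan is to reduce to the generators $N$, $H$, $J$ of $\mathrm{Sp}(2n,\mb Q)$ established via~\cite[Section 2.2]{omeara} and to exhibit, for each generator, an endofunctor whose image in $\mathrm{Sp}(2n,\mb Q)$ (computed in Proposition~\ref{prop:endomirror}) realizes it, and which by Proposition~\ref{prop:fstability} acts on $\mathrm{Stab}(D^b(E^n))$. First I would observe that the assignment $g\mapsto F$ must be multiplicative in the sense that composing functors composes the associated symplectic matrices; since $\beta^{-1}\circ g_*\circ\beta$ is a homomorphism in $g$, it suffices to treat generators and then compose. Concretely: an element $\begin{pmatrix} 1 & A \\ 0 & 1\end{pmatrix}\in N$ with $A=A^T\in M_n(\mb Q)$ should be realized by $(-)\otimes V$ for a semi-homogeneous bundle $V$ with $c_1(V)/\rank V$ equal to the class corresponding to $A$ — here one needs that every symmetric rational matrix arises as $c_1(V)/\rank V$ for some semi-homogeneous $V$ on $E^n$, which follows from the classification of semi-homogeneous bundles on abelian varieties (Mukai) together with the fact that $NS(E^n)\otimes\mb Q$ is exactly the space of symmetric matrices when $E$ has no CM. An element $\begin{pmatrix} A & 0 \\ 0 & (A^T)^{-1}\end{pmatrix}\in H$ with $A\in\mathrm{GL}(n,\mb Q)$ should be realized by $f^*$ (or $f_*$) for an isogeny $f$: writing $A=\tfrac1m B$ with $B\in M_n(\mb Z)$ of nonzero determinant and $m\in\mb Z_{>0}$, one takes $f$ to be the isogeny of $E^n$ given by $B$ and composes with multiplication-by-$m$ maps to clear denominators, again using that $\mathrm{Hom}(E^n,E^n)\otimes\mb Q = M_n(\mb Q)$. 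Finally $J$ is realized by the Fourier--Mukai-type autoequivalence $\Phi$ of the previous section, whose image in $\mathrm{Sp}(2n,\mb Z)\subset\mathrm{Sp}(2n,\mb Q)$ is $\begin{pmatrix} 0 & -1 \\ 1 & 0\end{pmatrix}$, which differs from $J$ only by a sign (harmless, cf.\ the $\mb Z/2$ ambiguity in $\mathrm{Spin}(A)$) or by a further shift.

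The key steps, in order, are then: (i) decompose a given $g$ into a word in the generators $N$, $H$, $J$; (ii) for each letter, produce the corresponding functor ($\otimes V$, $f^*$ or $f_*$, or $\Phi$) and invoke Proposition~\ref{prop:fstability} to see that $F^{-1}(-)$ preserves $\mathrm{Stab}$ — noting that the hypothesis in Proposition~\ref{prop:fstability} requires invariance of the stability condition under $t_a^*$ and $(-)\otimes L$ for $L\in\mathrm{Pic}^0$, which holds automatically for numerical stability conditions since those functors act trivially on $\NN(E^n)$; (iii) compose the functors along the word to obtain $F$, and check via Proposition~\ref{prop:endomirror} together with Lemma~\ref{lem:Umirror} that the induced action on $\NN(E^n)$ is $\beta^{-1}\circ g_*\circ\beta$; (iv) absorb the sign/shift ambiguities into the $\mb Z\times A\times\hat A$ part of $\mathrm{Aut}(D^b(E^n))$ from~\eqref{eq:AutAseq}, which acts trivially on $\NN(E^n)$ and on central charges up to the (irrelevant) phase. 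One should be slightly careful that Proposition~\ref{prop:fstability} is stated for a single functor applied to a single $\sigma$, so step (ii) must be applied iteratively: having built $F_1^{-1}\sigma$, one applies the next generator's functor to $F_1^{-1}\sigma$, and so on; this is legitimate because the output of each step is again a (numerical) stability condition, hence again invariant under $t_a^*$ and $\otimes L$ with $L\in\mathrm{Pic}^0$.

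The main obstacle I anticipate is step (ii)/(iii): verifying that the endofunctors really induce the claimed matrices \emph{on the nose} after all identifications — in particular tracking the mirror map $\beta$ against the various identifications $U(E^n)=\mathrm{Sp}(2n,\mb Z)$, $U(E^n,\mb Q)=\mathrm{Sp}(2n,\mb Q)$, and the Chern-character comparison of Section~\ref{sec:mirrorspn}, and confirming that the homomorphism property (composition of functors $\leftrightarrow$ product of matrices) is respected by the passage from $\mathrm{End}(D^b(E^n))$ through $U(E^n,\mb Q)$ to the $\mathrm{Sp}$-action on $H^n_{\mathrm{pr}}((E^\circ)^n)$. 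Since $\mathrm{Sp}(2n,\mb Q)$ and the relevant functor (semi)group are not literally isomorphic — the latter surjects onto the former with kernel the homogeneous/shift part — one does not get a genuine group homomorphism $g\mapsto F$, only a set-theoretic section, and some care is needed to phrase the statement so that this suffices (which it does, since we only need \emph{existence} of $F$ for each $g$, together with the prescribed action on $\NN(E^n)$). The sign ambiguity for $J$ versus $\Phi$ and the decomposition length in step (i) are purely bookkeeping and should not cause real difficulty, given~\cite{omeara}.
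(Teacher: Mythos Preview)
Your approach is correct and essentially identical to the paper's: decompose $g$ into the generators $N$, $H$, $J$ via~\cite{omeara}, realize each by the corresponding endofunctor ($\otimes V$, $f_*$, $\Phi$) using Proposition~\ref{prop:endomirror}, invoke Proposition~\ref{prop:fstability} to act on stability conditions, and compose. You are in fact more careful than the paper's terse proof on several points it leaves implicit --- the $\mathbf H$-invariance hypothesis of Proposition~\ref{prop:fstability}, the surjectivity onto $N$ and $H$, and the iterative application --- but the strategy is the same.
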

\begin{rem}
We note that Proposition~\ref{prop:fstability} follows directly from \cite[Section 3.1]{Polishchuk_2014} and our exposition is similar. Nevertheless, our goal is to give a self-contained exposition without relying on the theory of Lagrangian-invariant objects described in~\cite{polishchuk2011lagrangianinvariantsheavesfunctorsabelian}.
\end{rem}
\subsection{Induced stability from endo-functors}\label{sec:endostab}
Let $X$ and $Y$ be smooth projective varieties over a field $\mathbf k$ of characteristic $0$ and consider a diagram of the form
\[
\begin{tikzcd}
D_{qcoh}(X) \arrow[r,"F", bend right = 10]&D_{qcoh}(Y)  \arrow[l, "G"', bend left = -10]\\
D^b(X) \arrow[u, hook] \arrow[r] & D^b(Y) \arrow[u,hook]
\end{tikzcd}
\]
with $D_{qcoh}(X)$ the unbounded derived category of quasi-coherent sheaves on $X$, and an adjunction $G \dashv F$. 

Throughout the rest of this section, we specialize to the case where $X = Y$ is an abelian variety, and $F$ is one of the following functors:
\begin{itemize}
\item 
$F = (-) \otimes V$, where $V \in D^b(X)$ is a semi-homogeneous bundle.
\item 
$F = f_*, f^*$, where $f \colon X \rightarrow X$ is an isogeny.
\end{itemize}

We first observe the following.

\begin{lemma}\label{lem:compzero}
The functor $F$ admits a left adjoint $G$. In particular, $G$ is also right adjoint to $F$. The compositions $GF(\EE), FG(\EE)$ can be obtained by consecutive extensions of objects of the form $t_a^*\EE \otimes L$, $L \in \mathrm{Pic}^0(X)$.
\end{lemma}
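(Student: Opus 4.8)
The plan is to treat the three cases ($F = (-)\otimes V$, $F = f_*$, $F = f^*$) in turn, in each case producing the left adjoint explicitly and then computing the compositions $GF$ and $FG$ by hand. For $F = (-)\otimes V$, the left adjoint is $G = (-)\otimes V^\vee$; since $V$ is a locally free sheaf (a bundle), $V^\vee$ is too, so $G$ is also right adjoint to $F$, and $GF(\EE) = \EE\otimes(V^\vee\otimes V)$, $FG(\EE) = \EE\otimes(V\otimes V^\vee)$. The key input is that for a semi-homogeneous bundle $V$ on an abelian variety, $V^\vee\otimes V$ (equivalently $\mathcal End(V)$) is again semi-homogeneous, hence is a successive extension of line bundles $L\in\mathrm{Pic}^0(X)$ — this is a standard fact from Mukai's theory of semi-homogeneous bundles. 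Tensoring such a filtration by $\EE$ realizes $GF(\EE)$ as consecutive extensions of $\EE\otimes L$, which is the desired form with $a = 0$.

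For $F = f^*$, the adjoints are $G = f_*$ on both sides ($f$ is finite flat since it is an isogeny, so $f_*$ is exact and both adjoint to $f^*$). Then $GF(\EE) = f_* f^*\EE \cong \EE\otimes f_*\OO_X$ by the projection formula, and $FG(\EE) = f^* f_*\EE$. The crucial computation is that for an isogeny $f$ with kernel $K$, $f_*\OO_X$ is a successive extension of the line bundles $P_y$ for $y$ in the (finite) group $\hat K = \ker(\hat f)\subset\hat X = \mathrm{Pic}^0(X)$ — indeed after the isogeny $\hat f$ it is the regular representation of $K$, and the character decomposition of the group algebra gives exactly these line bundles. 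For $FG(\EE) = f^* f_*\EE$, one uses flat base change along the Cartesian square expressing $f^* f_*$ as a sum (after pullback) of translates, giving a filtration with graded pieces $t_a^*\EE$ for $a\in K$. The case $F = f_*$ is dual: $G = f^*$ is both adjoint (again using finiteness and flatness), and the same two identities appear with the roles of $GF$ and $FG$ swapped.

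In all cases the conclusion follows by combining the projection formula / flat base change with the structural description of $f_*\OO_X$ and of $\mathcal End(V)$. I expect the main obstacle to be the semi-homogeneous case: one must invoke (and perhaps briefly recall) Mukai's result that semi-homogeneous bundles are Jordan–H\"older filtered with graded pieces in $\mathrm{Pic}^0(X)$ after an isogeny, and that $\mathcal End(V)$ inherits semi-homogeneity; getting the filtration to descend to $X$ itself (rather than only after pulling back along an isogeny) is the delicate point, and one may need to pull back $\EE$ as well and then push forward, which is why the statement is phrased with general $t_a^*\EE\otimes L$ rather than just $\EE\otimes L$. The isogeny cases are essentially formal once the Cartesian-square and group-algebra computations are set up, so the bulk of the writing will be in carefully stating the semi-homogeneous input and verifying that left and right adjoints coincide (which is where $V$ being a genuine bundle, and $f$ being finite flat, are used).
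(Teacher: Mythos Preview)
Your approach is essentially the same as the paper's, treating each case separately with the same adjoints and the same key inputs (Mukai's structure theory for semi-homogeneous bundles, the projection formula, and base change for isogenies). Your worry about descent in the semi-homogeneous case is unfounded: Mukai's results already give that a semi-homogeneous bundle of slope $0$ on $X$ is filtered by line bundles in $\mathrm{Pic}^0(X)$, so the translations $t_a^*$ with $a\neq 0$ enter only in the $f^*f_*$ computation, not in the $(-)\otimes V$ case.
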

\begin{proof}
The first two sentences follow in each case from the fact that each functor is of Fourier--Mukai type, together with \cite[Proposition 5.9]{huybrechts} and the fact that the canonical bundle of $X$ is trivial.

Assume that $F = (-) \otimes V$, with $V$ a semi-homogeneous bundle on $X$. Then the left adjoint satisfies $G = (-) \otimes V^\vee$, and $GF = FG = (-) \otimes (V \otimes V^\vee)$ where $V \otimes V^\vee$ is a semi-homogeneous bundle of slope $0$. The conclusion follows from \cite[Proposition 6.15 \& Theorem 7.11(2)]{Mukai1978SemihomogeneousVB}, together with the fact that $\OO_X$ is homogeneous.

Assume that $F = f_*, f^*$. Then it suffices to show that $f_*f^*\EE$ and $f^*f_*\EE$ can be obtained by consecutive extensions of objects of the form $\EE \otimes L$, with $L \in \mathrm{Pic}^0(X)$. In the former case, we have $f_* f^* \EE = \EE \otimes f_* \OO_X$, and the conclusion follows from \cite[Corollary 4.22]{Mukai1978SemihomogeneousVB}. In the latter case, we have $f^* f_*\EE \simeq p_{1*} m^*\EE$ by~\cite[Section 1]{Oda}, where $m \colon X \times \mathrm{ker}(f) \rightarrow X$ is the restriction of the multiplication map. As the base field $\mathbf k$ is of characteristic $0$, $f$ must be a separable isogeny, and hence $p_{1*}m^* \EE \simeq t_a^*\EE$, where $a \in \mathrm{ker}(f)$.
\end{proof}
\begin{lemma}\label{lem:tstructurecond}
$F$ satisfies the following conditions:
\begin{itemize}
\item 
$F$ commutes with small coproducts.
\item 
$F(\EE) \simeq 0 \implies \EE \simeq 0 $.
\item 
$F(\EE) \in D^b(Y) \implies \EE \in D^b(X)$.
\end{itemize}
\end{lemma}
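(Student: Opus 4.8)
The goal is to verify the three bulleted properties of $F$ in Lemma~\ref{lem:tstructurecond}, so I would treat each in turn, using the Fourier--Mukai description of $F$ already established.

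\textbf{First bullet (commuting with small coproducts).} In each case $F$ is a Fourier--Mukai functor whose kernel is a bounded complex of coherent sheaves on $X\times X$ (the graph of the isogeny pushed forward, or the diagonal twisted by a semi-homogeneous bundle). A Fourier--Mukai functor with perfect kernel is a left adjoint, hence automatically commutes with all small coproducts; alternatively, one observes directly that $(-)\otimes V$, $f^*$, and $f_*$ all commute with arbitrary direct sums: for $f_*$ this uses that $f$ is a finite (in particular quasi-compact, quasi-separated) morphism, so $Rf_*$ commutes with coproducts on $D_{qcoh}$. I would just cite \cite[Proposition 5.9]{huybrechts} / standard facts and move on.

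\textbf{Second and third bullets (conservativity and reflection of boundedness).} The key point is that, by Lemma~\ref{lem:compzero}, $GF(\EE)$ is a finite iterated extension of objects $t_a^*\EE\otimes L$ with $L\in\mathrm{Pic}^0(X)$. Each such functor $t_a^*\otimes L$ is an autoequivalence of $D_{qcoh}(X)$ (and of $D^b(X)$), so $t_a^*\EE\otimes L\simeq 0$ iff $\EE\simeq 0$, and $t_a^*\EE\otimes L\in D^b(X)$ iff $\EE\in D^b(X)$. Now if $F(\EE)\simeq 0$ then $GF(\EE)\simeq 0$; but a nonzero object of a triangulated category cannot be built from finitely many copies of $\EE$ (up to the autoequivalences above) unless $\EE$ itself is zero — more precisely, an iterated extension of objects each isomorphic to a shift-free twist of $\EE$ vanishes only if all those objects vanish, which forces $\EE\simeq 0$. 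This gives the second bullet. For the third bullet, if $F(\EE)\in D^b(Y)$ then $GF(\EE)\in D^b(X)$ since $G$ also preserves boundedness (again $G$ is of the same type: $(-)\otimes V^\vee$, $f_*$, or $f^*$); then the iterated-extension description shows that each $t_a^*\EE\otimes L$, hence $\EE$, has bounded cohomology. I would phrase the extension argument via the long exact sequences in cohomology: in a triangle $A\to B\to C$, if $B$ and one of $A,C$ is bounded then so is the third, and run induction on the number of extension steps, with the base case being that a single $t_a^*\EE\otimes L$ is bounded/zero iff $\EE$ is.

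\textbf{Main obstacle.} The only genuinely delicate point is making the ``iterated extension'' bookkeeping rigorous: Lemma~\ref{lem:compzero} tells us $GF(\EE)$ and $FG(\EE)$ are built from the $t_a^*\EE\otimes L$ but one must be careful that the number of extension steps is finite and uniform (independent of $\EE$), which it is because it only depends on $f_*\OO_X$, $V\otimes V^\vee$, or $f^*\OO_X$, not on $\EE$. Once that is in hand, the conservativity and boundedness-reflection statements reduce to the trivial observation that shift-preserving autoequivalences are conservative and bounded-reflecting, combined with the two-out-of-three property for boundedness in triangles. I would also note for the third bullet that one should use $G$ rather than $F$ to go back down (since we know $GF$, not $FF$, has the nice form), which is legitimate because $G$ is one of the listed functor types and hence preserves $D^b$.
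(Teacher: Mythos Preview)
Your first bullet is fine and matches the paper's argument.

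For the second and third bullets your iterated-extension strategy has a genuine gap. The key assertion --- ``an iterated extension of objects each isomorphic to a shift-free twist of $\EE$ vanishes only if all those objects vanish'' --- is false in $D_{qcoh}(X)$: already with two factors, a triangle $T_1\to 0\to T_2$ only says $T_2\simeq T_1[1]$, and for $\EE=\bigoplus_{n\in\mb Z}\OO_X[n]$ one has $\Psi(\EE)\simeq\Psi(\EE)[1]$ for any $t$-exact autoequivalence $\Psi$, so $0$ is a nontrivial extension of $\Psi(\EE)$ by $\Psi(\EE)$. The same problem breaks your boundedness induction: in a triangle $E_{i-1}\to E_i\to T_i$ the two-out-of-three property requires knowing \emph{two} of the three terms are bounded, but starting from $E_n=GF(\EE)$ bounded you only ever know one. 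There is no way to get the induction started.

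The paper avoids this entirely by not using Lemma~\ref{lem:compzero} here at all. Instead it observes that each of the three candidate $F$'s is exact on $\mathrm{Coh}(X)$ (tensoring with a vector bundle; pullback or pushforward along a finite flat map), hence $t$-exact for the standard $t$-structure, so $F(\HH^i(\EE))=\HH^i(F(\EE))$ for every $i$. Conservativity and reflection of boundedness then reduce immediately to the obvious case $\EE\in\mathrm{Coh}(X)$. Your argument can be rescued along the same lines --- the extensions in Lemma~\ref{lem:compzero} come from honest filtrations of the vector bundle $V\otimes V^\vee$ or $f_*\OO_X$ in $\mathrm{Coh}(X)$, so after applying $\HH^i$ the triangles become short exact sequences and no cancellation can occur --- but at that point you are really just invoking $t$-exactness of $F$, and it is cleaner to say so directly as the paper does.
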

\begin{proof}
The fact that $(-) \otimes V$ and $f^*$ commute with coproducts follows from the fact that these are both left adjoint functors, together with the adjoint functor theorem. The conclusion for $f_*$ follows directly from~\cite[Lemma 1.4]{Neeman1996TheGD}.

For the second property, we note that the claim is clear for $\EE \in \mathrm{Coh}(X)$. In particular, note that all three functors are exact on the abelian category $\mathrm{Coh}(X)$, and thus commute with taking cohomology with respect to $\mathrm{Coh}(X)$. Thus, we have $F(\HH^i(\EE)) = \HH^i(F(\EE)) = 0$ for all $ i \in \mathbb{Z}$, and so $\HH^i(\EE) = 0$ for all $i \in \mathbb{Z}$, which implies that $\EE \simeq 0 $.

For the third property, let $\EE \in D_{qcoh}(X)$ such that $F(\EE) \in D^b(X)$. Then $F(\HH^i(\EE)) = \HH^i(F(\EE)) = 0$ for $i \gg 0$ and $ i \ll 0$. This implies by the above paragraph that $\HH^i(\EE) = 0$ for $i \gg 0$ and $i \ll 0$, and the conclusion follows.
\end{proof}

Following~\cite{Polishchuk_2014}, we say that a t-structure on $(\DD^{\leq 0}, \DD^{\geq 0})$ is $\mathbf{H}$-invariant, if 
\[
\EE \in \DD^{\leq 0 ( \geq 0)} \implies t_{a}^*\EE \otimes L \in \DD^{\leq 0 (\geq 0)}
\]with $ L \in \mathrm{Pic}^0(X)$. Similarly, we say that a stability condition $\sigma \in \mathrm{Stab}(D^b(X))$ is $\mathbf{H}$-invariant if $\sigma$ is invariant under the auto-equivalences $t_a^*, (-) \otimes L$, with $L \in \mathrm{Pic}^0(X)$.

\begin{corollary}[{\cite[Theorem 2.1.2]{Polishchuk2006ConstantFO}}]
Assume that $(\DD'^{\leq 0}, \DD'^{\geq 0})$ is a $\mathbf{H}$-invariant t-structure on $D^b(X)$ and define
\[
\DD^{\leq 0} \coloneqq \{ \EE\in D^b(X) \ \vert \ F(\EE) \in \DD'^{\leq 0} \}, \quad \DD^{\geq 0} \coloneqq \{ \EE\in D^b(X) \ \vert \ F(\EE) \in \DD'^{\geq 0} \}.
\]
Then $(\DD^{\leq 0}, \DD^{\geq 0})$ is an $\mathbf{H}$-invariant t-structure on $D^b(X)$. In particular $F$ and $G$ are t-exact functors.
\end{corollary}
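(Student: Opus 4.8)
The plan is to verify the axioms of a t-structure for $(\DD^{\leq 0},\DD^{\geq 0})$ directly, using the adjunction $G\dashv F$ together with the fact (Lemma~\ref{lem:compzero}) that $GF$ and $FG$ are built from successive extensions of twist-translates $t_a^*(-)\otimes L$ with $L\in\mathrm{Pic}^0(X)$, for which the original t-structure is insensitive by $\mathbf H$-invariance. First I would record that $F$ and $G$ are exact for the \emph{standard} t-structure on $D^b(X)$ (true in all our cases: $(-)\otimes V$ is exact since $V$ is locally free, $f^*$ and $f_*$ are exact since $f$ is finite flat), so they restrict to functors on $D^b(X)$ and one may freely pass between $D^b(X)$ and $D_{qcoh}(X)$; combined with the last bullet of Lemma~\ref{lem:tstructurecond}, $\DD^{\leq 0}$ and $\DD^{\geq 0}$ are well-defined subcategories of $D^b(X)$, closed under the appropriate shifts since $\DD'^{\bullet}$ is.

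The vanishing axiom $\mathrm{Hom}(\DD^{\leq 0},\DD^{\geq 1})=0$: given $\EE\in\DD^{\leq 0}$ and $\FF\in\DD^{\geq 1}$, I would use adjunction to write $\mathrm{Hom}(\EE,\FF)\hookrightarrow\mathrm{Hom}(GF(\EE),\FF)$ — more precisely, the unit $\EE\to GF(\EE)$ is split by the counit since $G$ is both left and right adjoint to $F$, so $\EE$ is a summand of $GF(\EE)$. Now $GF(\EE)$ is a finite extension of objects $t_a^*\EE\otimes L$, each of which lies in $\DD'^{\leq 0}$ by $\mathbf H$-invariance of $\DD'^{\bullet}$ applied to the fact that $F(\EE)\in\DD'^{\leq 0}$... here I need to be a little careful: $\mathbf H$-invariance of the target t-structure and exactness of $F$ give $F(t_a^*\EE\otimes L)\cong t_a^*F(\EE)\otimes L'\in\DD'^{\leq 0}$, hence each $t_a^*\EE\otimes L\in\DD^{\leq 0}$; but what I actually want is that $GF(\EE)\in\DD^{\leq 0}$, which follows because $F(GF(\EE))=FG(F(\EE))$ is a finite extension of twist-translates of $F(\EE)\in\DD'^{\leq 0}$, and $\DD'^{\leq 0}$ is extension-closed and $\mathbf H$-invariant. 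Symmetrically $FG(\FF)\in\DD^{\geq 1}$ and $\FF$ is a summand of $GF(\FF)\in\DD^{\geq 1}$. Then $\mathrm{Hom}_{D^b(X)}(\EE,\FF)$ is a summand of $\mathrm{Hom}(GF(\EE),\FF)\cong\mathrm{Hom}(F(\EE),F(\FF))$, and since $F(\EE)\in\DD'^{\leq 0}$, $F(\FF)\in\DD'^{\geq 1}$ this vanishes.

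For the truncation axiom, given $\EE\in D^b(X)$ I would apply the truncation triangle $A\to F(\EE)\to B$ for the t-structure $\DD'^{\bullet}$ on $D^b(Y)$, then hit it with $G$ to get $G(A)\to GF(\EE)\to G(B)$, and finally cone the unit $\EE\to GF(\EE)$ against the known decomposition of $GF$ into twist-translates of $\EE$; the point is that the twist-translate "correction terms" lie simultaneously in $\DD^{\leq 0}$ and can be absorbed, so one can extract a triangle $\EE^{\leq 0}\to\EE\to\EE^{\geq 1}$ with the two outer terms in the right subcategories. Cleaner, and what I would actually write: reduce to the classical gluing statement. This is exactly the content of \cite[Theorem 2.1.2]{Polishchuk2006ConstantFO} cited in the statement, whose hypotheses are precisely Lemmas~\ref{lem:compzero} and~\ref{lem:tstructurecond} together with $\mathbf H$-invariance; so the proof is to check those hypotheses (done in the two preceding lemmas) and invoke that theorem. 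The $\mathbf H$-invariance of the resulting t-structure is then immediate: $F(t_a^*\EE\otimes L)\cong t_a^*F(\EE)\otimes L'$ by exactness of $F$ and the projection formula / functoriality of pullback along translations, so $\EE\in\DD^{\leq 0}\Rightarrow t_a^*\EE\otimes L\in\DD^{\leq 0}$, and likewise for $\DD^{\geq 0}$. Finally, t-exactness of $F$ is built into the definition of $\DD^{\leq 0},\DD^{\geq 0}$, and t-exactness of $G$ follows formally since $G$ is both adjoint to the t-exact functor $F$. The main obstacle is the bookkeeping in the truncation axiom — making sure the unit/counit correction terms coming from $GF$, $FG$ genuinely lie in the correct pieces of the new t-structure — but this is exactly where extension-closedness of $\DD'^{\leq 0},\DD'^{\geq 0}$ together with the decomposition in Lemma~\ref{lem:compzero} does the work, which is why it has been isolated as a citable lemma.
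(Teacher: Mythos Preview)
Your proposal is correct and, once you settle on the ``cleaner'' version, coincides with the paper's proof: invoke \cite[Theorem 2.1.2]{Polishchuk2006ConstantFO} (whose hypotheses are supplied by Lemmas~\ref{lem:compzero} and~\ref{lem:tstructurecond}) for the t-structure and the t-exactness of $F$. Your argument for the t-exactness of $G$ via the biadjunction is slightly slicker than the paper's, which instead appeals directly to Lemma~\ref{lem:compzero} and $\mathbf H$-invariance to show $FG(\EE)\in\DD'^{\leq 0}$ (resp.\ $\DD'^{\geq 0}$) whenever $\EE$ is; both are fine.
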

\begin{proof}
The fact that $(\DD^{\leq 0}, \DD^{\geq 0})$ is a t-structure, and that $F$ is t-exact follows directly from~\cite[Theorem 2.1.2]{Polishchuk2006ConstantFO}. The fact that $G$ is t-exact follows from Lemma~\ref{lem:compzero}, and the assumption of $\mathbf{H}$-invariance.
\end{proof}
The following directly implies Proposition~\ref{prop:fstability} in the case of $X = Y = E^n$.
\begin{prop}[{\cite[Theorem 2.14]{Macr2007InducingSC},\ \cite{Polishchuk2006ConstantFO}}]\label{prop:inducedstab}
Let $\sigma' = (Z' ,\PP') \in \mathrm{Stab}(Y)$ be $\mathbf{H}$-invariant. Then $F^{-1}\sigma' = (Z, \PP)\in \mathrm{Stab}(X)$.
\end{prop}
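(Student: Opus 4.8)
The plan is to verify the axioms of a (pre-)stability condition for $\sigma = F^{-1}\sigma' = (Z,\PP)$ directly, leveraging the t-exactness of $F$ and $G$ established in the preceding corollary. First I would check that $\PP$ defines a slicing: since $F$ is t-exact for the t-structures with hearts $\PP(\varphi)$ and $\PP'(\varphi)$ built from the given phases, the subcategories $\PP(\varphi) = \{\EE \mid F(\EE) \in \PP'(\varphi)\}$ satisfy $\PP(\varphi+1) = \PP(\varphi)[1]$ (because $F$ commutes with shifts), the orthogonality $\mathrm{Hom}(\PP(\varphi_1),\PP(\varphi_2)) = 0$ for $\varphi_1 > \varphi_2$ (because $F$ is exact and faithful by the second bullet of Lemma~\ref{lem:tstructurecond}, so a nonzero map would push forward to a nonzero forbidden map downstairs), and the Harder--Narasimhan property (pull back the HN filtration of $F(\EE)$ and use that $F$ reflects the bounded derived category by the third bullet of Lemma~\ref{lem:tstructurecond} together with t-exactness to see the filtration quotients lie in $D^b(X)$). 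The compatibility $Z(\EE) = Z'(F(\EE)) = m(F(\EE))e^{i\pi\phi(F(\EE))}$ for semistable $\EE$ is immediate from the definition $Z = Z' \circ F$ and the fact that $\EE \in \PP(\varphi)$ iff $F(\EE) \in \PP'(\varphi)$.

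Next I would address the support property, which is the substantive point. Here the key input is Lemma~\ref{lem:compzero}: $GF(\EE)$ and $FG(\EE)$ are iterated extensions of objects $t_a^*\EE \otimes L$ with $L \in \mathrm{Pic}^0(X)$, and $\sigma'$ is assumed $\mathbf H$-invariant, so these twists do not change the $\sigma'$-mass or the numerical class in any way that matters. The strategy is the standard one from~\cite{Macr2007InducingSC}: given a constant $C'$ witnessing the support property for $\sigma'$ with respect to a norm $\|\cdot\|$ on $\Knum(Y)\otimes\mb R$, one transports it through the induced map on numerical Grothendieck groups $[F]\colon\Knum(X)\to\Knum(Y)$ (which is injective up to finite index here, or at least one works with a compatible norm), and one must control the mass of a $\sigma$-semistable object $\EE$ in terms of $\|[F](\EE)\|$. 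The adjunction unit/counit give maps relating $\EE$, $GF(\EE)$, and $FG(\EE)$; since $G\dashv F\dashv G$ and both are t-exact, $\sigma$-semistability of $\EE$ controls $\sigma'$-semistability of $F(\EE)$, and the support property for $\sigma'$ then bounds $|Z(\EE)| = |Z'(F(\EE))|$ from below by a multiple of $\|[F](\EE)\|$, which in turn bounds $\|\EE\|$ because $[F]$ is a lattice embedding of full rank (as $F$ is an equivalence onto its image, or because $\mathrm{End}(E)=\mb Z$ forces $[F]$ to be an isomorphism after $\otimes\mb Q$ by Proposition~\ref{prop:endomirror}).

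I would then invoke Proposition~\ref{prop:inducedstab} (citing~\cite{Macr2007InducingSC} and~\cite{Polishchuk2006ConstantFO}) as the black box that packages exactly this argument, noting that the hypotheses are met: $F$ admits both adjoints equal to $G$ (Lemma~\ref{lem:compzero}), $F$ and $G$ are t-exact (the corollary), $F$ is faithful and reflects boundedness (Lemma~\ref{lem:tstructurecond}), and $\sigma'$ is $\mathbf H$-invariant by hypothesis so the compositions $GF$, $FG$ built from $\mathbf H$-translates preserve $\sigma'$-mass. Finally I would remark that the resulting $\sigma$ is again $\mathbf H$-invariant — because $t_a^*$ and $(-)\otimes L$ commute with each of the functors $(-)\otimes V$, $f^*$, $f_*$ up to replacing $a,L$ by their images, and $\sigma'$ is $\mathbf H$-invariant — so that the construction can be iterated, which is what is needed to realize all of $\mathrm{Sp}(2n,\mb Q)$ in Proposition~\ref{prop:spendo}.

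\medskip

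The main obstacle I anticipate is the support property rather than the slicing axioms: one must be careful that the norm used for $\sigma$ is genuinely comparable to the pullback of the norm for $\sigma'$ under $[F]$, and that the semistable objects of $\sigma$ are mapped to objects whose $\sigma'$-HN factors have controlled mass. For $(-)\otimes V$ this is transparent since $F$ is an equivalence with explicit inverse $(-)\otimes V^\vee$ (up to a homogeneous twist), but for $f_*$ and $f^*$ one genuinely uses that $GF$ and $FG$ are extensions of $\mathbf H$-translates — the quantitative heart of the matter — and this is precisely where the $\mathbf H$-invariance hypothesis on $\sigma'$ is indispensable. Since all of this is carried out in~\cite{Polishchuk_2014} and~\cite{Macr2007InducingSC}, the proof reduces to checking the listed hypotheses, which we have done in Lemmas~\ref{lem:compzero} and~\ref{lem:tstructurecond} and the intervening corollary.
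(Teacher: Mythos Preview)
Your plan is essentially the paper's argument, and the ingredients you invoke (t-exactness of $F$ and $G$, Lemma~\ref{lem:compzero}, $\mathbf H$-invariance) are precisely the ones used. Two points of divergence are worth noting. First, the paper does not verify the slicing axioms one by one; it works through the heart $\AA=\PP(0,1]$ (already a bounded t-structure by the preceding corollary) and isolates a single clean lemma: $\EE\in\AA$ is $Z$-semistable if and only if $F(\EE)$ is $Z'$-semistable. The nontrivial direction ($\EE$ semistable $\Rightarrow$ $F(\EE)$ semistable) uses that $G$ is t-exact and that $GF(\EE)$ is an extension of $\mathbf H$-translates of $\EE$, hence $Z$-semistable of the same phase, so a $Z'$-destabilizer of $F(\EE)$ would push forward under $G$ to a $Z$-destabilizer of $GF(\EE)$. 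With this in hand the support property is a one-liner: take $Q(v)\coloneqq Q'(Fv)$, non-negative on semistables because $F$ preserves them, and negative definite on $\ker Z$ because $[F]$ is injective on $\Knum$. Your norm-and-mass discussion arrives at the same place but obscures this clean structure. Second, your orthogonality step cites Lemma~\ref{lem:tstructurecond} for faithfulness of $F$, but that lemma only gives conservativity ($F(\EE)\simeq 0\Rightarrow\EE\simeq 0$), which is strictly weaker; faithfulness happens to hold here (in characteristic zero $\EE$ is a direct summand of $GF(\EE)$ via trace) but needs its own justification, or can be avoided entirely by reading off Hom-vanishing from the induced t-structures as the paper does.
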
 
\begin{proof}

By \cite[Theorem 2.1.2]{Polishchuk2006ConstantFO}, $\AA \coloneqq \PP(0,1]$ is the heart of a bounded t-structure on $D^b(X)$. We note that the fact that $Z'$ is a stability function on $\AA' \coloneqq \PP'(0,1]$ immediately implies that $Z$ is a stability function on $\AA$. It suffices to show that the pair $(Z, \AA)$ satisfies the Harder--Narasimhan and the support property.

We claim that an object $\EE \in \AA$ is $Z$-semistable if and only if $F(\EE)$ is $Z'$-semistable in $\AA' \coloneqq \PP'(0,1]$. Assume that $A \xhookrightarrow{} \EE$ is a $Z$-destabilizing in $\AA$. As $F$ is $t$-exact and by definition of the central charge $Z$, it follows that $F(A) \xhookrightarrow{} F(\EE)$ is $Z'$-destabilizing, which is a contradiction. On the other hand, assume that $A \xhookrightarrow{} F(\EE)$ is $Z'$-destabilizing with $\EE$ being $Z$-semistable. As $G$ is $t$-exact, this induces an inclusion $G(A) \xhookrightarrow{} GF(\EE)$ which 
is $Z$-destabilizing again by definition of the central charge $Z$, and Lemma~\ref{lem:compzero}. By the assumption of $\mathbf{H}$-invariance and again by Lemma~\ref{lem:compzero}, the object $GF(\EE)$ is obtained by consecutive extensions of $Z$-semistable objects with the same slope, and hence $GF(\EE)$ must be $Z$-semistable giving a contradiction.

The existence of Harder-Narasimhan filtrations in $\AA$ with respect to $Z$ follows by the same argument as in~\cite[Theorem 2.14]{Macr2007InducingSC}. The support property follows by taking the quadratic form $Q(v) \coloneqq Q'(F(v))$. Indeed $Q(v) \geq 0$ for all $v \in \PP(\varphi)$ by the above paragraph. The fact that $Q$ is negative definite on $\mathrm{ker}(Z)$ follows by definition, unless $[F(\EE)] = 0$ with $[\EE] \neq 0$, which is impossible with our assumptions on $F$.
\end{proof}

\subsection{$\mathrm{Sp}(2n,\mathbb{Q})$ action on $\Knum(X)$}
In this section, we apply the results of Section~\ref{sec:endostab}, which implies Proposition~\ref{prop:fstability}, to deduce Proposition~\ref{prop:spendo}.

\begin{proof}[Proof of Proposition~\ref{prop:spendo}]
By Proposition~\ref{prop:endomirror} we know that every element of $N$ is induced by a functor of the form $\otimes V$ with $V$ with a semi-homogeneous bundle, and $J$ is induced by the duality functor $\Phi$. Moreover, the elements of $H$ induced by functors of the form $f_*$ with $f$ an isogeny generate $H$.
By Proposition~\ref{prop:fstability} all these endofunctors act on the space of stability conditions.
Finally, by composing the functors and the fact that $N$, $H$, and $J$ together generate $\mathrm{Sp}(2n,\mathbb{Q})$, the proposition follows.
\end{proof}

\part{A-model: An extension of the Siegel domain}\label{part:amodel}
\section{A fundamental domain in \texorpdfstring{$\mathcal{U}^+(3)$}{U+(3)}}

Throughout this section, all Lie groups are over $\mb R$ by default, e.g. $\mathrm{Sp}(2n)=\mathrm{Sp}(2n,\mb R)$.
Let $V$ be a symplectic vector space of dimension $\dim V=2n$.
The action of $\mathrm{Sp}(V)$ on ${\bigwedge}^nV^*$ and the action of $\mathrm{GL}^+(2)$ on $\mb C$ combine to give an action of $\mathrm{Sp}(V)\times \mathrm{GL}^+(2)$ on $\mb C\otimes{\bigwedge}^nV^*$ which restricts to an action on $\mc U^+(V)$.
If $n=1$ or $2$, then $\mc U^+(V)$ consists of a single orbit, but if $n=3$ it turns out there is a 3-dimensional space of orbits. The aim of this section is to describe it (Proposition~\ref{prop:u3normalform}).

A general primitive form in $\mb C\otimes{\bigwedge}^3\mb R^6$ can be written as
\begin{align}\label{form14comps}
\begin{split}
\Omega &= \alpha dz_{123} \\
&+\beta_1dz_{\ol{1}23}+\beta_2dz_{1\ol{2}3}+\beta_3dz_{12\ol{3}}+\beta_{12}(dz_{\ol{2}23}+dz_{1\ol{1}3})+\beta_{13}(dz_{\ol{3}23}+dz_{12\ol{1}})+\beta_{23}(dz_{1\ol{3}3}+dz_{12\ol{2}}) \\
&+\gamma_1dz_{1\ol{23}}+\gamma_2dz_{\ol{1}2\ol{3}}+\gamma_3dz_{\ol{12}3}+\gamma_{12}(dz_{2\ol{23}}+dz_{\ol{1}1\ol{3}})+\gamma_{13}(dz_{3\ol{23}}+dz_{\ol{12}1})+\gamma_{23}(dz_{\ol{1}3\ol{3}}+dz_{\ol{12}2}) \\
&+\delta dz_{\ol{123}}
\end{split}
\end{align}
where the 14 coefficients are in $\mb C$ and $dz_{123}=dz_1\wedge dz_2\wedge dz_3$, $dz_{\ol{1}23}=d\bar{z}_1\wedge dz_2\wedge dz_3$, etc.

\begin{lemma}\label{lem:removeoffdiagonal}
If $\Omega\in\mc U^+(3)$, then after a suitable change of coordinates in $\mathrm{Sp}(6)$, $\beta_{12}=\beta_{13}=\beta_{23}=\gamma_{12}=\gamma_{13}=\gamma_{23}=0$ in the expansion~\eqref{form14comps}, i.e. 
\begin{equation}\label{form8comp}
\Omega = \alpha dz_{123}+\beta_1dz_{\ol{1}23}+\beta_2dz_{1\ol{2}3}+\beta_3dz_{12\ol{3}}+\gamma_1dz_{1\ol{23}}+\gamma_2dz_{\ol{1}2\ol{3}}+\gamma_3dz_{\ol{12}3}+\delta dz_{\ol{123}}.
\end{equation}
\end{lemma}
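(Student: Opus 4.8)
The plan is to produce the desired normal form by acting with a carefully chosen block-upper-triangular symplectic transformation, i.e. an element of the subgroup
\[
N=\left\{\begin{pmatrix}1 & A\\ 0 & 1\end{pmatrix}\ \middle\vert\ A=A^{T}\right\}\subset\mathrm{Sp}(6),
\]
since such elements fix the holomorphic volume form $dz_{123}$ up to scalar (they act ``lower-triangularly'' on the $dz_i$ and only add multiples of $dz_i$ to $d\bar z_j$), and hence have a controlled, essentially affine, effect on the six ``off-diagonal'' coefficients $\beta_{ij},\gamma_{ij}$ that we wish to annihilate. Concretely, I would first fix a compatible complex structure and write a general element of $N$ as $d\bar z_j\mapsto d\bar z_j+\sum_k a_{jk}dz_k$ with $(a_{jk})$ a symmetric complex matrix of parameters (the $B$-field plus metric perturbation), keeping $dz_i\mapsto dz_i$. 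Substituting into~\eqref{form14comps} and collecting terms, the transformed coefficients $\beta_{ij}',\gamma_{ij}'$ are each of the form (old coefficient) $+$ (a linear, in fact affine, expression in the $a_{jk}$ with coefficients built from $\alpha,\beta_1,\beta_2,\beta_3$). The key point is that the ``leading'' contribution to $\beta_{ij}'$ is $\beta_{ij}+\alpha\cdot a_{ij}$ (a multiple of $\alpha$ times the matching entry of $A$), so as long as $\alpha\neq 0$ we can solve for the six real-and-imaginary unknowns in $A$ to kill all six of $\beta_{12},\beta_{13},\beta_{23},\gamma_{12},\gamma_{13},\gamma_{23}$ simultaneously.

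The step that needs genuine input, rather than linear algebra, is the claim that $\alpha\neq 0$ for any $\Omega\in\mc U^+(3)$; this is where the defining property of $\mc U^+$ enters. If $\alpha=0$, then $\Omega$ restricted to the Lagrangian subspace $L=\langle \partial_{z_1},\partial_{z_2},\partial_{z_3}\rangle$ (the ``holomorphic'' Lagrangian of the chosen complex structure) vanishes, contradicting the condition $\Omega|_L\neq 0$ for every Lagrangian $L$ in the definition of $\mc U(V)$. More precisely, pairing $\Omega$ with the decomposable $n$-vector $\partial_{z_1}\wedge\partial_{z_2}\wedge\partial_{z_3}$ picks out exactly the $dz_{123}$-coefficient $\alpha$, so $\alpha=\Omega(\partial_{z_1}\wedge\partial_{z_2}\wedge\partial_{z_3})$; and $L$ is Lagrangian because it is the $+i$-eigenspace of a compatible complex structure (equivalently, $\omega|_L=0$ by the compatibility), so nondegeneracy of $\Omega$ on Lagrangians forces $\alpha\neq 0$. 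This should be a short argument once the identification of $\alpha$ as a restriction-to-$L$ is made explicit.

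With $\alpha\neq 0$ in hand, I would then carry out the substitution explicitly enough to exhibit the affine system for $A$ and note it is triangular/solvable: even if the ``lower-order'' corrections to $\beta_{ij}'$ involve the $a_{k\ell}$ nonlinearly or couple the equations, one can solve iteratively because the $\alpha a_{ij}$ term dominates — or, more cleanly, observe that the map $A\mapsto(\beta_{ij}',\gamma_{ij}')$ from symmetric complex $3\times 3$ matrices to $\mathbb{C}^6$ has invertible linear part (namely multiplication by $\alpha$, up to reindexing the symmetric-matrix entries against the pairs $\{i,j\}$) and hence is a bijection, so a unique solution exists. The main obstacle, as noted, is pinning down $\alpha\neq 0$ and bookkeeping the action of $N$ on the fourteen components correctly; once those are settled the lemma follows by setting $A$ equal to the unique solution and reading off~\eqref{form8comp}. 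One should also double-check that the remaining coefficients $\beta_i,\gamma_i,\delta$ may get modified by this transformation — which is fine, the lemma only asserts that the six off-diagonal ones can be made to vanish, and we relabel the surviving eight coefficients accordingly.
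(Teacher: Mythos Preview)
There is a genuine gap: the transformation you describe, $dz_i\mapsto dz_i$ and $d\bar z_j\mapsto d\bar z_j+\sum_k a_{jk}dz_k$, is \emph{not} a real linear map of $V=\mb R^6$ unless $a=0$. Any real $T\in\mathrm{GL}(V)$ commutes with complex conjugation on $V\otimes\mb C$, so $T(dz_i)=dz_i$ forces $T(d\bar z_i)=d\bar z_i$. Concretely, an element of the real group $N$ (say $x_1\mapsto x_1+ay_1$) sends $dz_1\mapsto (1-\tfrac{ia}{2})dz_1+\tfrac{ia}{2}d\bar z_1$, so $dz_{123}$ is not fixed up to scalar and your ``lower-triangular'' picture collapses. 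This is also visible from a parameter count: the real subgroup $N\subset\mathrm{Sp}(6,\mb R)$ has dimension $6$, but you are trying to kill six \emph{complex} coefficients, i.e.\ twelve real conditions. Your ``symmetric complex matrix'' of parameters simply does not live in $\mathrm{Sp}(6,\mb R)$.

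Your argument for $\alpha\neq 0$ is also not right as stated: the span of $\partial_{z_1},\partial_{z_2},\partial_{z_3}$ is a complex Lagrangian in $V\otimes\mb C$, not a real Lagrangian $L\subset V$, and the definition of $\mc U(V)$ only tests real Lagrangians. (The conclusion $\alpha\neq 0$ is in fact true and follows from the winding-number argument of Lemma~\ref{lem:rootsindisk}, which works for the general $14$-coefficient expansion since the $\beta_{ij},\gamma_{ij}$ terms vanish on $\partial_{x_1}\wedge\partial_{x_2}\wedge\partial_{x_3}$; but that is a different argument.) The paper's proof goes a completely different route: it invokes \cite[Corollary~2.18]{haiden20}, which already produces an $\mathrm{Sp}(6)$-normal form
\[
\Omega=\mathrm{Re}(c_1dz_{123})+i\,\mathrm{Im}\bigl(c_2(\lambda_1dx_1+idy_1)\wedge(\lambda_2dx_2+idy_2)\wedge(\lambda_3dx_3+idy_3)\bigr),
\]
and then simply observes that $\lambda dx+idy$ is a linear combination of $dz$ and $d\bar z$, so expanding gives only the eight ``diagonal'' monomials.
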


\begin{proof}
According to~\cite[Corollary 2.18]{haiden20} there is a change of coordinates in $\mathrm{Sp}(6)$ such that
\[
\Omega=\mathrm{Re}(c_1dz_{123})+i\mathrm{Im}(c_2(\lambda_1dx_1+idy_1)\wedge(\lambda_2dx_2+idy_2)\wedge(\lambda_3dx_3+idy_3)).
\]
Using that $\lambda dx+idy$ is a linear combination of $dz$ and $d\bar{z}$, and expanding out, one sees that this has only terms involving $dz_{123}$, $dz_{\ol{1}23}$, $dz_{1\ol{2}3}$, $dz_{12\ol{3}}$, and their complex conjugates.
\end{proof}

The following lemma, or rather its contrapositive, provides a simple sufficient criterion for $\Omega\notin \mc U^+(3)$.

\begin{lemma}\label{lem:rootsindisk}
Suppose $\Omega\in\mc U^+(3)$ is of the form~\eqref{form8comp}. 
Then $\alpha\neq 0$ and the cubic polynomial
\[
P(z)\coloneqq\alpha z^3+(\beta_1+\beta_2+\beta_3)z^2+(\gamma_1+\gamma_2+\gamma_3)z+\delta
\]
has all roots inside the open unit disk.
In particular, $|\delta/\alpha|<1$.
\end{lemma}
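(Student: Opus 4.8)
The plan is to test $\Omega$ against an explicit one-parameter family of Lagrangian subspaces and read off the claimed conditions from the requirement $\Omega|_L\neq 0$. Recall that for $\Omega\in\mc U^+(3)$ of the form~\eqref{form8comp}, the defining property is that $\Omega$ restricts nontrivially to every Lagrangian subspace $L\subset\mb R^6$. So I want a family of Lagrangians whose restrictions produce the polynomial $P(z)$ (up to a scalar), so that a root of $P$ inside or on the unit circle yields a Lagrangian on which $\Omega$ vanishes.

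First I would fix the standard symplectic basis and write $dz_k=dx_k+\tau\,dy_k$ in the conventions of~\cite{haiden20}, so that a natural family of Lagrangians is the graph-type subspaces. Concretely, for $w\in\mb C$ with $|w|=1$ (equivalently, $w=e^{i\theta}$ for the appropriate real structure), consider the \emph{diagonal} Lagrangian $L_w$ spanned by vectors of the form $e_k+w f_k$ (suitably normalized so that $L_w$ is isotropic — here one uses that the three factors are ``parallel'', which forces the same $w$ in each slot). On such an $L_w$, each $dz_k$ restricts to a single 1-form, and $d\bar z_k$ restricts to a scalar multiple of it; tracking the scalars, one finds $\Omega|_{L_w}$ equals a nonzero universal constant times a cubic expression in $w$ whose coefficients are exactly $\alpha$, $\beta_1+\beta_2+\beta_3$, $\gamma_1+\gamma_2+\gamma_3$, $\delta$ — i.e. $P$ evaluated at $w$ (or at $1/w$, depending on orientation conventions; either way the roots are detected). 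The key point is that only the \emph{sums} $\beta_1+\beta_2+\beta_3$ and $\gamma_1+\gamma_2+\gamma_3$ survive because the ``mixed'' off-diagonal terms have already been eliminated by Lemma~\ref{lem:removeoffdiagonal}, and on the diagonal Lagrangian the three factors contribute symmetrically.

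From this computation the conclusion is immediate: if $P(w_0)=0$ for some $w_0$ with $|w_0|\le 1$, then there is a Lagrangian subspace $L$ (namely $L_{w_0}$, or a small perturbation of it if $|w_0|=1$ lands exactly on a degenerate locus — here one can instead take $|w_0|$ slightly less than $1$ and use openness, or argue directly) on which $\Omega|_L=0$, contradicting $\Omega\in\mc U^+(3)$. Hence all roots of $P$ lie strictly inside the open unit disk. Taking $\alpha=0$ would make $P$ have degree $\le 2$, which I handle as the limiting case: either $P$ has a root ``at infinity'' (giving a Lagrangian of a slightly different, ``vertical'' type on which $\Omega$ vanishes), or one checks directly that $\alpha=0$ forces $\Omega$ to vanish on the Lagrangian spanned by the $e_k$'s (the ``real'' Lagrangian $\{y_1=y_2=y_3=0\}$), since the coefficient of $dx_1\wedge dx_2\wedge dx_3$ in $\Omega$ is a nonzero multiple of $\alpha$. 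So $\alpha\neq 0$. Finally $|\delta/\alpha|<1$ follows since $\delta/\alpha$ is $\pm$ the product of the three roots of $P$.

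The main obstacle I anticipate is purely bookkeeping: getting the normalization of the diagonal Lagrangians right so that the restriction really is the asserted cubic $P(z)$ (and not some Möbius transform of it), and correctly handling the boundary case $|w_0|=1$, where the naive Lagrangian $L_{w_0}$ may fail to be a genuine real $3$-plane or may be where $\Omega$ happens not to vanish for trivial reasons — this is where a density/continuity argument, using that $\mc U(V)$ is open, will be needed.
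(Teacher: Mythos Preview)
Your initial idea---evaluating $\Omega$ on the diagonal family of Lagrangians $L_\theta$ obtained by rotating the real Lagrangian $\{y=0\}$ by $e^{i\theta}$---is exactly what the paper does: setting $w=\partial_{x_1}\wedge\partial_{x_2}\wedge\partial_{x_3}$ one computes $\Omega(e^{i\theta}w)=e^{-3i\theta}P(e^{2i\theta})$, so $\Omega\in\mc U^+(3)$ forces $P$ to have no roots \emph{on the unit circle}. But your proposal goes wrong at the next step.

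The family $L_w$ only makes sense as a real Lagrangian for $|w|=1$; for $|w_0|<1$ the span of $e_k+w_0 f_k$ is not a real $3$-plane in $\mb R^6$. So the Lagrangian test detects roots on $|z|=1$ only, and your sentence ``if $P(w_0)=0$ for some $w_0$ with $|w_0|\le 1$, then there is a Lagrangian $L_{w_0}$\ldots'' is simply false when $|w_0|<1$. (Note also that the lemma asserts all roots lie \emph{inside} the disk, so the contrapositive concerns roots with $|w_0|\ge 1$, not $\le 1$.) What you are missing is the topological step the paper supplies: once $P$ is nonvanishing on $|z|=1$, the winding number of $\theta\mapsto P(e^{i\theta})$ about $0$ is locally constant on $\mc U^+(3)$; at the basepoint $\Omega=dz_{123}$ one has $P(z)=z^3$ with winding number $3$, so by connectedness of $\mc U^+(3)$ the winding number is always $3$. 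By the argument principle this means $P$ has exactly three zeros (with multiplicity) inside the open disk, hence all of them, and in particular $\deg P=3$, i.e.\ $\alpha\neq 0$.

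Your direct argument for $\alpha\neq 0$ is also incorrect: the restriction of $\Omega$ to the real Lagrangian $\{y=0\}$ is $P(1)\,dx_1\wedge dx_2\wedge dx_3$ (each $dz_k$ and each $d\bar z_k$ restricts to $dx_k$), not a multiple of $\alpha$ alone; there is no real Lagrangian that isolates the $(3,0)$-component. The conclusion $\alpha\neq 0$ genuinely requires the connectedness argument above (or an equivalent degree count).
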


\begin{proof}
This follows from the proof of~\cite[Proposition 2.5]{haiden20}. Let $w=\frac{\partial}{\partial x_1}\wedge\frac{\partial}{\partial x_2}\wedge\frac{\partial}{\partial x_3}\in {\bigwedge}^3\mb R^6$.
Then
\begin{align*}
\Omega(e^{i\theta}w)&=e^{3i\theta}\Omega^{3,0}(w)+e^{i\theta}\Omega^{2,1}(w)+e^{-i\theta}\Omega^{1,2}(w)+e^{-3i\theta}\Omega^{0,3}(w)\\
&=e^{3i\theta}\alpha+e^{i\theta}(\beta_1+\beta_2+\beta_3)+e^{-i\theta}(\gamma_1+\gamma_2+\gamma_3)+e^{-3i\theta}\delta\\
&=e^{-3i\theta}P(e^{2i\theta})
\end{align*}
Since $\Omega\in\mc U^+(3)$ and $e^{i\theta}w$ defines a path of Lagrangian subspaces in $\mb R^6$, $\Omega(e^{i\theta}w)\neq 0$ for $\theta\in\mb R$, thus $P$ has no roots on the unit circle. Also, for $\Omega=dz_{123}$, all roots are clearly inside the unit disk, so the conclusion follows from connectedness of $\mc U^+(3)$ and the fact that we can define $P$ for general $\Omega\in\mc U^+(3)$ with coefficients $\Omega^{k,3-k}(w)$ as above.
\end{proof}

\begin{lemma}\label{lem:normalize}
If $\Omega\in\mc U^+(3)$, then after a suitable change of coordinates in $\mathrm{Sp}(6)$ and multiplication by an element of $\mb C^\times$, we obtain $\Omega$ in the form~\eqref{form8comp} with $\alpha=1$ and $\gamma_1,\gamma_2,\gamma_3\in\mb [0,+\infty)$.
\end{lemma}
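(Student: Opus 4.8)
The plan is to take as starting point Lemma~\ref{lem:removeoffdiagonal}, which provides a change of coordinates in $\mathrm{Sp}(6)$ bringing $\Omega$ into the form~\eqref{form8comp}, and then to exploit the ``compact torus'' residual symmetry of this form together with a $\mb C^\times$-rescaling. By Lemma~\ref{lem:rootsindisk} we have $\alpha\neq 0$, so after multiplying $\Omega$ by $\alpha^{-1}\in\mb C^\times$ we may assume $\alpha=1$, still in the form~\eqref{form8comp}.

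Next I would consider the subgroup $\mathrm{SO}(2)^3\subset\mathrm{Sp}(2,\mb R)^3\subset\mathrm{Sp}(6,\mb R)$, the product of the maximal compact subgroups of the three symplectic block factors acting on the three $\mb C$-factors of $\mb R^6=\mb C^3$. The $i$-th circle acts on the $i$-th copy of $\mb C$ by multiplication by $e^{i\phi_i}$, hence scales $dz_i$ and $d\bar z_i$ by $e^{i\phi_i}$ and $e^{-i\phi_i}$ respectively while fixing all $dz_j, d\bar z_j$ with $j\neq i$. Consequently $\mathrm{SO}(2)^3$ multiplies each wedge monomial in $dz_1,d\bar z_1,\dots$ by a phase, so it preserves the subspace on which the twelve off-diagonal coefficients of~\eqref{form14comps} vanish, i.e.\ the form~\eqref{form8comp}, and acts on the eight surviving coefficients by phase rotations. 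A short computation shows that $(\phi_1,\phi_2,\phi_3)$ multiplies $\alpha$ by $e^{i(\phi_1+\phi_2+\phi_3)}$ and multiplies $\gamma_1,\gamma_2,\gamma_3$ by $e^{i(\phi_1-\phi_2-\phi_3)}$, $e^{i(-\phi_1+\phi_2-\phi_3)}$, $e^{i(-\phi_1-\phi_2+\phi_3)}$ respectively (while permuting the $\beta_i$ up to phase and scaling $\delta$, which we do not need). Following this transformation by the rescaling by $e^{-i(\phi_1+\phi_2+\phi_3)}\in\mb C^\times$ restores $\alpha=1$ and multiplies each $\gamma_i$ by $e^{-2i(\phi_j+\phi_k)}$, where $\{i,j,k\}=\{1,2,3\}$.

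It then remains to choose $(\phi_1,\phi_2,\phi_3)$ making each nonzero $\gamma_i$ a nonnegative real; a vanishing $\gamma_i$ is already in $[0,+\infty)$, so this reduces to solving $2(\phi_j+\phi_k)\equiv\arg\gamma_i\pmod{2\pi}$ over the indices $i$ with $\gamma_i\neq 0$. Since the matrix $\left(\begin{smallmatrix}0&1&1\\1&0&1\\1&1&0\end{smallmatrix}\right)$ has determinant $2$, any set of its rows is linearly independent and the relevant subsystem is solvable in $(\mb R/2\pi\mb Z)^3$; this produces $\Omega$ in the form~\eqref{form8comp} with $\alpha=1$ and $\gamma_1,\gamma_2,\gamma_3\in[0,+\infty)$, as required. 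There is no genuine obstacle here: the one point that has to be gotten right is the bookkeeping of how $\mathrm{SO}(2)^3$ and the $\mb C^\times$-rescaling act on the coefficients in~\eqref{form8comp}, everything else being linear algebra over an explicitly invertible $3\times 3$ matrix.
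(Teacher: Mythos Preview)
Your proof is correct and follows essentially the same approach as the paper: reduce to the form~\eqref{form8comp} via Lemma~\ref{lem:removeoffdiagonal}, use Lemma~\ref{lem:rootsindisk} to ensure $\alpha\neq 0$, and then exploit the diagonal torus $U(1)^3\subset\mathrm{Sp}(6)$ together with a $\mb C^\times$-rescaling to adjust the phases of $\alpha,\gamma_1,\gamma_2,\gamma_3$. The only cosmetic difference is that the paper packages the torus action and the unit-modulus rescaling together as a $U(1)^4$-action and invokes invertibility of the $4\times 4$ weight matrix on the four coefficients $\alpha,\gamma_1,\gamma_2,\gamma_3$ directly, whereas you first normalize $\alpha=1$ and then solve the residual $3\times 3$ system for the $\gamma_i$; the underlying linear algebra is identical.
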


\begin{proof}
We may assume that $\Omega$ is already of the form~\eqref{form8comp}.
Further dividing $\Omega$ by $|\alpha|$, we may assume $|\alpha|=1$.
Acting on $\Omega$ by some element in the subgroup $U(1)^3\times U(1)\subset \mathrm{Sp}(6)\times\mb C^\times$ we can achieve $\alpha=1$ and $\gamma_1,\gamma_2,\gamma_3\in\mb [0,+\infty)$ since $U(1)^4$ acts on the lines generated by $dz_{123}$, $dz_{1\ol{23}}$, $dz_{\ol{1}2\ol{3}}$, $dz_{\ol{12}3}$ with weights
\[
\begin{pmatrix}
1 & 1 & 1 & 1 \\
1 & -1 & -1 & 1 \\
-1 & 1 & -1 & 1 \\
-1 & -1 & 1 & 1 
\end{pmatrix}
\]
which is an invertible matrix.
\end{proof}

In the following we will make use of real geometric invariant theory as developed in~\cite{richardson_slodowy} and~\cite{bohm_lafuente21}. All the statements we need can be found in~\cite[Theorem 1.1]{bohm_lafuente21}.

\begin{lemma}\label{lem:orbitsclosed}
All $\mathrm{Sp}(2n)\times\mathrm{SL}(2)$-orbits in $\mc U(n)$ are closed.
\end{lemma}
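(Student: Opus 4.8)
The plan is to use the Hilbert--Mumford criterion for real reductive group actions, as formulated in \cite[Theorem 1.1]{bohm_lafuente21}: an orbit $G\cdot v$ fails to be closed precisely when there is a one-parameter subgroup $\lambda\colon\mb R^\times\to G$ (equivalently, an element of the Lie algebra acting semisimply with real eigenvalues) such that $\lim_{t\to 0}\lambda(t)\cdot v$ exists but does not lie in $G\cdot v$. So I would argue by contradiction: suppose $\Omega\in\mc U(n)$ has non-closed orbit, pick such a degenerating one-parameter subgroup $\lambda$ of $\mathrm{Sp}(2n)\times\mathrm{SL}(2)$, and derive a contradiction with the defining condition of $\mc U(n)$, namely that $\Omega|_L\neq 0$ for every Lagrangian $L\subset V$.

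First I would reduce to a normal form for $\lambda$. Up to conjugating $\Omega$ within $G$ (which does not affect closedness of the orbit), a one-parameter subgroup of $\mathrm{Sp}(2n)\times\mathrm{SL}(2)$ can be diagonalized: on the $\mathrm{SL}(2)$ factor it acts on $\mb C$ with some weight, and on the $\mathrm{Sp}(2n)$ factor it can be taken to lie in a maximal $\mb R$-split torus, so there is a symplectic basis $e_1,\dots,e_n,f_1,\dots,f_n$ (with $\omega(e_i,f_j)=\delta_{ij}$) in which $\lambda(t)$ scales $e_i$ by $t^{a_i}$ and $f_i$ by $t^{-a_i}$ for integers $a_i$, and scales the overall form by an extra $t^{c}$ from the $\mathrm{GL}^+(2)$-direction. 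Decompose $\Omega=\sum_I \Omega_I$ into weight components under $\lambda$, where $I$ ranges over multi-indices selecting, for each coordinate plane, either the $dx_i$-type or $dy_i$-type dual basis vector; the weight of $\Omega_I$ is $c+\sum_{i\in I^+}(-a_i)+\sum_{i\in I^-}a_i$ or similar. The limit $\lim_{t\to 0}\lambda(t)\cdot\Omega$ exists iff all nonzero weight components have weight $\ge 0$, and the limit is then the sum of the weight-zero components, call it $\Omega_0$; non-closedness forces $\Omega_0$ to genuinely differ from $\Omega$, so some component of strictly positive weight is present.

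The heart of the argument is then to produce a Lagrangian $L$ on which $\Omega_0$ vanishes, contradicting $\Omega_0\in\mc U(n)$ (note $\mc U(n)$ is closed under the $G$-action and the limit $\Omega_0$ automatically lies in $\overline{G\cdot\Omega}$, so if we knew $\mc U(n)$ were closed we would already be done --- but that is essentially what we are proving, so instead we must show directly that the degeneration violates the Lagrangian non-vanishing). The key observation is that the weight-zero part $\Omega_0$ is invariant under the whole torus $\lambda(\mb R^\times)$, hence its restriction to any $\lambda$-invariant Lagrangian is unconstrained, whereas the missing positive-weight terms are exactly those needed to make $\Omega$ non-degenerate against the "extremal" Lagrangians spanned by the $e_i$'s (those with the most negative $\lambda$-weight). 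Concretely, if we order so that $a_1\ge\dots\ge a_n>0\ge$ (rest), the Lagrangian $L_0=\langle e_1,\dots,e_k,f_{k+1},\dots,f_n\rangle$ for appropriate $k$ sees only the lowest-weight piece of $\Omega$, and $\Omega_0|_{L_0}$ picks up only the weight-zero contribution there, which I claim can be made to vanish precisely because the degeneration has stripped off the terms that were forced to be nonzero. Making this precise is the main obstacle: one has to track which weight component of $\Omega$ restricts nontrivially to which Lagrangian and check that $\Omega_0$, having lost the positive-weight components, must vanish on at least one Lagrangian in the family $e^{i\theta}$-rotating $L_0$ --- essentially a degree/argument-principle count of the type already used in the proof of Lemma~\ref{lem:rootsindisk}. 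Alternatively, and perhaps more cleanly, one can invoke the characterization from \cite{haiden20} that $\mc U^+(V)$ is a single $\mathrm{Sp}(V)\times\mathrm{GL}^+(2)$-orbit for $n=1,2$ and, for general $n$, that membership in $\mc U(n)$ is an open condition detected by a continuous non-vanishing invariant; since $\Omega$ and its limit $\Omega_0$ lie in the same fiber of the $\mathrm{Sp}(2n)\times\mathrm{SL}(2)$-moment-map stratification, boundedness of the relevant invariant along the degeneration forces $\Omega_0\in G\cdot\Omega$, contradicting non-closedness. I would write up the first, hands-on version, using the explicit weight bookkeeping from \eqref{form14comps}, as it parallels the rest of this section.
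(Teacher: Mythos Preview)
Your overall framework is right and matches the paper: invoke the real Hilbert--Mumford criterion from \cite{bohm_lafuente21}, conjugate the one-parameter subgroup to diagonal form, and analyze weights. Where you go astray is the target of the contradiction. You repeatedly aim to show that the \emph{limit} $\Omega_0$ vanishes on some Lagrangian, ``contradicting $\Omega_0\in\mc U(n)$''. But $\Omega_0\in\mc U(n)$ is never assumed and need not hold; $\mc U(n)$ is open, so the limit of a degenerating family can perfectly well land outside it without contradicting anything. Exhibiting a Lagrangian that kills $\Omega_0$ tells you nothing about $\Omega$ or about closedness of its orbit. You flag this circularity yourself, but your proposed fix (``show directly that the degeneration violates the Lagrangian non-vanishing'') is still phrased for $\Omega_0$, and your alternative involving a ``continuous non-vanishing invariant'' or moment-map strata is too vague to carry weight --- no such invariant is available for $n\geq 3$.

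The paper runs the logic the other way. It evaluates $\Omega_t=\Omega\exp(tX)$ on two concrete Lagrangian test vectors
\[
v=\tfrac{\partial}{\partial x_1}\wedge\cdots\wedge\tfrac{\partial}{\partial x_n},\qquad
w=\tfrac{\partial}{\partial x_1}\wedge\cdots\wedge\tfrac{\partial}{\partial x_{n-1}}\wedge\tfrac{\partial}{\partial y_n},
\]
and uses that $a\coloneqq\Omega(v)$ and $b\coloneqq\Omega(w)$ are nonzero --- which \emph{is} given, since $\Omega\in\mc U(n)$. Existence of the limit forces the positively-weighted parts of $\Omega_t(v)$ and $\Omega_t(w)$ to vanish; chasing the inequalities among the diagonal entries $\lambda_i,\mu$ shows this is impossible in almost all cases (so the limit does not exist). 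In the residual case one argues that if $\mathrm{Re}(b)=0$ then both $a,b$ are purely imaginary, whence $\Omega$ itself vanishes on some member of the Lagrangian pencil $\cos\theta\,v+\sin\theta\,w$ --- contradicting $\Omega\in\mc U(n)$. Thus the contradiction is always with the hypothesis on $\Omega$, never with an unestablished property of the limit. Your weight bookkeeping and choice of extremal Lagrangian $L_0$ are pointed in the right direction, but you need to turn them against $\Omega$, not $\Omega_0$.
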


\begin{proof}
This is a strengthening of~\cite[Proposition 2.14]{haiden20} and the strategy of the proof is similar.
It suffices to check the Hilbert--Mumford criterion, i.e. closedness under the action of 1-parameter subgroups.
More precisely, let $G\coloneqq\mathrm{Sp}(2n)\times\mathrm{SL}(2)$ and $\mk g\coloneqq\mk{sp}(2n)\oplus\mk{sl}(2)$ be its Lie algebra. 
Then $\mk g$ has a Cartan decomposition $\mk g=\mk k\oplus\mk p$ where $\mk k=\mk u(n)\oplus \mk u(1)$ is the Lie algebra of the maximal compact subgroup $U(n)\times U(1)$, and $\mk p$ contains the symmetric matrices in $\mk g$.
For given $\Omega\in\mc U(n)$ and $0\neq X\in\mk p$ we need to show that if
\begin{equation}\label{eq:1grouplim}
\lim_{t\to+\infty}\Omega\exp(tX)
\end{equation}
exists, then it is contained in the same orbit.
After conjugating by some element of $G$, we may assume that $X$ is diagonal of the form
\[
\left(\mathrm{diag}(\lambda_1,\ldots,\lambda_n,-\lambda_1,\ldots,-\lambda_n),\mathrm{diag}(\mu,-\mu)\right)
\]
with $\lambda_1\geq  \lambda_2\geq \ldots\geq\lambda_n\geq 0$ and $\mu\geq 0$.

Let
\[
v\coloneqq\frac{\partial}{\partial x_1}\wedge\ldots\wedge \frac{\partial}{\partial x_n}, \qquad w\coloneqq\frac{\partial}{\partial x_1}\wedge\ldots\wedge \frac{\partial}{\partial x_{n-1}}\wedge \frac{\partial}{\partial y_n}
\]
then $a\coloneqq\Omega(v)$ and $b\coloneqq\Omega(w)$ are non-zero since $\Omega\in \mc U(n)$.
If $\Omega_t\coloneqq\Omega\exp(tX)$ then
\begin{align*}
\Omega_t(v)&=e^{(\lambda_1+\ldots+\lambda_n+\mu)t}\mathrm{Re}(a)+ie^{(\lambda_1+\ldots+\lambda_n-\mu)t}\mathrm{Im}(a) \\
\Omega_t(w)&=e^{(\lambda_1+\ldots+\lambda_{n-1}-\lambda_n+\mu)t}\mathrm{Re}(b)+ie^{(\lambda_1+\ldots+\lambda_{n-1}-\lambda_n-\mu)t}\mathrm{Im}(b).
\end{align*}
If $\mathrm{Re}(a)\neq 0$, then since $\lambda_1+\ldots+\lambda_n+\mu>0$ the limit~\eqref{eq:1grouplim} does not exist.
Assume then that $\mathrm{Re}(a)=0$ and thus $\mathrm{Im}(a)\neq 0$.
If $\lambda_1+\ldots+\lambda_n>\mu$, then again the limit does not exist, so consider the case $\lambda_1+\ldots+\lambda_n\leq\mu$.
We then have 
\[
\lambda_1+\ldots+\lambda_{n-1}-\lambda_n+\mu\geq 2\left(\lambda_1+\ldots+\lambda_{n-1}\right)
\]
which implies that the left-hand side of the inequality is positive unless $n=1$ and $\lambda_1=\mu$, in which case
\[
\lim_{t\to+\infty}\Omega_t=adx_1+\mathrm{Re}(b)dy_1
\]
which is in $\mc U(1)$ by the assumption $\Omega\in\mc U(1)$.
We may thus assume $\lambda_1+\ldots+\lambda_{n-1}-\lambda_n+\mu>0$.
If $\mathrm{Re}(b)\neq 0$, this implies that the limit~\eqref{eq:1grouplim} does not exist.
On the other hand, if $\mathrm{Re}(b)=0$, then both $a$ and $b$ are purely imaginary and thus
\[
\Omega(\cos(\theta)v+\sin(\theta)w)=a\cos(\theta)+b\sin(\theta)
\]
vanishes for some $\theta$.
Since 
\[
\cos(\theta)v+\sin(\theta)w=\frac{\partial}{\partial x_1}\wedge\ldots\wedge \frac{\partial}{\partial x_{n-1}}\wedge \left(\cos(\theta)\frac{\partial}{\partial x_n}+\sin(\theta)\frac{\partial}{\partial y_n}\right)
\]
describes a path in the Lagrangian Grassmannian, this contradicts $\Omega\in\mc U(n)$.
\end{proof}

The following lemma will be used in conjunction with Lemma~\ref{lem:rootsindisk}.

\begin{lemma}\label{lem:polyunstable}
Suppose
\[
P(z)=z^3+az^2+bz+c
\]
is a cubic polynomial with complex coefficients such that
\[
0<1-|c|^2\leq|a\ol{c}-\ol{b}|.
\]
Then $P$ has a root $r$ with $|r|\geq 1$.
\end{lemma}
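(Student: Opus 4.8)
The plan is to prove the contrapositive: assuming all three roots $r_1,r_2,r_3$ of $P$ lie in the open unit disk, I will derive the strict inequality $|a\bar c-\bar b|<1-|c|^2$, which contradicts the hypothesis $0<1-|c|^2\le|a\bar c-\bar b|$; hence under that hypothesis some root must satisfy $|r|\ge 1$.

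First I would express $a\bar c-\bar b$ through the roots via Vieta's relations $a=-(r_1+r_2+r_3)$, $b=r_1r_2+r_2r_3+r_3r_1$, $c=-r_1r_2r_3$. Regrouping the six resulting monomials and using $r_j\bar r_j=|r_j|^2$ produces the factored form
\[
a\bar c-\bar b=-\bigl(\bar r_2\bar r_3(1-|r_1|^2)+\bar r_1\bar r_3(1-|r_2|^2)+\bar r_1\bar r_2(1-|r_3|^2)\bigr).
\]
Since each $1-|r_j|^2\ge 0$, the triangle inequality gives, with $p=|r_1|$, $q=|r_2|$, $s=|r_3|\in[0,1)$,
\[
|a\bar c-\bar b|\le qs(1-p^2)+ps(1-q^2)+pq(1-s^2).
\]

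The remaining ingredient is the elementary algebraic identity
\[
\bigl(1-p^2q^2s^2\bigr)-\bigl(qs(1-p^2)+ps(1-q^2)+pq(1-s^2)\bigr)=(1-pq)(1-qs)(1-sp),
\]
verified by expanding both sides (each equals $1-(pq+qs+sp)+pqs(p+q+s)-p^2q^2s^2$). For $p,q,s\in[0,1)$ every factor on the right lies in $(0,1]$, so the right-hand side is strictly positive; since $|c|^2=p^2q^2s^2$, combining this with the preceding bound yields $|a\bar c-\bar b|<1-|c|^2$, the desired contradiction.

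I do not expect a genuine obstacle here: the one non-mechanical observation is the regrouping of $a\bar c-\bar b$ into root-pair terms, after which the argument is a finite computation. As a cross-check, the same conclusion follows from the classical Schur--Cohn/Jury test: were all roots of $P$ in the open disk one would have $|c|<1$ and the first Schur transform $(1-|c|^2)z^2+(a-c\bar b)z+(b-c\bar a)$ would again have all roots in the open disk, forcing $1-|c|^2>|b-c\bar a|=|a\bar c-\bar b|$; but the root computation above is self-contained and needs no external input.
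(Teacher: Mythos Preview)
Your argument is correct. The factorization of $a\bar c-\bar b$ through Vieta is right, the triangle-inequality bound follows, and the identity
\[
(1-p^2q^2s^2)-\bigl(qs(1-p^2)+ps(1-q^2)+pq(1-s^2)\bigr)=(1-pq)(1-qs)(1-sp)
\]
checks out by direct expansion. Since all three factors on the right are strictly positive for $p,q,s\in[0,1)$, the strict inequality $|a\bar c-\bar b|<1-|c|^2$ follows, contradicting the hypothesis.

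Your route differs from the paper's. The paper invokes the Schur--Cohn test directly (citing Henrici): it forms the reciprocal adjoint $P^*$ and the Schur transform $TP(z)=(a\bar c-\bar b)z^2+(b\bar c-\bar a)z+(|c|^2-1)$, uses the sign of $TP(0)=|c|^2-1<0$ to conclude that $P^*$ and $TP$ have the same number of roots in the closed disk, and then observes that the product of the two roots of $TP$ has modulus $(1-|c|^2)/|a\bar c-\bar b|\le 1$, forcing one of them into the closed disk. You effectively unpack the degree-three case of Schur--Cohn by hand via the root identity, which makes the proof entirely self-contained and avoids the external reference; the paper's version is shorter but relies on the black-box test. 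Your closing remark already anticipates the paper's approach as a cross-check.
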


\begin{proof}
We apply the Schur--Cohn test, see e.g.~\cite[Section 6.8]{henrici}.
The \textit{reciprocal adjoint polynomial} of $P$ is 
\[
P^*(z)\coloneqq \ol{c}z^3+\ol{b}z^2+\ol{a}z+1
\]
and we need to show that this has a root inside the closed unit disk.
The \textit{Schur transform}, $TP$ of $P$ is 
\[
(TP)(z)\coloneqq \ol{P(0)}P(z)-\ol{P^*(0)}P^*(z)=(a\ol{c}-\ol{b})z^2+(b\ol{c}-\ol{a})z+c\ol{c}-1.
\]
Since $(TP)(0)=c\ol{c}-1<0$ by assumption, it follows from the Schur--Cohn test that $P^*$ and $TP$ have the same number of roots inside the closed unit disk.
But if $r_1,r_2$ are the roots of $TP$, then
\[
|r_1r_2|=\frac{1-c\ol{c}}{|a\ol{c}-\ol{b}|}\leq 1
\]
so $|r_i|\leq 1$ for at least one of $i=1,2$.
\end{proof}

\begin{lemma}\label{lem:momentcondition}
$\Omega\in\mc U^+(3)$ of the type \eqref{form8comp} minimizes $\|\Omega\|^2$ on its $\mathrm{Sp}(2)^3\times\mathrm{SL}(2)$-orbit if and only if $\Omega^{2,1}=0$ and $\Omega^{0,3}=0$, i.e. the $\beta$'s and $\delta$ in the expansion \eqref{form14comps} vanish.
\end{lemma}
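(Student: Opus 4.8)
The plan is to read this off as a Kempf--Ness (zero moment map) statement. Set $G:=\mathrm{Sp}(2,\mathbb R)^3\times\mathrm{SL}(2,\mathbb R)$, acting linearly on $W:=\mathbb C\otimes\bigwedge^3(\mathbb R^6)^*$, and let $\|\cdot\|$ be the Euclidean norm for which the monomials $dz_{I\bar J}$ are orthonormal; this norm is invariant under the maximal compact $K:=\mathrm{SO}(2)^3\times\mathrm{SO}(2)$ of $G$, and on forms of shape \eqref{form8comp} one has $\|\Omega\|^2=|\alpha|^2+\sum_i|\beta_i|^2+\sum_i|\gamma_i|^2+|\delta|^2$. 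Fix a Cartan decomposition $\mathfrak g=\mathfrak k\oplus\mathfrak p$. For $X\in\mathfrak p$ the function $t\mapsto\|e^{tX}\Omega\|^2=\sum_\lambda\|\Omega_\lambda\|^2e^{2\lambda t}$ (sum over eigenspaces of $X$) is convex; since $K$ preserves $\|\cdot\|$ and $G=K\exp(\mathfrak p)$, it follows that $\Omega$ minimizes $\|\cdot\|$ on $G\Omega$ if and only if $\langle X\cdot\Omega,\Omega\rangle=0$ for every $X\in\mathfrak p$, i.e.\ the moment map vanishes at $\Omega$.

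The next step is to compute this. A basis of $\mathfrak p$ consists of: for each $i\in\{1,2,3\}$, the two generators of $\mathfrak p_i\subset\mathfrak{sp}(2,\mathbb R)_i$, one acting by $dz_i\leftrightarrow d\bar z_i$ (fixing the remaining $dz_j,d\bar z_j$) and the other by $dz_i\mapsto id\bar z_i$, $d\bar z_i\mapsto -idz_i$; together with, from the $\mathfrak{sl}(2,\mathbb R)$-factor, the conjugation $c$ of $W$ relative to the real form $\bigwedge^3(\mathbb R^6)^*$ (which sends $dz_i\mapsto d\bar z_i$ and conjugates coefficients) and $J_0c$, where $J_0$ is the complex structure of $W$. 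Evaluating $\langle X\cdot\Omega,\Omega\rangle$ on these eight vectors for $\Omega$ of shape \eqref{form8comp} and separating real and imaginary parts yields the system
\begin{align*}
&\alpha\delta+\beta_1\gamma_1+\beta_2\gamma_2+\beta_3\gamma_3=0,\\
&\alpha\bar\beta_i+\gamma_i\bar\delta+\beta_j\bar\gamma_k+\beta_k\bar\gamma_j=0\qquad(\{i,j,k\}=\{1,2,3\}).
\end{align*}
The ``if'' direction is then immediate: if $\beta_1=\beta_2=\beta_3=\delta=0$, equivalently $\Omega^{2,1}=\Omega^{0,3}=0$, this system holds, so the moment map vanishes and $\Omega$ minimizes $\|\cdot\|$ on its orbit.

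For the converse, a minimizer satisfies the system above. Acting by $K$ and rescaling (Lemma~\ref{lem:normalize}) we may assume $\alpha=1$ and $\gamma_1,\gamma_2,\gamma_3\ge0$. Substituting $\delta=-(\beta_1\gamma_1+\beta_2\gamma_2+\beta_3\gamma_3)$ into the last three equations and separating real and imaginary parts of $\beta_i=p_i+iq_i$ shows that $p=(p_i)$ is fixed by the symmetric matrix $\gamma\gamma^T-S$ and $q=(q_i)$ by $\gamma\gamma^T+S$, where $S_{ij}=\gamma_k$ for $\{i,j,k\}=\{1,2,3\}$ and $S_{ii}=0$. It therefore suffices to show that, for $\Omega\in\mc U^+(3)$, neither matrix has $1$ as an eigenvalue: then $p=q=0$, hence $\beta=0$ and then $\delta=0$. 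This is where membership in $\mc U^+(3)$ is used. Evaluating $\Omega$ on the product Lagrangians $\bigwedge_i(\cos\phi_i\,\partial_{x_i}+\sin\phi_i\,\partial_{y_i})$ shows that $\Omega\in\mc U^+(3)$ forces the affine-multilinear polynomial
\[
Q(u_1,u_2,u_3)=1+\beta_1u_1+\beta_2u_2+\beta_3u_3+\gamma_1u_2u_3+\gamma_2u_1u_3+\gamma_3u_1u_2+\delta\,u_1u_2u_3
\]
to be zero-free on $(S^1)^3$ (Lemma~\ref{lem:rootsindisk} being the diagonal restriction $u_1=u_2=u_3$). I would combine a $1$-eigenvector of $\gamma\gamma^T\mp S$ with the constraint $\delta=-\gamma\cdot\beta$ to produce a point of $(S^1)^3$ at which $Q$ vanishes, contradicting $\Omega\in\mc U^+(3)$, so that $p=q=0$.

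The main obstacle is this last spectral exclusion. A crude estimate gives $\|\gamma\gamma^T\pm S\|_{\mathrm{op}}\le(\sum_i\gamma_i)^2+\sum_i\gamma_i$, so it would be enough to know $\sum_i\gamma_i<\tfrac{\sqrt5-1}{2}$; but establishing such a bound when $\beta$ and $\delta$ are allowed to be nonzero requires the finer system of inequalities cutting out $\mc U^+(3)$ inside the set of forms \eqref{form8comp} --- for instance the companion cubics obtained from the other coordinate Lagrangians, or the full zero-freeness of $Q$ on $(S^1)^3$ --- and the bookkeeping needed to close this is the delicate part of the argument.
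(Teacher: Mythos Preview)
Your Kempf--Ness framework and the resulting moment map equations are correct and match the paper's setup; the ``if'' direction is fine. The gap is entirely in the converse. Your reduction to the eigenvalue conditions $(\gamma\gamma^T-S)p=p$ and $(\gamma\gamma^T+S)q=q$ is valid, but you do not exclude the eigenvalue $1$, and this cannot be done by the crude norm bound: at this stage nothing forces $\sum_i\gamma_i<1$ (that inequality only emerges \emph{after} the present lemma, in Proposition~\ref{prop:u3normalform}), let alone $\sum_i\gamma_i<\tfrac{\sqrt5-1}{2}$. Indeed, for $\gamma=(0,0,1)$ the matrix $\gamma\gamma^T+S$ genuinely has eigenvalue $1$, so any argument must go through $\Omega\in\mc U^+(3)$ to rule out the corresponding nonzero $(\beta,\delta)$. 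Your proposal to manufacture a zero of the affine--trilinear $Q$ on $(S^1)^3$ from a $1$-eigenvector is not carried out, and making it uniform across the various eigenvector types is exactly the missing idea.

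The paper handles the converse without any spectral or multilinear analysis. Combining each $\mathfrak{sp}(2)$-moment equation with the $\mathfrak{sl}(2)$-equation (after normalizing to $\alpha=1$, $\gamma_i\ge0$) yields, for each $i$, the relation $\gamma_i(\beta_i-\bar\delta)=\bar\beta_i-\delta$, whence the dichotomy $\beta_i=\bar\delta$ or $\gamma_i=1$. Up to permutation this leaves four cases; in each the paper writes down the diagonal cubic $P(z)$ of Lemma~\ref{lem:rootsindisk} explicitly and verifies the Schur--Cohn inequality of Lemma~\ref{lem:polyunstable}, forcing a root with $|r|\ge1$ and contradicting $\Omega\in\mc U^+(3)$. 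Thus only the diagonal restriction $u_1=u_2=u_3$ of your $Q$ is ever used, and the entire converse reduces to four short one-variable estimates rather than a global argument on $(S^1)^3$.
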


\begin{proof}
The Lie algebra $\mk{sl}(2)=\mk{sp}(2)$ has Cartan decomposition $\mk{sl}(2)=\mk{u}(1)\oplus\mk p$, where $\mk p$ consists of traceless symmetric matrices and has a basis
\[
X\coloneqq\begin{pmatrix} 1 & 0 \\ 0 & -1 \end{pmatrix}, \qquad Y\coloneqq\begin{pmatrix} 0 & 1 \\ 1 & 0 \end{pmatrix}.
\]
These act on 1-forms $\mb C$-linearly as 
\[
Xdz=d\bar{z},\qquad Xd\bar{z}=dz,\qquad Ydz=id\bar{z},\qquad Yd\bar{z}=-idz.
\]
The action of $X_1\coloneqq(X,0,0),Y_1\coloneqq(Y,0,0)\in \mk{sp}(2)^3$ on $\Omega$ of the type \eqref{form8comp} is given by
\begin{align*}
    X_1\Omega&=\beta_1 dz_{123}+\alpha dz_{\ol{1}23}+\gamma_3dz_{1\ol{2}3}+\gamma_2dz_{12\ol{3}}+\delta dz_{1\ol{23}}+\beta_3dz_{\ol{1}2\ol{3}}+\beta_2dz_{\ol{12}3}+\gamma_1dz_{\ol{123}} \\
    Y_1\Omega&=i\left(-\beta_1 dz_{123}+\alpha dz_{\ol{1}23}-\gamma_3dz_{1\ol{2}3}-\gamma_2dz_{12\ol{3}}-\delta dz_{1\ol{23}}+\beta_3dz_{\ol{1}2\ol{3}}+\beta_2dz_{\ol{12}3}+\gamma_1dz_{\ol{123}}\right)
\end{align*}
From this we find that
\begin{align*}
    \mathrm{Re}\langle\Omega,X_1\Omega\rangle &= \mathrm{Re}\left(\ol{\alpha}\beta_1+\ol{\beta_1}\alpha+\ol{\beta_2}\gamma_3+\ol{\beta_3}\gamma_2+\ol{\gamma_1}\delta+\ol{\gamma_2}\beta_3+\ol{\gamma_3}\beta_2+\ol{\delta}\gamma_1\right)\\
    &=2\mathrm{Re}\left(\ol{\alpha}\beta_1+\ol{\beta_2}\gamma_3+\ol{\beta_3}\gamma_2+\ol{\gamma_1}\delta\right)
\end{align*}
\begin{align*}
    \mathrm{Re}\langle\Omega,Y_1\Omega\rangle &= \mathrm{Re}\left(i\left(-\ol{\alpha}\beta_1+\ol{\beta_1}\alpha-\ol{\beta_2}\gamma_3-\ol{\beta_3}\gamma_2-\ol{\gamma_1}\delta+\ol{\gamma_2}\beta_3+\ol{\gamma_3}\beta_2+\ol{\delta}\gamma_1\right)\right)\\
    &=2\mathrm{Im}\left(\ol{\alpha}\beta_1+\ol{\beta_2}\gamma_3+\ol{\beta_3}\gamma_2+\ol{\gamma_1}\delta\right)
\end{align*}
Hence the vanishing of the variation of $\|\Omega\|^2$ along the first $\mk{sp}(2)$ factor can be expressed as
\begin{equation}
\alpha\ol{\beta_1}+\gamma_1\ol{\delta}+\beta_2\ol{\gamma_2}+\beta_3\ol{\gamma_3}=0 \label{eq:moment1}
\end{equation}
where we have taken the complex conjugate for convenience.
Similarly, the vanishing of the variation along the second a third $\mk{sp}(2)$ factor can be expressed as
\begin{align}
\alpha\ol{\beta_2}+\gamma_2\ol{\delta}+\beta_1\ol{\gamma_1}+\beta_3\ol{\gamma_3}=0 \label{eq:moment2}, \\
\alpha\ol{\beta_3}+\gamma_3\ol{\delta}+\beta_1\ol{\gamma_1}+\beta_2\ol{\gamma_2}=0 \label{eq:moment3}.
\end{align}

Let us turn to the action of $\mk{sl}(2)$ induced from its natural action on $\mb C=\mb R^2$.
Then
\begin{equation*}
    X\Omega=\ol{\Omega},\qquad Y\Omega=-i\ol{\Omega}
\end{equation*}
thus the variation of $\|\Omega\|^2$ along $\mathrm{SL}(2)$ vanishes iff $\langle\ol{\Omega},\Omega\rangle=0$, or more explicitly:
\begin{equation}
    \alpha\delta+\beta_1\gamma_1+\beta_2\gamma_2+\beta_3\gamma_3=0. \label{eq:moment4}
\end{equation}

Assume now that $\Omega$ minimizes $\|\Omega\|^2$ on its $\mathrm{Sp}(2)^3\times\mathrm{SL}(2)$-orbit, i.e. that \eqref{eq:moment1}, \eqref{eq:moment2}, \eqref{eq:moment3}, and \eqref{eq:moment4} hold.
We need to show that $\Omega^{2,1}=0$ and $\Omega^{0,3}=0$.
By Lemma~\ref{lem:normalize} we can and will assume, after a unitary change of coordinates and rescaling, that $\alpha=1$ and $\gamma_1,\gamma_2,\gamma_3\in\mb [0,+\infty)$.
Combining \eqref{eq:moment4} with \eqref{eq:moment1} (resp. \eqref{eq:moment2} or \eqref{eq:moment3}) gives
\begin{equation}
\gamma_i(\beta_i-\ol{\delta})=\ol{\beta_i}-\delta
\end{equation}
for $i=1,2,3$. Thus, for each $i$ either $\beta_i=\ol{\delta}$ or $\gamma_i=1$ (or both).
After possibly applying some permutation of $\{1,2,3\}$, there are four cases to check.

\begin{itemize}
\item \underline{Case $\beta_1=\beta_2=\beta_3=\ol{\delta}$:}
All four equations \eqref{eq:moment1}, \eqref{eq:moment2}, \eqref{eq:moment3}, \eqref{eq:moment4} become $\delta+\ol{\delta}(\gamma_1+\gamma_2+\gamma_3)=0$. 
If $\delta=0$, then $\beta_i=0$, $i=1,2,3$, and we are done.
If $\delta\neq 0$, then $\gamma_1+\gamma_2+\gamma_3=-\delta/\ol{\delta}$ has unit modulus and is non-negative, thus equal to $1$, and $\delta=\lambda i$ for some $\lambda\in[0,1)$.

The polynomial $P(z)$ as in Lemma~\ref{lem:rootsindisk} is of the form
\[
P(z)=z^3-3\lambda iz^2+z+\lambda i
\]
and since $1-\lambda^2\leq |-3\lambda^2-1|$, it follows from Lemma~\ref{lem:polyunstable} that $P$ has a root outside the open unit disk, contradicting the assumption $\Omega\in\mc U^+(3)$.

\item \underline{Case $\beta_1=\beta_2=\ol{\delta}$, $\gamma_3=1$:}
Equations~\eqref{eq:moment1} and~\eqref{eq:moment3} become
\[
\ol{\delta}(\gamma_1+\gamma_2)+\delta+\beta_3=0,\qquad \ol{\delta}(\gamma_1+\gamma_2)+\ol{\delta}+\ol{\beta_3}=0,
\]
hence, subtracting the two equations, $\delta+\beta_3\in\mb R$, and adding the two equations we see that either $\mathrm{Im}(\delta)=0$ or $\gamma_1+\gamma_2=0$.

Consider first the case $\delta\in\mb R$.
Then also $\beta_3\in\mb R$ and 
\[
\beta_1+\beta_2+\beta_3=\delta(2-\gamma_1-\gamma_2-\gamma_3)=:\delta(2-\gamma).
\]
The polynomial $P(z)$ as in Lemma~\ref{lem:rootsindisk} is then
\[
P(z)=z^3+\delta(2-\gamma)z^2+\gamma z+\delta
\]
with $\gamma\geq 1$ and $0\leq\delta<1$.
Since
\[
1-\delta^2\leq \gamma(1+\delta^2(1-2/\gamma))=|\delta^2(2-\gamma)-\gamma|,
\]
it follows from Lemma~\ref{lem:polyunstable} that $P$ has a root outside the open unit disk.

In the case $\gamma_1+\gamma_2=0$ we have $\beta_3=-\delta$, thus the polynomial $P(z)$ as in Lemma~\ref{lem:rootsindisk} is 
\[
P(z)=z^3+(2\ol{\delta}-\delta)z^2+z+\delta.
\]
Since
\[
1-\delta^2\leq |2\ol{\delta}^2-\delta\ol{\delta}-1|,
\]
it follows from Lemma~\ref{lem:polyunstable} that $P$ has a root outside the open unit disk.

\item \underline{Case $\beta_1=\ol{\delta}$, $\gamma_2=\gamma_3=1$:}
Taking the sum of \eqref{eq:moment1} and \eqref{eq:moment2} gives
\[
\delta+\ol{\delta}+\beta_2+\ol{\beta_2}+2(\gamma_1\ol{\delta}+\beta_3)=0
\]
thus $\gamma_1\mathrm{Im}(\delta)=\mathrm{Im}(\beta_3)$.
Taking the difference of \eqref{eq:moment1} and \eqref{eq:moment2}, respectively \eqref{eq:moment3} implies $\mathrm{Im}(\delta)=-\mathrm{Im}(\beta_2)=-\mathrm{Im}(\beta_3)$.
Since $\gamma_1\geq 0$, we must have $\delta,\beta_2,\beta_3\in\mb R$.

By \eqref{eq:moment4} and our assumptions we have
\[
\beta_1+\beta_2+\beta_3=-\delta\gamma_1,\qquad \gamma_1+\gamma_2+\gamma_3=\gamma_1+2.
\]
Let $\gamma\coloneqq\gamma_1$, then the polynomial $P(z)$ as in Lemma~\ref{lem:rootsindisk} is
\[
P(z)=z^3-\delta\gamma z^2+(\gamma+2)z+\delta.
\]
Since
\[
1-\delta^2\leq \left|2+\gamma(\delta^2+1)\right|,
\]
it follows from Lemma~\ref{lem:polyunstable} that $P$ has a root outside the open unit disk.

\item \underline{Case $\gamma_1=\gamma_2=\gamma_3=1$:}
The coefficient of $z$ in $P(z)$ is $\gamma_1+\gamma_2+\gamma_3=3$, so not all three roots can be inside the open unit disk.
\end{itemize}

This completes the forward implication of the lemma.
Conversely, suppose that $\Omega^{2,1}=0$ and $\Omega^{0,3}=0$. Then clearly \eqref{eq:moment1}, \eqref{eq:moment2}, \eqref{eq:moment3}, and \eqref{eq:moment4} hold, i.e. $\Omega$ is a critical point of the norm squared.
By convexity of the distance function along 1-parameter subgroups (see~\cite[Lemma 5.1]{bohm_lafuente21}), all critical points are minima, so the proof is complete.
\end{proof}

\begin{corollary}
\label{cor:momentcondition}
$\Omega\in\mc U^+(3)$ minimizes $\|\Omega\|^2$ on its $\mathrm{Sp}(6)\times\mathrm{SL}(2)$-orbit if and only if $\Omega^{2,1}=0$ and $\Omega^{0,3}=0$, i.e. the $\beta$'s and $\delta$ in the expansion \eqref{form14comps} vanish.
\end{corollary}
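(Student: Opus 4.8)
The plan is to reduce Corollary~\ref{cor:momentcondition} to Lemma~\ref{lem:momentcondition} by using the fact that the relevant orbit minimization problem is insensitive to which compatible complex structure we use. First I would observe that the statement is invariant under the $\mathrm{Sp}(6)$-action in the following sense: if $g \in \mathrm{Sp}(6)$, then $g$ carries $\mathrm{Sp}(6) \times \mathrm{SL}(2)$-orbits to $\mathrm{Sp}(6) \times \mathrm{SL}(2)$-orbits, preserves the norm $\|\cdot\|$ up to the $\mathrm{Sp}(6)$-invariant inner product being used (or, more carefully, we should fix the inner product to be $\mathrm{U}(3)$-invariant and track how $g$ distorts it — but since we only care about minimizers \emph{within a single orbit}, and the norm-squared function on an orbit is well-defined up to the choice of basepoint for the inner product, the set of minimizers transforms equivariantly). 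Concretely: $\Omega$ minimizes $\|\cdot\|^2$ on its $\mathrm{Sp}(6)\times\mathrm{SL}(2)$-orbit if and only if $g\cdot\Omega$ does, for the correspondingly transported norm; and the condition ``$\Omega^{2,1} = \Omega^{0,3} = 0$'' is exactly the condition that $\Omega$ be of type $(3,0)$ with respect to the standard complex structure, which transforms to ``type $(3,0)$ with respect to $g$-transported complex structure.''

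The key step is then: given arbitrary $\Omega \in \mc U^+(3)$, apply Lemma~\ref{lem:removeoffdiagonal} to find $g \in \mathrm{Sp}(6)$ so that $g \cdot \Omega$ has the form~\eqref{form8comp}, i.e. only the $8$ ``diagonal'' components survive. For forms of this type, the $\mathrm{Sp}(6)$-orbit minimization reduces to the $\mathrm{Sp}(2)^3 \times \mathrm{SL}(2)$-orbit minimization, because the block-diagonal torus-type subgroup $\mathrm{Sp}(2)^3 \subset \mathrm{Sp}(6)$ is precisely the stabilizer of the decomposition into the eight lines spanned by $dz_{123}, dz_{\overline 1 23}, \ldots$ — more precisely, one should check that the gradient of $\|\cdot\|^2$ at such an $\Omega$, viewed as an element of $\mk p \subset \mk{sp}(6) \oplus \mk{sl}(2)$, automatically lies in the $\mk{sp}(2)^3 \oplus \mk{sl}(2)$ sub-summand. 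This is because the off-diagonal generators of $\mk{sp}(6)$ move $\Omega$ of type~\eqref{form8comp} into the span of the off-diagonal basis vectors $dz_{\overline 2 2 3} + dz_{1\overline 1 3}$, etc., which are orthogonal to $\Omega$; hence those directions contribute nothing to the first variation. Therefore $\Omega$ of the form~\eqref{form8comp} is an $\mathrm{Sp}(6)\times\mathrm{SL}(2)$-critical point if and only if it is an $\mathrm{Sp}(2)^3 \times \mathrm{SL}(2)$-critical point, and by convexity of the distance function along 1-parameter subgroups (\cite[Lemma 5.1]{bohm_lafuente21}) and closedness of orbits (Lemma~\ref{lem:orbitsclosed}), critical $\iff$ minimizing in both settings. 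Lemma~\ref{lem:momentcondition} then identifies these critical points as exactly those with $\Omega^{2,1} = \Omega^{0,3} = 0$.

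Finally I would untwist: if $\Omega \in \mc U^+(3)$ is arbitrary and $g\cdot\Omega$ has the form~\eqref{form8comp}, then $\Omega$ minimizes $\|\cdot\|^2$ on its orbit iff $g\cdot\Omega$ does iff $(g\cdot\Omega)^{2,1} = (g\cdot\Omega)^{0,3} = 0$ iff $g\cdot\Omega$ is of type $(3,0)$; but whether a form is of type $(3,0)$ \emph{for some} compatible complex structure is an $\mathrm{Sp}(6)$-invariant notion, and conversely ``$\Omega^{2,1} = \Omega^{0,3} = 0$ for the standard complex structure'' need not survive; so one must be slightly careful here. The cleanest route: since both the minimizing condition and the ``is of type $(n,0)$ for some compatible $J$'' condition are $\mathrm{Sp}(6)\times\mathrm{SL}(2)$-invariant, and Lemma~\ref{lem:momentcondition} establishes their equivalence on the locus of forms of type~\eqref{form8comp} which every orbit meets (by Lemma~\ref{lem:removeoffdiagonal}), the equivalence holds on all of $\mc U^+(3)$. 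The main obstacle I anticipate is the bookkeeping in the first paragraph — making precise that ``minimizer within an orbit'' is a well-posed $\mathrm{Sp}(6)$-equivariant notion even though the ambient norm is only $\mathrm{U}(3)\times\mathrm{U}(1)$-invariant, not $\mathrm{Sp}(6)$-invariant — but this is exactly the standard setup of real GIT / Kempf--Ness theory as in~\cite{bohm_lafuente21}, where the norm-squared (moment map) function is intrinsically attached to each orbit, so the argument goes through.
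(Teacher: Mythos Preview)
Your reduction to the form~\eqref{form8comp} via some $g\in\mathrm{Sp}(6)$ cannot be ``untwisted'' in the way you propose, and this is a genuine gap. You correctly identify the problem yourself: neither ``$\Omega$ minimizes $\|\cdot\|^2$ on its orbit'' nor ``$\Omega^{2,1}=\Omega^{0,3}=0$'' is $\mathrm{Sp}(6)$-invariant, since both are tied to the fixed $U(3)\times U(1)$-invariant inner product and the fixed standard complex structure. Transporting the norm along with $g$ makes the equivariance tautological but useless, because Lemma~\ref{lem:momentcondition} is proved for the \emph{standard} norm, not the transported one. Your final attempt to repair this by passing to ``type $(3,0)$ for some compatible $J$'' both changes the statement and mischaracterizes the condition: $\Omega^{2,1}=\Omega^{0,3}=0$ means $\Omega=\Omega^{3,0}+\Omega^{1,2}$, not that $\Omega$ is purely of type $(3,0)$. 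Your intermediate observation---that for $\Omega$ of the form~\eqref{form8comp} the off-diagonal part of $\mk p\cap\mk{sp}(6)$ lands in the span of the $\beta_{ij},\gamma_{ij}$ basis vectors, so that $\mathrm{Sp}(6)\times\mathrm{SL}(2)$-criticality and $\mathrm{Sp}(2)^3\times\mathrm{SL}(2)$-criticality agree there---is correct, but it only yields the corollary on that locus.

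The paper's argument avoids the untwisting problem entirely. For the forward implication it notes that every $X\in\mk p$ shifts $(p,q)$-type: elements of $\mk p\cap\mk{sp}(6)$ shift by $\pm 1$ and elements of $\mk p\cap\mk{sl}(2)$ conjugate. Hence if $\Omega$ has only $(3,0)$ and $(1,2)$ components then $X\Omega$ has only $(2,1)$ and $(0,3)$ components, so $\langle\Omega,X\Omega\rangle=0$ for \emph{all} $X\in\mk p$ at once---no reduction to~\eqref{form8comp} is needed. For the converse, the paper combines this forward direction with Lemma~\ref{lem:momentcondition} to see that every orbit contains a minimizer with $\Omega^{2,1}=\Omega^{0,3}=0$; since the set of minimizers on a closed orbit is a single $U(3)\times U(1)$-orbit (Kempf--Ness), and since the vanishing condition is $U(3)\times U(1)$-invariant, all minimizers satisfy it. This is the argument your proposal should collapse to once the equivariance attempt is abandoned.
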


\begin{proof}
As before, let $\mk g=\mk{sp}(6)\oplus\mk{sl}(2)$ and $\mk g=\mk k\oplus\mk p$ its Cartan decomposition.
If $\Omega^{2,1}=\Omega^{0,3}=0$ and $X\in\mk p$, then $X\Omega$ has components of type $(1,2)$ and $(0,3)$ only, thus $\langle \Omega,X\Omega\rangle=0$, so $\Omega$ minimizes the norm squared.
For the converse, we have already seen that every $\mathrm{Sp}(6)\times\mathrm{SL}(2)$-orbit contains a minimizer with $\Omega^{2,1}=\Omega^{0,3}=0$. Since minimizers are unique up to the action of $U(3)\times U(1)$, the same is true for all minimizers.
\end{proof}

\begin{prop}\label{prop:u3normalform}
Every $\mathrm{Sp}(6)\times\mathrm{GL}^+(2)$-orbit in $\mathcal{U}^+(3)$ contains a $3$-form $\Omega$ of the type
\begin{equation}\label{form3comps}
    \Omega=dz_{123}+\gamma_1dz_{1\ol{23}}+\gamma_2dz_{\ol{1}2\ol{3}}+\gamma_3dz_{\ol{12}3}
\end{equation}
where $0\leq\gamma_1\leq\gamma_2\leq\gamma_3$ and $\gamma_1+\gamma_2+\gamma_3<1$.
Conversely, all such forms belong to $\mathcal{U}^+(3)$.
\end{prop}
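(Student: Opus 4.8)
The plan is to prove the two halves separately: existence of the normal form by combining the norm‑minimisation of the preceding lemmas with a diagonalisation of the $(1,2)$‑part, and the converse by computing $\Omega|_L$ explicitly over the Lagrangian Grassmannian.

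For the forward direction, let $\Omega\in\mathcal U^+(3)$. First I replace $\Omega$ by a convenient representative of its $\mathrm{Sp}(6)\times\mathrm{GL}^+(2)$‑orbit. By Lemma~\ref{lem:orbitsclosed} the $\mathrm{Sp}(6)\times\mathrm{SL}(2)$‑orbit of $\Omega$ is closed, hence contains a point minimising $\|\cdot\|^2$ (a minimal vector in the sense of real GIT, cf.\ \cite[Theorem 1.1]{bohm_lafuente21}), and by Corollary~\ref{cor:momentcondition} such a minimiser has $\Omega^{2,1}=\Omega^{0,3}=0$; thus I may assume $\Omega=\Omega^{3,0}+\Omega^{1,2}$ with $\Omega^{1,2}$ primitive of type $(1,2)$. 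As a $U(3)$‑representation the space of primitive $(1,2)$‑forms on $\mathbb C^3$ is the space of symmetric bilinear forms, with the forms $\gamma_1dz_{1\ol{23}}+\gamma_2dz_{\ol{1}2\ol{3}}+\gamma_3dz_{\ol{12}3}$ corresponding to the diagonal matrices; so the Autonne--Takagi factorisation provides a transformation in $U(3)\subset\mathrm{Sp}(6)$ after which $\Omega^{1,2}$ has exactly this diagonal shape with $\gamma_1,\gamma_2,\gamma_3\ge 0$. Now $\Omega$ is of the shape \eqref{form8comp} with vanishing $\beta$'s and $\delta$; a rescaling together with a torus element, as in Lemma~\ref{lem:normalize} (whose argument uses only torus elements once the form is as in \eqref{form8comp}, hence preserves the vanishing of the $\beta$'s and $\delta$), followed by a permutation in $S_3\subset U(3)$, brings $\Omega$ to the shape \eqref{form3comps} with $\alpha=1$ and $0\le\gamma_1\le\gamma_2\le\gamma_3$. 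Since the result still lies in $\mathcal U^+(3)$ and is of type \eqref{form8comp}, Lemma~\ref{lem:rootsindisk} applies to the polynomial $P(z)=z^3+(\gamma_1+\gamma_2+\gamma_3)z=z\left(z^2+\gamma_1+\gamma_2+\gamma_3\right)$, whose roots $0,\pm i\sqrt{\gamma_1+\gamma_2+\gamma_3}$ lie in the open unit disk exactly when $\gamma_1+\gamma_2+\gamma_3<1$.

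For the converse, let $\Omega$ be of type \eqref{form3comps} with $0\le\gamma_1\le\gamma_2\le\gamma_3$ and $\gamma_1+\gamma_2+\gamma_3<1$. Each of its four terms is a primitive form, so $\Omega$ is primitive, and I claim moreover that $\Omega|_L\ne 0$ for every Lagrangian $L\subset\mathbb R^6$: granting this, $\Omega\in\mathcal U(3)$, and since the arc $t\mapsto\Omega_{t\gamma}$ ($t\in[0,1]$) stays in this same region, the same argument gives $\Omega_{t\gamma}\in\mathcal U(3)$ for all $t$, connecting $\Omega$ to $dz_{123}\in\mathcal U^+(3)$ inside $\mathcal U(3)$, whence $\Omega\in\mathcal U^+(3)$. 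To prove the claim, pick a real basis $v_1,v_2,v_3$ of $L$, write $v_k=\sum_j(c_{kj}\partial_{z_j}+\overline{c_{kj}}\partial_{\overline z_j})$ and set $C=(c_{kj})$. The Lagrangian condition amounts to $CC^*$ being real; since $\dim L=3$ this forces $C$ invertible and $R\coloneqq C^{-1}\overline C$ a symmetric unitary matrix (independent of the chosen basis). Expanding determinants,
\[
\frac{\Omega(v_1\wedge v_2\wedge v_3)}{\det C}=1+\gamma_1\widetilde R_{11}+\gamma_2\widetilde R_{22}+\gamma_3\widetilde R_{33},
\]
where $\widetilde R=\mathrm{adj}(R)$, so $\widetilde R_{jj}=\det(R)\overline{R_{jj}}$ and $|\widetilde R_{jj}|=|R_{jj}|\le 1$ since $R$ is unitary. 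Hence the right‑hand side has modulus at least $1-\sum_j\gamma_j|R_{jj}|\ge 1-\sum_j\gamma_j>0$, so $\Omega|_L\ne 0$.

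I expect the main obstacles to be bookkeeping rather than conceptual: correctly identifying the $U(3)$‑action on primitive $(1,2)$‑forms with the congruence action on symmetric matrices (so that the Autonne--Takagi normal form really lands in the span of $dz_{1\ol{23}},dz_{\ol{1}2\ol{3}},dz_{\ol{12}3}$ with real non‑negative coefficients), and getting the signs right in the determinant identity of the converse. The genuinely hard input --- excluding the unstable configurations --- has already been supplied by Lemma~\ref{lem:momentcondition}.
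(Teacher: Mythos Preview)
Your argument is correct and follows the same overall architecture as the paper's proof; the differences are in packaging rather than substance.

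For the forward direction, the paper first invokes Lemma~\ref{lem:removeoffdiagonal} to put $\Omega$ into the shape~\eqref{form8comp}, then minimises along the smaller group $\mathrm{Sp}(2)^3\times\mathrm{SL}(2)$ and applies Lemma~\ref{lem:momentcondition}. You instead minimise along the full $\mathrm{Sp}(6)\times\mathrm{SL}(2)$ and quote Corollary~\ref{cor:momentcondition} directly, then diagonalise the remaining primitive $(1,2)$-part via $U(3)$ using Autonne--Takagi. This is a pleasant reorganisation: the Autonne--Takagi step is transparent and replaces the explicit appeal to Lemma~\ref{lem:removeoffdiagonal} in the body of the proof (though that lemma is still used inside the proof of Corollary~\ref{cor:momentcondition}, so nothing is truly avoided). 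One small point: the invertibility of $C$ is not a consequence of $\dim L=3$ alone but of the Lagrangian (equivalently totally real) condition; your sentence is fine in context since you have already recorded that $CC^*$ is real, which together with $\mathbb R$-independence of the $v_k$ forces $C$ to be nonsingular.

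For the converse, the paper argues with an orthonormal basis of $L$ and Hadamard's inequality, obtaining $|dz_{1\ol{23}}(b_1\wedge b_2\wedge b_3)|\le 1$ etc.\ and hence $|\Omega(b_1\wedge b_2\wedge b_3)|\ge 1-\sum\gamma_j>0$. Your computation via the symmetric unitary matrix $R=C^{-1}\overline C$ is an explicit coordinate version of the same estimate: both yield the identical lower bound. Your path argument to land in $\mathcal U^+(3)$ rather than $\mathcal U^-(3)$ is exactly what the paper means by ``easy to see''.
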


\begin{proof}
Let $O\subset \mathcal{U}^+(3)$ be an $\mathrm{Sp}(6)\times\mathrm{GL}^+(2)$-orbit.
By Lemma~\ref{lem:removeoffdiagonal} we can find an $\Omega\in O$ of the form~\eqref{form8comp}.
By Lemma~\ref{lem:orbitsclosed} the $\mathrm{Sp}(2)^3\times \mathrm{SL}(2)$-orbit of $\Omega$ is closed in $\mb C\otimes {\bigwedge}^3\mb R^6$ and we can therefore assume that $\Omega$ minimizes the norm-squared along this orbit.
Then $\Omega^{2,1}=0$ and $\Omega^{0,3}=0$ by Lemma~\ref{lem:momentcondition}.
Finally, by Lemma~\ref{lem:normalize} and a possible permutation of the coordinates we can find an $\Omega$ of the type \eqref{form3comps} in $O$.
The inequality $\gamma_1+\gamma_2+\gamma_3<1$ follows directly from Lemma~\ref{lem:rootsindisk}.

To show that the forms $\Omega$ as in \eqref{form3comps} are in $\mathcal{U}^+(3)$, suppose $b_1,b_2,b_3$ is an orthonormal basis of a Lagrangian subspace in $\mb C^3$. 
Then the $b_i$ form a unitary basis of $\mb C^3$, so $|dz_{123}(b_1\wedge b_2\wedge b_3)|=1$.
By Hadamard's inequality for the determinant and the fact that the $\mb R$-linear map $\mb C^3\to \mb C^3$ with $z_1\mapsto z_1$, $z_2\mapsto \ol{z_2}$, $z_3\mapsto \ol{z_3}$ is length-preserving, we get $|dz_{1\ol{23}}(b_1\wedge b_2\wedge b_3)|\leq 1$, and similarly for $dz_{\ol{1}2\ol{3}}$ and $dz_{\ol{12}3}$.
Thus
\[
|\Omega(b_1\wedge b_2\wedge b_3)|\geq 1 - \gamma_1 - \gamma_2 - \gamma_3 > 0
\]
so $\Omega$ is non-vanishing on any Lagrangian subspace.
It is easy to see that $\Omega$ belongs to $\mathcal{U}^+(3)$ rather than $\mathcal{U}^-(3)$.
\end{proof}

\part{B-model: Construction of stability conditions}\label{part:bmodel}
\section{Product stability conditions}\label{sec:prod}

\subsection{Review of construction}

In this section, we recall constructions and results from~\cite{liu21}. We begin with the following intermediate notions of a Bridgeland stability condition. Let $X$ be a smooth projective variety, $\AA \subset D^b(X)$ be the heart of a bounded t-structure, and $Z \colon \Knum(\AA) \rightarrow \mathbb{C}$ be an additive function. Consider the following properties for the pair $(\AA,Z)$:
\begin{enumerate}
\item[(P)] 
For all $\EE \in \AA$, $\mathrm{Im}(Z(\EE)) \geq 0$. Moreover, if $\mathrm{Im}(Z(\EE)) = 0$, then $\mathrm{Re}(Z(\EE)) \leq 0$.
\item[(P$'$)] 
For all $\EE \in \AA$, $\mathrm{Im}(Z(\EE)) \geq 0$. Moreover, if $\mathrm{Im}(Z(\EE)) = 0$ and $\EE \neq 0$, then $\mathrm{Re}(Z(\EE)) < 0$.
\end{enumerate}
\begin{rem}
$Z$ is a \textit{weak stability function} if (P) is satisfied, and is a \textit{stability function} if the stronger condition (P$'$) is satisfied. 
\end{rem}
\begin{enumerate}
  \item[(HN)] $(\mathcal{A},Z)$ satisfies the Harder--Narasimhan property.
  \item[(SP)] $(\mathcal{A},Z)$ satisfies the support property.
\end{enumerate}
\begin{defn} The pair $(\mathcal{A},Z)$ is
\begin{itemize}
    \item 
 a \textit{weak pre-stability condition} if (P) + (HN) holds,
 \item 
 a \textit{weak stability condition} if (P) + (HN) + (SP) holds,
 \item 
 a \textit{stability condition} if (P$'$) + (HN) + (SP) holds.
\end{itemize}
\end{defn}

Let $S$ be a smooth projective variety and $C$ be a smooth projective curve, and consider the projections:
\[
\begin{tikzcd}
& X \coloneqq S \times C \arrow[ld,"p"']\arrow[rd,"q"]& \\
S& & C
\end{tikzcd}
\]
Fix $\sigma = (\AA,Z)$ a heart of bounded t-structure in $D^b(S)$ with a stability function $Z$.

The first step in constructing a stability condition on $X$ is the construction of a central charge function on the Abramovich--Polishchuk heart~\cite{abramovich2006sheaveststructuresvaluativecriteria} on the product variety $X$.
\begin{prop}[{\cite[Theorem 3.3]{liu21}}]
\ 
\begin{enumerate}
\item 
For any $\EE \in D^b(X)$ the function $L_{\EE}(k) \coloneqq Z\left(p_*(\EE \otimes q^* \mathcal{O}(k))\right)$ coincides with a linear polynomial for $k \gg 0$, and so we can define $a(\EE),b(\EE),c(\EE),d(\EE)\in\mb R$ by
\[
L_{\EE}(k) = a(\EE) k + b(\EE) + i (c(\EE) k + d(\EE)) \qquad \text{for }k\gg 0.
\]
\item
The following pair defines a weak pre-stability condition.
\[
\mathcal{A}_C \coloneqq \left\{ \EE \in D^b(X) \ \middle| \ p_* (\EE \otimes q^* \mathcal{O}(k)) \in \AA, k \gg 0  \right\}\]
\[
Z_C(\EE) \coloneqq \lim\limits_{k\rightarrow +\infty}\frac{Z(p_*(\EE \otimes q^* \OO(k)))}{k\cdot vol(\OO(1))} = a(\EE) + i c(\EE)
\]
\end{enumerate}
\end{prop}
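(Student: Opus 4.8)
The plan is to handle part (1) by a Grothendieck--Riemann--Roch computation and part (2) by invoking the Abramovich--Polishchuk ``constant family of t-structures'' construction and then checking the stability axioms against the given stability function on $S$.

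For part (1): I would compute the numerical class of $p_*(\EE\otimes q^*\OO(k))$ in $\Knum(D^b(S))$ and observe it is affine-linear in $k$. Since $C$ is a curve, $\ch(q^*\OO_C(k)) = 1 + k\,q^*c_1(\OO_C(1))$, and because $q^*c_1(\OO_C(1))$ squares to zero the product $\ch(\EE\otimes q^*\OO(k)) = \ch(\EE)\,\ch(q^*\OO_C(k))$ is affine-linear in $k$; applying GRR to the smooth projection $p$, whose relative Todd class $\td(q^*T_C)$ is pulled back from $C$, preserves this affine-linearity, with coefficient of $k$ equal to $\mathrm{vol}(\OO(1))\,\ch(L\iota_c^*\EE)$, where $\iota_c\colon S\cong S\times\{c\}\hookrightarrow X$ is the inclusion of a fibre of $q$. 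As $Z$ factors through $\Knum(D^b(S))$, the function $L_\EE(k) = Z(p_*(\EE\otimes q^*\OO(k)))$ is a polynomial of degree $\leq 1$ in $k$ (for all $k$, a fortiori for $k\gg 0$), and reading off coefficients defines $a(\EE),\dots,d(\EE)$. The same computation yields the identity used below:
\[
Z_C(\EE)\;=\;Z\bigl([L\iota_c^*\EE]\bigr)\qquad\text{for general }c\in C,
\]
i.e. the coefficient of $k$ in $L_\EE$, normalized by $\mathrm{vol}(\OO(1))$, equals $Z$ of the class of the restriction of $\EE$ to a general fibre of $q$.

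For part (2): the heart $\AA_C$ is the one furnished by the Abramovich--Polishchuk construction (\cite{abramovich2006sheaveststructuresvaluativecriteria}, \cite{Polishchuk2006ConstantFO}, \cite{liu21}) applied to the projective morphism $p\colon X\to S$ and the $p$-ample line bundle $q^*\OO_C(1)$: assuming, as in \emph{loc.\ cit.}, that $\AA$ is Noetherian, this yields a bounded t-structure on $D^b(X)$ with heart exactly $\{\EE : p_*(\EE\otimes q^*\OO(k))\in\AA,\ k\gg 0\}$; boundedness uses only that $D^b(S)$ is bounded together with relative Serre vanishing, which makes the $\AA$-cohomological amplitude of $p_*(\EE\otimes q^*\OO(k))$ stabilize for $k\gg 0$. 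Granting this, I would check (P) for $(\AA_C,Z_C)$ as follows. For $\EE\in\AA_C$ one has $p_*(\EE\otimes q^*\OO(k))\in\AA$ for $k\gg 0$, so $\mathrm{Im}\,L_\EE(k) = c(\EE)k+d(\EE)\geq 0$ for all large $k$ because $Z$ is a stability function on $\AA$; letting $k\to\infty$ gives $\mathrm{Im}\,Z_C(\EE) = c(\EE)\geq 0$. If $c(\EE) = 0$, then by part (1) $Z_C(\EE) = Z([L\iota_c^*\EE])$ with $L\iota_c^*\EE\in\AA$ for general $c$ --- the one place where one genuinely needs the \emph{fibrewise} nature of the Abramovich--Polishchuk heart, namely that an object of $\AA_C$ restricts into $\AA$ over a general point of $C$ --- and since $\mathrm{Im}\,Z(L\iota_c^*\EE) = c(\EE) = 0$ and $Z$ satisfies (P$'$), either $L\iota_c^*\EE = 0$ or $\mathrm{Re}\,Z(L\iota_c^*\EE) < 0$; in both cases $\mathrm{Re}\,Z_C(\EE)\leq 0$. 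The case $L\iota_c^*\EE = 0$ --- e.g. $\EE$ supported on finitely many fibres of $q$ --- is exactly why (P$'$) on $\AA$ yields only (P), not (P$'$), on $\AA_C$. Finally, (HN) for $(\AA_C,Z_C)$ follows from Noetherianity of $\AA_C$ (inherited from $\AA$ through the construction) and the standard Harder--Narasimhan criterion for weak stability functions (\cite{liu21}, \cite{MR3573975}), with the Serre subcategory of objects killed by $Z_C$ handled separately.

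The genuinely deep input is the Abramovich--Polishchuk theorem that $\AA_C$ is the heart of a bounded t-structure, which I would import rather than reprove. Within the remaining steps the delicate point is the boundary case of (P): it cannot be detected numerically and relies on an object of $\AA_C$ restricting into $\AA$ on a dense open subset of $C$ --- this is precisely what makes the family of t-structures ``constant in the fibres'', and it is also the source of the kernel of $Z_C$ on certain nonzero objects, hence of the passage from a stability function on $S$ to only a \emph{weak} (pre-)stability condition on $X$.
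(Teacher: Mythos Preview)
The paper does not prove this proposition at all: it is stated with attribution \cite[Theorem 3.3]{liu21} in a section explicitly devoted to \emph{recalling} constructions and results from that reference, and no argument is given. So there is nothing in the paper to compare your proof against.

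That said, your sketch is a faithful outline of the argument one finds in Liu's paper and its antecedents. Part~(1) is exactly the Grothendieck--Riemann--Roch computation the present paper later carries out in the torus case (Lemma~\ref{lem:grr} and Corollary~\ref{cor:coeffs}); your observation that $q^*c_1(\OO_C(1))$ squares to zero is precisely why linearity holds. For part~(2) you correctly identify the Abramovich--Polishchuk heart as the substantive input and isolate the one genuinely delicate step, namely that the weak positivity $\mathrm{Re}\,Z_C(\EE)\le 0$ in the boundary case $c(\EE)=0$ requires the fibrewise restriction property $L\iota_c^*\EE\in\AA$ rather than any purely numerical argument. Your explanation of why only (P) and not (P$'$) survives---objects supported on fibres of $q$---is also the right diagnosis. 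One small caution: your assertion that $L_\EE(k)$ is linear ``for all $k$'' is correct at the level of $K$-theory (derived pushforward), but the proposition's phrasing ``for $k\gg 0$'' is there because one may wish to read $p_*$ as the underived pushforward, which agrees with the derived one only after Serre vanishing kicks in; this is cosmetic.
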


As usual, in order to promote a weak stability condition to a stability condition on $X$, it is necessary to perform a tilting operation on the heart $\AA_C$. As for the case of surfaces, we first define a modified central charge and corresponding slope function depending on a parameter $t>0$:
\[
Z_t(\EE) \coloneqq a(\EE)t - d(\EE) + i c(\EE) t, \qquad \nu_t(\EE) \coloneqq \begin{cases}
  \frac{-a(\EE) t +d(\EE)}{c(\EE) t} , \quad &c(\EE) \neq 0 \\
\infty, & c(\EE) = 0
\end{cases}
\]
\begin{lemma}
The pair $(\AA_C, Z_t)$ is a weak pre-stability condition. In particular, it admits a torsion pair
\[
\AA_C = \langle \FF_t , \TT_t \rangle, \quad \mathcal{T}_t \coloneqq \left\{ \EE \in \mathcal{A}_S \ \middle\vert \ \nu_{t, min}(\EE) > 0 \right\}, \ \mathcal{F}_t \coloneqq \left\{ \EE \in \mathcal{A}_S \ \middle\vert \ \nu_{t,max}(\EE) \leq 0 \right\}
\]
where $\nu_{t,max}(\EE)$ and $\nu_{t,min}(\EE)$ are the slopes of the first and last HN factor of $\EE$, respectively.
\end{lemma}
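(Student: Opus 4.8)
The plan is to verify the two defining properties of a weak pre-stability condition for $(\AA_C, Z_t)$ --- the positivity axiom (P) and the Harder--Narasimhan property (HN) --- and then to obtain the torsion pair $(\FF_t, \TT_t)$ by truncating HN filtrations at slope $0$. Property (P) is immediate: since $\mathrm{Im}(Z_t(\EE)) = t\,c(\EE) = t\,\mathrm{Im}(Z_C(\EE))$ and $t > 0$, non-negativity of the imaginary part is inherited from $(\AA_C, Z_C)$; and if $\mathrm{Im}(Z_t(\EE)) = 0$ then $c(\EE) = 0$, so $L_\EE(k) = a(\EE) k + b(\EE) + i\,d(\EE)$ for $k \gg 0$. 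Since $p_*(\EE \otimes q^*\OO(k)) \in \AA$, this forces $d(\EE) = \mathrm{Im}(L_\EE(k)) \ge 0$, while $a(\EE) = \mathrm{Re}(Z_C(\EE)) \le 0$ by (P) for $(\AA_C, Z_C)$; hence $\mathrm{Re}(Z_t(\EE)) = a(\EE)t - d(\EE) \le 0$, as required.

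I expect (HN) to be the main obstacle. The decisive input is that the Abramovich--Polishchuk heart $\AA_C$ is Noetherian --- this is part of the construction of~\cite{abramovich2006sheaveststructuresvaluativecriteria} underlying the construction of $\AA_C$ --- together with the fact, established in the previous proposition, that $(\AA_C, Z_C)$ already satisfies (HN). The point is that $\mathrm{Im}(Z_t) = t\,\mathrm{Im}(Z_C)$, so the two functions have the same imaginary part up to the positive scalar $t$; in particular the finiteness data used to run the Harder--Narasimhan argument for $Z_C$ (the values of the imaginary part on subobjects of a fixed object lie in a bounded set, combined with Noetherianity of $\AA_C$) is unchanged, and the only new feature is the fixed linear term $-d(\EE)$ in $\mathrm{Re}(Z_t)$, which does not enter any of the finiteness inputs. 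I would therefore follow the proof of the $Z_C$-case in~\cite{liu21} essentially verbatim: one constructs a maximal $Z_t$-destabilizing subobject of an arbitrary $\EE \in \AA_C$ and iterates, the iteration terminating because the kernels of the successive quotient maps form an ascending chain in the Noetherian heart $\AA_C$. It is also worth recording at this point that any $\EE \in \AA_C$ with $c(\EE) = 0$ has all of its HN factors of slope $\nu_t = \infty$, since $c$ is non-negative and additive and hence vanishes on every subquotient of $\EE$; such $\EE$ are automatically $Z_t$-semistable, which is consistent with the convention $\nu_t = \infty$ in the statement.

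With (HN) in hand, the torsion pair is formal. For $\EE \in \AA_C$ with HN filtration $0 = \EE_0 \subset \EE_1 \subset \cdots \subset \EE_m = \EE$ and strictly decreasing slopes $\nu_t(\EE_1/\EE_0) > \cdots > \nu_t(\EE_m/\EE_{m-1})$, let $j$ be the largest index with $\nu_t(\EE_j/\EE_{j-1}) > 0$ (allowing $j = 0$ or $j = m$). Then $\EE_j \in \TT_t$ and $\EE/\EE_j \in \FF_t$, so $0 \to \EE_j \to \EE \to \EE/\EE_j \to 0$ is a short exact sequence of the required shape. The vanishing $\mathrm{Hom}(\TT_t, \FF_t) = 0$ holds because the image of any such morphism is simultaneously a quotient of an object of $\TT_t$ (hence has $\nu_{t,\min} > 0$) and a subobject of an object of $\FF_t$ (hence has $\nu_{t,\max} \le 0$), contradicting the general inequality $\nu_{t,\min} \le \nu_{t,\max}$ unless it is zero. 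Closure of $\TT_t$ under quotients and extensions, and of $\FF_t$ under subobjects and extensions, follows from the seesaw inequalities $\nu_{t,\min}(B) \ge \min\{\nu_{t,\min}(A), \nu_{t,\min}(C)\}$ and $\nu_{t,\max}(B) \le \max\{\nu_{t,\max}(A), \nu_{t,\max}(C)\}$ for a short exact sequence $0 \to A \to B \to C \to 0$, together with $\nu_{t,\min}(C) \ge \nu_{t,\min}(B)$ for quotients and $\nu_{t,\max}(A) \le \nu_{t,\max}(B)$ for subobjects. Hence $\AA_C = \langle \FF_t, \TT_t \rangle$, which completes the argument.
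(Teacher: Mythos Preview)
Your proposal is correct and supplies the details that the paper omits: the paper's proof is simply a citation to the paragraph above \cite[Proposition~4.6]{liu21}, and your argument---verify (P) directly from the relation $\mathrm{Im}(Z_t)=t\,\mathrm{Im}(Z_C)$, invoke Noetherianity of the Abramovich--Polishchuk heart together with the $Z_C$-argument for (HN), then truncate HN filtrations at slope $0$ for the torsion pair---is precisely what one finds there.
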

\begin{proof}
See the paragraph above~\cite[Proposition 4.6]{liu21}.
\end{proof}

Finally, we obtain a stability condition on $X$ by the standard tilting operation.
\begin{theorem}[{\cite[Proposition 4.6]{liu21}}]\label{thm:prodstab} For $s,t > 0$, the pair $(\AA_C^t,Z_C^{s,t})$ with
\[
\AA_C^t \coloneqq \langle \TT_t, \FF_t[1] \rangle, \quad
Z_C^{s,t}(\EE) \coloneqq c(\EE) s + b(\EE) + i (-a(\EE) t + d(\EE)), 
\]
yields a stability condition on $D^b(X)$.
\end{theorem}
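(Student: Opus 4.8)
The plan is to verify the three defining properties of a stability condition—(P$'$), (HN), and (SP)—for the pair $(\AA_C^t, Z_C^{s,t})$ obtained by tilting $\AA_C$ at the torsion pair $(\FF_t,\TT_t)$ associated to the slope function $\nu_t$. First I would confirm that $Z_C^{s,t}$ restricts to a stability function on the tilted heart $\AA_C^t$. The standard mechanism is this: on $\AA_C$, the function $Z_t(\EE)=a(\EE)t-d(\EE)+ic(\EE)t$ from the preceding lemma is a weak stability function with $\nu_t$ as its slope, and the torsion pair is defined precisely so that $\TT_t$ collects the objects with positive minimal $\nu_t$-slope and $\FF_t$ those with nonpositive maximal slope. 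After tilting, the ``new imaginary part'' of $Z_C^{s,t}$ is $-a(\EE)t+d(\EE)=-\mathrm{Re}(Z_t(\EE))$, which is $\geq 0$ on $\TT_t$ (slope $\geq 0$ means $\mathrm{Re}(Z_t)\leq 0$, using $c\geq 0$) and $\geq 0$ on $\FF_t[1]$; the objects on which it vanishes are the zero-slope pieces, where one must check the ``new real part'' $c(\EE)s+b(\EE)$ is negative—here the $s>0$ term is what forces strict negativity, using that $c(\EE)\geq 0$ on $\AA_C$ and that $b$ controls the subleading behaviour when $c=0$. This is where the hypothesis $s,t>0$ genuinely enters, and it is essentially the surface-case argument of \cite{liu21} adapted verbatim.

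Next I would establish (HN). Since $\AA_C^t$ is obtained from the Noetherian-enough heart $\AA_C$ by a single tilt, the Harder--Narasimhan property follows from the general criterion (as in \cite{bridgeland08}) once one knows there are no infinite chains of sub/quotient objects with strictly increasing/decreasing phase; concretely, one reduces to the HN property for $(\AA_C, Z_t)$ already recorded in the lemma above, together with the fact that the imaginary part $-a(\EE)t+d(\EE)$ of $Z_C^{s,t}$ takes values in a discrete subset on classes of bounded mass. I would cite \cite[Proposition 4.6]{liu21} for the verification that this discreteness holds for the Abramovich--Polishchuk heart $\AA_C$, and deduce HN for the tilt.

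The main obstacle is the support property (SP). The strategy is to produce an explicit quadratic form $Q$ on $\Knum(D^b(X))_{\mb R}$ that is negative definite on $\ker Z_C^{s,t}$ and nonnegative on all $Z_C^{s,t}$-semistable classes. I would build $Q$ out of two ingredients: the support-property quadratic form for the given stability condition $\sigma=(\AA,Z)$ on $D^b(S)$, pulled back via the assignment $\EE\mapsto(a(\EE),b(\EE),c(\EE),d(\EE))$, and the Bogomolov-type inequality among $a,b,c,d$ that holds for $Z_t$-semistable objects in $\AA_C$ (the analogue of the classical discriminant inequality, proved in \cite{liu21} via the Hodge index theorem on the curve fibres). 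Combining these—semistability in $\AA_C^t$ implies, after passing to $Z_t$-HN factors in $\AA_C$, control on the $Z_t$-discriminant, which in turn feeds into $Q$—yields a quadratic form satisfying the support property for $(\AA_C^t, Z_C^{s,t})$. The delicate point is checking that $Q$ remains negative definite on the kernel of the tilted central charge (not just the untilted one) and that the sign conventions survive the shift by $[1]$ on $\FF_t$; I would handle this exactly as in \cite[Proposition 4.6]{liu21}, to which the bulk of the verification can be deferred, our contribution being only to record that the construction applies in the present generality of $X=S\times C$ with $C$ a curve.
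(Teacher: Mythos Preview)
The paper does not prove this statement: it is quoted verbatim as \cite[Proposition 4.6]{liu21} and used as a black box. Your proposal is a plausible outline of what the proof in \cite{liu21} involves, but there is nothing in the present paper to compare it against---the authors simply cite the result and move on, so no proof sketch is required here.
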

Let $\Lambda\coloneqq \Knum(S)$, and we note the following factorization property:
\begin{lemma}[{\cite[Lemma 5.2]{liu21}}]\label{lem:factor}
The central charge $Z_{C}^{s,t}$ factors as follows
\[
\begin{tikzcd}
\Knum(X ) \arrow[r,"\begin{pmatrix} v_1 \\ v_2\end{pmatrix}"] & \Lambda \oplus \Lambda\arrow[r,"\phi"] & \mathbb{C} 
\end{tikzcd}
\]
where $ \phi = (s \mathrm{Im}(g) - i t \mathrm{Re}(g), g)$ and $v_1, v_2:\Knum(X)\to\Lambda$ are given as follows:
\begin{align*}
v_1(\EE) &= v(p_*(\EE \otimes q^* \OO(k))) - v(p_*(\EE \otimes q^* \OO(k-1))) \\
v_2(\EE) &= v(p_*(\EE \otimes q^* \OO(k))) - kv_1(\EE).
\end{align*}
In particular, we have the equalities
\[
a(\EE) = \mathrm{Re}(g) \circ v_1, \ b(\EE) = \mathrm{Re}(g) \circ v_2, \ c(\EE) = \mathrm{Im}(g) \circ v_1, \ d(\EE) = \mathrm{Im}(g) \circ v_2.
\]
\end{lemma}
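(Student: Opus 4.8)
The plan is to identify the two homomorphisms $v_1,v_2\colon\Knum(X)\to\Lambda$ intrinsically and then match real and imaginary parts. Choose $\OO(1)=\OO_C(x)$ for a point $x\in C$, so that the structure sequence $0\to\OO_C(k-1)\to\OO_C(k)\to\OO_C(k)|_x\to 0$ pulls back along $q$ and, after tensoring with $\EE$ (line bundles are flat) and applying $Rp_*$, gives an exact triangle in $D^b(S)$. Passing to classes in $\Knum(S)=\Lambda$ yields
\[
v\bigl(p_*(\EE\otimes q^*\OO(k))\bigr)-v\bigl(p_*(\EE\otimes q^*\OO(k-1))\bigr)=v\bigl(p_*(\EE\otimes q^*\OO_x)\bigr),
\]
which is manifestly independent of $k$. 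Hence $v_1(\EE)$ as defined in the statement equals $v(p_*(\EE\otimes q^*\OO_x))$, i.e. the class of $Li_x^*\EE$ where $i_x\colon S\cong S\times\{x\}\hookrightarrow X$; the same telescoping identity shows $v_2(\EE)=v(p_*(\EE\otimes q^*\OO(k)))-k\,v_1(\EE)$ is independent of $k$ as well. Both $v_1$ and $v_2$ are additive on $\Knum(X)$ because $\otimes q^*(-)$, $Rp_*$, and passage to numerical classes are additive. (Alternatively, one gets the affine-linear-in-$k$ behaviour directly from Grothendieck--Riemann--Roch for $p\colon S\times C\to S$, using $\ch(\OO_C(k))=1+k[x]$ and $[x]^2=0$.)

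With $v_1,v_2$ in hand, write $g=Z\colon\Lambda\to\mb C$ for the fixed stability function on $S$ (the source uses $g$ for this $Z$); additivity of $g$ then gives, for $k\gg0$,
\[
L_{\EE}(k)=g\bigl(v(p_*(\EE\otimes q^*\OO(k)))\bigr)=g\bigl(v_2(\EE)+k\,v_1(\EE)\bigr)=g(v_2(\EE))+k\,g(v_1(\EE)).
\]
Comparing with $L_{\EE}(k)=a(\EE)k+b(\EE)+i(c(\EE)k+d(\EE))$ and separating real and imaginary parts yields exactly
\[
a(\EE)=\mathrm{Re}(g)(v_1(\EE)),\quad c(\EE)=\mathrm{Im}(g)(v_1(\EE)),\quad b(\EE)=\mathrm{Re}(g)(v_2(\EE)),\quad d(\EE)=\mathrm{Im}(g)(v_2(\EE)),
\]
which are the ``in particular'' equalities.

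Finally, substitute these into the definition of $Z_C^{s,t}$ from Theorem~\ref{thm:prodstab}:
\[
Z_C^{s,t}(\EE)=c(\EE)s+b(\EE)+i\bigl(-a(\EE)t+d(\EE)\bigr)=s\,\mathrm{Im}(g)(v_1(\EE))+\mathrm{Re}(g)(v_2(\EE))+i\bigl(-t\,\mathrm{Re}(g)(v_1(\EE))+\mathrm{Im}(g)(v_2(\EE))\bigr),
\]
and observe that the right-hand side is precisely $\phi\bigl(v_1(\EE),v_2(\EE)\bigr)$ for $\phi(w_1,w_2)=\bigl(s\,\mathrm{Im}(g)-i\,t\,\mathrm{Re}(g)\bigr)(w_1)+g(w_2)$, since this $\phi(v_1,v_2)$ has real part $s\,\mathrm{Im}(g)(v_1)+\mathrm{Re}(g)(v_2)$ and imaginary part $-t\,\mathrm{Re}(g)(v_1)+\mathrm{Im}(g)(v_2)$, matching term by term. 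This establishes the factorization $Z_C^{s,t}=\phi\circ(v_1,v_2)$. The only genuinely non-formal point is the first one — that the numerical class of $p_*(\EE\otimes q^*\OO(k))$ is an honest affine-linear function of $k$ with $k$-independent coefficients, not merely that $L_{\EE}(k)$ is eventually linear — and I expect this to be the main thing requiring care; once it is in place, the remainder is bookkeeping with real and imaginary parts.
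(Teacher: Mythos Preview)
Your proof is correct. The paper does not give its own proof of this lemma---it is stated as a citation of \cite[Lemma 5.2]{liu21}---so there is no in-paper argument to compare against. Your structure-sequence approach for the $k$-independence of $v_1$ (and hence of $v_2$) is the natural one, and once that is established the remaining identifications of $a,b,c,d$ and the factorization $Z_C^{s,t}=\phi\circ(v_1,v_2)$ are, as you say, bookkeeping with real and imaginary parts.
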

\begin{rem}\label{rem:quadratic}
$Z_{C}^{s,t}$ satisfies the support property on $\Lambda \oplus \Lambda/\ker(g)$ with respect to the quadratic form
\[
b(\EE) c(\EE) -  a(\EE) d(\EE) + \eta Q(v_1(\EE))
\]
for sufficiently small $\eta >0$. See~\cite[Lemma 5.7 \& Remark 5.8]{liu21}.
\end{rem}
\begin{rem}
One can write the factorized central charge in the suggestive form
\[
\phi = ( \tau \cdot g , g ), \quad \tau = \begin{pmatrix}
0 & s \\
-t & 0
\end{pmatrix}
\]
More generally, $\tau$ should admit values in $GL^+(2,\mathbb{R})$.
\end{rem}

Finally, we note that Theorem~\ref{thm:prodstab} implies the existence of a stability condition on the product $C_1 \times \ldots \times C_n$, of arbitrary smooth projective curves of genus $g \geq 1$. A particularly useful statement in this setting, is that stability conditions on such varieties are uniquely determined by their central charges, together with the phase of any skyscraper sheaf.

\begin{theorem}[{\cite[Theorem 6.9]{MR4469233}, \cite{LPMSZ}}]\label{thm:injective}
Let $X = C_1 \times  \ldots \times C_n$ be a product of smooth curves of positive genus. Then all (numerical) stability conditions are geometric, in other words, all
skyscraper sheaves are stable. Moreover, a stability condition is uniquely determined by its central
charge and the phase of skyscraper sheaves. Namely, the forgetful map
\begin{align*}
\mathrm{Stab}(X) &\rightarrow \mathrm{Hom}_{\mathbb{Z}}(\Knum(X), \mathbb{C}) \times \mathbb{R} \\
\sigma =(Z, \PP) & \mapsto (Z, \varphi(\OO_p))
\end{align*}
is injective.
\end{theorem}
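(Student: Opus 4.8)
The plan is to establish the two assertions in turn --- geometricity of every numerical stability condition, and then, using it, injectivity of the forgetful map --- by induction on the number $n$ of curve factors. The base case $n=1$ is the classical identification $\mathrm{Stab}(C)\cong\widetilde{\mathrm{GL}}^+(2,\mathbb{R})$ for a smooth curve $C$ of genus $\geq 1$, in which every stability condition is geometric and is determined by its central charge together with $\varphi(\OO_p)$.

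For the inductive step, write $X=S\times C$ with $S=C_1\times\cdots\times C_{n-1}$ and $C=C_n$, and let $\sigma=(Z,\PP)\in\mathrm{Stab}(X)$. Acting by $\widetilde{\mathrm{GL}}^+(2,\mathbb{R})$ we may assume $\OO_p\in\PP(0,1]$; since $[\OO_p]\in\Knum(X)$ is independent of $p$, this holds simultaneously for all points, with $Z(\OO_p)$ a single complex number. To prove geometricity I would show $\OO_p$ has no destabilizing walls: a short exact sequence $0\to A\to\OO_p\to B\to 0$ in $\PP(0,1]$ with $A,B\neq 0$ and $\varphi(A)>\varphi(B)$ satisfies $[A]+[B]=[\OO_p]$ in $\Knum(X)$, and I would combine the Bogomolov--Gieseker-type inequalities available on products of curves --- the quadratic forms of Remark~\ref{rem:quadratic} and of Part~\ref{part:bmodel}, together with Hodge-index constraints on $\Knum(X)$ --- to conclude that no class $[A]$ compatible with the support property of $\sigma$ can occur. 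That the geometric locus is nonempty is visible from the product stability conditions of Theorem~\ref{thm:prodstab}: since $p_*(\OO_p\otimes q^*\OO(k))$ is the skyscraper $\OO_{p_S}$ at the image $p_S\in S$ of $p$, the object $\OO_p$ lands in the tilted heart $\AA_C^t$ with central charge $Z(\OO_{p_S})$, and it is stable there because $\OO_{p_S}$ is stable on $S$ (which is geometric by induction). Once $\OO_p$ is semistable it is automatically stable: it is simple with $\mathrm{Hom}^{<0}(\OO_p,\OO_p)=0$ and $[\OO_p]$ is primitive (its top Chern component equals the length), so a proper Jordan--H\"older subobject of the same phase would again have a class excluded by the above inequality. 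Since $Z(\OO_p)$ is $p$-independent, all skyscrapers then have a single common phase and $\sigma$ is geometric.

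Granting geometricity, injectivity is the statement that two geometric stability conditions with the same central charge $Z$ and the same $\varphi(\OO_p)$ coincide. By Bridgeland's deformation theorem $\mathrm{Stab}(X)\to\mathrm{Hom}(\Knum(X),\mathbb{C})$ is a local homeomorphism, so it suffices to reconstruct the slicing $\PP$ of a geometric $\sigma$ from the pair $(Z,\varphi(\OO_p))$. Normalizing $\varphi(\OO_p)=1$ so that $\PP(0,1]$ contains the finite-length sheaves, I would argue that $\PP(0,1]$ is an iterated tilt of $\mathrm{Coh}(X)$ whose tilting data is forced by $Z$: restricting $\sigma$ along the projections $X\to S$ and $X\to C$ should recover geometric stability conditions on $S$ and $C$ whose central charges and skyscraper phases are the restrictions of $(Z,\varphi(\OO_p))$, hence are determined by the inductive hypothesis and the $n=1$ case; combining these with the Abramovich--Polishchuk description of the heart $\AA_C$ in terms of fibrewise pushforwards --- the remaining tilt parameter $\tau\in\mathrm{GL}^+(2,\mathbb{R})$ being read off from $Z$ on classes pulled back from $C$ --- and with equivariance under the autoequivalence group, one pins down the phases of a generating family of objects and therefore $\PP$.

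The main obstacle is the wall-avoidance statement for the point class: one must produce, uniformly over all numerical stability conditions on $X$, a Bogomolov--Gieseker-type inequality sharp enough to exclude every destabilizer of $\OO_p$. This --- and the precise description of geometric hearts as iterated tilts that underlies the injectivity argument --- is exactly where the special geometry of products of curves of positive genus enters; the base case and the bookkeeping of the $\widetilde{\mathrm{GL}}^+(2,\mathbb{R})$- and autoequivalence-actions are comparatively formal.
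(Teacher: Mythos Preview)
The paper does not contain a proof of this theorem. It is stated with attribution to \cite[Theorem 6.9]{MR4469233} and \cite{LPMSZ}, and the authors explicitly note in Section~1.5 that their arguments ``rely on Theorem~\ref{thm:injective} from~\cite{LPMSZ} which is unpublished at the time of writing, though the proof of it is complete.'' There is therefore no in-paper proof to compare your proposal against; the result is imported as a black box and used in Propositions~\ref{prop:fullsupport}, Lemma~\ref{lem:PZimage}, and Lemma~\ref{lem:glue}.

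As for the sketch itself: the inductive framing and the $n=1$ base case are fine, and it is true that the product stability conditions of Theorem~\ref{thm:prodstab} are geometric. But the heart of both assertions is precisely where your sketch becomes programmatic. For geometricity you invoke ``the Bogomolov--Gieseker-type inequalities available on products of curves'' and the quadratic forms of Remark~\ref{rem:quadratic}; those forms, however, are attached to \emph{specific} stability conditions (the product ones and their deformations), not to an arbitrary numerical $\sigma$, so they cannot by themselves exclude destabilizers of $\OO_p$ for a $\sigma$ you have not yet connected to that family. Likewise, for injectivity you propose to ``restrict $\sigma$ along the projections $X\to S$ and $X\to C$'': there is no general restriction functor on stability conditions, and the Abramovich--Polishchuk heart description applies to constructions starting from a stability condition on $S$, not to dismantling an unknown $\sigma$ on $X$. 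You correctly flag the wall-avoidance step as the main obstacle; in the cited works this is handled by an argument specific to products of positive-genus curves that does not reduce to the support-property inequalities appearing in this paper.
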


\subsection{Central charges on products of tori}

In this section, we will take $S = E^{n-1}$ and $C = E$, where $E$ is an elliptic curve without complex multiplication. 
As before, $p$ and $q$ are the projections to the first and second factor of $X=S\times C$, respectively.  

We will explicitly compute the central charge of the product stability condition on $X$.  We take $D$ to be a divisor on $S$, and $H$ to be the class of $\OO(1)$ on $C$. We will abuse notation and use the same divisor class to denote its possible pullbacks; the ambient variety will be clear from context. The goal of this section is to establish the following:

\begin{prop}\label{prop:gcharge}
Let $\sigma \in \mathrm{Stab}(S)$ be a stability condition with central charge 
\[
Z = \begin{pmatrix} 
1+ \alpha & 0 \\
0 & 1-\alpha
\end{pmatrix} \cdot \langle \mathrm{exp}(i \beta  D),\  \cdot \ \rangle
\]
where $0 \leq \alpha < 1 $ and $\beta > 0$.
Then for $s = t = \beta$, the corresponding product stability condition has central charge:
\[
Z_C = \langle \mathrm{exp}(it (D + H))+ \alpha \, \mathrm{exp}(it (-D + H)), \ \cdot\  \rangle.
\]
\end{prop}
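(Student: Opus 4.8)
The plan is to reduce the asserted identity to the factorization of the product central charge recorded in Lemma~\ref{lem:factor}, together with a Grothendieck--Riemann--Roch computation, using crucially that every Todd class occurring here is trivial. The first step is to specialize Lemma~\ref{lem:factor} to $s=t=\beta$. There $\phi=(s\,\mathrm{Im}(g)-it\,\mathrm{Re}(g),\,g)$ with $g=Z$, and setting $s=t$ collapses the first component to $-it\,g$ (since $-it\,\mathrm{Re}(g)(x)+t\,\mathrm{Im}(g)(x)=-it\,g(x)$), so that $Z_C^{t,t}(\EE)=Z(v_2(\EE))-it\,Z(v_1(\EE))$ for every $\EE$, where $v_1,v_2\colon\Knum(X)\to\Knum(S)$ are as in Lemma~\ref{lem:factor}. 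It then remains to compute $v_1,v_2$ and to match this with the claimed pairing.

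Next I would compute $v_1,v_2$ at the level of Chern characters. Since $C=E$ is an elliptic curve, the relative tangent bundle $q^*T_C$ of $p\colon X=S\times C\to S$ is trivial, so Grothendieck--Riemann--Roch reads simply $\ch(p_*\FF)=p_*\ch(\FF)$. Applying this to $\FF=\EE\otimes q^*\OO(k)$ and expanding $\ch(\OO(k))=1+kH$ on the curve shows that $\ch(p_*(\EE\otimes q^*\OO(k)))=p_*\ch(\EE)+k\,p_*(\ch(\EE)\cdot q^*H)$ is already linear in $k$ as a cohomology class, whence $v_1(\EE)=p_*(\ch(\EE)\cdot q^*H)$ and $v_2(\EE)=p_*\ch(\EE)$. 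I would note that the ``$k\gg 0$'' hypotheses in Lemma~\ref{lem:factor} serve only to make the pushforwards honest sheaves and are irrelevant to these numerical classes.

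For the right-hand side, I would expand the target vector over the product. Because $D$ and $H$ are pulled back from the two factors and $H^2=0$ on $C$, one has $\exp(it(\pm D+H))=p^*\exp(\pm itD)\cdot(1+it\,q^*H)$, so the target vector is $\Omega_C=p^*A+it\,(p^*A)\cdot q^*H$ with $A=\exp(itD)+\alpha\exp(-itD)$. The key observation is that pairing with $A$ on $S$ recovers $Z$: since $D$ is a real divisor class one has $\exp(-itD)=\overline{\exp(itD)}$, and the pairing is defined over $\mathbb{R}$, so for a real class $w$ one gets $\langle A,w\rangle=\langle\exp(itD),w\rangle+\alpha\,\overline{\langle\exp(itD),w\rangle}$, which, using $t=\beta$, is precisely $\mathrm{diag}(1+\alpha,1-\alpha)$ applied to $\langle\exp(i\beta D),w\rangle$, i.e.\ $Z(w)$.

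The last step is to pair $\Omega_C$ against $\ch(\EE)$ over $X$, writing the pairing as $\langle u,v\rangle=\int u^\vee v$ (the Euler pairing on the abelian variety, whose Todd class is trivial), and using that the duality involution is a ring homomorphism with $(q^*H)^\vee=-q^*H$. The projection formula $\int_X p^*\eta\cdot\xi=\int_S\eta\cdot p_*\xi$ then gives $\langle p^*A,\ch(\EE)\rangle_X=\langle A,v_2(\EE)\rangle_S$ and $\langle it\,(p^*A)q^*H,\ch(\EE)\rangle_X=-it\,\langle A,v_1(\EE)\rangle_S$; combining with $\langle A,\cdot\rangle=Z$ from the previous step yields $\langle\Omega_C,\ch(\EE)\rangle=Z(v_2(\EE))-it\,Z(v_1(\EE))=Z_C^{t,t}(\EE)$, as desired. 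The computation is essentially bookkeeping; the one point requiring care is tracking the sign conventions in the duality involution on $\Knum(E)$ and in the Künneth decomposition of the pairing, so that the cross term enters with $-it$ rather than $+it$.
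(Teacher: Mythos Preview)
Your argument is correct, and it takes a cleaner route than the paper. The paper proceeds componentwise: it splits the product central charge as $Z_C=Z_{C_1}+Z_{C_2}$ according to the $\alpha$-independent and $\alpha$-dependent pieces of $g$, expands each using the explicit formulas of Corollary~\ref{cor:coeffs} as sums $\sum_j \frac{(-it)^j}{j!}D^j\ch_{n-j}$ (and the analogue with $H$), and then reassembles using the binomial identity $(H\pm D)^j=(\pm D)^j+jH(\pm D)^{j-1}$ from Lemma~\ref{lem:int} to recognise the two exponentials $\exp(it(H\pm D))$. Your approach bypasses the explicit sums entirely: you first observe that for $s=t$ the factorization of Lemma~\ref{lem:factor} collapses to $Z_C^{t,t}=g(v_2)-it\,g(v_1)$, then identify $g(\,\cdot\,)=Z(\,\cdot\,)=\langle A,\,\cdot\,\rangle_S$ on real classes via the complex-conjugation trick $\exp(-itD)=\overline{\exp(itD)}$, and finally match the two sides using K\"unneth and the projection formula for the Mukai pairing. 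What the paper's method buys is that it never specializes to $s=t$ and makes the degreewise structure transparent, which is useful later in Section~\ref{sec:quadratic}; what yours buys is brevity and a coordinate-free identification of the two pairings, at the cost of the sign bookkeeping you flag for the duality involution on $q^*H$.
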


In order to compute the projection vectors $v_1(\EE), v_2(\EE)$ of Lemma~\ref{lem:factor}, we first note the following. 
\begin{lemma}\label{lem:grr}
For sufficiently large $k$, there is an equality
\[
\ch(p_*(\EE \otimes q^* \OO(k))) = p_* \ch(\EE \otimes q^* \OO(k)).
\]
\end{lemma}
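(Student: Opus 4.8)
The plan is to recognize Lemma~\ref{lem:grr} as a special case of Grothendieck--Riemann--Roch that degenerates to a triviality because the relative tangent bundle of $p$ is trivial. The only step requiring a word of care --- and hardly an obstacle --- is reconciling the derived pushforward that appears in GRR with the underived $p_*$ in the statement, which is exactly what the hypothesis $k \gg 0$ takes care of.

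First I would record that $p \colon X = S \times E \to S$ is a smooth projective morphism with relative tangent bundle $T_p = q^* T_E$. Since $E$ is an elliptic curve, $T_E \cong \OO_E$, so $T_p \cong \OO_X$ and $\td(T_p) = 1$; equivalently, on the abelian variety $X = E^n$ both $T_X$ and $p^* T_S$ are trivial, hence so is $T_p$. Grothendieck--Riemann--Roch for $p$, applied to an arbitrary object $\GG \in D^b(X)$, then reads
\[
\ch(Rp_*\GG) = p_*\!\bigl(\ch(\GG)\cdot\td(T_p)\bigr) = p_*\ch(\GG),
\]
where on the right $p_*$ is proper pushforward on rational Chow groups (equivalently, on cohomology). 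Taking $\GG = \EE \otimes q^*\OO(k)$ gives the asserted identity, with $p_*$ on the left read as the derived pushforward $Rp_*$.

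It remains to match this with the statement as written, i.e.\ with $p_*$ the underived pushforward. Since $q^*\OO_E(1)$ is $p$-ample, relative Serre vanishing gives $R^i p_*\bigl(\FF \otimes q^*\OO(k)\bigr) = 0$ for every coherent sheaf $\FF$ on $X$, every $i > 0$, and every $k \gg 0$. Applying this to the finitely many nonzero cohomology sheaves $\mathcal{H}^j(\EE)$, we find that for $k \gg 0$ the object $Rp_*(\EE \otimes q^*\OO(k))$ has cohomology sheaves $p_*\bigl(\mathcal{H}^j(\EE)\otimes q^*\OO(k)\bigr)$ computed without deriving; hence the derived and naive pushforwards agree termwise on cohomology and therefore have the same Chern character, and the lemma follows. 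The substance of the argument is thus just GRR together with the vanishing of the Todd correction; I do not expect any genuine difficulty.
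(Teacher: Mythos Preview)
Your proposal is correct and follows essentially the same route as the paper: triviality of the Todd class (since the tangent bundle of an elliptic curve is trivial), Grothendieck--Riemann--Roch, and the observation that for $k\gg 0$ the derived and underived pushforwards agree. You spell out the last step via relative Serre vanishing a bit more explicitly than the paper does, but the argument is the same.
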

\begin{proof}
We first note that the tangent bundle of an elliptic curve is trivial. In particular, the Todd classes of $X, S,$ and $C$ are trivial. By Grothendieck--Riemann--Roch, we have, for $k\gg 0$, an equality
\[
 p_* \ch(\EE\otimes q^* \OO(k)) = \ch(p_! (\EE \otimes q^* \OO(k))) = \ch(p_* (\EE \otimes q^* \OO(k)))
\]
which proves the claim.
\end{proof}

Let $\EE \in D^b(X)$ be an object and
\[
\ch(\EE) = (\ch_0, \ldots , \ch_n).
\]
We then have the following expansion of the Mukai pairing with the exponential of a complexified K\"ahler class $itF$. 
\begin{lemma}\label{lem:expF}
Let $F$ be a divisor on $X$. Then the Mukai pairing takes the following form:
\begin{align*}
\langle \mathrm{exp}(i t F) , \ch(\EE) \rangle = & \sum\limits_{j=0}^n \frac{(-it)^j}{j!} F^j\ch_{n-j}
\end{align*}
\end{lemma}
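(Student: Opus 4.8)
The plan is to unwind the definition of the Mukai pairing and carry out a one-line Chern-character computation. As was already used in the proof of Lemma~\ref{lem:grr}, every factor of $X=S\times C=E^{n-1}\times E$ has trivial tangent bundle, so $\td(X)=1$; hence the Mukai vector of an object of $D^b(X)$ coincides with its Chern character, and the Mukai pairing reduces to
\[
\langle v, w \rangle \;=\; \int_X v^\vee \cdot w,
\]
where $v^\vee$ is obtained from $v$ by multiplying its degree-$2i$ component by $(-1)^i$ and $\int_X$ extracts the degree-$2n$ component (with the sign normalization of the pairing in force in the paper).

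First I would observe that the duality operator sends $\exp(itF)$ to $\exp(-itF)$: since $F$ is a divisor, $\exp(itF)=\sum_{j\ge 0}\frac{(itF)^j}{j!}$ has degree-$2j$ component $\frac{(itF)^j}{j!}$, and multiplying by $(-1)^j$ replaces $(itF)^j$ by $(-itF)^j$. Substituting this into the pairing and multiplying out against $\ch(\EE)=\sum_m\ch_m(\EE)$, only the terms with $j+m=n$ survive in top degree, so
\[
\langle \exp(itF),\ch(\EE)\rangle \;=\; \int_X \exp(-itF)\cdot\ch(\EE) \;=\; \sum_{j=0}^{n}\frac{(-it)^j}{j!}\,F^j\ch_{n-j}(\EE),
\]
the last expression being read as the intersection numbers $\int_X F^j\ch_{n-j}(\EE)$; the sum is automatically finite, since $F^j$ and $\ch_m(\EE)$ vanish for $j,m>n$.

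I expect no genuine obstacle here: the entire content is the identity $\exp(itF)^\vee=\exp(-itF)$ together with bilinearity and extraction of the top-degree part. The only points requiring care are conventional ones — pinning down the exact normalization of the Mukai pairing (the placement of the overall sign, and whether one writes $\td(X)$ or $\td(X)^{1/2}$, both immaterial once $\td(X)=1$) and remembering that in the displayed formula ``$F^j\ch_{n-j}$'' denotes a number rather than a cohomology class, consistently with the way central charges are written elsewhere in the paper (e.g.\ Section~\ref{subsec:E3corr}).
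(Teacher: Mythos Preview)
Your proof is correct and follows essentially the same approach as the paper: both reduce to the observation that $\exp(itF)^\vee=\exp(-itF)$ (since the degree-$2j$ component picks up a sign $(-1)^j$), then pair componentwise and extract the top-degree term. The paper's proof is simply the terse version of what you wrote, displaying the components of $\exp(-itF)$ and $\ch(\EE)$ and reading off the sum.
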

\begin{proof}
Evaluating directly, we obtain
\begin{align*}
\langle \mathrm{exp}(i t F) , \ch(\EE)\rangle &= \left(1, -it F, \frac{(-it)^2}{2!}F^2, \ldots, \frac{(-it)^n}{n!} F^n \right) \cdot (\ch_0, \ldots, \ch_n) \\
&=  \sum\limits_{j=0}^n \frac{(-it)^j}{j!} F^j\ch_{n-j}
\end{align*}
as desired.
\end{proof}

Next, we compute the maps $v_1, v_2$, as well as the asymptotic coefficients $a( \EE)$, $b(\EE)$, $c(\EE)$, $d(\EE)$ of Lemma~\ref{lem:factor}, which together determine the central charge $Z_C^{s,t}$. We recall the definitions:
\begin{equation}\label{eqn:v1v2}
\begin{split}
v_1(\EE) &\coloneqq v(p_*(\EE \otimes q^* \OO(k))) - v(p_*(\EE \otimes q^* \OO(k-1))) \\
v_2(\EE) &\coloneqq v(p_*(\EE \otimes q^* \OO(k))) - kv_1(\EE) \\
a(\EE) = \mathrm{Re}(g) \circ v_1, &\ b(\EE) = \mathrm{Re}(g) \circ v_2, \ c(\EE) = \mathrm{Im}(g) \circ v_1, \ d(\EE) = \mathrm{Im}(g) \circ v_2.
\end{split}
\end{equation}

\begin{corollary}\label{cor:coeffs}
The maps $v_1,v_2$ can be written as
\[
v_1(\EE) = \big( p_*(H \ch_0) , p_*(H \ch_1) , \ldots , p_*(H \ch_{n-1}) \big), \ \ 
v_2(\EE) = \big(p_*(\ch_1) , p_*( \ch_2) , \ldots , p_*(\ch_n) \big).
\]
Assume $g \coloneqq \langle \mathrm{exp}(it D) , \cdot \rangle$. Then there are equalities:
\begin{align*}
a(\EE) + i c(\EE) =H \sum\limits_{j = 1}^{n} \frac{(-it)^{j-1}}{(j-1)!} D^{j-1}\ch_{n-j},\quad b(\EE) + i d(\EE) = \sum\limits_{j = 0}^{n-1} \frac{(-it)^{j}}{j!}D^{j} \ch_{n-j}.
\end{align*}

\end{corollary}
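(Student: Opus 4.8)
The plan is to reduce the entire statement to Grothendieck--Riemann--Roch plus formal manipulations with Chern characters, exploiting that all Todd classes here are trivial (exactly as in Lemma~\ref{lem:grr}).

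\textbf{Step 1: the vectors $v_1,v_2$.} First I would compute $\ch(\EE\otimes q^*\OO(k))=\ch(\EE)\cdot e^{kH}$. Since $H$ is pulled back from the curve $C$ we have $H^2=0$, so $e^{kH}=1+kH$ and $\ch(\EE\otimes q^*\OO(k))=\ch(\EE)+kH\ch(\EE)$. Applying $p_*$ and invoking Lemma~\ref{lem:grr} (valid for $k\gg0$) gives
\[
v(p_*(\EE\otimes q^*\OO(k))) = p_*\ch(\EE) + k\, p_*(H\ch(\EE)), \qquad k\gg 0.
\]
This expression is affine in $k$, so substituting into the definitions~\eqref{eqn:v1v2} of $v_1,v_2$ immediately yields $v_1(\EE)=p_*(H\ch(\EE))$ and $v_2(\EE)=p_*\ch(\EE)$. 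Writing $\ch(\EE)=(\ch_0,\dots,\ch_n)$ by cohomological degree and tracking degrees under $p_*\colon H^{2j}(X)\to H^{2j-2}(S)$: the term $\ch_0$ is killed in $v_2$, while $H\ch_n$ vanishes for degree reasons and so disappears from $v_1$. Under the Chern character identification $\Knum(S)\otimes\mathbb{Q}\cong\bigoplus_{j=0}^{n-1}H^{2j}(S;\mathbb{Q})$ this is precisely the claimed description of $v_1$ and $v_2$.

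\textbf{Step 2: the coefficients.} The relations in~\eqref{eqn:v1v2} combine to $a(\EE)+ic(\EE)=g(v_1(\EE))$ and $b(\EE)+id(\EE)=g(v_2(\EE))$, since $g=\mathrm{Re}(g)+i\,\mathrm{Im}(g)$ and $v_1(\EE),v_2(\EE)$ are real (integral) classes. I would then plug in $g=\langle\exp(itD),\cdot\rangle$, expand the Mukai pairing via the $S$-dimensional analogue of Lemma~\ref{lem:expF}, and use the projection formula to pull the powers $D^j$ (which come from $S$) across $p_*$, so that $D^j p_*(H\ch_{n-1-j})$ becomes (the top-degree coefficient of) $HD^j\ch_{n-1-j}$ on $X$, and likewise for $v_2$. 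A reindexing $j\mapsto j-1$ in the $v_1$-computation matches the stated formula for $a+ic$, and the $v_2$-computation gives $b+id$ directly.

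The argument is essentially bookkeeping and I do not expect a genuine obstacle; the only points needing care are the cohomological degree shifts under $p_*$ (so that the correct truncation of $\ch(\EE)$ appears in each $v_i$) and consistently adhering to the sign/duality convention for the Mukai pairing fixed by Lemma~\ref{lem:expF}.
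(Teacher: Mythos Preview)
Your proposal is correct and follows essentially the same route as the paper: both compute $\ch(p_*(\EE\otimes q^*\OO(k)))$ via Lemma~\ref{lem:grr} together with $H^2=0$ to read off $v_1,v_2$ as the linear and constant terms in $k$, and then pair with $\exp(itD)$ using Lemma~\ref{lem:expF} and the projection formula. The only cosmetic difference is that the paper writes $\ch(q^*\OO(k))=(1,kH,0,\ldots,0)$ and multiplies componentwise, whereas you write $e^{kH}=1+kH$, but these are the same computation.
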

\begin{proof}
We have
\begin{align*}
\ch(p_*( \EE \otimes q^*\OO(k))) &= p_*(\ch(\EE) \cdot \ch(q^*\OO(k))) \\
&= p_*( (\ch_0, \ldots , \ch_n) \cdot (1, kH ,0 , \ldots, 0 )) \\
&= p_*( \ch_0, kH \ch_0 + \ch_1, kH \ch_1 + \ch_2, \ldots, kH\ch_{n-1}+  \ch_{n}) \\
&= k \big( p_*(H \ch_0) , p_*(H \ch_1) , \ldots , p_*(H \ch_{n-1}) \big) + \big(p_*(\ch_1) , p_*( \ch_2) , \ldots , p_*(\ch_n) \big)
\end{align*}
which gives the desired expansion for the vectors $v_1$ and $v_2$.

From equation~(\ref{eqn:v1v2}), the asymptotic coefficients can be expressed as
\[
a(\EE) + i c(\EE) = \langle \mathrm{exp}(itD) , v_1(\EE) \rangle, \quad b(\EE) + id(\EE) = \langle \mathrm{exp}(itD) , v_2(\EE) \rangle.
\]
The claimed expansion for the asymptotic coefficients then follows directly from Lemma~\ref{lem:expF}, together with the projection formula for intersection products.
\end{proof}

\begin{lemma}\label{lem:int}
There are intersection relations
\[
D^k = 0, \ k > n-1, \quad H^k = 0, \ k > 1
\]
on the varieties $X, S$, and $C$.
\end{lemma}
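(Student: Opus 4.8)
The plan is to deduce both relations from the elementary fact that on a smooth projective variety the cohomology (equivalently, Chow) group vanishes above the dimension, combined with the fact that pullback along a morphism is a homomorphism of graded rings.

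Recall that in this section $S=E^{n-1}$, $C=E$, and $X=S\times C=E^n$, so that $\dim_{\mb C}S=n-1$, $\dim_{\mb C}C=1$, and $\dim_{\mb C}X=n$. By the conventions adopted above, $D$ is a fixed divisor class on $S$ and $H=c_1(\OO(1))$ is a divisor class on $C$; on $X$ these symbols stand for $p^*D$ and $q^*H$ respectively, where $p\colon X\to S$ and $q\colon X\to C$ are the two projections. On $S$ the product $D^k$ lies in $\mathrm{CH}^k(S)$ (or in $H^{2k}(S;\mb Q)$), which is zero as soon as $k>\dim_{\mb C}S=n-1$; likewise $H^k\in\mathrm{CH}^k(C)$ vanishes once $k>\dim_{\mb C}C=1$. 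This already establishes the relations on $S$ and on $C$ themselves.

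To transfer them to $X$ I would simply invoke that $p^*$ and $q^*$ are ring homomorphisms, whence $(p^*D)^k=p^*(D^k)=0$ for $k>n-1$ and $(q^*H)^k=q^*(H^k)=0$ for $k>1$ in $\mathrm{CH}^\bullet(X)$; under the standing abuse of notation this is exactly the asserted vanishing on $X$, and it is the relation that will let us truncate the sums appearing in Corollary~\ref{cor:coeffs} and in the computations that follow. There is essentially no obstacle here: the only thing requiring any care is keeping track of which factor each class originates on, so that the ring-homomorphism property of the pullbacks is applied to the correct variety.
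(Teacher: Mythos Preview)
Your argument is correct and follows essentially the same route as the paper: both use the vanishing of intersection products above the dimension of the variety on which the divisor originates, and then transport this to $X$ via the fact that flat pullback is a ring homomorphism (what the paper phrases as ``intersections commute with flat pullbacks'').
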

\begin{proof}
This follows from the fact that the intersection of $k$ divisors on a smooth projective variety $Y$, vanishes for $k > \mathrm{dim}(Y)$, together with the fact that intersections commutes with flat pullbacks.
\end{proof}

We now combine the above results to deduce an explicit expression for the product central charge.

\begin{proof}[Proof of Proposition~\ref{prop:gcharge}]
By Theorem~\ref{thm:prodstab} and Lemma~\ref{lem:factor}, the product central charge associated with
\[
g = \begin{pmatrix}
1 + \alpha & 0 \\
0 & 1-\alpha 
\end{pmatrix} \cdot Z, \quad Z = \langle \mathrm{exp}(itD),\cdot \rangle
\]
 can be written as
\begin{align*}
Z_C(\EE) \coloneqq Z_{C_1}(\EE) + Z_{C_2}(\EE), \quad   &Z_{C_1}(\EE) \coloneqq (-it)( a(\EE) + i c(\EE)) + (b(\EE) + i d(\EE)) \\
 \quad  &Z_{C_2}(\EE) \coloneqq (-it)(  (\alpha)a(\EE) + i (-\alpha) c(\EE)) + ( (\alpha) b(\EE) + i (- \alpha) d(\EE)).
\end{align*}

By Corollary~\ref{cor:coeffs}, we have:
\begin{align*}
Z_{C_1}(\EE) = & \ch_n + \sum\limits_{j = 1}^{n-1}  \left( (-it) \frac{(-it)^{j-1}}{(j-1)!} H D^{j-1} + \frac{(-it)^{j}}{j!} D^{j}  \right)\ch_{n-j}  + (-it) \frac{(-it)^{n-1}}{(n-1)!}H \ch_0D^{n-1}\\
= & \ch_n + \sum\limits_{j = 1}^{n-1}  \left( \frac{(-it)^{j}}{j!} (H+D)^{j}  \right)\ch_{n-j}  + \frac{(-it)^{n}}{n!} (H+D)^n \ch_0
\end{align*}
which follows from the fact that
\begin{equation} \label{eqn:dhsum}
(H+D)^j = D^{j} + j HD^{j-1}
\end{equation}
for $j = 1, \ldots, n$ by Lemma~\ref{lem:int}. Applying Lemma~\ref{lem:expF}, we immediately obtain the equality
\begin{equation}\label{eqn:zc1}
Z_{C_1}(\EE) = \langle \mathrm{exp}(it (H+D)) , \cdot \rangle.
\end{equation}

To compute $Z_{C_2}(\EE)$, we note the equalities:
\begin{align*}
\alpha a(\EE) + i (-\alpha) c(\EE) &= \alpha H \sum\limits_{j = 1}^{n} (-1)^{j-1} \frac{(-it)^{j-1}}{(j-1)!} D^{j-1}\ch_{n-j},\\  \alpha b(\EE) + i(-\alpha) d(\EE) &= \alpha \sum\limits_{j = 0}^{n-1} (-1)^j\frac{(-it)^{j}}{j!}D^{j} \ch_{n-j}.    
\end{align*}
This implies the expansion:
\begin{align*}
Z_{C_2}(\EE) =&\alpha\left( \ch_n + \sum\limits_{j = 1}^{n-1}  \bigg( (-it) \frac{(it)^{j-1}}{(j-1)!} H D^{j-1} + \frac{(it)^{j}}{j!} D^{j}  \bigg)\ch_{n-j}  + (-it) \frac{(it)^{n-1}}{(n-1)!}H \ch_0D^{n-1} \right)\\
=&\alpha\left( \ch_n + \sum\limits_{j = 1}^{n-1}  \bigg( \frac{(it)^{j}}{j!} (D-H)^{j}  \bigg)\ch_{n-j}  + \frac{(it)^{n}}{n!} (D-H)^n \ch_0 \right).
\end{align*}
after applying equation~(\ref{eqn:dhsum}). Again applying Lemma~\ref{lem:expF}, we obtain the equality
\begin{equation}\label{eqn:zc2}
Z_{C_2}(\EE) = \alpha \langle \mathrm{exp}(it ( H - D)) , \cdot \rangle.
\end{equation}
Combining equations~(\ref{eqn:zc1}) and (\ref{eqn:zc2}) implies the conclusion.
\end{proof}

\subsection{The Poincar\'e Bundle}\label{subsec:poincarebundle}
In this section, we consider the auto-equivalence associated with the Poincar\'e bundle and its action on product stability conditions. 

Let $X$ be an abelian variety of dimension $n$, $\Phi_{\PP}$ be the Fourier--Mukai functor associated with the Poincar\'e bundle $\PP$ on $X \times \widehat{X}$, and $\varphi_L \colon X\rightarrow \widehat{X}$ be the morphism associated with a principal polarization $L$. 
Recall the involution operation on $X$ and the Poincar\'e duality isomorphism
\begin{align*}
i \colon X \rightarrow X ,\quad x \mapsto -x, \quad \mathrm{PD}_k \colon \mathrm{H}^k(X, \mathbb{Z}) \rightarrow \mathrm{H}^{2n-k}(\widehat{X},  \mathbb{Z}).
\end{align*}
We consider the following composition of functors
\[
\begin{tikzcd}
\Phi \colon D^b(X) \arrow[r,"\Phi_{\PP}"]& D^b(\widehat{X}) \arrow[r,"\varphi_L^*"]& D^b(X).
\end{tikzcd}
\]

\begin{lemma}[{\cite[Lemma 9.23 \& Lemma 9.29 \& (9.9)]{huybrechts}}]\label{lem:poincarefunctor}
The functors $\Phi_{\PP}$ and $\Phi$ satisfy the following properties:
\begin{enumerate}
\item 
The induced map on cohomology is 
\[
\Phi_{\PP}^H = (-1)^{\frac{k(k+1)}{2} + n}\cdot  \mathrm{PD}_k \colon \mathrm{H}^k(X,\mathbb{Q}) \rightarrow \mathrm{H}^{2n-k}(\widehat{X},\mathbb{Q}).
\]
In particular, we have
\[
(\Phi^H)^{-1}= (-1)^{\frac{k(k+3)}{2}} \cdot \left((\varphi_L^*)^H \circ \mathrm{PD}_k\right).
\]
\item 
There is a natural isomorphism of functors $\Phi \circ \Phi \simeq i^* \circ [-n]$.
\item
There is an isomorphism of line bundles $\Phi(L) \simeq L^*$.
\end{enumerate}
\end{lemma}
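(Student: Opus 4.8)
The three assertions are assembled from \cite{huybrechts}, and the plan is to recall the arguments rather than merely cite, with Mukai's inversion theorem as the backbone. For part (2), the key input is Mukai's theorem $\Phi_{\widehat\PP}\circ\Phi_{\PP}\cong(-1_X)^*\circ[-n]$, where $\widehat\PP$ is the Poincar\'e bundle on $\widehat X\times X$. To pass from $\Phi_{\PP}$ to $\Phi=\varphi_L^*\circ\Phi_{\PP}$ I would use that, since $L$ is a principal polarization, $\varphi_L\colon X\xrightarrow{\sim}\widehat X$ is an isomorphism of abelian varieties, that $(\mathrm{id}_X\times\varphi_L)^*\PP$ agrees up to a line bundle pulled back from $X$ with the Mumford bundle $m^*L\otimes p_1^*L^{-1}\otimes p_2^*L^{-1}$, and that the latter is symmetric under the flip of the two factors of $X\times X$. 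Composing kernels and exploiting this symmetry, together with $\varphi_L\circ i_X=i_{\widehat X}\circ\varphi_L$ (so that $\varphi_L^*$ commutes with $i^*$), reduces $\Phi\circ\Phi\cong i^*\circ[-n]$ to Mukai's theorem, and also pins down $\Phi\circ\Phi$ on the nose up to the shift and the involution.

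For part (1), I would compute $\Phi_{\PP}^H$ via Grothendieck--Riemann--Roch. Since all Todd classes on an abelian variety are trivial, $\Phi_{\PP}^H(\alpha)=p_{\widehat X,*}\bigl(p_X^*\alpha\cup\exp(\ell)\bigr)$, where $\ell\in H^1(X;\mb Z)\otimes H^1(\widehat X;\mb Z)\subset H^2(X\times\widehat X;\mb Z)$ is the canonical class. Expanding $\exp(\ell)$ and integrating over the fibres of $p_{\widehat X}$, only the $\tfrac1{k!}\ell^k$ term contributes on $H^k(X;\mb Q)$, and what survives is exactly the Poincar\'e duality map $\mathrm{PD}_k$ up to sign: the factor $(-1)^{k(k+1)/2}$ is produced by moving the $k$ odd-degree factors of $\alpha$ past the odd-degree factors of $\ell$ inside the cup product, and the extra $(-1)^n$ reflects the orientation/K\"unneth conventions in the pushforward. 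The inverse formula is then purely formal: part (2) gives $(\Phi^H)^2=(i^*)^H\cdot(-1)^n$, which acts on $H^k$ by $(-1)^{n+k}$, so $(\Phi^H)^{-1}|_{H^k}=(-1)^{n+k}\,(\varphi_L^*)^H\circ\Phi_{\PP}^H$, and $n+k+\tfrac{k(k+1)}{2}\equiv\tfrac{k(k+3)}{2}\pmod 2$ yields the stated exponent.

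For part (3), a principal polarization $L$ satisfies $\chi(L)=1$, and in fact $H^{>0}(X,L\otimes P)=0$ with $h^0=1$ for every $P\in\mathrm{Pic}^0(X)$, so $\Phi_{\PP}(L)$ is a line bundle $\widehat L$ on $\widehat X$ in cohomological degree $0$; hence $\Phi(L)=\varphi_L^*\widehat L$ is a line bundle on $X$. Its first Chern class is read off from part (1): the degree-$2$ part of $\Phi^H(\exp c_1(L))$ comes from $c_1(L)^{n-1}/(n-1)!\in H^{2n-2}(X)$, and using $c_1(L)^n/n!=[\mathrm{pt}]$ together with the fact that $\varphi_L$ is the polarization morphism attached to $c_1(L)$, this evaluates to $-c_1(L)$; so $\Phi(L)\otimes L\in\mathrm{Pic}^0(X)$. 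The remaining degree-$0$ ambiguity is removed as in \cite{huybrechts}, e.g. by identifying $\widehat L$ with $\mathcal O_{\widehat X}(-\widehat\Theta)$ up to translation and using $\varphi_L^*\mathcal O_{\widehat X}(\widehat\Theta)\cong\mathcal O_X(\Theta)=L$, with the translation normalized away by taking $\Theta$ symmetric.

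The main obstacle here is not conceptual but bookkeeping: keeping the signs in the Grothendieck--Riemann--Roch computation of (1) consistent with the shift and translation normalizations underlying Mukai's theorem in (2), and pinning down the $\mathrm{Pic}^0$-ambiguity in (3). All of this is carried out in \cite[Lemma 9.23, Lemma 9.29, (9.9)]{huybrechts}, whose conventions we follow.
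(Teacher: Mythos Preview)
Your approach matches the paper's for the only part it actually proves. The paper cites \cite{huybrechts} directly for the first formula in (1), for (2), and for (3), and only derives the second formula in (1) from the first together with (2): since $\Phi\circ\Phi\cong i^*[-n]$ acts on $H^k$ as $(-1)^{k+n}$, one gets $(\Phi^H)^{-1}=(-1)^{k+n}\Phi^H$ on $H^k$, and substituting the first formula yields the claimed exponent. This is exactly your argument for that step; your additional sketches of the cited results go beyond what the paper does but are in the right spirit.

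One minor slip: in your final congruence you wrote $n+k+\tfrac{k(k+1)}{2}\equiv\tfrac{k(k+3)}{2}\pmod 2$, which is only true for even $n$. The correct bookkeeping is $(n+k)+\bigl(\tfrac{k(k+1)}{2}+n\bigr)=2n+k+\tfrac{k(k+1)}{2}\equiv k+\tfrac{k(k+1)}{2}=\tfrac{k(k+3)}{2}$; you dropped one of the two $n$'s coming from $\Phi_{\PP}^H$.
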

\begin{proof}
We only need to verify the second statement of $(1)$. From $(2)$ and the fact that $i^* \circ [-n]$ acts as $(-1)^{k+n}$ on $\mathrm{H}^k$, there is an equality of functors
\[
(-1)^{k+n} (\Phi^H)^{-1} = \Phi^H = (-1)^{\frac{k(k+1)}{2} + n}\,(\varphi_L^*)^H \circ \Phi_{\PP}^H,
\]
from which the conclusion follows.
\end{proof}

For the rest of this section, we will again fix $X \coloneqq S \times C$, where $S = E^{n-1}$ and $C = E$. We denote by $D_i\coloneqq\{x_i=0\} \subseteq X$, the divisor defined by the vanishing of the projection to the $i$'th factor of $E^n$. 

\begin{rem}
We note that $X$ admits a principal polarization with divisor class $L = D_1 + \ldots + D_n$.
\end{rem}

We first note that Lemma~\ref{lem:poincarefunctor}(1) gives a straightforward action of the functor $\Phi$ on the divisors $D_i \subseteq X$. We fix the natural basis $\{e_i \}$ for $H^1(X, \mathbb{Z})$ such that the divisor $D_i$ corresponds to the class $e_{2i} \wedge e_{2i+1} \in H^2(X, \mathbb{Z})$. 

\begin{lemma}\label{lem:pd}
    $\Phi^H(D_i) = D_1 \cdot \ldots \cdot D_{i-1} \cdot D_{i+1} \cdot \ldots \cdot D_n $ 
\end{lemma}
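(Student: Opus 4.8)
The plan is to compute the cohomological action $\Phi^H$ on the divisor class $D_i$ directly from Lemma~\ref{lem:poincarefunctor}(1), tracking the Poincar\'e duality isomorphism and the sign factors. The key observation is that $D_i$ lives in $H^2(X,\mathbb{Z})$ (so $k=2$), and one must carefully identify where $\mathrm{PD}_2 \colon H^2(X,\mathbb{Z}) \to H^{2n-2}(\widehat{X},\mathbb{Z})$ sends $D_i$, and then how $(\varphi_L^*)^H$ pulls this back to $H^{2n-2}(X,\mathbb{Z})$.

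\medskip

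\noindent First I would set up coordinates: with the basis $\{e_i\}$ for $H^1(X,\mathbb{Z})$ so that $D_i \leftrightarrow e_{2i}\wedge e_{2i+1}$, Poincar\'e duality on $X = E^n$ sends a degree-2 monomial to the complementary degree-$(2n-2)$ monomial (up to sign), so $\mathrm{PD}_2(D_i)$ is, up to sign, the class $\prod_{j\neq i} (e_{2j}\wedge e_{2j+1})$ living in $H^{2n-2}(\widehat X;\mathbb{Z})$ under the identification of $H_\bullet(X)$ with $H^\bullet(\widehat X)$ coming from the principal polarization. Then $(\varphi_L^*)^H$, where $\varphi_L$ is the isogeny associated to $L = D_1 + \cdots + D_n$, identifies $\widehat X$ with $X$ compatibly with this monomial basis, so that $(\varphi_L^*)^H \circ \mathrm{PD}_2(D_i) = \pm \prod_{j\neq i} D_j = \pm\, D_1\cdots D_{i-1} D_{i+1}\cdots D_n$ in $H^{2n-2}(X;\mathbb{Z})$. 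The sign coming from the formula $(\Phi^H)^{-1} = (-1)^{\frac{k(k+3)}{2}}\,(\varphi_L^*)^H\circ \mathrm{PD}_k$ with $k=2$ gives $(-1)^5 = -1$, but inverting and also accounting for $\Phi^H$ versus $(\Phi^H)^{-1}$ via the relation $(-1)^{k+n}(\Phi^H)^{-1} = \Phi^H$ will need to be reconciled; since the whole setup is only defined up to the $\mathbb{Z}/2$ sign ambiguity of $\mathrm{Spin}(A)$ recorded in Section~\ref{sec:mirrorspn}, the sign is immaterial and I would simply state the equality up to sign or fix the sign by a normalization convention, as the paper does elsewhere.

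\medskip

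\noindent Concretely, the steps in order are: (1) record that $D_i$ is in degree $2$, so $k=2$ throughout; (2) identify $\mathrm{PD}_2(D_i) \in H^{2n-2}(\widehat X;\mathbb{Z})$ as the complementary wedge monomial; (3) check that pullback along the polarization isogeny $\varphi_L$ matches the standard basis of $H^\bullet(\widehat X)$ with that of $H^\bullet(X)$, turning the complementary monomial into $\prod_{j\neq i} D_j$; (4) assemble via Lemma~\ref{lem:poincarefunctor}(1), absorbing all signs into the $\mathbb{Z}/2$ ambiguity. One can also cross-check using Lemma~\ref{lem:poincarefunctor}(2), $\Phi\circ\Phi \simeq i^*\circ[-n]$: applying $\Phi^H$ twice to $D_i$ should return $\pm D_i$ (since $i^*$ acts trivially on $H^2$), and indeed $\Phi^H(\prod_{j\neq i} D_j) = \pm D_i$ is the degree-$2$ complement of the degree-$(2n-2)$ class, which is a reassuring consistency check that $\Phi^H$ swaps a monomial with its complementary monomial.

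\medskip

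\noindent The main obstacle I anticipate is purely bookkeeping: getting the Poincar\'e duality normalization and the interaction with $(\varphi_L^*)^H$ exactly right, including the identification of $H^\bullet(\widehat X;\mathbb{Z})$ with $H^\bullet(X;\mathbb{Z})$ that makes $\varphi_L^*$ act as claimed. There is no conceptual difficulty — the formula in Lemma~\ref{lem:poincarefunctor}(1) does all the work — but one must be disciplined about which cohomology group each class lives in at each stage. Since signs only matter up to the overall $\mathbb{Z}/2$, I would not belabor them.
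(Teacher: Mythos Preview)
Your proposal is correct and follows essentially the same route as the paper: both set up the basis $\{e_i\}$ with $D_i \leftrightarrow e_{2i}\wedge e_{2i+1}$, compute Poincar\'e duality as the complementary wedge monomial, and then identify this with $\prod_{j\neq i} D_j$. The paper's proof is terser --- it computes the Poincar\'e-duality sign $\epsilon=1$ by integrating and then simply asserts the identification with divisor classes, without explicitly reconciling the remaining sign factor from Lemma~\ref{lem:poincarefunctor}(1) --- whereas you spell out the sign bookkeeping and invoke the $\mb Z/2$ ambiguity; your consistency check via $\Phi\circ\Phi\simeq i^*\circ[-n]$ is a nice addition but not needed.
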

\begin{proof}
By definition of Poincar\'e duality, we have
\[
\mathrm{PD}(e_{2i} \wedge e_{2i+1}) = \epsilon\cdot e_1 \wedge \ldots e_{2i-1} \wedge e_{2i+2} \wedge \ldots \wedge e_{2n},
\]
with the sign given by
\begin{align*}
\epsilon &= \int (e_{2i} \wedge e_{2i+1}) \wedge (e_1 \wedge \ldots e_{2i-1} \wedge e_{2i+2} \wedge \ldots \wedge e_{2n}) \\
&=\int  e_1 \wedge \ldots \wedge e_{2n} = 1.
\end{align*}
The conclusion follows directly from the straightforward identification of the differential forms with the corresponding divisor classes.
\end{proof}

\begin{lemma}\label{lem:pdminus}
Let $A_i$ be the numerical class of an algebraic cycle in codimension $i$. Then there is an equality of intersection pairings:
\[A_i \cdot A_{n-i} = (-1)^{n}\ \Phi(A_i) \cdot \Phi(A_{n-i}).
\]
\end{lemma}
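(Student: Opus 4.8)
The plan is to reduce the statement to the action of $\Phi^H$ on cohomology recorded in Lemma~\ref{lem:poincarefunctor}(1), combined with the identity $\Phi\circ\Phi\simeq i^*\circ[-n]$ from Lemma~\ref{lem:poincarefunctor}(2). First I would note that $\Phi^H$ sends $H^{2i}(X)$ isomorphically to $H^{2(n-i)}(\widehat X)$ and, via $\varphi_L^*$, back to $H^{2(n-i)}(X)$, so that $\Phi(A_i)$ has codimension $n-i$ and $\Phi(A_{n-i})$ has codimension $i$; hence $\Phi(A_i)\cdot\Phi(A_{n-i})$ is again a top-degree (codimension $n$) class, i.e.\ a number, and the asserted equality makes sense. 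The key input is that $\Phi$, being a composition of a Fourier--Mukai equivalence with a pullback along an isomorphism of abelian varieties, is an equivalence of derived categories, and such equivalences preserve the Euler pairing $\chi(-,-)$ on $\Knum$; dually, the induced cohomological transform preserves the Mukai pairing. The sign $(-1)^n$ will come from comparing the Mukai pairing with the naive intersection pairing of the Chern-character components, together with the parity shift built into $\Phi^H$.

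The main steps, in order, are: \textbf{(1)} Recall that for $\alpha\in H^{2i}(X)$ and $\beta\in H^{2(n-i)}(X)$ one has $\alpha\cdot\beta=\int_X\alpha\wedge\beta$, a single number, and that the Mukai pairing on $H^\bullet(X)$ restricted to even cohomology agrees, up to the standard sign convention, with $\int_X(-)^\vee\wedge(-)$, where $(-)^\vee$ acts on $H^{2j}$ by $(-1)^j$. \textbf{(2)} Invoke Lemma~\ref{lem:poincarefunctor}(1): on $H^{2i}(X)$ the transform $\Phi^H$ equals $(-1)^{i(2i+3)/2}\,(\varphi_L^*)^H\circ\mathrm{PD}_{2i}$, and Poincar\'e duality $\mathrm{PD}_k$ together with the isometry $\varphi_L$ intertwines the intersection pairing on $X$ with that on $\widehat X$. \textbf{(3)} Track the signs: $\mathrm{PD}$ is compatible with $\int$ by construction (cf.\ the computation in Lemma~\ref{lem:pd} showing $\epsilon=1$ on basis elements), $\varphi_L^*$ is a ring isomorphism hence pairing-preserving, and the only surviving sign is the product of the degree-dependent prefactors $(-1)^{i(2i+3)/2}\cdot(-1)^{(n-i)(2(n-i)+3)/2}$ acting on the two factors. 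A direct check modulo $2$ shows this product equals $(-1)^n$, independent of $i$. \textbf{(4)} Conclude $A_i\cdot A_{n-i}=\Phi^H(A_i)\cdot\Phi^H(A_{n-i})\cdot(-1)^{?}$ and rearrange to the stated form; alternatively, apply $\Phi$ twice and use Lemma~\ref{lem:poincarefunctor}(2) to see that $i^*\circ[-n]$ multiplies the pairing by $(-1)^{?}$, giving a consistency check on the sign.

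I expect the main obstacle to be the bookkeeping of signs in step~(3): the prefactor $(-1)^{k(k+1)/2+n}$ in $\Phi_{\PP}^H$, the subsequent $(-1)^{k(k+3)/2}$ appearing in the stated inverse, and the $(-1)^j$ in the Mukai-vs.-intersection comparison all interact, and one must be careful that $A_i,A_{n-i}$ live in the correct degrees ($k=2i$ and $k=2(n-i)$) before substituting. A clean way to sidestep much of this is the following: since $\Phi$ is an autoequivalence of $D^b(X)$, it preserves the Euler form $\chi$; writing $\chi$ in terms of Chern characters via Hirzebruch--Riemann--Roch and using that all Todd classes are trivial here (as in Lemma~\ref{lem:grr}), $\chi(\mathcal E,\mathcal F)=\int_X \ch(\mathcal E)^\vee\ch(\mathcal F)$, so the only discrepancy between the $\Phi$-invariant pairing and the naive intersection pairing $A_i\cdot A_{n-i}$ is the dualization sign $(-1)^i$ on one factor and $(-1)^{n-i}$ on the other, whose product is $(-1)^n$. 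This gives the cleanest route to the claimed identity and I would present it this way, using the sign computation of step~(3) only as a cross-check.
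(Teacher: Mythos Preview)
Your ``clean route'' at the end is exactly the paper's proof: use that the autoequivalence $\Phi$ preserves the Mukai pairing $\langle -,-\rangle$, relate the Mukai pairing to the intersection product via $\langle A_i,A_{n-i}\rangle=(-1)^iA_i\cdot A_{n-i}$ (only the \emph{first} slot is dualized), and then observe that $\Phi(A_i)$ has codimension $n-i$ so $\langle\Phi(A_i),\Phi(A_{n-i})\rangle=(-1)^{n-i}\Phi(A_i)\cdot\Phi(A_{n-i})$; equating gives the factor $(-1)^{n-2i}=(-1)^n$. Your phrasing ``$(-1)^i$ on one factor and $(-1)^{n-i}$ on the other'' is slightly ambiguous---the two signs live on the two \emph{sides} of the $\Phi$-invariance equation rather than on the two slots of a single pairing---but the computation and conclusion are correct, and your longer route via the explicit Poincar\'e-duality prefactors is unnecessary.
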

\begin{proof}
We have the following equalities:
\begin{align*}
(-1)^i A_i \cdot A_{n-i} = \langle A_i , A_{n-i} \rangle = \langle \Phi(A_i), \Phi(A_{n-i}) \rangle = (-1)^{n-i} \Phi(A_i) \cdot \Phi(A_{n-i})
\end{align*}
where the first follows from the definition of the Mukai pairing together with the fact that the Chern character induces an isomorphism:
\[
\ch \colon {\rm \Knum}(X) \xrightarrow{\sim} {\rm Ch_{num}}(X).
\]
The second equality follows from the fact that $\Phi$ is an equivalence of categories, and the third follows again from the definition of the Mukai pairing together with Lemma~\ref{lem:poincarefunctor}. The conclusion follows immediately.
\end{proof}

We fix the divisor class $L= D_1 + \ldots + D_n$, and we note the following action of $\Phi$.
\begin{lemma}\label{lem:pfix}
The action of $\Phi$ satisfies:
\[
\Phi \cdot \mathrm{exp}(iL) = i^n \cdot \mathrm{exp}(iL).
\]
\end{lemma}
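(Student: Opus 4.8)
The plan is to carry out the whole computation on cohomology. By construction $\Phi=\varphi_L^*\circ\Phi_{\PP}$, and I would use two inputs from Lemma~\ref{lem:poincarefunctor}: first, that the cohomological action $\Phi^H$ (which is compatible with $\ch$, as recalled in Section~\ref{sec:mirrorspn}) reverses cohomological degree, sending $H^{2k}(X)$ into $H^{2n-2k}(X)$ --- this is immediate from the shape of $\Phi_{\PP}^H$ in part~(1) together with the fact that $\varphi_L^*$ preserves degree; and second, that $\Phi$ sends the polarizing line bundle to its dual, $\Phi(\OO_X(L))\simeq\OO_X(-L)$, which is part~(3).

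The first step is to turn part~(3) into a statement about each graded piece. Applying $\Phi^H$ to $\ch(\OO_X(L))=\exp(L)$ gives $\Phi^H(\exp(L))=\exp(-L)$. Writing $\exp(\pm L)=\sum_{k=0}^n(\pm 1)^k L^k/k!$ and using that $\Phi^H$ carries the degree-$2k$ component into degree $2(n-k)$, I would compare the two sides in each cohomological degree to obtain
\[
\Phi^H\!\left(\frac{L^k}{k!}\right)=(-1)^{\,n-k}\,\frac{L^{\,n-k}}{(n-k)!},\qquad k=0,1,\dots,n.
\]
(As a consistency check, iterating this gives $(\Phi^H)^2=(-1)^n$ on $H^{2k}(X)$, matching the action of $i^*\circ[-n]$ from part~(2); and for $n=1$ it reproduces by hand that $\Phi^H(1+iL)=i+(-iL)=i(1+iL)$.)

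The second step is a direct substitution. Expanding $\exp(iL)=\sum_{k=0}^n i^k L^k/k!$ and applying the previous identity,
\[
\Phi^H\bigl(\exp(iL)\bigr)=\sum_{k=0}^n i^k\,(-1)^{\,n-k}\,\frac{L^{\,n-k}}{(n-k)!}=i^n\sum_{m=0}^n(-1/i)^m\,\frac{L^m}{m!},
\]
after re-indexing by $m=n-k$. Since $-1/i=i$, the sum on the right equals $\sum_{m=0}^n i^m L^m/m!=\exp(iL)$, so $\Phi^H(\exp(iL))=i^n\exp(iL)$, which is the assertion.

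There is no serious obstacle here; the argument is essentially bookkeeping. The only points demanding attention are that the operator in the statement is indeed the cohomological action $\Phi^H$ and is $\ch$-compatible, and that the degree reversal $H^{2k}\to H^{2n-2k}$ is strict --- both already contained in the cited results --- so that the degree-by-degree comparison in the first step is legitimate.
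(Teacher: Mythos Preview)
Your proof is correct and follows essentially the same route as the paper: both use $\Phi(\OO_X(L))\simeq\OO_X(-L)$ from Lemma~\ref{lem:poincarefunctor}(3) together with the degree reversal from part~(1) to deduce $\Phi^H(L^k/k!)=(-1)^{n-k}L^{n-k}/(n-k)!$, and then perform the identical algebraic substitution into $\exp(iL)$. The only cosmetic difference is that the paper writes the key identity as $\Phi\bigl(\tfrac{1}{k!}(iL)^k\bigr)=i^n\tfrac{1}{(n-k)!}(iL)^{n-k}$ before summing, whereas you reindex after summing.
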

\begin{proof}
    By Lemma~\ref{lem:poincarefunctor}(3), we have the equality $\Phi( \OO(L) ) = \OO(-L)$, which implies the equality of Chern characters:
    \[
    \Phi\left(1, L, \ldots, \frac{1}{(n-1)!}L^{n-1}, \frac{1}{n!} L^n \right) = \left(1, -L, \ldots, \frac{(-1)^{n-1}}{(n-1)!} L^{n-1}, \frac{(-1)^n}{n!} L^n\right).
    \]
     Combined with Lemma~\ref{lem:poincarefunctor}(1), this implies the equality of algebraic cycles
    \[
    \Phi \left( \frac{1}{k!} L^k\right) = \frac{1}{(n-k)!} (-L)^{n-k}.
    \]
This implies the following equality for all $n = 0, \ldots, n$:
\[
\Phi\left(\frac{1}{k!} (iL)^k\right) = i^k \frac{1}{(n-k)!} (-L)^{n-k} = i^n \frac{1}{(n-k)!} ( iL)^{n-k},
\]
which implies the conclusion.
\end{proof}
We consider the product stability condition as in Theorem~\ref{thm:prodstab}, with the following central charge and parameters:
\[
g = \langle \mathrm{exp}(i D) ,  \ \cdot\  \rangle, \ \  D = D_1 + \ldots + D_{n-1},\  \ s = t = 1, \ \ H = D_n
\]
The following proposition describes the action of the auto-equivalence $\Phi$ on the functions $a(\EE)$, $b(\EE)$, $c(\EE)$, $d(\EE)$ appearing in the product stability condition.
\begin{prop}\label{prop:poincarecoeffs}
 We have the relations
\begin{align*}
a(\Phi(\EE)) &= (-i)^{n-1} b(\EE), \quad b (\Phi(\EE)) = - (-i)^{n-1}a(\EE) \\ c(\Phi(\EE)) &= (-i)^{n-1}d(\EE), \quad d(\Phi(\EE)) = -(-i)^{n-1} c(\EE).
\end{align*}
\end{prop}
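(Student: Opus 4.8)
The plan is to exploit the product structure $X=S\times C$. The Poincar\'e bundle $\PP_X$ on $X\times\widehat X=(S\times\widehat S)\times(C\times\widehat C)$ is the external product of $\PP_S$ and $\PP_C$, and the principal polarization $L=D_1+\dots+D_n$ decomposes as $L=p^*L_S+q^*L_C$, where $L_S=D_1+\dots+D_{n-1}$ is the standard principal polarization of $S$ and $L_C=D_n=H$ that of $C$; hence $\varphi_L=\varphi_{L_S}\times\varphi_{L_C}$. Consequently $\Phi=\varphi_L^*\circ\Phi_{\PP_X}$ is the external product $\Phi_S\boxtimes\Phi_C$ of the analogous functors on $S$ and $C$, and on cohomology $\Phi^H=\Phi_S^H\otimes\Phi_C^H$ under the K\"unneth isomorphism $H^\bullet(X;\mb Q)=H^\bullet(S;\mb Q)\otimes H^\bullet(C;\mb Q)$.

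I would then assemble two ingredients. First, writing $H_C\in H^2(C;\mb Q)$ for the point class and using $H^2=0$, a projection-formula computation with the description of $v_1,v_2$ in Corollary~\ref{cor:coeffs} (together with $p_*(q^*H_C)=1$ and $p_*(1)=0$) identifies $v_1(\EE),v_2(\EE)\in\Knum(S)\otimes\mb Q$ with the two K\"unneth components of $\ch(\EE)$, i.e. $\ch(\EE)=v_1(\EE)\otimes 1+v_2(\EE)\otimes H_C$; moreover $a(\EE)+ic(\EE)=\langle\mathrm{exp}(iD),v_1(\EE)\rangle_S$ and $b(\EE)+id(\EE)=\langle\mathrm{exp}(iD),v_2(\EE)\rangle_S$ for the Mukai pairing on $S$. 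Second, $\Phi_C^H(1)=-H_C$ and $\Phi_C^H(H_C)=1$; this follows from Lemma~\ref{lem:poincarefunctor}(1) in dimension one (the sign is $(-1)^{0+1}=-1$ on $H^0$ and $(-1)^{3+1}=1$ on $H^2$) together with the fact that $\varphi_{L_C}$ has degree $1$, and it is also forced by Lemma~\ref{lem:pfix} applied to $C$.

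Combining these, $\ch(\Phi\EE)=\Phi^H(\ch\EE)=\Phi_S^H(v_2(\EE))\otimes 1-\Phi_S^H(v_1(\EE))\otimes H_C$, so $v_1(\Phi\EE)=\Phi_S^H(v_2(\EE))$ and $v_2(\Phi\EE)=-\Phi_S^H(v_1(\EE))$. Now $\Phi_S^H$ preserves the Mukai pairing on $S$ (it is induced by a Fourier--Mukai equivalence; cf.\ the proof of Lemma~\ref{lem:pdminus}), and Lemma~\ref{lem:pfix} applied to $S$ gives $\Phi_S^H(\mathrm{exp}(iD))=i^{n-1}\mathrm{exp}(iD)$, hence $(\Phi_S^H)^{-1}(\mathrm{exp}(iD))=(-i)^{n-1}\mathrm{exp}(iD)$. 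Using the isometry property to move $\Phi_S^H$ onto the first slot, one obtains
\[
a(\Phi\EE)+ic(\Phi\EE)=\langle(\Phi_S^H)^{-1}\mathrm{exp}(iD),v_2(\EE)\rangle_S=(-i)^{n-1}\big(b(\EE)+id(\EE)\big),
\]
\[
b(\Phi\EE)+id(\Phi\EE)=-\langle(\Phi_S^H)^{-1}\mathrm{exp}(iD),v_1(\EE)\rangle_S=-(-i)^{n-1}\big(a(\EE)+ic(\EE)\big);
\]
separating real and imaginary parts yields the four asserted identities. The main work lies in the second paragraph — the K\"unneth/projection-formula identification of the $v_i$ and the sign of $\Phi_C^H$ — and in checking that the external-product factorizations of $\Phi_{\PP_X}$ and of $\varphi_L^*$ are compatible with composition and that no sign intervenes on even-degree cohomology; I expect this compatibility to be the point requiring the most care.
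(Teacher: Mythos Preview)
Your argument is correct and takes a genuinely different route from the paper's. The paper proceeds by direct computation: it expands $a(\Phi\EE)+ic(\Phi\EE)$ and $b(\Phi\EE)+id(\Phi\EE)$ via Corollary~\ref{cor:coeffs} and Lemma~\ref{lem:poincarefunctor}(1), then explicitly evaluates $\Phi^{-1}(HD^{n-a})$ using the Poincar\'e duality description of Lemma~\ref{lem:pd}, and finally matches signs term by term. Your approach instead exploits the product structure $X=S\times C$ to factor $\Phi^H=\Phi_S^H\otimes\Phi_C^H$ through K\"unneth, identifies $v_1(\EE),v_2(\EE)$ as the two K\"unneth components of $\ch(\EE)$ with respect to the basis $\{1,H_C\}$ of $H^{\mathrm{ev}}(C)$, and then reduces everything to the single identity $\Phi_S^H(\exp(iD))=i^{n-1}\exp(iD)$ from Lemma~\ref{lem:pfix} combined with the isometry property of $\Phi_S^H$. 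This avoids the combinatorics of $\Phi^{-1}(HD^{n-a})$ and the delicate sign check entirely; the price is that you must justify the external-product factorization of $\Phi^H$ and the K\"unneth identification of the $v_i$, but both are short (as you indicate, the former is the compatibility $\PP_X\cong\PP_S\boxtimes\PP_C$ and $\varphi_L=\varphi_{L_S}\times\varphi_{L_C}$, and the latter follows from $p_*(\alpha\otimes H_C)=\alpha$, $p_*(\alpha\otimes 1)=0$). Your remark that separating real and imaginary parts yields the four identities is valid for $n$ odd, which covers the case $n=3$ actually used; for even $n$ the scalar $(-i)^{n-1}$ is imaginary and the displayed combined equations are the correct content.
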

\begin{proof}
By Corollary~\ref{cor:coeffs}, we have the equalities
\begin{align}\label{eqn:ac}
a(\EE) + i c(\EE) = H \sum\limits_{a = 1}^{n} \frac{(-i)^{a-1}}{(a-1)!} D^{a-1}\ch_{n-a}, \quad b(\EE) + i d(\EE) = \sum\limits_{a = 0}^{n-1} \frac{(-i)^{a}}{a!}D^{a} \ch_{n-a}
\end{align}
On the other hand, by Lemma~\ref{lem:poincarefunctor}(1), we have
\[
\Phi(\ch(\EE)) = (\Phi(\ch_n), \Phi(\ch_{n-1}), \ldots, \Phi(\ch_1), \Phi(\ch_0)),
\]
which implies
\begin{equation} \begin{split}\label{eqn:acphi}
a(\Phi(\EE)) + i c(\Phi(\EE)) &=  H \sum\limits_{a = 1}^{n} \frac{(-i)^{a-1}}{(a-1)!} D^{a-1} \Phi( \ch_a) = (-1)^n \sum \limits_{a = 1}^n \frac{(-i)^{a-1}}{(a-1)!} \Phi^{-1}(H D^{a-1}) \ch_a\\
&= (-1)^n \sum \limits_{a = 1}^n \frac{(-i)^{n-a}}{(n-a)!} \Phi^{-1}(H D^{n-a}) \ch_{n-a+1}\\
b(\Phi(\EE)) + i d(\Phi(\EE)) &= \sum\limits_{a = 0}^{n-1} \frac{(-i)^{a}}{a!}D^{a} \Phi(\ch_a) = (-1)^n \sum\limits_{a = 0}^{n-1} \frac{(-i)^{a}}{a!} \Phi^{-1}(D^{a}) \ch_a  \\
&=(-1)^n \sum\limits_{a =0}^{n-1} \frac{(-i)^{n-a-1}}{(n-a-1)!} \Phi^{-1}(D^{n-a-1}) \ch_{n-a-1}
\end{split}
\end{equation}
where the first equalities follow again by Corollary~\ref{cor:coeffs}. The second equalities follow from Lemma~\ref{lem:pdminus}, and the third equalities are a relabeling of indices.

We first prove the equality 
\[
a(\Phi(\EE)) + i c(\Phi(\EE)) = (-i)^{n-1} ( b(\EE) + i d(\EE)).
\]
By equations~(\ref{eqn:ac}) and (\ref{eqn:acphi}), it suffices to show the equalities
\[
(-1)^n\frac{(-i)^{n-a}}{(n-a)!} \Phi^{-1}(H D^{n-a})  \ch_{n-a +1} = (-i)^{n-1} \frac{(-i)^{a-1}}{(a-1)!} D^{a-1} \ch_{n-a+1}
\]
for all $a = 1, \ldots, n$. 

We first claim that there exists an equality of divisor classes 
\[
\Phi^{-1}(H D^{n-a}) = (-1)^{\frac{k(k+3)}{2}}D^{a-1}, \quad k \coloneqq 2(n-a +1).
\]
We take $D = \sum\limits_{j \neq n} D_j$, and we note
\[
\left( \sum\limits_{j \neq n} D_j\right)^{n-a}\cdot H = \sum\limits_{I \subseteq \{1\ldots n-1\} } (n-a)! \ D_{I \cup \{ n \}}
\]
where $I$ runs over all subsets of $\{1, \ldots, n-1 \}$ of length $n-a$, and given a subset $J\subset\{1,\ldots,n\}$, we define the following intersection of divisor classes:
\[
 D_{J} \coloneqq \prod_{j\in J}D_j.
\]
By Lemma~\ref{lem:poincarefunctor}(1) and Lemma~\ref{lem:pd}, this implies:
\[
\Phi^{-1}\left(\left( \sum\limits_{j \neq n} D_j\right)^{n-a}\cdot H \right) = (-1)^{\frac{k(k+3)}{2}} (n-a)! \sum\limits_{J \subseteq \{1\ldots n-1\} } D_{J}, 
\]
where $J$ runs over all subsets of length $a-1$. Together with the equality
\[
 \sum\limits_{J \subseteq \{1\ldots n-1\} } D_{J} = \frac{1}{(a-1)!} D^{a-1},
\]
we obtain the claim:
\[
\frac{1}{(n-a)!} \Phi^{-1}(H \cdot D^{n-a}) =(-1)^{\frac{k(k+3)}{2}} \frac{1}{(a-1)!} D^{a-1}.
\]
To conclude, it suffices to show the equality of signs:
\[
(-i)^{n-a} (-1)^{n} (-1)^{\frac{k(k+3)}{2}} =(-i)^{n-1} (-i)^{a-1} , \quad k = 2(n-a+1),
\]
which follows from a direct computation. 

The equality
\[
b (\Phi(\EE)) + id(\Phi(\EE)) =  - (-i)^{n-1}a(\EE)  - i (-i)^{n-1}c(\EE)
\]
follows from an identical argument, and we conclude.
\end{proof}

\section{The full support property}

In this section we specialize to the case
\[
\begin{tikzcd}
& X=S\times C=E^3 \arrow[ld,"p"']\arrow[rd,"q"]& \\
S=E^2 & & C=E
\end{tikzcd}
\]
where $E$ is an elliptic curve without complex multiplication.
We apply the results of Section~\ref{sec:prod} (for $n=3$) to establish the following.
\begin{prop}\label{prop:fullsupport}
There exists a stability condition $\sigma \in \mathrm{Stab}(X)$ with central charge
\[
Z = \langle \mathrm{exp}(i(D_1 + D_2 + D_3)), \ \cdot \ \rangle \colon \Knum(X) \rightarrow \mathbb{C}
\]
satisfying the support property with respect to the lattice $\Knum(X)$ of rank $14$.
\end{prop}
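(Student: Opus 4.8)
The plan is to obtain $\sigma$ as a product stability condition --- which a priori only carries the support property with respect to a proper quotient lattice of $\Knum(X)$ --- and then to upgrade this to the full support property by averaging the defining quadratic form over a group of autoequivalences that fixes $\sigma$. Concretely, apply Proposition~\ref{prop:gcharge} with $S=E^2$, $C=E$, $D=D_1+D_2$, $H=D_3$, $\alpha=0$, $\beta=t=1$, using as input a stability condition on $E^2$ with central charge $\langle\exp(i(D_1+D_2)),\cdot\rangle$ having the full support property, which exists by the case $n=2$ of Theorem~\ref{thm:main} (Bridgeland's theorem for abelian surfaces). This produces $\sigma=(\AA_C^{1},Z_C^{1,1})$ with $Z_C^{1,1}=Z=\langle\exp(i(D_1+D_2+D_3)),\cdot\rangle$, and, by Remark~\ref{rem:quadratic} and the non-degeneracy of the inner form $Q_S$, $\sigma$ satisfies the support property with respect to $Q(\EE)=b(\EE)c(\EE)-a(\EE)d(\EE)+\eta\,Q_S(v_1(\EE))$, $0<\eta\ll1$, but only in the weak sense that $Q\ge 0$ on $\sigma$-semistable classes and $Q$ is negative \emph{semi}definite on $\ker Z$. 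A direct computation from Corollary~\ref{cor:coeffs} --- using that $v_1,v_2$ record only the parts of $\ch(\EE)$ carrying no factor in $H^1$ of the third elliptic curve --- identifies the radical of $Q$, in the basis of $\Knum(X)\otimes\mb Q$ of Section~\ref{subsec:E3corr}, as the rank-$7$ subspace
\[
R=\langle\,D_3+D_1D_2D_3,\ D_1D_3-D_2D_3,\ D_3F_{12},\ F_{13},\ F_{23},\ D_2F_{13},\ D_1F_{23}\,\rangle\subseteq\ker Z.
\]

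Now I would bring in autoequivalences. By Theorem~\ref{thm:injective} the stability condition $\sigma$ is geometric and is determined by $Z$ and the phase of a skyscraper sheaf; since $Z(\OO_p)\in\mb R_{>0}$, any $g\in\mathrm{Aut}(D^b(X))$ with $Z\circ g_*\in\mb C^\times\cdot Z$ automatically preserves that phase, so $g_*\sigma$ agrees with $\sigma$ up to the $\mathrm{GL}^+(2,\mb R)$-action and shift. Such $g$ include the permutations $\mathfrak S_3$ of the three factors (which fix $Z$, being symmetric in the $D_i$), the Fourier--Mukai transform $\Phi=\varphi_L^*\circ\Phi_{\PP}$ for the principal polarization $L=D_1+D_2+D_3$ (which by Lemma~\ref{lem:pfix} multiplies $\exp(iL)$, hence $Z$, by $i^{3}$), and all their products. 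For each such $g$, $\sigma$ also satisfies the weak support property with respect to $Q\circ g_*^{-1}$, whose radical is $g_*(R)$; hence any finite positive combination $\widetilde Q=\sum_g Q\circ g_*^{-1}$ remains $\ge 0$ on $\sigma$-semistable classes and negative semidefinite on $\ker Z$, with $\mathrm{rad}(\widetilde Q)=\bigcap_g g_*(R)$. It thus suffices to find finitely many such $g$ with $\bigcap_g g_*(R)=0$: then $\widetilde Q$ is negative definite on $\ker Z$ and $\sigma$ has the support property with respect to all of $\Knum(X)$, i.e. $\sigma\in\mathrm{Stab}(X)$, which is the claim. Carrying this out with the explicit action on $\Knum(X)\otimes\mb Q$: the transpositions $(1\,3)$ and $(2\,3)$ move the product part $\langle D_3+D_1D_2D_3,\ D_1D_3-D_2D_3\rangle$ of $R$ onto subspaces intersecting it trivially while permuting the codimension-one cross-classes $F_{ij}$, so the intersection already drops to $\langle D_1F_{23},\ D_2F_{13},\ D_3F_{12}\rangle$; then $\Phi$, which by Lemmas~\ref{lem:poincarefunctor}(1) and~\ref{lem:pd} sends a codimension-$c$ cycle class to a codimension-$(3-c)$ one and in particular carries $D_kF_{ij}$ into $H^2(X)$, together with $\mathfrak S_3$-symmetry eliminates the remaining three classes.

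I expect the last verification to be the crux: pinning down $R$ and checking, from the cohomology of $\Knum(E^3)$ in Section~\ref{subsec:E3corr} and the action of $\Phi$ on the diagonal classes $F_{ij}$, that the orbit of $R$ under the stabilizer of $\sigma$ in $\mathrm{Aut}(D^b(E^3))$ has trivial intersection --- the remainder being formal. A secondary point that must be handled with care is that Theorem~\ref{thm:injective}, via~\cite{LPMSZ}, has to be available already for stability conditions with respect to the quotient lattice produced by the product construction, since this is what licenses the identifications $g_*\sigma\simeq\sigma$ before the full support property is established.
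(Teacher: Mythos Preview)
Your proposal is essentially the same strategy as the paper's: construct the product stability condition with the given central charge, then average the quadratic form over autoequivalences fixing $\sigma$ (up to $\widetilde{\mathrm{GL}^+(2,\mb R)}$), invoking Theorem~\ref{thm:injective} to justify the latter identification. The paper packages this as an abstract lemma (Lemma~\ref{lem:fullsupport}: if $\bigcap_i\ker(v\circ\Phi_i)=0$ then $\sum_i\lambda_iQ_i(v\circ\Phi_i)$ gives full support) and then proves the kernel condition directly (Lemma~\ref{lem:injective}), using only the six functors $\{\mathrm{Id},F,F^2,\Phi,F\Phi,F^2\Phi\}$ with $F$ the cyclic permutation; your formulation in terms of the radical $R$ of $Q|_{\ker Z}$ and the condition $\bigcap_g g_*(R)=0$ is equivalent, and your use of the full $\mathfrak S_3$ rather than just $\mb Z/3$ is harmless.

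The one place where the paper's execution is cleaner than your sketch is the verification itself: rather than writing down $R$ explicitly in the mixed divisor/diagonal basis and tracking its orbit, the paper reduces Lemma~\ref{lem:injective} to the single implication $p_*(H\cdot F^i(\ch_1))=0$ for $i=0,1,2 \Rightarrow \ch_1=0$, which follows immediately from the Euler pairing matrix of Lemma~\ref{lem:bases}; the remaining Chern components are then killed by $\Phi$. This avoids the need to identify $R$ precisely and sidesteps the somewhat delicate step in your outline of checking how $\Phi$ acts on the classes $D_kF_{ij}$. Your secondary concern about Theorem~\ref{thm:injective} applying already at the quotient-lattice stage is well-placed, and the paper does rely on exactly this.
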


\subsection{Intersection theory on $E \times E \times E$}


 
We begin by establishing a basis for $\Knum(X) \otimes \mathbb{Q}$. Consider the following subvarieties of $X$. (Recall that $x_i:E^3\to E$ is the projection to the $i$'th factor, $i=1,2,3$.)
\begin{enumerate}
\item 
$D_i \coloneqq \{x_i=0\}$
\item 
$\Delta_{ij} \coloneqq \{x_i=x_j\}$
\item 
$C_{ij} \coloneqq D_i \cap D_j$
\item 
$D_{ij} \coloneqq \Delta_{ij} \cap D_k$, where $k \neq i,j$
\end{enumerate}

\begin{lemma}\label{lem:intersections}
We have the following intersection relations among the subvarieties of $X$:
\begin{enumerate}
\item
$D_i \cdot D_3 = C_{i3}, \quad i = 1,2$
\item
$D_3 \cdot D_3 = 0$
\item
$\Delta_{i3} \cdot D_3 = C_{i3}, \quad i = 1,2$
\item
$\Delta_{12} \cdot D_3 = D_{12}$
\item
$ C_{12} \cdot D_3 = 1$ 
\item
$ C_{i3} \cdot D_3 = 0, \quad i = 1,2$ 
\item
$ D_{i3} \cdot D_3 = 1, \quad i = 1,2$
\item
$D_{12} \cdot D_3 = 0$
\end{enumerate}
\end{lemma}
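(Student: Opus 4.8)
The plan is to carry out all eight computations in the Chow ring $CH^\bullet(X)$ (equivalently in $\mathrm{Ch}_{\mathrm{num}}(X)$), after rewriting each subvariety as a pullback of point classes. Write $[0]\in CH^1(E)$ for the class of a point, so $[0]^2=0$ in $CH^2(E)=0$. Each projection $x_i\colon E^3\to E$ is smooth, and each difference morphism $x_i-x_j\colon E^3\to E$ is a coordinate projection precomposed with an automorphism of $E^3$ (the linear coordinate change $z\mapsto z_i-z_j$ in one slot), hence smooth; the same holds for all combined maps $(x_i,x_3),\,(x_i-x_3,x_3),\,(x_1-x_2,x_3)\colon E^3\to E^2$ and $(x_1,x_2,x_3),\,(x_1-x_3,x_2,x_3),\,(x_2-x_3,x_1,x_3)\colon E^3\to E^3$ used below. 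Smoothness makes these maps flat with reduced fibres, so pulling back a product of point classes returns the class of the reduced preimage. In particular $D_i=x_i^\ast[0]$, $\Delta_{ij}=(x_i-x_j)^\ast[0]$, $C_{ij}=D_i\cdot D_j=(x_i,x_j)^\ast([0]\boxtimes[0])$, and $D_{ij}=\Delta_{ij}\cdot D_k=(x_i-x_j,x_k)^\ast([0]\boxtimes[0])$ for $k\notin\{i,j\}$ --- the last two being identities (1) and (4) together with their index analogues, while (3) is the same computation with $\Delta_{i3}$ replacing $D_i$.

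With this in hand each identity is immediate. Item (2) is $D_3\cdot D_3=x_3^\ast([0]^2)=0$. For (1), (3), (4) one pulls back $[0]\boxtimes[0]$ along $(x_i,x_3)$, $(x_i-x_3,x_3)$, $(x_1-x_2,x_3)$, whose fibres over the origin are the reduced subvarieties $C_{i3}=\{x_i=0=x_3\}$, again $C_{i3}$, and $D_{12}=\{x_1=x_2,\,x_3=0\}$ respectively. For (5) the combined map $(x_1,x_2,x_3)$ is the identity of $E^3$, so $C_{12}\cdot D_3=[0]\boxtimes[0]\boxtimes[0]$, the class of the origin, namely $1$. For (7), $D_{13}\cdot D_3$ and $D_{23}\cdot D_3$ are the pullbacks of $[0]\boxtimes[0]\boxtimes[0]$ along $(x_1-x_3,x_2,x_3)$ and $(x_2-x_3,x_1,x_3)$; the corresponding integer matrices have determinant $\pm1$, so these are automorphisms of $E^3$ and each product is again the origin class $1$. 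Finally (6) and (8) follow formally from (1), (2), (4): $C_{i3}\cdot D_3=(D_i\cdot D_3)\cdot D_3=D_i\cdot D_3^2=0$ and $D_{12}\cdot D_3=(\Delta_{12}\cdot D_3)\cdot D_3=\Delta_{12}\cdot D_3^2=0$.

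I do not anticipate a genuine obstacle; the content is entirely the bookkeeping of which composite $E^3\to E^k$ one pulls back along, plus the smoothness remark that both identifies pullbacks of point classes with reduced preimages and lets $[0]^2=0$ propagate. As a cross-check --- and a route that avoids flatness --- one can verify transversality directly: every subvariety on the list is a subgroup of $E^3$, so at each of its points its tangent space is the fixed linear subspace of $T_0E^3=\mathbb C^3$ cut out by the corresponding forms among the $z_i$ and $z_i-z_j$; in (1), (3), (4), (5), (7) these forms are linearly independent of the expected rank, so the scheme-theoretic intersection is smooth (or a reduced point) of the expected dimension and represents the intersection product, and (2), (6), (8) are then the vanishing statements obtained as above.
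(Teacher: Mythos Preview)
Your proof is correct. The argument is essentially equivalent to the paper's for the non-vanishing items: the paper simply observes that the listed subvarieties are smooth and meet transversally, so the scheme-theoretic intersection represents the product --- this is exactly your transversality cross-check, and your primary ``pull back $[0]^{\boxtimes k}$ along a smooth map'' argument is the same statement repackaged.

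For the vanishing items (2), (6), (8) you take a slightly different route. The paper argues geometrically: translations on $E^3$ preserve numerical classes, and after a suitable translation the two cycles in question become disjoint, so the product is zero. You instead deduce $D_3^2=x_3^\ast([0]^2)=0$ from $CH^2(E)=0$ and then get (6) and (8) formally by associativity. Both arguments are short and valid; the translation argument has the virtue of working uniformly for any self-intersection of a translate of a subgroup on an abelian variety, while your $[0]^2=0$ argument makes the vanishing entirely algebraic and avoids invoking numerical equivalence under translation.
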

\begin{proof}
We first note that two distinct subvarieties of $X$ in the above are smooth, and intersect transversally and hence their intersection can be computed directly. This implies the non-vanishing results.

For the vanishing results, we note that the translations $t_{(x_1,x_2,x_3)} \colon X \rightarrow X$ preserve the numerical cycle classes. 
The vanishing intersections follow by applying a translation on one cycle, and noting that the result has zero intersection with the second cycle.
\end{proof}

\begin{lemma} \label{lem:bases}
The following gives a basis for $\Knum(X)\otimes \mathbb{Q}$:
\begin{align*}\label{eqn:bases}
\BB \coloneqq \{&\OO_{X}, \OO_{D_1}, \OO_{D_2}, \OO_{D_3}, \OO_{\Delta_{12}}, \OO_{\Delta_{13}}, \OO_{\Delta_{23}}, \OO_{C_{12}}, \OO_{C_{13}},\OO_{C_{23}},\OO_{D_{12}} , \OO_{D_{13}} , \OO_{D_{23}} , \OO_0 	\}
\end{align*}
Moreover, the Euler pairings are given by
\[
\chi = \begin{pmatrix}
0 & 0 & 0 & 0 & 0 & 0 &0& 0 & 0 & 0 & 0 & 0 &0 & 1\\
0 & 0 & 0 & 0 & 0 & 0 &0& 0 & 0 & -1 & -1 & -1 &0 & 0\\
0 & 0 & 0 & 0 & 0 & 0 &0& 0 & -1 & 0 & -1 & 0 &-1 & 0\\
0 & 0 & 0 & 0 & 0 & 0 &0& -1 & 0 & 0 & 0 & -1 &-1 & 0\\
0 & 0 & 0 & 0 & 0 & 0 &0& 0 & -1 & -1 & 0 & -1 &-1 & 0\\
0 & 0 & 0 & 0 & 0 & 0 &0& -1 & 0 & -1 & -1 & 0 &-1 & 0\\
0 & 0 & 0 & 0 & 0 & 0 &0& -1 & -1 & 0 & -1 & -1 &0 & 0\\
0 & 0 & 0 & 1 & 0 & 1 &1& 0 & 0 & 0 & 0 & 0 &0 & 0\\
0 & 0 & 1 & 0 & 1 & 0 &1& 0 & 0 & 0 & 0 & 0 &0 & 0\\
0 & 1 & 0 & 0 & 1 & 1 &0& 0 & 0 & 0 & 0 & 0 &0 & 0\\
0 & 1 & 1 & 0 & 0 & 1 &1& 0 & 0 & 0 & 0 & 0 &0 & 0\\
0 & 1 & 0 & 1 & 1 & 0 &1& 0 & 0 & 0 & 0 & 0 &0 & 0\\
0 & 0 & 1 & 1 & 1 & 1 &0& 0 & 0 & 0 & 0 & 0 &0 & 0\\
-1 & 0 & 0 & 0 & 0 & 0 &0& 0 & 0 & 0 & 0 & 0 &0 & 0\\
\end{pmatrix}
\]
\end{lemma}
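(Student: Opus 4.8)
The plan is to compute the Gram matrix of $\BB$ with respect to the Euler pairing directly and to deduce the basis claim from its nonsingularity. Since $X=E^3$ has trivial tangent bundle, $\td_X=1$ and $\omega_X\cong\OO_X$, so by Hirzebruch--Riemann--Roch the Euler pairing is $\chi(\EE,\FF)=\int_X\ch(\EE)^\vee\ch(\FF)$, and Serre duality on this odd-dimensional Calabi--Yau makes it skew-symmetric (so the diagonal vanishes and it suffices to determine $\chi(\OO_Y,\OO_Z)$ for $\dim Y\le\dim Z$). By Proposition~\ref{prop:HodgePrimitive} the lattice $\Knum(X)$ has rank $14=|\BB|$ and carries a nondegenerate Euler pairing; hence, once we show that the Gram matrix of $\BB$ is the displayed $\chi$ and that $\det\chi\neq 0$, both assertions of the lemma follow at once.

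The first step I would carry out is to observe that every generator in $\BB$ has Chern character equal to the fundamental class of its support, with no lower-dimensional corrections. For a divisor $Y\in\{D_i,\Delta_{ij}\}$ this follows from the resolution $0\to\OO_X(-Y)\to\OO_X\to\OO_Y\to 0$, which gives $\ch(\OO_Y)=1-e^{-[Y]}$, together with the vanishing $[Y]^2=0$: the class $[D_i]$ is pulled back from the class of a point under the $i$-th projection $X\to E$ and $[\Delta_{ij}]$ from the diagonal under a projection $X\to E\times E$, while $[\mathrm{pt}]^2=0$ in $H^4(E)$ and $[\Delta]^2=e(E)=0$ in $H^4(E\times E)$. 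For a codimension-two stratum $C_{ij}=D_i\cap D_j$ or $D_{ij}=\Delta_{ij}\cap D_k$ — each a transverse complete intersection of two of the above divisors — the Koszul resolution gives $\ch(\OO_{C_{ij}})=(1-e^{-[D_i]})(1-e^{-[D_j]})=[D_i][D_j]$ and $\ch(\OO_{D_{ij}})=[\Delta_{ij}][D_k]$, again using the vanishing of the relevant squares; and $\ch(\OO_0)=[\mathrm{pt}]$.

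Granting this, $\chi(\OO_Y,\OO_Z)=(-1)^{\operatorname{codim}Y}\,([Y]\cdot[Z])_X$, which vanishes automatically unless $\operatorname{codim}Y+\operatorname{codim}Z=3$; this accounts for all the zero blocks of $\chi$ (the codim $0$--$0$, $1$--$1$, $2$--$2$, $0$--$1$, $0$--$2$ and $3$--$3$ pairings). The surviving entries reduce to triple products of the divisors $D_i$ and $\Delta_{ij}$: for the codim $0$ versus $3$ pairing, $([X]\cdot[\mathrm{pt}])=1$; for the codim $1$ versus $2$ block, one substitutes $[C_{ij}]=[D_i][D_j]$, $[D_{ij}]=[\Delta_{ij}][D_k]$ and evaluates products of the shape $[D_a][D_b][D_c]$ and $[D_a][\Delta_{bc}][D_d]$. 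Each such product equals $1$ when the defining conditions cut out a reduced point transversally and $0$ otherwise; in the degenerate cases I would move one cycle by a generic translation — numerical classes being translation-invariant — to make the intersection empty. The transverse evaluations needed are precisely those of Lemma~\ref{lem:intersections} together with their images under the $S_3$-action permuting the three elliptic factors; carrying them out fills in the $6\times 6$ block pairing $\{D_1,D_2,D_3,\Delta_{12},\Delta_{13},\Delta_{23}\}$ against $\{C_{12},C_{13},C_{23},D_{12},D_{13},D_{23}\}$, and skew-symmetry supplies its negative transpose, reproducing the displayed matrix.

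To finish, $\chi$ is block anti-diagonal for the codimension grading $1+6+6+1$, so $\det\chi=\pm(\det M)^2$ with $M$ the $6\times 6$ block; a short computation — for instance via the Schur complement of the invertible anti-identity sub-block arising from the $\{D_i\}$-versus-$\{C_{jk}\}$ pairing — gives $\det M=\pm 8\neq 0$, so $\chi$ is nonsingular and $\BB$ is a $\mathbb{Q}$-basis of $\Knum(X)\otimes\mathbb{Q}$. I expect the main obstacle to be purely the organized bookkeeping of the triple intersection numbers and their signs, together with the excess-intersection/translation argument for the vanishing ones; the reductions above make everything else formal.
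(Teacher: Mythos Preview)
Your proposal is correct and follows essentially the same approach as the paper: compute the Euler pairing via intersection theory (the paper cites Lemma~\ref{lem:intersections} for this), verify the resulting Gram matrix has nonzero determinant, and conclude using $\dim(\Knum(X)\otimes\mathbb{Q})=14$ from Proposition~\ref{prop:HodgePrimitive}. You have simply made explicit the mechanism behind the paper's phrase ``computed directly'' --- the reduction of Chern characters to fundamental classes via the vanishing self-intersections, the skew-symmetry from Serre duality, and the block structure of the determinant --- but the strategy is the same.
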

\begin{proof} 
The Euler pairing can be computed directly by applying Lemma~\ref{lem:intersections}. In particular, a straightforward calculation demonstrates that the determinant of the Euler form is non-zero. The conclusion that $\BB_1$ is a basis follows from the fact that $\dim\left(\Knum(X) \otimes \mathbb{Q}\right) = 14$ by Proposition~\ref{prop:HodgePrimitive} and the subsequent remark.
\end{proof}

\subsection{Extending the support property}

We first establish the existence of a stability condition with the same central charge as in Proposition~\ref{prop:fullsupport}, together with a quadratic form satisfying the support property with respect to a lattice of rank $7$.
\begin{lemma}\label{lem:geomcharge}
There exists a stability condition with central charge
\[
\begin{tikzcd}
Z = \langle \mathrm{exp}(i(D_1 + D_2 + D_3)), \ \cdot \ \rangle \colon \Knum(X) \arrow[rd,"v", start anchor={[xshift=10ex]},]\arrow[rr]&  & \mathbb{C} \\
 &  \Lambda \oplus \Lambda \arrow[ru]&
\end{tikzcd}
\]
satisfying the support property with respect to the quotient lattice $\Lambda \oplus \Lambda/\ker(g)$ and the quadratic form
\[
Q = b(\EE) c(\EE) - a(\EE) d(\EE) + \eta Q(v_1(\EE))
\]
for sufficiently small $\eta$.
\end{lemma}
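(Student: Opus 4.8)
The plan is to realize $\sigma$ as a product stability condition in the sense of Section~\ref{sec:prod}, with $S=E^2$ and $C=E$, built from a suitable stability condition on the abelian surface $E^2$. The point is that $\langle\exp(i(D_1+D_2+D_3)),\,\cdot\,\rangle$ is precisely the output central charge of Proposition~\ref{prop:gcharge} in the degenerate case $\alpha=0$, and that Remark~\ref{rem:quadratic} already records the support property for that output.

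First I would fix the input datum on $S=E^2$. The class $D_1+D_2$ is ample on $E^2$ --- in fact a principal polarization, since $(D_1+D_2)^2=2\,D_1\cdot D_2=2$ and $\OO(D_1+D_2)$ has a one-dimensional space of sections by K\"unneth --- so by Bridgeland's description of a full component of $\mathrm{Stab}(E^2)$ (\cite[Theorem 15.2]{bridgeland08}, the $n=2$ analogue of Theorem~\ref{thm:main}) there is a stability condition $\sigma_S=(\AA_S,Z_S)\in\mathrm{Stab}(E^2)$ with central charge $Z_S=\langle\exp(i(D_1+D_2)),\,\cdot\,\rangle$; concretely one may take the geometric $\sigma_S$ obtained by the standard tilt of $\mathrm{Coh}(E^2)$ with respect to $\omega=D_1+D_2$ and vanishing $B$-field, using that $\mathrm{td}(E^2)=1$ so that the Mukai vector equals the Chern character and this central charge makes sense. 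Being a genuine stability condition on a surface, $\sigma_S$ satisfies the support property with respect to some quadratic form $Q$ on $\Lambda\coloneqq\Knum(E^2)$ (the discriminant form coming from the Bogomolov inequality will do).

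Next I would apply Proposition~\ref{prop:gcharge} to $\sigma_S$ with $D=D_1+D_2$, $H=D_3$, $\alpha=0$, $\beta=1$, and $s=t=1$: by Theorem~\ref{thm:prodstab} the resulting pair $\sigma\coloneqq(\AA_C^t,Z_C^{s,t})$ is a stability condition on $D^b(X)$, and the proposition (in the case $\alpha=0$) identifies its central charge with $\langle\exp(i(D_1+D_2+D_3)),\,\cdot\,\rangle$, which is the required $Z$. By Lemma~\ref{lem:factor} this $Z$ factors as $\Knum(X)\xrightarrow{(v_1,v_2)}\Lambda\oplus\Lambda\xrightarrow{\phi}\mathbb{C}$ with $g=\langle\exp(iD),\,\cdot\,\rangle$, and since $s,t>0$ the map $\phi$ annihilates $\ker(g)$ in the second summand; the functions $a,b,c,d$ of Lemma~\ref{lem:factor} and the pullback of $Q$ along $v_1$ descend accordingly, so the form $Q=b(\EE)c(\EE)-a(\EE)d(\EE)+\eta\,Q(v_1(\EE))$ is well defined on the rank-$7$ lattice $\Lambda\oplus(\Lambda/\ker(g))$. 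Finally, the support property of $\sigma$ with respect to this lattice and this form, for $\eta>0$ sufficiently small, is exactly the content of Remark~\ref{rem:quadratic}, i.e.\ \cite[Lemma 5.7 \& Remark 5.8]{liu21}, applied with the quadratic form $Q$ furnished by $\sigma_S$; pulling back along $(v_1,v_2)$ then gives the claim. The only non-formal ingredient is the existence (with support property) of $\sigma_S$ on $E^2$, which is classical; the rest is a direct specialization of the product-stability machinery, so I do not expect a genuine obstacle here --- the work lies rather in the \emph{subsequent} extension of this rank-$7$ statement to the full rank-$14$ lattice in Proposition~\ref{prop:fullsupport}.
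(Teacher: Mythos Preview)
Your proof is correct and follows essentially the same route as the paper: construct the surface stability condition on $E^2$ with central charge $\langle\exp(i(D_1+D_2)),\,\cdot\,\rangle$, apply Proposition~\ref{prop:gcharge} with $\alpha=0$, $\beta=s=t=1$ to identify the product central charge, and invoke Remark~\ref{rem:quadratic} for the support property. The only cosmetic difference is that the paper cites \cite[Theorem~6.10]{Macri2017} rather than \cite{bridgeland08} for the existence of $\sigma_S$, and does not spell out the descent of the quadratic form to the rank-$7$ quotient as you do.
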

\begin{proof}
The existence of a stability condition $(\AA,Z)\in \mathrm{Stab}(S)$ with central charge
\[
g \coloneqq \langle \mathrm{exp}(i(D_1 + D_2) , \cdot \rangle \colon \Knum(S) \rightarrow \mathbb{C}
\]
follows from \cite[Theorem 6.10]{Macri2017}. By Theorem~\ref{thm:prodstab} and Lemma~\ref{lem:factor}, there exists a stability condition $(\AA_C^t, Z_{C}^{s,t}) \in \mathrm{Stab}(X)$ with central charge
\[
Z_C^{s,t}(\EE) \coloneqq c(\EE) s + b(\EE) + i (-a(\EE) t + d(\EE)) = \begin{pmatrix}
\tau \cdot g & g
\end{pmatrix}\begin{pmatrix}
    v_1 \\ v_2
\end{pmatrix}, \quad \tau = \begin{pmatrix} 0 & s \\ -t & 0\end{pmatrix}.
\]
Applying Proposition~\ref{prop:gcharge} with the parameters $n = 3$, $s = t =1$, and $\alpha = 0$, implies that the central charge takes the claimed form.

The statement about the support property follows directly from Remark~\ref{rem:quadratic}.
\end{proof}

In order to extend the quadratic form to the full rank $14$ lattice $\Knum(X)$, we observe that it suffices to find appropriate auto-equivalences preserving the stability condition. We note that a similar idea appears already in~\cite[Lemma 3.19]{Oberdieck:2018uqa}.
\begin{lemma}\label{lem:fullsupport}
Let $\mathcal{T}$ be a triangulated category, linear over some field, and with well defined and finite rank numerical Grothendieck group $\Knum(\TT)$.
Fix a numerical stability condition $\sigma = (Z, \mathcal{A})\in \mathrm{Stab}(\mathcal{T})$. Assume the following:
\begin{enumerate}
\item
The central charge $Z$ admits factorizations through quotient lattices $\Lambda_i$, $i=1,\ldots,n$,
\[
Z \colon \Knum(\TT) \xrightarrowdbl{v_i} \Lambda_i \xrightarrow{g_i}\mathbb{C}
\]
such that $\bigcap\limits_i \ker(v_i) = \{0 \}$.
\item 
There exists quadratic forms $Q_i$ on $\Lambda_i \otimes \mathbb{R}$ satisfying the support property.
\end{enumerate}
Then $Z$ satisfies the support property on $\Knum(\TT)$ with respect to the quadratic form 
\[
Q \coloneqq \lambda_1 Q_1(v_1) + \ldots +\lambda_n Q_n(v_n)
\]
for any choices of $\lambda_i > 0$, $i=1,\ldots,n$.
\end{lemma}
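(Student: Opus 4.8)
The plan is to exhibit the asserted quadratic form directly and to verify the two conditions in the definition of the support property: (i) $Q(E)\ge 0$ for every $\sigma$-semistable object $E$, and (ii) the restriction of $Q$ to $\ker Z\subseteq\Knum(\TT)\otimes\mb R$ is negative definite. First I would record that $Q=\sum_{i=1}^n\lambda_i\,Q_i(v_i(-))$ really is a quadratic form on $\Knum(\TT)\otimes\mb R$, being a linear combination of the pullbacks of the quadratic forms $Q_i$ along the real-linear maps $v_i$. Condition~(i) is then immediate: if $E$ is $\sigma$-semistable, then $Q_i(v_i(E))\ge 0$ for every $i$ by hypothesis~(2), and since each $\lambda_i>0$ we get $Q(E)=\sum_i\lambda_i Q_i(v_i(E))\ge 0$.

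The content of the argument is condition~(ii). The key point is that the factorization $Z=g_i\circ v_i$ gives $\ker Z=v_i^{-1}(\ker g_i)$ for each $i$; in particular $v_i$ carries $\ker Z$ into $\ker g_i$. Now let $v\in\ker Z$. For each $i$ the vector $v_i(v)$ lies in $\ker g_i$, on which $Q_i$ is negative definite by hypothesis~(2); hence $Q_i(v_i(v))\le 0$, with equality if and only if $v_i(v)=0$. Summing with positive weights, $Q(v)=\sum_i\lambda_i Q_i(v_i(v))\le 0$, and if $Q(v)=0$ then every summand vanishes, so $v_i(v)=0$ for all $i$, whence $v\in\bigcap_i\ker v_i=\{0\}$ by hypothesis~(1). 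Thus $Q(v)<0$ for every $v\in\ker Z\setminus\{0\}$, which is what we wanted.

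The only step that requires any care is the last one --- excluding the degenerate possibility that a nonzero class in $\ker Z$ is killed by all of the maps $v_i$ simultaneously --- and this is exactly the role played by the hypothesis $\bigcap_i\ker v_i=\{0\}$; no deformation input, estimate, or compactness argument enters. If one prefers the equivalent ``norm'' version of the support property, namely $\|v(E)\|\le C\,|Z(E)|$ for $\sigma$-semistable $E$ and a fixed norm on $\Knum(\TT)\otimes\mb R$, it follows from the quadratic-form statement above in the standard way.
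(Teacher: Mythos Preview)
Your proof is correct and follows essentially the same approach as the paper: verify directly that $Q$ is nonnegative on semistable classes (each summand is) and negative definite on $\ker Z$ (each summand is nonpositive there, and the intersection hypothesis $\bigcap_i\ker v_i=\{0\}$ forces a strict inequality for nonzero vectors). The paper phrases the second step by first picking an index $j$ with $v_j(v)\neq 0$ and noting $Q_j(v_j(v))<0$, while you argue contrapositively that $Q(v)=0$ forces $v=0$; these are equivalent.
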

\begin{proof}
Clearly $Q$ is a quadratic form. It suffices to verify the following two conditions:
\begin{enumerate}
\item
$Q(v(\EE)) \geq 0$ for any semistable object $\EE \in \TT$.
\item
$Q(v(\EE)) < 0$ for any $0\neq v(\EE) \in\ker(Z)$.
\end{enumerate}

$(1)$: We note that for any semi-stable object $\EE \in \TT$, we have $Q_i(v_i(\EE)) \geq 0 $ by assumption~$(2)$ that each $Q_i$ individually satisfies the support property with respect to $\Lambda_i$. The conclusion follows from the assumption that $\lambda_i >0$ for all $i$.

$(2)$: Let $0\neq v(\EE) \in\ker(Z)$. Then by assumption~$(2)$, we have $Q_i(v_i(v(\EE)))\leq 0$  for all $i$. By the assumption on the intersection
\[
\bigcap\limits_i \ker(v_i) = \{0\},
\]
there must exist $j$ such that the projection $v_j(v(\EE)) \in \Lambda_i$ is non-zero. Again by assumption~$(2)$ that $Q_j$ satisfies the support property, we have a strict inequality $Q_j(v_j(v(\EE))) < 0 $. The conclusion $Q(v(\EE)) < 0 $ then follows from the assumption that $\lambda_i > 0$ for all $i$.
\end{proof}

In order to apply Lemma~\ref{lem:fullsupport}, we will use two classes of auto-equivalences which preserves our central charge. 
We denote by 
\[
F \colon E^3 \rightarrow E^3, \qquad (x_1, x_2, x_3) \mapsto (x_3, x_1, x_2) 
\]
the cyclic permutation of factors. In addition, we will need to combine this action with that of the Fourier--Mukai functor associated with the Poincar\'e bundle.

We consider the functor 
\[
\Phi\coloneqq \varphi_L^* \circ \Phi_{\PP} \colon D^b(X) \longrightarrow D^b(X)
\]
 where $\PP$ is the Poincar\'e bundle on $X$, and $L$ is the principal polarization $D_1 + D_2 +D_3$. As in the previous sections, we define the divisor $ H = D_3$, and we recall the projection maps
\[
\begin{tikzcd}
\Knum(X) \arrow[r,"(v_1{,} v_2)"] & \Lambda \oplus \Lambda \arrow[r]& \mathbb{C}
\end{tikzcd}
\]
given by
\begin{align*}
v_1(\EE) &\coloneqq v(p_*(\EE \otimes q^* \OO(k))) - v(p_*(\EE \otimes q^* \OO(k-1))) \\
v_2(\EE) &\coloneqq v(p_*(\EE \otimes q^* \OO(k))) - kv_1(\EE) 
\end{align*}
appearing in Lemma~\ref{lem:geomcharge}. We demonstrate that the combination of these functors suffices for the application of Lemma~\ref{lem:fullsupport} to our situation.

\begin{lemma}\label{lem:injective} The map
\[
\left(v_1 , v_1 \circ F, v_1 \circ F^2, v_1 \circ \Phi, v_1 \circ F \circ \Phi,  v_1 \circ F^2 \circ \Phi\right)^T \colon \Knum(E^3) \rightarrow \Lambda^{\oplus 6}
\]
is injective.
\end{lemma}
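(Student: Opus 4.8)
The plan is to work rationally, identifying $\Knum(E^3)\otimes\mathbb Q$ with the space of Hodge classes $\bigoplus_p H^{p,p}(E^3)\cap H^{2p}(E^3;\mathbb Q)$ via the Chern character (Proposition~\ref{prop:HodgePrimitive}), and to show that the intersection of the kernels of the six maps vanishes by a degree count in the K\"unneth decomposition $H^\bullet(E^3)=H^\bullet(E_1)\otimes H^\bullet(E_2)\otimes H^\bullet(E_3)$. The first step is to make $\ker(v_1)$ explicit. By Corollary~\ref{cor:coeffs} (applied with $n=3$ and $H=D_3$) one has $v_1(\EE)=\big(p_*(D_3\cdot\ch_0(\EE)),p_*(D_3\cdot\ch_1(\EE)),p_*(D_3\cdot\ch_2(\EE))\big)$, and since $D_3\cdot\ch_3(\EE)\in H^8(E^3)=0$ the missing component carries no information; hence $\ker(v_1)=\{\EE : p_*(D_3\cdot\ch(\EE))=0\}$, a condition on the class $\ch(\EE)$ alone. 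Using $D_3=1\otimes 1\otimes[\mathrm{pt}_{E_3}]$ and the fact that $p$ is the projection forgetting the third factor, I would check directly (this is where Lemma~\ref{lem:grr} and the triviality of the relevant Todd classes enter) that $\alpha\mapsto p_*(D_3\cdot\alpha)$ is exactly the projection of $\alpha$ onto the K\"unneth summand $H^\bullet(E_1)\otimes H^\bullet(E_2)\otimes H^0(E_3)$, identified with $H^\bullet(E^2)$. So $\ker(v_1)$ consists of the classes whose K\"unneth component in this summand vanishes.

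With this in hand the argument is a degree count. Since $F$ cyclically permutes the three factors, $\ker(v_1\circ F)$ and $\ker(v_1\circ F^2)$ are the conditions obtained from $\ker(v_1)$ by the corresponding permutation of $\{1,2,3\}$, and together with $\ker(v_1)$ they account for all three factors. Consequently any class in $\ker(v_1)\cap\ker(v_1\circ F)\cap\ker(v_1\circ F^2)$ has the property that in every term of its K\"unneth decomposition all three factors $H^\bullet(E_i)$ contribute in positive degree; being of even total degree, such a class lies in $H^4(E^3)\oplus H^6(E^3)$. On the other hand, for $G\in\{\mathrm{id},F,F^2\}$ one has $\ker(v_1\circ G\circ\Phi)=\Phi^{-1}(\ker(v_1\circ G))$ directly from the definitions, and since $\Phi$ acts bijectively on $\Knum(E^3)\otimes\mathbb Q$ this gives
\[
\ker(v_1\Phi)\cap\ker(v_1 F\Phi)\cap\ker(v_1 F^2\Phi)=\Phi^{-1}\big(\ker(v_1)\cap\ker(v_1 F)\cap\ker(v_1 F^2)\big).
\]
By Lemma~\ref{lem:poincarefunctor}(1) the induced map $\Phi^H$ sends $H^k(E^3)$ to $H^{6-k}(E^3)$, so the right-hand side lies in $H^0(E^3)\oplus H^2(E^3)$. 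The kernel of the map in the statement is the intersection of all six kernels, hence contained in $\big(H^4(E^3)\oplus H^6(E^3)\big)\cap\big(H^0(E^3)\oplus H^2(E^3)\big)=0$, the Chern character being compatible with the cohomological grading. As $\Knum(E^3)$ is torsion-free of rank $14$, injectivity over $\mathbb Q$ gives injectivity.

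The step I expect to be the real content is the explicit identification of $\ker(v_1)$ as the vanishing of that one K\"unneth component: everything afterwards is the observation that the three permuted copies force degree $\ge 4$ while the three $\Phi$-twisted copies force degree $\le 2$. The only thing to be careful about beyond this is verifying $\ker(v_1\circ G\circ\Phi)=\Phi^{-1}(\ker(v_1\circ G))$ and that the six conditions really split cleanly as stated; both are immediate from the definitions once $v_1$ and the action of $F$ and $\Phi$ on cohomology have been pinned down. It is also worth recording, as a byproduct, the exact triple intersections (spanned by $[\mathrm{pt}]$ and the classes $F_{ij}\cdot D_k$ on one side, and by their $\Phi$-images on the other), which makes the disjointness in degree transparent.
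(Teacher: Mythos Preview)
Your argument is correct and takes a genuinely different route from the paper's. The paper proceeds more computationally: it writes out $v_1\circ F^i$ and $v_1\circ F^i\circ\Phi$ in terms of the Chern components $\ch_0,\ldots,\ch_3$ (via Corollary~\ref{cor:coeffs} and Lemma~\ref{lem:poincarefunctor}(1)), then reduces the whole statement to the single implication
\[
p_*(H\cdot \ch_1)=p_*(H\cdot F(\ch_1))=p_*(H\cdot F^2(\ch_1))=0\ \Longrightarrow\ \ch_1=0,
\]
from which the vanishing of $\ch_2$ (via $\Phi$), $\ch_0$, and $\ch_3$ follows easily. That implication is then checked by the projection formula, which turns the three hypotheses into the vanishing of the intersection numbers $C\cdot\ch_1$ for $C$ running over the six curve classes $C_{ij},D_{ij}$, and the explicit Euler pairing of Lemma~\ref{lem:bases} shows these pairings detect $\ch_1$.

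Your approach replaces this explicit pairing computation by the K\"unneth picture: identifying $p_*(D_3\cdot{-})$ with the projection onto the summand $H^\bullet(E_1)\otimes H^\bullet(E_2)\otimes H^0(E_3)$ makes the triple intersection a pure degree constraint ($a,b,c\geq 1$, hence total degree $\geq 4$), and the degree-reversing property of $\Phi^H$ immediately gives the complementary constraint for the $\Phi$-twisted triple. This is cleaner and basis-free; it also makes transparent exactly what the two triple kernels are (the span of $[\mathrm{pt}]$ and the $F_{ij}\cdot D_k$, respectively of $[\mathcal O_X]$ and the $F_{ij}$), which your final remark records. The paper's approach, by contrast, has the virtue of isolating the one nontrivial step as a statement purely about divisor classes and curve classes, and of tying it to the Euler form already tabulated in Lemma~\ref{lem:bases}; but it does require that tabulation. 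Either way the content is the same: the cyclic action of $F$ on the three K\"unneth slots, together with the Poincar\'e-duality action of $\Phi$, leaves no room for a nonzero common kernel.
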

\begin{proof}
    Let $\EE \in D^b(X)$, and we denote $\ch(\EE) = (\ch_0, \ch_1, \ch_2, \ch_3)$. Then by Corollary~\ref{cor:coeffs} and Lemma~\ref{lem:poincarefunctor}(1), we have
\begin{equation}\label{eqn:v1proj}
\begin{split}
  \left(  v_1 \circ F^i \right)(\EE) &= \left(p_*( H \cdot F^i(\ch_0)), p_*( H \cdot F^i(\ch_1)), p_*( H \cdot F^i(\ch_2))\right) \\
  \left(  v_1 \circ F^i \circ \Phi \right)(\EE) &= \left(p_*( H \cdot (F^i \circ\Phi)(\ch_3)),p_*( H \cdot (F^i \circ\Phi)(\ch_2)),p_*( H \cdot (F^i \circ\Phi)(\ch_1)) \right)
  \end{split}
\end{equation}
    for $i = 0,1,2$. We claim that it suffices to show:
    \[
    p_* \left(H \cdot F^i (\ch_1)\right) = 0 \implies \ch_1 = 0.
    \]
Indeed, in this case, assume that  $\left(  v_1 \circ F^i \right)(\EE) = \left(  v_1 \circ F^i \circ \Phi\right)(\EE)  = 0$. Then the second line of~\eqref{eqn:v1proj} together with the claim implies 
$\Phi(\ch_2) = 0$, thus $\ch_2 = 0$
from the fact that $\Phi$ is an auto-equivalence. The vanishing, $\ch_0 = 0$, follows from the fact that $p_*(k H) = k [S]$, and the vanishing, $\ch_3 = 0$, follows again from the fact that $\Phi$ is an auto-equivalence.

Thus, it suffices to show the implication:
\begin{equation}\label{eqn:vanish}
p_*( H \cdot \ch_1) = p_*( H \cdot F(\ch_1)) =  p_*( H \cdot F(F(\ch_1))) = 0 \implies \ch_1 = 0.
\end{equation}
We have the equalities
\[
A \cdot p_*( H \cdot F^{i}(\ch_1)) = p^*A  \cdot H \cdot F^{i}(\ch_1) = F^{-i} ( p^*A \cdot H) \cdot \ch_1
\]
for $A$ a divisor on $E \times E$. Taking $A = D_1, D_2, \Delta$, the assumption of~\eqref{eqn:vanish} implies
\[
C \cdot \ch_1 = 0, \quad C \in \{ C_{12}, C_{23}, C_{13}, D_{12}, D_{13}, D_{23} \}.
\]
Evaluating with the Euler pairing of Lemma~\ref{lem:bases}, this immediately implies that $\ch_1 = 0$, and we conclude.
\end{proof}

Finally, we apply Lemma~\ref{lem:injective} to deduce Proposition~\ref{prop:fullsupport}.

\begin{proof}[Proof of Proposition~\ref{prop:fullsupport}]
We consider the following set of auto-equivalences of $D^b(X)$:
\[
\{\Phi_i\} \coloneqq \{ \mathrm{Id}, F, F^2, \Phi, F \circ \Phi, F^2 \circ \Phi\}
\]
By Lemma~\ref{lem:pfix} and Theorem~\ref{thm:injective}, the functors $\Phi_i$ fix the stability condition $\sigma$ up to the action by an element of $\widetilde{GL^+(2,\mathbb{R})}$. 
This implies that the central charge $Z$ admits the factorizations
\[
\begin{tikzcd}
Z \colon \Knum(X) \arrow[r,twoheadrightarrow,"v \circ \Phi_i"]& \Lambda \oplus \Lambda /\ker(g) \arrow[r,"g"]& \mathbb{C}
\end{tikzcd}
\]
for all $i$. In particular, we claim that the quadratic form of Lemma~\ref{lem:geomcharge} satisfies the support property on $\Lambda \oplus \Lambda /\ker(g)$ with respect to every factorization. Indeed, assume that $\EE \in D^b(X)$ is $\sigma$-semistable. Then the equality $\Phi_i \cdot \sigma = \sigma$ implies that $\Phi_i(\EE)$ is $\sigma$-semistable. Thus, we have $Q( (v\circ \Phi_i)(\EE)) \geq 0$. The condition on negative-definiteness with respect to $\ker(g)$ is immediate by Lemma~\ref{lem:geomcharge}.

Together with Lemma~\ref{lem:injective}, the set of auto-equivalences $\{\Phi_i \}$ verifies the assumptions of Lemma~\ref{lem:fullsupport}, and the conclusion follows immediately.
\end{proof}

\section{Quadratic forms for the fundamental domain}\label{sec:quadratic}

Throughout this section, as before, $C=E$, $S=E^2$, $X=S\times C= E^3$, where $E$ is an elliptic curve without complex multiplication.
Consider the divisors $D_i=\{x_i=0\}\subset X$. We defer to Section~\ref{sec:glossary} for a summary of the data specifying the stability condition that we will need.

In this section, we examine properties of the full quadratic form on the fundamental domain. We caution that the following domain $\Pi$, which we call \textit{fundamental domain}, only becomes one in the usual sense of the term after imposing the additional condition $\alpha\leq\beta\leq\gamma$. 
For our purposes, we only need the equality: 
\[
\left(\mathrm{Sp}(6,\mathbb{R}) \times \mathrm{GL}^+(2,\mathbb{R})\right) \cdot \Pi = \UU^+(3).
\] 
Recall from Proposition~\ref{prop:u3normalform} and Section~\ref{subsec:E3corr} that this was identified with the following subset of central charges:
\begin{equation}\label{eqn:domain}
\begin{split}
\Pi \coloneqq \{ Z_{(\alpha,\beta,\gamma)} \coloneqq \,& \langle \mathrm{exp}(i(D_1 + D_2 + D_3)) + \alpha\exp(i(-D_1 - D_2 + D_3))+ \beta\exp(i(-D_1 + D_2 - D_3)) \\ & + \gamma\exp(i(D_1 - D_2 - D_3)) , \,\cdot\,\rangle\ \vert\  \alpha + \beta + \gamma < 1, \ \alpha, \beta, \gamma \geq 0  \}
\end{split}
\end{equation}
Our main result is the following:
\begin{prop}\label{prop:quadratic}
Let $Z_{(\alpha,\beta,\gamma)} \in \Pi$. Then there exist real parameters $\eta_i > 0$ and $\alpha',\beta',\gamma' >0$ such that the quadratic form
\[
Q'_{(\alpha',\beta',\gamma')} \coloneqq \alpha' (Q_1^{\eta_1}+Q_4^{\eta_4}) + \beta'( Q_2^{\eta_2} + Q_5^{\eta_5}) + \gamma' (Q_3^{\eta_3} + Q_6^{\eta_6})
\]
satisfies the following:
\begin{enumerate}
    \item 
$Q'$ satisfies the support property on $\Knum(X)$ with respect to the numerical stability condition $\sigma$ with central charge $Z_{(0,0,0)}$.
    \item 
 $Q'$   is negative definite on $\mathrm{ker}\left(Z_{(\lambda \alpha, \lambda \beta, \lambda \gamma)}\right)$ for $0 \leq \lambda \leq 1$.
\end{enumerate}
\end{prop}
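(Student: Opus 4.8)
The plan is to deduce (1) directly from the apparatus of Proposition~\ref{prop:fullsupport}, and to reduce (2) to a single Cauchy--Schwarz inequality after rewriting the three blocks $G_j\coloneqq Q_j^{\eta_j}+Q_{j+3}^{\eta_{j+3}}$, $j=1,2,3$, in closed form.

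\emph{Part (1).} Whatever the grouping, $Q'_{(\alpha',\beta',\gamma')}$ is a positive linear combination $\sum_{i=1}^{6}\mu_iQ_i^{\eta_i}$ with all $\mu_i>0$ (namely $\mu_1=\mu_4=\alpha'$, $\mu_2=\mu_5=\beta'$, $\mu_3=\mu_6=\gamma'$). For $\eta_i>0$ small each $Q_i^{\eta_i}$ satisfies the support property for the stability condition $\sigma$ with central charge $Z_{(0,0,0)}$: this is Lemma~\ref{lem:geomcharge} when $i=1$, and for the remaining $i$ it follows since the autoequivalences $\{\mathrm{Id},F,F^2,\Phi,F\Phi,F^2\Phi\}$ fix $\sigma$ up to $\widetilde{GL^+(2,\mathbb{R})}$ (Lemma~\ref{lem:pfix} and Theorem~\ref{thm:injective}). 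Since $\bigcap_i\ker(v_1\circ\Phi_i)=0$ by Lemma~\ref{lem:injective}, Lemma~\ref{lem:fullsupport} applies and gives (1). I would fix the $\eta_i$ small enough for this, keeping the freedom to shrink them further for (2).

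\emph{Part (2): a closed form for the blocks.} Write $W_0\coloneqq Z_{(0,0,0)}=\langle\exp(i(D_1+D_2+D_3)),\,\cdot\,\rangle$ and let $W_1,W_2,W_3$ be the pairings against the three remaining exponentials in~\eqref{eqn:domain}, so that $Z_{(\lambda\alpha,\lambda\beta,\lambda\gamma)}=W_0+\lambda(\alpha W_1+\beta W_2+\gamma W_3)$. The crucial step is the identity
\[
G_j(\mathcal{E})=\tfrac12\bigl(|W_0(\mathcal{E})|^2-|W_{\pi(j)}(\mathcal{E})|^2\bigr)+\eta_j\,Q_S(v_1(\Phi_j\mathcal{E}))+\eta_{j+3}\,Q_S(v_1(\Phi_{j+3}\mathcal{E})),
\]
where $Q_S$ is the surface support quadratic form entering Remark~\ref{rem:quadratic} and $\pi$ is a permutation of $\{1,2,3\}$. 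This follows by combining: (i) the computation in the proof of Proposition~\ref{prop:gcharge}, which identifies $Z_{C_1}=W_0$ and $Z_{C_2}=\alpha W_1$ for the product presentation underlying $Q_1$ and hence yields $bc-ad=\tfrac14(|W_0|^2-|W_1|^2)$ for the asymptotic coefficients; (ii) Proposition~\ref{prop:poincarecoeffs} with $n=3$ (so $(-i)^{n-1}=-1$), by which $bc-ad$ is invariant under the Poincar\'e--duality functor $\Phi$, so that $Q_{j+3}^{\eta_{j+3}}=Q_j^{\eta_j}\circ\Phi$ doubles this term and contributes only a surface correction; and (iii) $G_j=G_1\circ F^{\,j-1}$ together with the fact that $F^*$ permutes $D_1,D_2,D_3$ and fixes $D_1+D_2+D_3$ (so $F$ commutes with $\Phi$, permutes $W_1,W_2,W_3$, and fixes $W_0$), which produces $\pi$.

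\emph{Part (2): the estimate.} With the identity in hand, take $(\alpha',\beta',\gamma')$ to be the $\pi$--matching of $(\alpha,\beta,\gamma)$, replacing any vanishing entry by a small positive number; then the ``leading part'' of $Q'$ equals $\tfrac12\bigl((\alpha+\beta+\gamma)|W_0|^2-\alpha|W_1|^2-\beta|W_2|^2-\gamma|W_3|^2\bigr)$ plus surface corrections with small coefficients. For $w\in\ker Z_{(\lambda\alpha,\lambda\beta,\lambda\gamma)}$ one has $W_0(w)=-\lambda(\alpha W_1(w)+\beta W_2(w)+\gamma W_3(w))$, so Cauchy--Schwarz gives $|W_0(w)|^2\le\lambda^2(\alpha+\beta+\gamma)\bigl(\alpha|W_1(w)|^2+\beta|W_2(w)|^2+\gamma|W_3(w)|^2\bigr)$; since $\alpha+\beta+\gamma<1$ and $\lambda\le1$, the leading part is at most $-\delta_0\bigl(\alpha|W_1(w)|^2+\beta|W_2(w)|^2+\gamma|W_3(w)|^2\bigr)$ with $\delta_0=\tfrac12(1-(\alpha+\beta+\gamma)^2)>0$, and is strictly negative unless $w$ lies in $K_0\coloneqq\bigcap_k\ker W_k\subseteq\ker Z_{(0,0,0)}$. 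On $K_0$ all asymptotic coefficients vanish, so each $v_1(\Phi_i w)$ lands in the kernel of the relevant surface central charge, where $Q_S$ is negative definite; hence $Q'|_{K_0}$ coincides with the surface part, negative definite by the injectivity of Lemma~\ref{lem:injective} --- this is where (1) re-enters. A routine interpolation --- splitting $w$ into its $K_0$--component and a complement on which the $W_k$ are coordinates, and absorbing the cross terms by Cauchy--Schwarz --- then gives $Q'(w)<0$ for all $0\ne w\in\ker Z_{(\lambda\alpha,\lambda\beta,\lambda\gamma)}$, provided the $\eta_i$ are small enough relative to the fixed $(\alpha,\beta,\gamma)$. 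The strata of $\CC$ where some of $\alpha,\beta,\gamma$ vanish are easier: the extra small positive weights contribute only nonpositive leading terms and in fact supply strict negativity in the remaining coordinate directions. The main obstacle is establishing the key identity cleanly --- recognizing that the symmetrized Liu product quadratic form of each presentation $E^2\times E$ is, up to a controllable surface correction, the difference of the squared moduli of the two ``corner'' central charges it governs; once that is in place the rest is robust, the only care needed being the uniform control of the surface corrections against $K_0$ along the whole segment and the book-keeping of the boundary strata, with the admissible size of the $\eta_i$ depending on $(\alpha,\beta,\gamma)$.
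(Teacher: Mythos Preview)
Your argument for part~(1) is essentially the paper's (Proposition~\ref{prop:fullsupport} combined with Lemma~\ref{lem:fullsupport}), and is correct.

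For part~(2) you take a genuinely different route. The paper introduces formal real variables $a_i,b_i,c_i,d_i$ subject to the relations of Lemmas~\ref{lem:fixedcoeffs} and~\ref{lem:sdualityq}, replaces the surface correction $\eta_iQ_S(v_1\circ\Phi_i)$ by its upper bound $C_i(a_i^2+c_i^2)$, parametrises $\ker Y_{(\lambda\alpha,\lambda\beta,\lambda\gamma)}$ by $(a_i,b_i)$, and then shows directly that the difference $P_{\lambda=0}-P_\lambda$ is positive semidefinite via explicit Sylvester--criterion computations, separately for the three strata of $\mathcal C$ (Proposition~\ref{prop:linalg}). Your key observation --- that $2(b_jc_j-a_jd_j)=\tfrac12\bigl(|W_0|^2-|W_j|^2\bigr)$, obtained by reading off $W_0=(b_j+c_j)+i(d_j-a_j)$ and $W_j=(b_j-c_j)-i(a_j+d_j)$ from Lemma~\ref{lem:1d} together with the $\Phi$--invariance of $bc-ad$ from Proposition~\ref{prop:poincarecoeffs} at $n=3$ --- recasts the leading part of $Q'$ as $\tfrac12\bigl((\alpha'+\beta'+\gamma')|W_0|^2-\sum_j\alpha'_j|W_j|^2\bigr)$. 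On $\ker Z_{(\lambda\alpha,\lambda\beta,\lambda\gamma)}$ the relation $W_0=-\lambda(\alpha W_1+\beta W_2+\gamma W_3)$ and Cauchy--Schwarz then give negativity of the leading part whenever some $W_j$ is nonzero, and the surface terms (bounded via the glossary item~(5c)) are absorbed for $\eta$ small; on $K_0=\bigcap_k\ker W_k$ the leading part vanishes and strict negativity comes from $Q_S|_{\ker g}<0$ together with Lemma~\ref{lem:injective}. This is correct, more conceptual than the paper's minor computations, and makes the role of the hypothesis $\alpha+\beta+\gamma<1$ transparent. Two small remarks: the permutation $\pi$ is in fact the identity with the paper's indexing, and the ``routine interpolation'' you mention is not actually needed --- the clean dichotomy $M>0$ versus $w\in K_0$ already suffices, since negative definiteness of a fixed quadratic form is equivalent to pointwise strict negativity on nonzero vectors.
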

The strategy of our proof proceeds in the following steps:
\begin{enumerate}
\item
In Corollary~\ref{cor:reducecharge}, we express the sum of exponentials in $Z_{(\alpha,\beta,\gamma)}$ in terms of the coefficients $a,b,c,d$ combined with the automorphisms $\Phi_i$.
\item
We then apply this in Proposition~\ref{prop:reduction}, to reduce Proposition~\ref{prop:quadratic} to a statement about quadratic forms on real variables.
\item
We conclude with Proposition~\ref{prop:linalg}, which establishes the corresponding linear algebraic statement.
\end{enumerate}

\subsection{Glossary}\label{sec:glossary}
By Proposition~\ref{prop:fullsupport}, we constructed a stability condition specified by the following data:

\begin{enumerate}
\item 
The central charge:
\[
Z = \langle \mathrm{exp}(i(D_1 + D_2 + D_3)), \ \cdot \ \rangle \colon \Knum(X) \rightarrow \mathbb{C}
\]
\item 
A corresponding stability condition on $S$ as in Theorem~\ref{thm:prodstab} and Lemma~\ref{lem:factor}:
\[
g \coloneqq \mathrm{exp}(i(D_1 + D_2)) \colon \Lambda \coloneqq \Knum(S) \rightarrow \mathbb{C}, \quad Q \coloneqq - \chi (\cdot, \cdot)
\]
\item 
A collection of auto-equivalences of $D^b(X)$:
\[
\{\Phi_i \} =\{
\mathrm{Id}, F , F^2, \Phi, F \circ \Phi, F^2 \circ \Phi\}
\]
\begin{enumerate}
\item 
$F$ denotes the permutation:
\[
F \colon E^3 \rightarrow E^3, \qquad (x_1, x_2, x_3) \mapsto (x_3, x_1, x_2) 
\]
\item 
$\Phi$ denotes the composition:
\[
\begin{tikzcd}
    \Phi \colon D^b(X) \arrow[r,"\Phi_{\PP}"]& D^b(\widehat{X}) \arrow[r,"\varphi_L^*"]& D^b(X)
\end{tikzcd},\quad L = D_1 + D_2 + D_3
\]
\end{enumerate}
We denote the induced linear automorphisms of $\Knum(X)$ by the same symbols.

\item 
A collection of factorizations:
\[
\begin{tikzcd}
Z \colon \Knum(X) \arrow[rr]\arrow[rd,"v \circ \Phi_i"',twoheadrightarrow] && \mathbb{C} \\
& \Lambda \oplus \Lambda/\ker(g) \arrow[ru]&
\end{tikzcd}
\]
\begin{enumerate}
\item
$v$ denotes the composition:
\[
\begin{tikzcd}
v \colon \Knum(X) \arrow[r,"(v_1{,}v_2)"]& \Lambda \oplus \Lambda \arrow[r,twoheadrightarrow]& \Lambda \oplus \Lambda /\ker(g)
\end{tikzcd}
\]
\item 
$v_1$ and $v_2$ denote the following:
\[
v_1(\EE) \coloneqq v(p_*(\EE \otimes q^* \OO(k))) - v(p_*(\EE \otimes q^* \OO(k-1))),
\quad
v_2(\EE) \coloneqq v(p_*(\EE \otimes q^* \OO(k))) - kv_1(\EE)\]
\end{enumerate}

\item 
A family of quadratic forms on $\Knum(X)$ satisfying the support property for all $\lambda_i >0$:
\[
\lambda_1 Q_1 + \ldots + \lambda_6 Q_6, \quad Q_i \coloneqq Q_1 \circ \Phi_i
\]
\begin{enumerate}
\item
$Q_1$ denotes the quadratic form on the lattice $\Lambda \oplus \Lambda /\ker(g)$ satisfying the support property with respect to all projections $v \circ \Phi_i$:
\[
Q^\eta_1(\EE) \coloneqq b(\EE) c(\EE) - a(\EE) d(\EE) + \eta Q(v_1(\EE) ), \ 0 < \eta < \frac{1}{C}
\]
\item 
$a(\EE),\ldots,d(\EE)$ denote the coefficients:
\[a \coloneqq \mathrm{Re}(g) \circ v_1, \ b \coloneqq \mathrm{Re}(g) \circ v_2, \ c \coloneqq \mathrm{Im}(g) \circ v_1, \ d \coloneqq \mathrm{Im}(g) \circ v_2.
\]
\item 
$C$ denotes the constant appearing in the support property on $S$:
\[
Q(v_1(\EE)) \leq C \vert Z(\EE) \vert^2 = C ( a(\EE)^2 + c(\EE)^2 )
\]
with equality if and only if $v_1(\EE) = 0$.

\end{enumerate}
\end{enumerate}

We will fix $a(\EE), \ldots , d(\EE)$ to denote the asymptotic coefficients of the product stability condition associated with the central charge $\langle\mathrm{exp}(i(D_1 + D_2 + D_3)), \, \cdot \,\rangle$.
In the following subsections, we define
\begin{equation}\label{eqn:transformcoeffs}
a_i(\EE) \coloneqq a(\Phi_i(\EE)), \ldots, d_i(\EE) \coloneqq d(\Phi_i(\EE))
\end{equation}

\subsection{Reduction step} 
In this section, we reduce Proposition~\ref{prop:quadratic} to a statement about quadratic forms on real variables.

We consider the following quadratic form 
\begin{equation}\label{eqn:pdefn}
P^{C_i}_i \coloneqq b_i c_i - a_i d_i + C_i (a_i^2 + c_i^2)
\end{equation}
and real variables $a_i, b_i, c_i, d_i$ for $ i = 1, \ldots , 6$ satisfying the conditions:
\begin{equation}\label{eqn:coeffrelations}
c_1 + b_1 = c_2 + b_2 = c_3 + b_3, \ -a_1 + d_1 = -a_2 + d_2 = -a_3 + d_3.
\end{equation}
and the conditions:
\begin{equation}\label{eqn:poincarecoeffs0}
\begin{pmatrix}
a_4 & b_4 & c_4 & d_4 \\
a_5 & b_5 & c_5 & d_5 \\
a_6 & b_6 & c_6 & d_6 \\
\end{pmatrix} =
\begin{pmatrix}
-b_1 & a_1 & - d_1 & c_1 \\ 
- b_2 & a_2 & - d_2 & c_2 \\
- b_3 & a_3 & - d_3 & c_3
\end{pmatrix}.
\end{equation}
Finally, we define the following linear function:
\begin{align}
\begin{split}\label{eqn:yfunction}
Y_{\alpha,\beta,\gamma}(a_i, b_i, c_i, d_i) &\coloneqq (c_1 + b_1) + \alpha(-c_1 + b_1) + \beta (-c_2 + b_2) + \gamma( -c_3 + b_3) \\
&\quad+ i((-a_1 + d_1) + \alpha(-a_1 - d_1 ) + \beta(-a_2 - d_2 ) + \gamma(- a_3 - d_3))
\end{split}
\end{align}

\begin{rem}
We note that $P_i^{C_i}$ replicates the quadratic forms $Q_i^{\eta_i}$; the relation~(\ref{eqn:coeffrelations}) is motivated by Lemma~\ref{lem:fixedcoeffs}, the relation~(\ref{eqn:poincarecoeffs0}) is motivated by Lemma~\ref{lem:sdualityq}, and the relation~(\ref{eqn:yfunction}) is motivated by Corollary~\ref{cor:reducecharge}.
\end{rem}
The following reduces Proposition~\ref{prop:quadratic} to a statement about quadratic forms on real variables.
\begin{prop}\label{prop:reduction}
Assume that the quadratic form
\[
P_{(\alpha',\beta',\gamma')} \coloneqq \alpha' (P_1^{C_1}+P_4^{C_4}) + \beta'( P_2^{C_2} + P_5^{C_5}) + \gamma' (P_3^{C_3} + P_6^{C_6})
\]
is negative definite on $\ker(Y_{(\alpha, \beta, \gamma)})$ for some $C_i > 0$, and real parameters $\alpha',\beta',\gamma' > 0 $. Then 
\[
\alpha' (Q_1^{\eta_1}+Q_4^{\eta_4}) + \beta'( Q_2^{\eta_2} + Q_5^{\eta_5}) + \gamma' (Q_3^{\eta_3} + Q_6^{\eta_6})
\]
is negative  definite on $\mathrm{ker}(Z_{(\alpha, \beta, \gamma)})$ for $0 < \eta_i < \frac{C_i}{C}$.
\end{prop}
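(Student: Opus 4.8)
The plan is to show that Proposition~\ref{prop:reduction} reduces the desired negative-definiteness of
$\alpha'(Q_1^{\eta_1}+Q_4^{\eta_4})+\beta'(Q_2^{\eta_2}+Q_5^{\eta_5})+\gamma'(Q_3^{\eta_3}+Q_6^{\eta_6})$ on $\ker(Z_{(\alpha,\beta,\gamma)})$ to a finite-dimensional linear-algebra fact about the forms $P_i^{C_i}$ on $\ker(Y_{(\alpha,\beta,\gamma)})\subseteq\mb R^{24}$, and then to invoke that fact (which is Proposition~\ref{prop:linalg}, proved in the next subsection). First I would unwind the definitions in the Glossary: for an object $\EE\in\Knum(X)$ the quadratic form $Q_1^{\eta}$ evaluated on $\EE$ is, up to the $\eta Q(v_1(\EE))$ term, exactly $b(\EE)c(\EE)-a(\EE)d(\EE)$, and $Q_i^{\eta}=Q_1^{\eta}\circ\Phi_i$ is obtained by replacing $a,b,c,d$ with $a_i,b_i,c_i,d_i$ as in~\eqref{eqn:transformcoeffs}. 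Since $Q(v_1(\EE))\le C(a(\EE)^2+c(\EE)^2)$ with equality iff $v_1(\EE)=0$, for any $\eta_i<C_i/C$ we have $Q_i^{\eta_i}(\EE)\le P_i^{C_i}(a_i(\EE),b_i(\EE),c_i(\EE),d_i(\EE))$, with equality exactly on the common kernel of the $v_1\circ\Phi_i$.

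Next I would check that the point $(a_i(\EE),b_i(\EE),c_i(\EE),d_i(\EE))_{i=1,\dots,6}$ attached to any $\EE$ actually lies in the constraint locus cut out by~\eqref{eqn:coeffrelations} and~\eqref{eqn:poincarecoeffs0}. The relations~\eqref{eqn:coeffrelations} among $b_i+c_i$ and $-a_i+d_i$ for $i=1,2,3$ come from Lemma~\ref{lem:fixedcoeffs} (the permutation $F$ fixes the relevant "diagonal" combination of the $a,b,c,d$), and~\eqref{eqn:poincarecoeffs0} expressing $(a_{i+3},b_{i+3},c_{i+3},d_{i+3})$ in terms of $(a_i,b_i,c_i,d_i)$ is precisely Proposition~\ref{prop:poincarecoeffs} / Lemma~\ref{lem:sdualityq} specialized to $n=3$, where $(-i)^{n-1}=(-i)^2=-1$. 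I would then verify, using Corollary~\ref{cor:reducecharge}, that $Z_{(\alpha,\beta,\gamma)}(\EE)=Y_{(\alpha,\beta,\gamma)}(a_i(\EE),\dots)$, so that $\EE\in\ker(Z_{(\alpha,\beta,\gamma)})$ forces the associated real tuple into $\ker(Y_{(\alpha,\beta,\gamma)})$.

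With these identifications in hand, the argument is short: given $\EE\in\ker(Z_{(\alpha,\beta,\gamma)})$ with $\EE\ne 0$, the real tuple $x\coloneqq(a_i(\EE),b_i(\EE),c_i(\EE),d_i(\EE))_i$ satisfies the constraints~\eqref{eqn:coeffrelations}--\eqref{eqn:poincarecoeffs0} and lies in $\ker(Y_{(\alpha,\beta,\gamma)})$, hence by hypothesis (which is the content of Proposition~\ref{prop:linalg}) $P_{(\alpha',\beta',\gamma')}(x)<0$; since each $Q_i^{\eta_i}(\EE)\le P_i^{C_i}(\text{corresponding coordinates of }x)$ and all of $\alpha',\beta',\gamma'$ are positive, we get
\[
\alpha'(Q_1^{\eta_1}+Q_4^{\eta_4})(\EE)+\beta'(Q_2^{\eta_2}+Q_5^{\eta_5})(\EE)+\gamma'(Q_3^{\eta_3}+Q_6^{\eta_6})(\EE)\le P_{(\alpha',\beta',\gamma')}(x)<0 .
\]
This gives negative-definiteness on $\ker(Z_{(\alpha,\beta,\gamma)})$. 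To get it on $\ker(Z_{(\lambda\alpha,\lambda\beta,\lambda\gamma)})$ for all $0\le\lambda\le 1$ as stated, one runs the same computation with $Y_{(\lambda\alpha,\lambda\beta,\lambda\gamma)}$ in place of $Y_{(\alpha,\beta,\gamma)}$ — the linear-algebra input is required for the whole segment, which is how Proposition~\ref{prop:linalg} is formulated. The main obstacle is not this reduction, which is essentially bookkeeping, but rather making sure the sign conventions in~\eqref{eqn:poincarecoeffs0} (inherited from Proposition~\ref{prop:poincarecoeffs}, with its factor $(-i)^{n-1}$ and the swap $a\leftrightarrow b$, $c\leftrightarrow d$ up to sign) and in Corollary~\ref{cor:reducecharge} are transcribed exactly, so that the real tuple genuinely lands in the locus on which Proposition~\ref{prop:linalg} is proved; any sign error there would break the inclusion into $\ker(Y)$ and invalidate the estimate.
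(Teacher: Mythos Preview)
Your reduction is essentially the paper's argument, but there is a genuine gap in the degenerate case. You write: ``given $\EE\in\ker(Z_{(\alpha,\beta,\gamma)})$ with $\EE\ne 0$, the real tuple $x$ \ldots\ lies in $\ker(Y_{(\alpha,\beta,\gamma)})$, hence by hypothesis $P_{(\alpha',\beta',\gamma')}(x)<0$.'' But negative definiteness of $P$ on $\ker(Y)$ only gives $P(x)<0$ when $x\ne 0$, and the map $\EE\mapsto (a_i(\EE),b_i(\EE),c_i(\EE),d_i(\EE))_{i}$ is \emph{not} injective: it factors through $g$ applied to $v_1,v_2$, and $\ker(g)\subset\Lambda$ is nontrivial. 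So there exist nonzero $\EE\in\ker(Z_{(\alpha,\beta,\gamma)})$ with $x=0$, and for those your chain of inequalities collapses to $\sum(\cdots)\le P(x)=0$, which does not give strict negativity.

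The paper treats this case separately. When all $a_i(\EE),\ldots,d_i(\EE)$ vanish, each $Q_i^{\eta_i}(\EE)$ reduces to $\eta_i\, Q(v_1(\Phi_i(\EE)))$. Since $a_i(\EE)=c_i(\EE)=0$ for all $i$, every $v_1(\Phi_i(\EE))$ lies in $\ker(g)$, on which $Q$ is negative definite (part of the support property on $S$). By Lemma~\ref{lem:injective} (restated as Lemma~\ref{lem:vanishing}), at least one $v_1(\Phi_j(\EE))$ is nonzero, so that term contributes a strictly negative amount while all others are $\le 0$. You hinted at this with your remark that equality in $Q_i^{\eta_i}\le P_i^{C_i}$ holds exactly on the common kernel of the $v_1\circ\Phi_i$, but you never used it; to close the gap you must invoke Lemma~\ref{lem:injective} and the negative definiteness of $Q|_{\ker(g)}$ explicitly.
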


We first recall that certain one-dimensional subspaces of the fundamental domain $\Pi$~(\ref{eqn:domain}) can be expressed as product stability conditions.
\begin{lemma}\label{lem:1d}
The linear functional for $0 \leq\alpha < 1$:
\[
\langle \mathrm{exp}(i(D_1 + D_2 + D_3)) + \alpha \ \mathrm{exp}(i (-D_1 - D_2 + D_3)),\,\cdot\,\rangle\in \mathrm{Hom}(\Knum(X),\mathbb{C})
\]
can be realized as the central charge of a product stability condition of Theorem~\ref{thm:prodstab} with the parameters:
\[
s = t = 1, \quad g = \begin{pmatrix}
1 + \alpha & 0 \\
0 & 1- \alpha
\end{pmatrix} \cdot \langle \mathrm{exp}(i(D_1 + D_2)), \cdot \rangle
\]
In particular, the corresponding central charge can be written in the form
\[
Z(\EE) = ((1-\alpha)c(\EE) + (1+ \alpha)b(\EE)) + i(-(1+\alpha)a(\EE) + (1- \alpha)d(\EE))
\]
\end{lemma}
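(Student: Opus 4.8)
The plan is to obtain this as a direct specialization of Proposition~\ref{prop:gcharge} and Theorem~\ref{thm:prodstab}, taking $S = E^2$, $C = E$, $X = S \times C = E^3$, the divisor $D \coloneqq D_1 + D_2$ on $S$, the point class $H \coloneqq D_3$ on $C$, and $\beta = 1$ (so that $s = t = 1$).

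First I would verify the hypothesis of Proposition~\ref{prop:gcharge}, namely the existence of a stability condition $\sigma \in \mathrm{Stab}(S)$ with central charge $\mathrm{diag}(1+\alpha, 1-\alpha) \cdot \langle \mathrm{exp}(i(D_1 + D_2)), \cdot\rangle$. As recalled in the proof of Lemma~\ref{lem:geomcharge}, a stability condition on $S$ with central charge $\langle \mathrm{exp}(i(D_1+D_2)), \cdot\rangle$ exists by \cite[Theorem 6.10]{Macri2017}. Since $0 \leq \alpha < 1$, the diagonal matrix $\mathrm{diag}(1+\alpha, 1-\alpha)$ has positive determinant $1-\alpha^2$, hence lies in $\mathrm{GL}^+(2,\mathbb{R})$; acting on the above stability condition by a suitable element of $\widetilde{\mathrm{GL}^+(2,\mathbb{R})}$ produces a stability condition whose central charge is the post-composition with this matrix, as needed. (Alternatively, one may observe that this central charge lies in $\UU^+(2)$ and invoke the $n=2$ case of Theorem~\ref{thm:main}.)

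Granting this, Proposition~\ref{prop:gcharge} (with $n = 3$, $\beta = 1$, $s = t = 1$) gives at once that the product stability condition of Theorem~\ref{thm:prodstab} has central charge $\langle \mathrm{exp}(i(D_1 + D_2 + D_3)) + \alpha\, \mathrm{exp}(i(-D_1 - D_2 + D_3)), \cdot\rangle$, which is the asserted linear functional. For the final ``in particular'' formula I would unwind Theorem~\ref{thm:prodstab}: there $Z_C^{s,t}(\EE) = c(\EE)s + b(\EE) + i(-a(\EE)t + d(\EE))$, where by Lemma~\ref{lem:factor} one has $a = \mathrm{Re}(g)\circ v_1$, $b = \mathrm{Re}(g)\circ v_2$, $c = \mathrm{Im}(g)\circ v_1$, $d = \mathrm{Im}(g)\circ v_2$. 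Writing $g = \mathrm{diag}(1+\alpha,1-\alpha)\cdot g_0$ with $g_0 = \langle \mathrm{exp}(i(D_1+D_2)), \cdot\rangle$, we have $\mathrm{Re}(g) = (1+\alpha)\mathrm{Re}(g_0)$ and $\mathrm{Im}(g) = (1-\alpha)\mathrm{Im}(g_0)$, so the coefficients satisfy $a = (1+\alpha)a_0$, $b = (1+\alpha)b_0$, $c = (1-\alpha)c_0$, $d = (1-\alpha)d_0$, where $a_0,\dots,d_0$ are the coefficients of the $\alpha = 0$ product stability condition (the ones denoted $a,\dots,d$ in the statement). Substituting $s = t = 1$ yields $Z(\EE) = ((1-\alpha)c(\EE) + (1+\alpha)b(\EE)) + i(-(1+\alpha)a(\EE) + (1-\alpha)d(\EE))$, as claimed.

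No genuine difficulty arises here; the lemma merely repackages Proposition~\ref{prop:gcharge}. The two points that warrant a little care are: (i) noticing that $\mathrm{diag}(1+\alpha,1-\alpha) \in \mathrm{GL}^+(2,\mathbb{R})$ precisely because $\alpha < 1$, so the existence input from \cite{Macri2017} transports to every $\alpha$ in the stated range; and (ii) keeping straight in the bookkeeping the distinction between the coefficients $a,\dots,d$ of the untwisted ($\alpha=0$) product stability condition, which appear in the final display, and those of the $\alpha$-twisted one, which appear inside $Z_C^{s,t}$.
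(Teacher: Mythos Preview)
Your proof is correct and follows essentially the same approach as the paper, which simply cites Proposition~\ref{prop:gcharge} with $n=3$ for the first claim and the definitions in Section~\ref{sec:glossary}(5b) for the second. Your version is more careful in that you explicitly verify the existence hypothesis of Proposition~\ref{prop:gcharge} (via the $\widetilde{\mathrm{GL}^+(2,\mathbb{R})}$-action on the stability condition from \cite{Macri2017}) and spell out the bookkeeping distinguishing the $\alpha$-twisted coefficients from the untwisted ones appearing in the final display; the paper leaves both of these implicit.
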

\begin{proof}
The first claim follows directly from Proposition~\ref{prop:gcharge} with $n = 3$. The second claim follows by definition of the coefficients $a(\,\cdot\,), \ldots, d(\,\cdot\,)$ in Section~\ref{sec:glossary}(5d).
\end{proof}

We recall that the three vectors appearing in the description of the fundamental domain $\mc D $ in (\ref{eqn:domain}) are given by 
\[
\left\{\mathrm{exp}(i(-D_1 - D_2 + D_3)), \ \mathrm{exp}(i(-D_1 + D_2 - D_3)), \ \mathrm{exp}(i(D_1 - D_2 - D_3)) \right\}
\]
which are related by the permutation automorphism $F$. Combining this action with the asymptotic coefficients, we observe the following relations.
\begin{lemma}\label{lem:fixedcoeffs}
There exist equalities of the asymptotic coefficients in~(\ref{eqn:transformcoeffs}):
\begin{align*}
c_1 + b_1 &= c_2 + b_2 = c_3 + b_3 \\
-a_1 + d_1 &=  -a_2 + d_2 = - a_3 + d_3
\end{align*}
\end{lemma}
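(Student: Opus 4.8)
The plan is to recognize the quantities $c_i+b_i$ and $d_i-a_i$ as the real and imaginary parts of a single linear functional on $\Knum(X)$ that is manifestly invariant under cyclic permutation of the three elliptic factors, and then simply read off the claimed equalities by evaluating on $\Phi_i(\EE)$.

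First I would record the identity
\[
c(\mathcal G)+b(\mathcal G)+i\bigl(d(\mathcal G)-a(\mathcal G)\bigr)=\bigl\langle \exp(i(D_1+D_2+D_3)),\ch(\mathcal G)\bigr\rangle,\qquad \mathcal G\in D^b(X),
\]
which is the specialization of Theorem~\ref{thm:prodstab} to $s=t=1$ combined with Proposition~\ref{prop:gcharge} applied with $n=3$, $\alpha=0$, $\beta=1$, $D=D_1+D_2$, $H=D_3$; equivalently it is the $\alpha=0$ case of Lemma~\ref{lem:1d}. Denote the right-hand side by $\Theta(\mathcal G)$. Since it depends only on $\ch(\mathcal G)$, it defines a linear functional $\Theta$ on $\Knum(X)\otimes\mathbb Q$, and by the very definitions of $a,b,c,d$ in Section~\ref{sec:glossary}(5) it is exactly the product central charge $Z_C^{1,1}$ with the parameters fixed there.

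Next I would show that $\Theta$ is invariant under the cyclic permutation $F$ of Section~\ref{sec:glossary}(3a). The automorphism $F$ of $X=E^3$ permutes the divisor classes $D_1,D_2,D_3$ cyclically, hence fixes $L\coloneqq D_1+D_2+D_3$ and therefore fixes $\exp(iL)\in H^\bullet(X;\mathbb C)$; and being an automorphism of the variety $X$, the induced automorphism of $H^\bullet(X;\mathbb Q)$ (equivalently, via $\ch$, of $\Knum(X)$) preserves the Mukai pairing. Writing $F$ also for this induced automorphism, so that $\ch(F\mathcal G)=F(\ch(\mathcal G))$ (the Todd class of $X$ being trivial), we get
\[
\Theta(F\mathcal G)=\bigl\langle \exp(iL),F(\ch(\mathcal G))\bigr\rangle=\bigl\langle F(\exp(iL)),F(\ch(\mathcal G))\bigr\rangle=\bigl\langle \exp(iL),\ch(\mathcal G)\bigr\rangle=\Theta(\mathcal G),
\]
hence $\Theta=\Theta\circ F=\Theta\circ F^2$. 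Applying the first identity to $\mathcal G=\Phi_i(\EE)$ for $\Phi_i\in\{\mathrm{Id},F,F^2\}$ now gives $(c_i+b_i)(\EE)+i\,(d_i-a_i)(\EE)=\Theta(\Phi_i\EE)=\Theta(\EE)$ for $i=1,2,3$, and equating real and imaginary parts yields $c_1+b_1=c_2+b_2=c_3+b_3$ and $-a_1+d_1=-a_2+d_2=-a_3+d_3$. There is no real obstacle here; the only care needed is bookkeeping — confirming that the $a,b,c,d$ of Section~\ref{sec:glossary} are precisely the coefficients for which $Z_C^{1,1}=\langle\exp(iL),-\rangle$, which is immediate, and that the induced action of $F$ fixes $\exp(iL)$, which follows at once from $F$ permuting the $D_j$ cyclically.
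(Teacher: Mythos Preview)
Your proof is correct and uses the same key idea as the paper: the functional $\langle\exp(iL),-\rangle$ with $L=D_1+D_2+D_3$ is $F$-invariant because $F$ permutes the $D_j$ cyclically, and this functional equals $c+b+i(d-a)$ by the $\alpha=0$ case of Lemma~\ref{lem:1d}. The paper's proof is slightly less direct---it first records the full $\alpha$-dependent identities (which are reused in Corollary~\ref{cor:reducecharge}) and then specializes to $\alpha=\beta=\gamma=0$---but the underlying mechanism is identical to yours.
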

\begin{proof}
We have the equalities of functions:
\begin{align*}
\langle \exp(i(D_1 + D_2 + D_3)) &+ \alpha \exp(i ( -D_1 - D_2 + D_3)), -\rangle = \\
&=(c_1+b_1)+ \alpha(-c_1 + b_1) + i ((-a_1 + d_1) + \alpha(-a_1 - d_1)) \\
\langle \exp(i(D_1 + D_2 + D_3)) &+ \beta \exp(i ( -D_1 + D_2 - D_3)), - \rangle = \\
&=\langle \exp(i(D_1 + D_2 + D_3)) + \beta \exp(i ( -D_1 - D_2 + D_3)),F(-)\rangle \\
&= (c_2+b_2)+ \beta(-c_2 + b_2) + i ((-a_2 + d_2) + \beta(-a_2 - d_2))\\
\langle \exp(i(D_1 + D_2 + D_3)) &+ \gamma  \exp(i ( D_1 - D_2 - D_3)), - \rangle = \\
&=\langle \exp(i(D_1 + D_2 + D_3)) + \gamma \exp(i ( -D_1 - D_2 + D_3)),F(F( -))\rangle\\
&= (c_3+b_3)+ \gamma(-c_3 + b_3) + i ((-a_3 + d_3) + \gamma(-a_3 - d_3))
\end{align*}
where the first equality follows from~(\ref{eqn:transformcoeffs}) together with Lemma~\ref{lem:1d}, the first equalities of the second two functions follow by definition of the automorphism $F$, and the second equalities follow again from~(\ref{eqn:transformcoeffs}) and Lemma~\ref{lem:1d}.

Setting $\alpha = \beta = \gamma = 0$ and equating the left hand side of the above equations implies the identities in the statement of the Lemma.
\end{proof}

Combining the above results, we immediately deduce the following.
\begin{corollary}\label{cor:reducecharge}
Consider the complexified Mukai vector
\begin{align*}
Z_{(\alpha,\beta,\gamma)} \coloneqq \ &\langle\exp(i(D_1 + D_2 + D_3)) + \alpha \exp(i (-D_1 - D_2 + D_3))\\&+ \beta \exp(i(-D_1 + D_2 - D_3)) + \gamma   \exp(i(D_1 - D_2 - D_3)), \,- \,\rangle, \quad \alpha + \beta + \gamma <1
\end{align*}
Then this can be written in the form
\begin{align*}
Z_{(\alpha,\beta,\gamma)} &= (c_1 + b_1) + \alpha(-c_1 + b_1) + \beta (-c_2 + b_2) + \gamma( -c_3 + b_3) \\
&\quad+ i((-a_1 + d_1) + \alpha(-a_1 - d_1 ) + \beta(-a_2 - d_2 ) + \gamma(- a_3 - d_3))
\end{align*}
\end{corollary}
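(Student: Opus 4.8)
The plan is to deduce the corollary purely from linearity of the Mukai pairing in its first argument, together with the identities already established in Lemma~\ref{lem:1d} and (inside the proof of) Lemma~\ref{lem:fixedcoeffs}. Since the class
\[
\exp(i(D_1+D_2+D_3)) + \alpha\exp(i(-D_1-D_2+D_3)) + \beta\exp(i(-D_1+D_2-D_3)) + \gamma\exp(i(D_1-D_2-D_3))
\]
depends affine-linearly on $(\alpha,\beta,\gamma)$, the functional $Z_{(\alpha,\beta,\gamma)}$ splits as the sum of the functional $\langle\exp(i(D_1+D_2+D_3)),-\rangle$ and the three functionals $\alpha\langle\exp(i(-D_1-D_2+D_3)),-\rangle$, $\beta\langle\exp(i(-D_1+D_2-D_3)),-\rangle$, $\gamma\langle\exp(i(D_1-D_2-D_3)),-\rangle$. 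It therefore suffices to pin down each of these four functionals in terms of the $a_i,b_i,c_i,d_i$.

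First I would take the identity of Lemma~\ref{lem:1d} (equivalently, the first displayed identity in the proof of Lemma~\ref{lem:fixedcoeffs}), which is valid for every $\alpha$, and separate its $\alpha$-independent and $\alpha$-linear parts to get
\[
\langle\exp(i(D_1+D_2+D_3)),-\rangle = (c_1+b_1) + i(-a_1+d_1),\qquad
\langle\exp(i(-D_1-D_2+D_3)),-\rangle = (-c_1+b_1) + i(-a_1-d_1).
\]
For the $\beta$- and $\gamma$-pieces I would use that $\exp(i(-D_1+D_2-D_3))$ and $\exp(i(D_1-D_2-D_3))$ are the images of $\exp(i(-D_1-D_2+D_3))$ under the permutation automorphisms $F$ and $F^2$ on $\Knum(X)$, so that precomposing with $F$ (resp. $F^2$) replaces $a_1,\dots,d_1$ by their transforms $a_2,\dots,d_2$ (resp. $a_3,\dots,d_3$) via the defining relations $a_i = a\circ\Phi_i,\dots,d_i = d\circ\Phi_i$ of~\eqref{eqn:transformcoeffs}; this is exactly the manipulation recorded for the second and third functionals in the proof of Lemma~\ref{lem:fixedcoeffs}, and it gives
\[
\langle\exp(i(-D_1+D_2-D_3)),-\rangle = (-c_2+b_2) + i(-a_2-d_2),\qquad
\langle\exp(i(D_1-D_2-D_3)),-\rangle = (-c_3+b_3) + i(-a_3-d_3).
\]
Substituting these four identities back into the linear decomposition of $Z_{(\alpha,\beta,\gamma)}$ yields the asserted formula.

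I do not expect a genuine obstacle: the entire content is linearity plus the two cited lemmas. The only point requiring care is the bookkeeping of which of $F$ and $F^2$ carries $-D_1-D_2+D_3$ to $-D_1+D_2-D_3$ versus $D_1-D_2-D_3$, and the observation that evaluating the central charge after applying $F^j$ produces the $F^j$-transformed asymptotic coefficients rather than the original ones — both of which are already handled in the proof of Lemma~\ref{lem:fixedcoeffs}. Hence the corollary follows immediately by combining Lemmas~\ref{lem:1d} and~\ref{lem:fixedcoeffs}.
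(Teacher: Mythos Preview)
Your proposal is correct and essentially identical to the paper's proof. The paper organizes the same linearity-and-substitution argument via the identity $Z_{(\alpha,\beta,\gamma)} = Z_{(\alpha,0,0)} + Z_{(0,\beta,0)} + Z_{(0,0,\gamma)} - 2Z_{(0,0,0)}$ and then invokes Lemma~\ref{lem:fixedcoeffs} to collapse the three copies of the constant term, whereas you isolate the four individual exponential pairings directly; both routes rest on Lemma~\ref{lem:1d} and the displayed identities in the proof of Lemma~\ref{lem:fixedcoeffs}.
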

\begin{proof}
We have the equalities:
\begin{align*}
Z_{(\alpha,\beta,\gamma)} &= Z_{(\alpha,0,0)} + Z_{(0,\beta,0)}+ Z_{(0,0,\gamma)} - 2Z_{(0,0,0)} \\
&= ((c_1+b_1)+ \alpha(-c_1 + b_1))+ i ((-a_1 + d_1) + \alpha(-a_1 - d_1))\\
&\quad +((c_2+b_2)+ \beta(-c_2 + b_2))+ i ((-a_2 + d_2) + \beta(-a_2 - d_2))\\
&\quad +((c_3+b_3)+ \gamma(-c_3 + b_3))+ i ((-a_3 + d_3) + \gamma(-a_3 - d_3))\\
&\quad - 2( (c_1+b_1) + i (-a_1 + d_1)) \\
&= (c_1 + b_1) + \alpha(-c_1 + b_1) + \beta (-c_2 + b_2) + \gamma( -c_3 + b_3) \\
&\quad + i((-a_1 + d_1) + \alpha(-a_1 - d_1 ) + \beta(-a_2 - d_2 ) + \gamma(- a_3 - d_3)).
\end{align*}
where the first equality follows by definition of $Z_{(\alpha,\beta,\gamma)}$ together with linearity, the second follows from the proof of Lemma~\ref{lem:fixedcoeffs}, and the third follows directly from Lemma~\ref{lem:fixedcoeffs}.
\end{proof}

On the other hand, the action of the functor, $\Phi$, induced from the Poincar\'e bundle gives a particularly simple relation among the asymptotic coefficients.
\begin{lemma}\label{lem:sdualityq}
Under the auto-equivalence $\Phi$, we have the following equalities for $i = 1, 2, 3:$
\[
a_i(\Phi(\EE)) = - b_i(\EE), \ b_i (\Phi(\EE)) = a_i(\EE), \ c_i(\Phi(\EE)) = -d_i(\EE), \ d_i(\Phi(\EE)) = c_i(\EE).
\]
\end{lemma}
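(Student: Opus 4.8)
The plan is to obtain the case $i=1$ directly from Proposition~\ref{prop:poincarecoeffs} and then bootstrap to $i=2,3$ using that $\Phi$ commutes with the cyclic permutation functor $F$. For $i=1$ there is nothing to do beyond specializing Proposition~\ref{prop:poincarecoeffs} to $n=3$: since $(-i)^{n-1}=(-i)^2=-1$, that proposition reads, for every $\mathcal G\in D^b(X)$,
\[
a(\Phi(\mathcal G))=-b(\mathcal G),\quad b(\Phi(\mathcal G))=a(\mathcal G),\quad c(\Phi(\mathcal G))=-d(\mathcal G),\quad d(\Phi(\mathcal G))=c(\mathcal G),
\]
which is precisely the assertion with $\Phi_1=\mathrm{Id}$.

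Next I would prove $\Phi\circ F\cong F\circ\Phi$. Since $F$ merely permutes $D_1,D_2,D_3$, the principal polarization $L=D_1+D_2+D_3$ is $F$-invariant, $F^*L\cong L$; combined with the standard identity $\varphi_{F^*L}=\widehat F\circ\varphi_L\circ F$ this gives $\varphi_L\circ F=\widehat F^{-1}\circ\varphi_L$. Feeding this through $\Phi=\varphi_L^*\circ\Phi_{\PP}$, together with the compatibility of the Fourier--Mukai functor $\Phi_{\PP}$ with the automorphism $F$ and its dual $\widehat F$ (which introduces no shift, $F$ being an isomorphism), yields $\Phi\circ F\cong F\circ\Phi$. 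Alternatively, one can check this equality directly on $H^\bullet(X;\mathbb{Q})$ from the formula for $\Phi^H$ in Lemma~\ref{lem:poincarefunctor}(1), using that Poincar\'e duality and $\varphi_L^*$ are equivariant for the permutation action on $H^1$ because $c_1(L)$ is $F$-invariant. In either form, $\Phi\circ F^{j}$ and $F^{j}\circ\Phi$ induce the same operator on $\Knum(X)$, hence the same operator on $H^\bullet(X;\mathbb{Q})$ via $\ch$, for $j=1,2$.

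Finally I would assemble the two ingredients. Recall $\Phi_i=F^{i-1}$ for $i=1,2,3$, so $a_i=a\circ F^{i-1}$ and likewise for $b_i,c_i,d_i$, and that by Corollary~\ref{cor:coeffs} these coefficients depend only on $\ch$. Applying the $i=1$ identities to $F^{i-1}(\EE)$ and commuting $F^{i-1}$ past $\Phi$,
\[
a_i(\Phi(\EE))=a\big(F^{i-1}\Phi(\EE)\big)=a\big(\Phi F^{i-1}(\EE)\big)=-b\big(F^{i-1}(\EE)\big)=-b_i(\EE),
\]
and the same chain with $b,c,d$ in place of $a$ gives $b_i(\Phi(\EE))=a_i(\EE)$, $c_i(\Phi(\EE))=-d_i(\EE)$, $d_i(\Phi(\EE))=c_i(\EE)$. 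The only step with genuine content is the commutation $\Phi\circ F\cong F\circ\Phi$; I expect it to be routine once the $F$-invariance of $L$ is used, but that is where one must take care — in particular to rule out an extra sign from a shift in the Fourier--Mukai compatibility.
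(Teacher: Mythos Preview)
Your proof is correct and follows the same approach as the paper: specialize Proposition~\ref{prop:poincarecoeffs} to $n=3$ to get the case $i=1$, then transport to $i=2,3$ via the cyclic permutation $F$. The paper's proof is a one-liner invoking that proposition ``together with the definitions of Section~\ref{sec:glossary}(5b) and the fact that the functors $F$ are auto-equivalences''; you have simply made explicit the commutation $\Phi\circ F\cong F\circ\Phi$ (via $F$-invariance of $L=D_1+D_2+D_3$) that the paper leaves implicit in that phrase.
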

\begin{proof}
This follows directly from Proposition~\ref{prop:poincarecoeffs} with $n = 3$, together with the definitions of Section~\ref{sec:glossary}(5b) and the fact that the functors $F$ are auto-equivalences.
\end{proof}

Finally, we recall that the vanishing of the numerical class $[\EE]$ can be characterized by the vanishing of both the associated asymptotic coefficients, and the projection vector $v_1(\EE)$.
\begin{lemma}\label{lem:vanishing}
Assume $\EE \in \Knum(X)$ such that for all $i = 1,2,3$, there are vanishings:
\begin{align*}
v_1(\EE) = v_1(F(\EE)) = v_1(F^2(\EE)) = v_1(\Phi(\EE)) =  v_1(F \circ \Phi(\EE)) =   v_1(F \circ F \circ \Phi(\EE)) = 0
\end{align*}
Then $[\EE] = 0$.
\end{lemma}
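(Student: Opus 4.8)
The plan is to recognize that this statement is nothing more than a repackaging of Lemma~\ref{lem:injective}, which has already done all the work. Concretely, the hypotheses
\[
v_1(\EE)=v_1(F(\EE))=v_1(F^2(\EE))=v_1(\Phi(\EE))=v_1(F\circ\Phi(\EE))=v_1(F^2\circ\Phi(\EE))=0
\]
say exactly that the class $[\EE]\in\Knum(X)=\Knum(E^3)$ lies in the kernel of the map
\[
\left(v_1,\; v_1\circ F,\; v_1\circ F^2,\; v_1\circ\Phi,\; v_1\circ F\circ\Phi,\; v_1\circ F^2\circ\Phi\right)^{T}\colon\Knum(E^3)\longrightarrow\Lambda^{\oplus 6}.
\]
By Lemma~\ref{lem:injective} this map is injective, so $[\EE]=0$, which is the assertion. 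Thus I would simply state this one-line deduction and move on; there is no new content to develop.

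The reason the lemma is isolated at all is organisational: in the reduction step of Section~\ref{sec:quadratic} one needs precisely the negative-definiteness half of the support property, and the criterion ``vanishing of all six $v_1\circ\Phi_i$ on a class forces that class to be zero'' is exactly what is fed into the proof (via Lemma~\ref{lem:fullsupport} and its analogue) that the quadratic form $Q'$ is negative definite on $\ker(Z_{(\alpha,\beta,\gamma)})$. The only genuine obstacle was already overcome inside the proof of Lemma~\ref{lem:injective}: one reduces to the implication $p_*(H\cdot\ch_1)=p_*(H\cdot F(\ch_1))=p_*(H\cdot F^2(\ch_1))=0\Rightarrow\ch_1=0$ by rewriting $A\cdot p_*(H\cdot F^i(\ch_1))=F^{-i}(p^*A\cdot H)\cdot\ch_1$, taking $A=D_1,D_2,\Delta$ to get $C\cdot\ch_1=0$ for all $C\in\{C_{12},C_{13},C_{23},D_{12},D_{13},D_{23}\}$, and invoking invertibility of the Euler form of Lemma~\ref{lem:bases}; then $\Phi$ being an autoequivalence kills $\ch_0,\ch_2,\ch_3$ as well. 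Nothing beyond citing that result is required here.
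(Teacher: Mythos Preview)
Your proposal is correct and matches the paper's approach exactly: the paper's proof is a one-line statement that this is a restatement of Lemma~\ref{lem:injective} together with the definitions in Section~\ref{sec:glossary}. Your additional commentary recalling the internal mechanism of Lemma~\ref{lem:injective} is accurate but unnecessary for the proof itself.
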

\begin{proof}
This is a restatement of Lemma~\ref{lem:injective}, with the definitions of Section~\ref{sec:glossary}(3, 4a, 5b, \ref{eqn:transformcoeffs}).
\end{proof}

\begin{proof}[Proof of Proposition~\ref{prop:reduction}]
By Corollary~\ref{cor:reducecharge}, the central charge of Proposition~\ref{prop:quadratic} can be written in the form
\begin{align*}
Z_{(\alpha,\beta,\gamma)}(\EE) &= ((c_1 + b_1) + \alpha(-c_1 + b_1) + \beta (-c_2 + b_2) + \gamma( -c_3 + b_3))(\EE) \\
&+ i((-a_1 + d_1) + \alpha(-a_1 - d_1 ) + \beta(-a_2 - d_2 ) + \gamma(- a_3 - d_3))(\EE)
\end{align*}
Let $ \EE \in \Knum(X)$ such that $Z_{(\alpha,\beta,\gamma)}(\EE) = 0$.

We first consider the case where $a_i(\EE), \ldots d_i(\EE)$ are not all zero for $i = 1,2,3$. We obtain the inequalities:
\begin{align*}
&(\alpha' (Q_1^{\eta_1}+Q_4^{\eta_4}) + \beta'( Q_2^{\eta_2} + Q_5^{\eta_5}) + \gamma' (Q_3^{\eta_3} + Q_6^{\eta_6}))(\EE) \\
=\  & (\alpha' (b_1 c_1 - a_1 d_1 + \eta_1 Q(v_1 \circ \Phi_1)) + \alpha' (b_4 c_4 - a_4 d_4 + \eta_4 Q(v_1 \circ \Phi_4)) + \ldots )(\EE)\\
\leq\  & (\alpha' (b_1 c_1 - a_1 d_1 + \eta_1 C (a_1^2 + c_1^2)) +  \alpha' (b_4 c_4 - a_4 d_4 + \eta_4 C (a_4^2 + c_4^2)) + \ldots)(\EE) \\
=\ & (\alpha' (P_1^{C_1}+P_4^{C_4}) + \beta'( P_2^{C_2} + P_5^{C_5}) + \gamma' (P_3^{C_3} + P_6^{C_6}))(a_i(\EE), \ldots , d_i(\EE)) \eqqcolon P(a_i(\EE), \ldots , d_i(\EE))
\end{align*}
where the first follows by definition in Section~\ref{sec:glossary}(5a), the second follows by the support property of the surface stability condition as in Section~\ref{sec:glossary}(5c), and the third follows by Definition~(\ref{eqn:pdefn}). By Definition~(\ref{eqn:yfunction}), we have that
\[
Z_{(\alpha,\beta,\gamma)}(\EE) = 0 \implies Y_{(\alpha,\beta,\gamma)}(a_i(\EE), \ldots, d_i(\EE)) = 0
\]
The conclusion follows from the assumption that the quadratic form $P$ is negative definite on $\mathrm{ker}(Y_{(\alpha,\beta,\gamma)}(a_i, \ldots, d_i))$ together with Lemmas~\ref{lem:fixedcoeffs} and~\ref{lem:sdualityq}, and hence the assumption that $a_i(\EE), \ldots , d_i(\EE)$ are not all zero implies that $P(a_i(\EE), \ldots , d_i(\EE))$ is strictly negative.

Otherwise, by Lemma~\ref{lem:vanishing}, it must be the case that at least one of the lattice vectors 
\[
\{ v_1(\EE), v_1(F(\EE)), v_1(F^2(\EE)), v_1 (\Phi(\EE)), v_1( F \circ \Phi(\EE)), v_1(F^2 \circ \Phi(\EE)) \}
\]
is non-zero in the lattice $ \Knum(S)$. By the assumption that $a_1(\EE) = \ldots = d_3(\EE) = 0$, it follows by the definitions in Section~\ref{sec:glossary}(5b,  \ref{eqn:transformcoeffs}) that each non-zero vector must lie in the kernel of the surface stability condition $g \colon \Knum(S) \rightarrow \mb C$. This implies:
\begin{align*}
(\alpha' (Q_1^{\eta_1}+Q_4^{\eta_4}) &+ \beta'( Q_2^{\eta_2} + Q_5^{\eta_5}) + \gamma' (Q_3^{\eta_3} + Q_6^{\eta_6}))(\EE) \\
=\  & (\alpha' (b_1 c_1 - a_1 d_1 + \eta_1 Q(v_1 \circ \Phi_1)) + \alpha' (b_4 c_4 - a_4 d_4 + \eta_4 Q(v_1 \circ \Phi_4)) + \ldots )(\EE) \\
= \ &(\alpha'(\eta_1 Q(v_1 \circ \Phi_1)+ \eta_4 Q(v_1 \circ \Phi_4)) + \ldots) (\EE) < 0
\end{align*}
where the first follows by definition in Section~\ref{sec:glossary}(5a), the second follows from the assumption of the vanishings $a_1(\EE) = \ldots = d_3(\EE) = 0$ together with Lemma~\ref{lem:sdualityq}, and the last inequality follows from the assumption that $Q$ satisfies the support property on $\Knum(S)$ and hence is negative definite on $\ker(g) \subset \Lambda$.
\end{proof}

\subsection{Definiteness of the quadratic form}
In this section, we verify the assumption of Proposition~\ref{prop:reduction}. We consider the following quadratic form:
\begin{equation}\label{eqn:pabcdefn}
P_{(\alpha,\beta,\gamma)} \coloneqq \alpha(P_1^{C_1} + P_4^{C_4}) + \beta (P_2^{C_2} + P_5^{C_5}) + \gamma (P_3^{C_3} + P_6^{C_6})
\end{equation}
for $0 < C_i < 1$, where each $P_i$ is defined as in~(\ref{eqn:pdefn}).
\begin{prop}\label{prop:linalg}
Let $\alpha, \beta, \gamma$ be parameters satisfying:
\[
\alpha, \beta, \gamma \geq 0 , \quad \alpha + \beta + \gamma < 1,
\]
and fix constants satisfying:
\[
C\coloneqq C_1 = C_2 = C_3= C_4 = C_5 = C_6.
\]
Consider the subspaces parametrized by $\lambda$:
\[
K_{\lambda} \coloneqq \mathrm{ker}\left(Y_{(\lambda \alpha, \lambda \beta, \lambda \gamma)}\right), \quad 0 \leq \lambda \leq 1.
\]
Then:
\begin{enumerate}
    \item
If $\alpha, \beta, \gamma > 0 $, fix $C$ such that $0 < C < 1 - (\alpha + \beta + \gamma)$. Then $P_{(\alpha, \beta, \gamma)}$ is negative definite on $K_{\lambda}$ for $0 \leq \lambda \leq 1$.
\item 
If $\alpha , \beta > 0$, $\gamma = 0$, fix $C, \gamma' > 0 $ such that $0 < C < 1 - (\alpha + \beta + \gamma')$. Then $P_{(\alpha, \beta, \gamma')}$ is negative definite on $K_{\lambda}$ for $0 \leq \lambda \leq 1$.
\item 
If $\alpha >0$, $\beta = \gamma = 0$, fix $C, \beta', \gamma' > 0$ such that $0 < C < 1 - (\alpha + \beta' + \gamma')$. Then $P_{(\alpha, \beta', \gamma')}$ is negative definite on $K_{\lambda}$ for $0 \leq \lambda \leq 1$.
\end{enumerate}
\end{prop}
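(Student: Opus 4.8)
The plan is to perform a single linear change of coordinates, adapted to the constraints~\eqref{eqn:coeffrelations} and~\eqref{eqn:poincarecoeffs0}, which simultaneously diagonalizes the quadratic forms $P_i^{C_i}+P_{i+3}^{C_{i+3}}$ and linearizes $Y$. Once this is done, all three cases of the proposition collapse to one weighted Cauchy--Schwarz estimate, and the numerical input is just the elementary inequality $(1+C)(1-C)^2<1-C$ for $0<C<1$.

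First I would coordinatize the solution space of~\eqref{eqn:coeffrelations} --- which is $8$-dimensional --- by the variables $s\coloneqq b_i+c_i$ and $r\coloneqq d_i-a_i$ (well-defined, i.e. independent of $i$, precisely by~\eqref{eqn:coeffrelations}), together with $t_i\coloneqq b_i-c_i$ and $w_i\coloneqq-(a_i+d_i)$ for $i=1,2,3$; equivalently $b_i=\tfrac{s+t_i}{2}$, $c_i=\tfrac{s-t_i}{2}$, $a_i=-\tfrac{w_i+r}{2}$, $d_i=\tfrac{r-w_i}{2}$. In these variables~\eqref{eqn:yfunction} becomes $Y_{(\alpha,\beta,\gamma)}=\bigl(s+\alpha t_1+\beta t_2+\gamma t_3\bigr)+i\bigl(r+\alpha w_1+\beta w_2+\gamma w_3\bigr)$, so $K_\lambda$ is the graph $s=-\lambda(\alpha t_1+\beta t_2+\gamma t_3)$, $r=-\lambda(\alpha w_1+\beta w_2+\gamma w_3)$ over the six free coordinates $t_i,w_i$. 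Next, using~\eqref{eqn:poincarecoeffs0} together with $C_i=C_{i+3}=C$ one computes $P_i^{C_i}+P_{i+3}^{C_{i+3}}=2(b_ic_i-a_id_i)+C(a_i^2+b_i^2+c_i^2+d_i^2)$, and substituting the formulas above gives the clean identity
\[
P_i^{C_i}+P_{i+3}^{C_{i+3}}=\tfrac12\bigl((1+C)(s^2+r^2)+(C-1)(t_i^2+w_i^2)\bigr).
\]
Writing $(\alpha_1',\alpha_2',\alpha_3')$ for the strictly positive weights appearing in $P$ in the case under consideration and $\Sigma'\coloneqq\alpha_1'+\alpha_2'+\alpha_3'$, summing gives $2P=(1+C)\Sigma'(s^2+r^2)+(C-1)\sum_i\alpha_i'(t_i^2+w_i^2)$.

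To finish, I would restrict to $K_\lambda$ and set $I\coloneqq\{i:\alpha_i>0\}$, observing $\alpha_i'=\alpha_i$ for $i\in I$ and $\sum_{i\in I}\alpha_i'\le\Sigma'$. Plugging in the graph equations and applying the weighted Cauchy--Schwarz inequality $\bigl(\sum_{i\in I}\alpha_i t_i\bigr)^2\le\bigl(\sum_{i\in I}\alpha_i'\bigr)\bigl(\sum_{i\in I}\alpha_i' t_i^2\bigr)\le\Sigma'\sum_{i\in I}\alpha_i' t_i^2$ (and likewise for the $w$'s) yields
\[
2P\big|_{K_\lambda}\ \le\ \bigl((1+C)(\Sigma')^2\lambda^2+C-1\bigr)\sum_{i\in I}\alpha_i'(t_i^2+w_i^2)\ +\ (C-1)\sum_{j\notin I}\alpha_j'(t_j^2+w_j^2).
\]
In each case the hypothesis is $0<C<1-\Sigma'$, hence $\Sigma'<1-C$ and so $(1+C)(\Sigma')^2<(1+C)(1-C)^2=(1-C)(1-C^2)<1-C$; therefore $(1+C)(\Sigma')^2\lambda^2+C-1<0$ for all $\lambda\in[0,1]$, and $C-1<0$ trivially. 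Both coefficients on the right being strictly negative and all $\alpha_j'>0$, the right-hand side is nonpositive, and vanishes exactly when all $t_i=w_i=0$, which on $K_\lambda$ forces $s=r=0$ as well, i.e. the point is the origin. Hence $P$ is negative definite on $K_\lambda$ for every $\lambda\in[0,1]$, which is precisely assertions (1), (2), and (3) (taking $(\alpha_1',\alpha_2',\alpha_3')=(\alpha,\beta,\gamma)$, $(\alpha,\beta,\gamma')$, $(\alpha,\beta',\gamma')$ respectively).

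The only genuine content is finding the coordinates of the first step that make the $P_i^{C_i}+P_{i+3}^{C_{i+3}}$ diagonal and $Y$ linear at the same time; I expect the sign bookkeeping in~\eqref{eqn:poincarecoeffs0} and the completion of squares to be the most error-prone part, but conceptually the argument is short once that identity is in hand.
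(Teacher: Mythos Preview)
Your argument is correct and is a genuinely cleaner route than the one in the paper. The paper proceeds by introducing the variables $S=b_i+c_i$, $T=d_i-a_i$, solving the kernel equations for $S,T$, substituting into the fully expanded form \eqref{eqn:pquadratic}, and then comparing $P_\lambda$ with $P_{\lambda=0}$ (the latter handled by Lemma~\ref{lem:lambda0}); the difference $P_{\lambda=0}-P_\lambda$ is shown to be positive semi-definite by explicitly computing the relevant $3\times 3$ principal minors via Sylvester's criterion, and the three cases (1)--(3) are treated by separate computations with increasingly delicate sign checks. Your change of variables $(s,r,t_i,w_i)$ goes one step further: it not only captures $S,T$ but simultaneously diagonalizes each $P_i^{C}+P_{i+3}^{C}$ as $\tfrac12\bigl((1+C)(s^2+r^2)+(C-1)(t_i^2+w_i^2)\bigr)$, which the paper never isolates. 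This structural identity replaces the Sylvester minor computations by a single weighted Cauchy--Schwarz estimate and collapses all three cases into one, with the numerical input reduced to $(1+C)(1-C)^2<1-C$. What your approach buys is brevity and uniformity; what the paper's approach buys is that it never has to spot the diagonalizing coordinates, at the cost of the case analysis and the explicit minor bookkeeping. Your verification of the equality case (forcing $t_i=w_i=0$ and hence $s=r=0$ on $K_\lambda$) is also correct, so the negative \emph{definiteness} (not just semi-definiteness) goes through.
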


We first note the following:
\begin{lemma}\label{lem:lambda0}
Assume that $\alpha, \beta, \gamma \geq 0$. Then the quadratic form $P_{(\alpha,\beta,\gamma)}$ is negative semi-definite on the subspace $\ker\left(Y_{(0,0,0)}\right)$ for $ 0 \leq C_i < 1$. In particular, if $\alpha,\beta,\gamma \neq 0$, then $P_{(\alpha,\beta,\gamma)}$ is strictly negative-definite.
\end{lemma}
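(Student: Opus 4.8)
The plan is a direct computation on the linear subspace $\ker(Y_{(0,0,0)})$: I will restrict each summand $P_i^{C_i}$ to this subspace and observe that the restriction is a non-positive multiple of $a_i^2 + c_i^2$ (after suitable relabeling). First I would identify $\ker(Y_{(0,0,0)})$ explicitly. Since $Y_{(0,0,0)} = (c_1 + b_1) + i(d_1 - a_1)$, its vanishing forces $c_1 + b_1 = 0$ and $d_1 - a_1 = 0$, and then the constraints~\eqref{eqn:coeffrelations} propagate this to $b_i = -c_i$ and $d_i = a_i$ for all $i = 1,2,3$. Hence $(a_1, c_1, a_2, c_2, a_3, c_3)$ are coordinates on $\ker(Y_{(0,0,0)})$, while~\eqref{eqn:poincarecoeffs0} determines the blocks $i = 4,5,6$ from the blocks $i = 1,2,3$.

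Substituting $b_i = -c_i$, $d_i = a_i$ into $P_i^{C_i} = b_i c_i - a_i d_i + C_i(a_i^2 + c_i^2)$ gives, for $i = 1,2,3$,
\[
P_i^{C_i} = -c_i^2 - a_i^2 + C_i(a_i^2 + c_i^2) = (C_i - 1)(a_i^2 + c_i^2).
\]
For $i = 4,5,6$ I would first use~\eqref{eqn:poincarecoeffs0} to rewrite, for instance, $P_4^{C_4} = b_4 c_4 - a_4 d_4 + C_4(a_4^2 + c_4^2) = b_1 c_1 - a_1 d_1 + C_4(b_1^2 + d_1^2)$, and then substitute the above relations, obtaining $P_4^{C_4} = (C_4 - 1)(a_1^2 + c_1^2)$ and, analogously, $P_5^{C_5} = (C_5-1)(a_2^2 + c_2^2)$, $P_6^{C_6} = (C_6 - 1)(a_3^2 + c_3^2)$. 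Collecting terms, the restriction of $P_{(\alpha,\beta,\gamma)}$ to $\ker(Y_{(0,0,0)})$ equals
\[
\alpha(C_1 + C_4 - 2)(a_1^2 + c_1^2) + \beta(C_2 + C_5 - 2)(a_2^2 + c_2^2) + \gamma(C_3 + C_6 - 2)(a_3^2 + c_3^2).
\]

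Since $0 \leq C_i < 1$, each of $C_1 + C_4 - 2$, $C_2 + C_5 - 2$, $C_3 + C_6 - 2$ is strictly negative, so with $\alpha, \beta, \gamma \geq 0$ the displayed form is a non-positive combination of squares, that is, negative semi-definite on $\ker(Y_{(0,0,0)})$. When in addition $\alpha, \beta, \gamma > 0$, all three coefficients are strictly negative, so the form vanishes only if $a_1 = c_1 = a_2 = c_2 = a_3 = c_3 = 0$; combined with $b_i = -c_i$, $d_i = a_i$ this means the whole vector is zero, giving strict negative-definiteness. The argument is entirely routine; the only points demanding care are identifying coordinates on $\ker(Y_{(0,0,0)})$ correctly via~\eqref{eqn:coeffrelations} and keeping track of the signs introduced by~\eqref{eqn:poincarecoeffs0}.
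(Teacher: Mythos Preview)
Your proof is correct and follows essentially the same approach as the paper's: both identify $\ker(Y_{(0,0,0)})$ via $b_i=-c_i$, $d_i=a_i$, substitute into each $P_i^{C_i}$ (using~\eqref{eqn:poincarecoeffs0} for $i=4,5,6$), and arrive at the diagonal form $\alpha(C_1+C_4-2)(a_1^2+c_1^2)+\beta(C_2+C_5-2)(a_2^2+c_2^2)+\gamma(C_3+C_6-2)(a_3^2+c_3^2)$. The paper writes the final expression with $a_i^2+b_i^2$ in place of your $a_i^2+c_i^2$, but since $b_i=-c_i$ on this subspace the two agree.
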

\begin{proof}
Setting $\alpha=\beta=\gamma=0$ in~\eqref{eqn:yfunction} yields
\[
Y_{(0,0,0)} = (c_1 + b_1) + i (-a_1 + d_1).
\]
Restricting to $\ker(Y_{(0,0,0)})$ with $b_1 = - c_1$ and $a_1 = d_1$, and using conditions~\eqref{eqn:coeffrelations} and~\eqref{eqn:poincarecoeffs0}, we find:
\begin{align*}
P_{(\alpha,\beta,\gamma)} &= \alpha(-c_1^2 - a_1^2 + C_1 (a_1^2 + c_1^2)) + \beta (-c_2^2 - a_2^2 + C_2 (a_2^2 + c_2^2)) + \gamma (-c_3^2 - a_3^2 + C_3(a_3^2 + c_3^2)) \\
&+ \alpha(-c_1^2 - a_1^2 + C_4 (b_1^2 + d_1^2)) + \beta (-c_2^2 - a_2^2 + C_5 (b_2^2 + d_2^2)) + \gamma (-c_3^2 - a_3^2 + C_6(b_3^2 + d_3^2)) \\
&= \alpha( -2 + C_1 + C_4) (a_1^2 + b_1^2) + \beta (-2 + C_2 + C_5)(a_2^2 + b_2^2) + \gamma (-2 + C_3 + C_6 ) (a_3^2 + b_3^2)
\end{align*}
which is clearly negative semi-definite with the assumptions on $\alpha,\beta,\gamma, C_i$. In particular, this is strictly negative definite with the additional assumption that $\alpha, \beta,\gamma \neq 0 $.
\end{proof}

\begin{proof}[Proof of Proposition~\ref{prop:linalg}]
We define the variables:
\[
S \coloneqq b_1 + c_1 = b_2 + c_2  = b_3 + c_3, \quad 
T \coloneqq -a_1 + d_1 = -a_2 + d_2 = -a_3 + d_3.
\]
where the equalities follow from~(\ref{eqn:coeffrelations}), and we recall the component quadratic forms~(\ref{eqn:pdefn}):
\begin{align*}
P_i^{C_i} &\coloneqq b_i c_i - a_i d_i + C_i(a_i^2 + c_i^2), \quad i = 1, 2, 3 \\
P_i^{C_i} &\coloneqq b_i c_i - a_i d_i + C_i(a_i^2 + c_i^2) = b_{i-3} c_{i-3} - a_{i-3} d_{i-3} + C_i(b_{i-3}^2 + d_{i-3}^2), \quad i = 4,5,6
\end{align*}
where the equality follows from~(\ref{eqn:poincarecoeffs0}).

This implies that we can write the full quadratic form~(\ref{eqn:pabcdefn}) as
\begin{equation}\label{eqn:pquadratic}
\begin{split}
P_{(\alpha,\beta,\gamma)} =&\  \alpha(b_1 (S - b_1) -a_1 (T+ a_1) + C_1((S - b_1)^2 + a_1^2)) \\ &+  \alpha(b_1 (S - b_1) -a_1 (T+ a_1) + C_4 (b_1^2+ (T+a_1)^2))  \\
&+ \beta(b_2 (S - b_2) - a_2 (T+ a_2) + C_2((S - b_2)^2 + a_2^2)) \\ &+ \beta(b_2 (S - b_2) - a_2 (T + a_2) + C_5 (b_2^2 + (T+a_2)^2)) \\
&+ \gamma(b_3(S- b_3) - a_3(T+a_3) + C_3((S - b_3)^2 + a_3^2)) \\ &+ \gamma(b_3 (S - b_3) - a_3(T+a_3) + C_6(b_3^2 +(T+a_3)^2))
\end{split}
\end{equation}
The linear function $Y_{(\lambda \alpha, \lambda \beta, \lambda \gamma)}$ as in Definition~(\ref{eqn:yfunction}) can be written as:
\begin{align*}
\mathrm{Re}(Y_{(\lambda \alpha, \lambda \beta, \lambda \gamma)}) &= S+ \lambda(\alpha( 2 b_1 - S) + \beta (2 b_2 - S) + \gamma ( 2 b_3 - S)) \\
\mathrm{Im}(Y_{(\lambda \alpha, \lambda \beta, \lambda \gamma)}) &= T +\lambda( \alpha (-2a_1 - T) + \beta(-2 a_2 - T) + \gamma(-2 a_3 - T))
\end{align*}
The assumptions imply that $1 - \lambda(\alpha+\beta+\gamma) \neq 0$, and so the condition $Y = 0$ gives:
\begin{equation}\label{eqn:S}
S = \frac{-2\lambda}{1 - \lambda(\alpha + \beta + \gamma)}( \alpha b_1 + \beta b_2 + \gamma b_3), \quad T = \frac{2 \lambda}{1 - \lambda(\alpha + \beta + \gamma)}(\alpha a_1 + \beta a_2 + \gamma a_3)
\end{equation}
\begin{enumerate}
\item \underline{Case $\alpha,\beta,\gamma > 0$:}
Rewriting the quadratic form $P_{(\alpha,\beta,\gamma)}$ of~(\ref{eqn:pquadratic}) using ~(\ref{eqn:S}), the quadratic form $P_{(\alpha,\beta,\gamma)}(a_i, \ldots , d_i) = P_{\lambda}(a_i, b_i)$ can be written as a function independent of the variables $c_i, d_i$. It suffices to show that for any $\lambda$ satisfying $0 \leq \lambda \leq1$ and constants $C_i$ as in the assumption, the quadratic form $P_{\lambda}(a_i, b_i)$ is negative definite.

By Lemma~\ref{lem:lambda0}, we have that $P_{\lambda = 0}(a_i, b_i)$ is negative definite, and hence it suffices to show that the difference:
\[
P_{\lambda = 0}(a_i, b_i) - P_{\lambda}(a_i, b_i) 
\]
is positive semi-definite for any values for $a_i, b_i$. We will show that this is true individually for the terms in Equation~(\ref{eqn:pquadratic}) depending only on $b_i$ and $a_i$ separately. With the variable $S$, the terms in $P_{\lambda}(a_i, b_i)$ depending only on the variables $b_i$ can be expressed as 
\begin{align*}\label{eqn:Q}
P_{\lambda} \vert_{b_i}&= \alpha(b_1 (S- b_1) + C_1 (S- b_1)^2) + \beta(b_2 (S-b_2) + C_2 (S - b_2)^2) + \gamma (b_3 (S - b_3) + C_3 (S- b_3)^2)\\
&+ \alpha(b_1 (S - b_1) + C_4 b_1^2) + \beta (b_2 (S - b_2) + C_5 b_2^2) + \gamma(b_3 (S - b_3) + C_6 b_3^2)
\end{align*}
Taking the difference and setting $C\coloneqq C_1 = \ldots = C_6$, we find
\[
P_{\lambda = 0}\vert_{b_i} - P_{\lambda}\vert_{b_i}  = - \frac{4 ( \alpha b_1 + \beta b_2 + \gamma b_3)^2 \lambda (-1 + C+ \lambda(\alpha + \beta + \gamma) )}{(-1 +\lambda (\alpha + \beta + \gamma))^2}
\]
which is clearly positive semi-definite with the assumption that $0 < C < 1 - (\alpha + \beta + \gamma)$. An identical calculation with the variable $a_i$ gives the expression
\[
P_{\lambda = 0}\vert_{a_i} - P_{\lambda}\vert_{a_i} = -\frac{4(\alpha a_1 + \beta a_2 + \gamma a_3)^2 \lambda(-1 + C + \lambda(\alpha + \beta + \gamma))}{(-1 + \lambda(\alpha + \beta + \gamma))^2}
\]
This expression together with the assumption $0 < C < 1 - (\alpha + \beta +\gamma)$ implies the result.

\item \underline{Case $\alpha,\beta >0 ,\ \gamma = 0$:}
Equation~(\ref{eqn:S}) reduces to
\[
S = \frac{-2\lambda}{1 - \lambda(\alpha + \beta)}( \alpha b_1 + \beta b_2), \quad T = \frac{2 \lambda}{1 - \lambda(\alpha + \beta )}(\alpha a_1 + \beta a_2 ).
\]
Rewriting the quadratic form $P_{(\alpha,\beta,\gamma')}$ of Equation~(\ref{eqn:pquadratic}) using the above, the quadratic form $P_{(\alpha,\beta,\gamma')}(a_i, \ldots , d_i) = P_{\lambda,\gamma'}(a_i, b_i)$ can be written as a function independent of the variables $c_i, d_i$. It suffices to show that for any $\lambda$ satisfying $0 \leq \lambda \leq1$ and constants $C_i,\gamma'$ as in the assumption, the quadratic form $P_{\lambda,\gamma'}(a_i, b_i)$ is negative definite.

A direct calculation demonstrates:
\begin{align*}
P_{\lambda=0,\gamma'= 0}(a_i, b_i) =& \alpha(-2 + C_1 + C_4) (a_1^2 + b_1^2)  + \beta (-2+ C_2 + C_5) (a_2^2 +b_2^2)
\end{align*}
which is clearly negative semi-definite. It suffices to show that the difference:
\begin{equation}\label{eqn:case2}
P_{\lambda = 0,\gamma'=0}(a_i,b_i) - P_{\lambda,\gamma'}(a_i,b_i)
\end{equation}
is positive semi-definite and that this is strictly positive definite when $a_1 = a_2 = b_1 = b_2 = 0$. Restricting to the variables depending only on $b_i$, and setting $C\coloneqq C_1 = \ldots = C_6$, we find
\begin{align*}
P_{\lambda = 0,\gamma'=0}\vert_{b_i} - P_{\lambda,\gamma'}\vert_{b_i} =& - 2b_3^2 (-1 + C) \gamma' + \frac{4 b_3 (-1 + C) (b_1 \alpha + b_2 \beta) \gamma' \lambda }{-1 + (\alpha + \beta) \lambda} \\
&- \frac{4 (b_1 \alpha + b_2 \beta)^2 \lambda ( -1 + C + (\alpha + \beta+ C \gamma')\lambda}{(-1 + (\alpha + \beta)\lambda)^2}
\end{align*}
With $ b_1 = b_2 = 0$, this expression reduces to $(- 2b_3^2 (-1 + C) \gamma')$, and together with the assumptions on $C_i, \gamma'$, this is strictly positive definite. Thus, it suffices to show that this difference is positive semi-definite for all $b_i$.

By Sylvester's criterion, it suffices to check that all the principal minors of the associated symmetric matrix are non-negative. Up to exchanging $\alpha \leftrightarrow \beta$, the determinants of the principal minors are:
\begin{gather*}
-4 (-1 + C) \gamma', \qquad  -\frac{8 \alpha^2 \lambda (-1 + C + (\alpha + \beta + C \gamma') \lambda)}{(-1 + (\alpha + \beta)\lambda)^2}, \\  \frac{16(-1 + C)\alpha^2 \gamma' \lambda (-2 + (2 (\alpha + \beta) + \gamma')\lambda + C(2 + \gamma' \lambda))}{(-1 + (\alpha + \beta)\lambda)^2}.
\end{gather*}
We claim that the assumptions:
\[
0 < \alpha, \beta, \gamma',C, \quad 0 < \alpha + \beta + \gamma' + C < 1, \quad 0 \leq \lambda \leq 1
\]
imply that the above determinants must be non-negative.

The conclusion for the first term is clear, as the assumptions imply $-1 + C < 0, \gamma' > 0$. For the second term, we note 
\begin{align*}
1 - C - (\alpha + \beta + C \gamma' )\lambda &> 1 - (1 - (\alpha + \beta + \gamma')) - (\alpha + \beta + C \gamma') \lambda \\
&= (\alpha + \beta + \gamma') - (\alpha + \beta + C \gamma' ) \lambda \\
& > 0
\end{align*}
where the first inequality follows from the assumption $C < 1- (\alpha + \beta + \gamma')$, and the third inequality follows from the facts that $0 < C  < 1$ and $ 0 \leq \lambda \leq 1$. For the third term, we note 
\begin{align*}
(-2 + (2 (\alpha + \beta) + \gamma')\lambda + C(2 + \gamma' \lambda)) &\leq -2 + (2 ( \alpha + \beta) + \gamma') + C ( 2 + \gamma')) \\
& = -2 + 2 ( \alpha + \beta + C) + (1 + C) \gamma' \\
& < -2 + 2( \alpha + \beta + \gamma' + C)\\ &< 0 
\end{align*}
where the first inequality follows from the assumption $0 \leq \lambda \leq 1$, the third follows from the fact that $1 < 1 + C_1 < 2 $, and the fourth follows from $0< \alpha + \beta + \gamma' + C < 1$. 

An identical calculation with Equation~(\ref{eqn:case2}) restricted to the variables $a_i$, implies the result.

\item \underline{Case $\alpha> 0,\ \beta=\gamma = 0$:}
Equation~(\ref{eqn:S}) reduces to:
\[
S = \frac{-2\lambda}{1 - \lambda\alpha }( \alpha b_1 ), \quad T = \frac{2 \lambda}{1 - \lambda\alpha }(\alpha a_1 ).
\]
Rewriting the quadratic form $P_{(\alpha,\beta',\gamma')}$ of Equation~(\ref{eqn:pquadratic}) using the above, the quadratic form $P_{(\alpha,\beta',\gamma')}(a_i, \ldots , d_i) = P_{\lambda,\beta',\gamma'}(a_i, b_i)$ can be written as a function independent of the variables $c_i, d_i$. It suffices to show that for any $\lambda$ satisfying $0 \leq \lambda \leq1$ and constants $C,\beta',\gamma'$ as in the assumption, the quadratic form $P_{\lambda,\beta',\gamma'}(a_i, b_i)$ is negative definite.

A direct calculation demonstrates:
\begin{equation}\label{eqn:case3}
P_{\lambda=0,\beta'=0,\gamma'= 0}(a_i, b_i) = 2\alpha(-1 +C) (a_1^2 + b_1^2)  
\end{equation}
which is clearly negative semi-definite. It suffices to show that the difference:
\[
P_{\lambda = 0,\beta'=0,\gamma'=0}(a_i,b_i) - P_{\lambda,\beta',\gamma'}(a_i,b_i)
\]
is strictly positive definite for $0 < \lambda \leq 1$. Indeed for $\lambda = 0$, the quadratic form $P_{\lambda = 0, \beta', \gamma'}(a_i,b_i)$ is clearly strictly negative-definite by Lemma~\ref{lem:lambda0}.

Restricting to the variables depending only on $b_i$ and applying Sylvester's criterion to the resulting quadratic form, the leading principal minors can be chosen to be:
\begin{gather*}
-4 (-1 +C) \beta', \qquad 16(-1 +C)^2 \beta' \gamma', \\ -\frac{64 (-1 + C)^2 \alpha^2 \beta' \gamma' \lambda ( 2 (-1 + C) + 2 \alpha \lambda + (1 +C) (\beta' + \gamma') \lambda)}{(-1 + \alpha \lambda)^2}    
\end{gather*}

We claim that the assumptions:
\[
0 < \alpha, \beta', \gamma',C, \quad 0 < \alpha + \beta' + \gamma' + C < 1, \quad 0 < \lambda \leq 1
\]
imply that the above determinants must strictly positive. The conclusion for the first and second terms are clear, as the assumptions imply $0 < \beta', \gamma'$, and $-1 + C < 0$. For the third term, we note
\begin{align*}
2 (-1 + C) + 2 \alpha \lambda + (1 + C)(\beta' + \gamma') \lambda &= -2 + 2(C + \alpha \lambda) + (1+ C) (\beta' + \gamma') \lambda \\
&< -2 + 2(C + \lambda(\alpha + \beta' + \gamma')) \\
&< 0
\end{align*}
where the second inequality follows from the fact that $ 1 < 1 + C < 2$, and the third follows from the fact that $ 0 < \lambda \leq 1$ and $ 0< \alpha + \beta' + \gamma' < 1-C $.

An identical calculation with Equation~(\ref{eqn:case3}) restricted to the variables $a_i$, implies the result.
\end{enumerate}
\end{proof}

We conclude this section with the proof of Proposition~\ref{prop:quadratic}.

\begin{proof}[Proof of Proposition~\ref{prop:quadratic}]
Proposition~\ref{prop:reduction} combined with Proposition~\ref{prop:linalg} implies that there exists such a quadratic form $Q'$ satisfying the second property. The fact that $Q'$ satisfies the support property on $\Knum(X)$ with respect to $\sigma$, and hence the first property, follows directly from the proof of Proposition~\ref{prop:fullsupport}.
\end{proof}
\part{Proof of the main theorem}\label{part:proof}
\section{Density Results}\label{sec:density}

\subsection{Dense group actions}
Let $G$ be a topological group acting continuously on a topological space $X$ by
\begin{align*}
\pi \colon G \times X &\rightarrow X \\
(g,x) &\mapsto g\cdot x.
\end{align*}
\begin{lemma}\label{lem:density}
Let $U \subseteq X$ be an open subset, and let $(g_0, x_0) \in G \times X$ be an element such that $g_0 \cdot x_0 \in U$. Then there exists an open neighborhood $H \subseteq G$ of $g_0$, such that $h \cdot x_0 \in U$ for all $h \in H$.
\end{lemma}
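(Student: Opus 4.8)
The plan is to reduce the statement to the continuity of a single map and then invoke the definition of continuity. First I would fix the point $x_0$ and consider the \emph{orbit map} $\pi_{x_0}\colon G \to X$ defined by $\pi_{x_0}(g) = g\cdot x_0 = \pi(g,x_0)$. This factors as the composition of the inclusion $\iota_{x_0}\colon G \to G\times X$, $g \mapsto (g,x_0)$, with the action map $\pi$. The inclusion $\iota_{x_0}$ is continuous, being the product of the identity on $G$ with the constant map $G \to X$ at $x_0$, and $\pi$ is continuous by assumption; hence $\pi_{x_0} = \pi\circ\iota_{x_0}$ is continuous.

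Having established this, I would set $H \coloneqq \pi_{x_0}^{-1}(U)$. Since $U$ is open in $X$ and $\pi_{x_0}$ is continuous, $H$ is open in $G$. By hypothesis $g_0\cdot x_0 = \pi_{x_0}(g_0) \in U$, so $g_0 \in H$, and therefore $H$ is an open neighborhood of $g_0$. Finally, for every $h \in H$ we have $h\cdot x_0 = \pi_{x_0}(h) \in U$ directly from the definition of $H$, which is exactly the desired conclusion.

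There is no genuine obstacle here: this is a formal consequence of the continuity of the group action. The only point requiring a word of care is the continuity of the inclusion $\iota_{x_0}$, which I would justify by the universal property of the product topology (its two coordinate components, $\mathrm{id}_G$ and the constant map, are both continuous). Everything else is unwinding definitions.
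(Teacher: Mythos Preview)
Your proof is correct and essentially the same as the paper's: both simply unwind the continuity of the action map. The only cosmetic difference is that the paper pulls back $U$ along $\pi$ to $G\times X$ and then picks a basic open set $H\times V$ containing $(g_0,x_0)$, whereas you more directly pull back along the orbit map $\pi_{x_0}=\pi\circ\iota_{x_0}$ and set $H=\pi_{x_0}^{-1}(U)$.
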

\begin{proof}
As the group action is continuous, the subset 
\[
\pi^{-1}(U) = \{(g, x) \ \vert\  g \cdot x \in U \} \subseteq G \times X
\]
is open in the product topology. In particular, there exists a basic open neighborhood, $H \times V \subseteq \pi^{-1}(U)$, of $(g_0, x_0)$ where $H \subseteq G$ and $V \subseteq X$ are both open subsets. The open subset, $H$, then gives the desired open neighborhood.
\end{proof}

\begin{corollary}\label{cor:densityfund}
Let $G' \subseteq G$ be a dense subgroup and $\UU \subseteq X$ an open subset such that $G \cdot \UU = X$. Then there is an equality of sets $G' \cdot \UU = X$. 
\end{corollary}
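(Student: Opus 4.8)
The plan is to verify the nontrivial inclusion $X \subseteq G' \cdot \UU$ pointwise, the inclusion $G' \cdot \UU \subseteq X$ being automatic. So I would fix an arbitrary $x \in X$. Since $G \cdot \UU = X$, I can write $x = g_0 \cdot u$ for some $g_0 \in G$ and $u \in \UU$; equivalently $g_0^{-1} \cdot x = u \in \UU$.

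Next I would apply Lemma~\ref{lem:density} to the open subset $\UU \subseteq X$ and to the pair $(g_0^{-1}, x) \in G \times X$, which satisfies $g_0^{-1} \cdot x \in \UU$ by the previous step. This yields an open neighborhood $H \subseteq G$ of $g_0^{-1}$ such that $h \cdot x \in \UU$ for every $h \in H$.

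Finally, since $G'$ is dense in $G$, the nonempty open set $H$ meets $G'$; pick $h \in H \cap G'$. Then $h \cdot x \in \UU$, and since $G'$ is a subgroup we have $h^{-1} \in G'$, whence $x = h^{-1} \cdot (h \cdot x) \in G' \cdot \UU$. As $x$ was arbitrary, $X \subseteq G' \cdot \UU$, completing the proof.

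The whole argument is essentially immediate from Lemma~\ref{lem:density}; the only thing to be slightly careful about is the bookkeeping with inverses — one applies the lemma at $g_0^{-1}$ rather than $g_0$, implicitly using that inversion is continuous on the topological group $G$ (so that density of $G'$ and openness of $H$ interact correctly) and that a subgroup is closed under taking inverses. I do not anticipate any real obstacle here.
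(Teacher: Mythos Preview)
Your proof is correct and follows essentially the same approach as the paper: fix $x$, use $G\cdot\UU=X$ to find $g$ with $g\cdot x\in\UU$, apply Lemma~\ref{lem:density} to get an open neighborhood, and use density of $G'$ to replace $g$ by an element of $G'$. The paper's write-up is slightly more terse, leaving the inversion bookkeeping implicit, but the argument is the same.
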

\begin{proof}
By the assumption that $G \cdot \UU = X$, for any element $x \in X$, there exists $g \in G$ such that $g\cdot x \in \UU$. By Lemma~\ref{lem:density}, there exists an open neighborhood, $H \subseteq G$, of $g$ such that $h \cdot x \in \UU$ for all $h \in H$. As $G' \subseteq G$ is dense, there exists an element $g' \in G'$ with $g' \in H$, from which the conclusion follows.
\end{proof}

\subsection{Density of $\mathrm{Sp}(2n,\mathbb{Q})$}
In the following, we specialize to the case of $G=\mathrm{Sp}(2n,\mathbb{R})$ and consider the subgroup $\mathrm{Sp}(2n,\mathbb{Q})$.

\begin{lemma}\label{lem:spqdense}
The subgroup $\mathrm{Sp}(2n,\mathbb{Q}) \subseteq \mathrm{Sp}(2n, \mathbb{R})$ is dense.
\end{lemma}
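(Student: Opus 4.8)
The plan is to reduce the statement to the generation result already invoked in the previous section. Recall that, by \cite[Section 2.2]{omeara}, $\mathrm{Sp}(2n,\mathbb{Q})$ is generated by the subgroups $N$, $H$ and the element $J$; the argument given there is a purely algebraic manipulation (writing a symplectic matrix as a product of symplectic transvections and elements of the Siegel parabolic) and is insensitive to the ground field, so it equally shows that $\mathrm{Sp}(2n,\mathbb{R})$ is generated by the real analogues
\[
N_{\mathbb{R}} = \left\{\begin{pmatrix} 1 & A \\ 0 & 1 \end{pmatrix} \;\middle|\; A = A^{T} \in M_n(\mathbb{R})\right\},\qquad H_{\mathbb{R}} = \left\{\begin{pmatrix} A & 0 \\ 0 & (A^{T})^{-1}\end{pmatrix} \;\middle|\; A \in \mathrm{GL}(n,\mathbb{R})\right\}
\]
together with $J$. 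Set $G' \coloneqq \langle N, H, J\rangle \subseteq \mathrm{Sp}(2n,\mathbb{Q})$, where $N$, $H$ are the rational subgroups from the previous section; it suffices to prove that the closure $\overline{G'}$ in $\mathrm{Sp}(2n,\mathbb{R})$ is all of $\mathrm{Sp}(2n,\mathbb{R})$.

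Next I would use that the closure of a subgroup of a topological group is again a subgroup, so $\overline{G'}$ is a closed subgroup, and then check that it contains each of the three generators. The element $J$ lies in $G'$ itself. Since the rational symmetric matrices are dense in the real symmetric matrices, $N$ is dense in $N_{\mathbb{R}}$, hence $N_{\mathbb{R}} = \overline{N} \subseteq \overline{G'}$. Since $\mathrm{GL}(n,\mathbb{R})$ is open in $M_n(\mathbb{R})$ and $M_n(\mathbb{Q})$ is dense in $M_n(\mathbb{R})$, the group $\mathrm{GL}(n,\mathbb{Q})$ is dense in $\mathrm{GL}(n,\mathbb{R})$, so likewise $H$ is dense in $H_{\mathbb{R}}$ and $H_{\mathbb{R}} \subseteq \overline{G'}$. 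Therefore $\overline{G'} \supseteq \langle N_{\mathbb{R}}, H_{\mathbb{R}}, J\rangle = \mathrm{Sp}(2n,\mathbb{R})$, which gives the claim.

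There is no serious obstacle here; the only point deserving a word of justification is that the field-independent generation statement of \cite{omeara} applies over $\mathbb{R}$, as noted above. If one prefers to avoid that citation, one can argue at the level of Lie algebras instead: $\overline{G'}$ contains $N_{\mathbb{R}}$ and $J$, hence also the conjugate unipotent subgroups $J N_{\mathbb{R}} J^{-1}$, so its Lie algebra contains all the root subspaces of $\mathfrak{sp}(2n,\mathbb{R})$ relative to the diagonal torus; these generate $\mathfrak{sp}(2n,\mathbb{R})$, so $\overline{G'}$ contains the identity component of $\mathrm{Sp}(2n,\mathbb{R})$, which is the whole group since $\mathrm{Sp}(2n,\mathbb{R})$ is connected.
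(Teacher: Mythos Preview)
Your argument is correct. The paper's own proof is a one-line citation to \cite[Appendix A]{PRA}, whereas you give a self-contained proof: use the O'Meara generation of $\mathrm{Sp}(2n,K)$ by $N$, $H$, and $J$ over any field, observe that the rational subgroups $N$ and $H$ are dense in their real counterparts, and conclude since the closure of a subgroup is a subgroup. This is a genuinely different route, and arguably preferable here since the paper already invokes the O'Meara generation statement in Proposition~\ref{prop:densesub}, so your argument reuses existing infrastructure rather than importing an outside black box.

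One small remark on your alternative Lie-algebra argument: the Lie algebras of $N_{\mathbb{R}}$ and $JN_{\mathbb{R}}J^{-1}$ are the abelian nilradicals $\mathfrak n_+$ and $\mathfrak n_-$ of the Siegel parabolic, which together contain only the root spaces for $\pm(e_i+e_j)$, not those for $e_i-e_j$. You still get all of $\mathfrak{sp}(2n,\mathbb{R})$ because $[\mathfrak n_+,\mathfrak n_-]=\mathfrak{gl}(n)$ fills in the Levi, but the sentence ``contains all the root subspaces'' is slightly loose; it would be cleaner to say the subalgebra they generate is all of $\mathfrak{sp}(2n,\mathbb{R})$. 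This does not affect the validity of your main argument.
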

\begin{proof}
This follows from~\cite[Appendix A]{PRA}.
\end{proof}

Consider the following subgroups and the element $J$ of $\mathrm{Sp}(2n,\mb R)$:
\begin{gather*}
N = \left\{\begin{pmatrix}
1 & A \\
0 & 1
\end{pmatrix} \ \middle\vert \ A = A^{T} \right\}, \qquad
H = \left\{\begin{pmatrix}
    A & 0 \\
    0 & (A^{T})^{-1}
\end{pmatrix} \ \middle\vert \ A \in \mathrm{GL}(n,\mathbb{R})\right\},\quad J = \begin{pmatrix}
    0 & 1 \\
    - 1 & 0 
\end{pmatrix}
\end{gather*}

\begin{prop}\label{prop:densesub}
The group $\mathrm{Sp}(2n,\mathbb{Q})$ is generated by $H(\mathbb{Q})$, $N(\mathbb{Q})$, and $J$, and is dense in $\mathrm{Sp}(2n,\mathbb{R})$. 
Thus, if $\mathrm{Sp}(2n,\mb R)$ acts continuously on a space $X$ and $\UU \subseteq X$ is an open subset such that $\mathrm{Sp}(2n,\mathbb{R}) \cdot \UU = X$, then also $\mathrm{Sp}(2n,\mathbb{Q})  \cdot \UU = X$.
\end{prop}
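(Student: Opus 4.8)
The plan is to assemble the statement from pieces already in place. For the generation claim, I would invoke~\cite[Section 2.2]{omeara}, exactly as was done in Section~\ref{sec:extendspn}: O'Meara's generators for $\mathrm{Sp}(2n)$ over a field are the elementary symplectic transvections, and over $\mathbb{Q}$ these are precisely the elements of $N(\mathbb{Q})$ together with the Weyl element $J$ conjugating between the two unipotent radicals, while the torus part $H(\mathbb{Q})$ is generated by the diagonal transvections. Concretely, $\mathrm{Sp}(2n,\mathbb{Q}) = \langle N(\mathbb{Q}), H(\mathbb{Q}), J\rangle$ because the Bruhat decomposition expresses every symplectic matrix as a product of an element of the Borel (hence of $N(\mathbb{Q})H(\mathbb{Q})$) and a Weyl group element, and the symplectic Weyl group is generated by $J$ together with permutation matrices lying in $H(\mathbb{Q})$. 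The density of $\mathrm{Sp}(2n,\mathbb{Q})$ in $\mathrm{Sp}(2n,\mathbb{R})$ is Lemma~\ref{lem:spqdense}.

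For the second assertion, I would simply apply Corollary~\ref{cor:densityfund} with $G = \mathrm{Sp}(2n,\mathbb{R})$ and the dense subgroup $G' = \mathrm{Sp}(2n,\mathbb{Q})$: given any $x \in X$, the hypothesis $\mathrm{Sp}(2n,\mathbb{R})\cdot\mathcal{U} = X$ supplies $g$ with $g\cdot x \in \mathcal{U}$, Lemma~\ref{lem:density} gives an open neighborhood $H$ of $g$ with $h\cdot x \in \mathcal{U}$ for all $h \in H$, and density of $\mathrm{Sp}(2n,\mathbb{Q})$ produces a rational $g' \in H$, whence $g'\cdot x \in \mathcal{U}$. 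Thus $\mathrm{Sp}(2n,\mathbb{Q})\cdot\mathcal{U} = X$.

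There is essentially no obstacle here: both inputs (O'Meara's generators and the density lemma) are already cited, and the topological argument is the routine open-map-type observation packaged in Corollary~\ref{cor:densityfund}. The only point that warrants a sentence of care is matching O'Meara's transvection generators to the specific subgroups $N(\mathbb{Q})$, $H(\mathbb{Q})$, $J$ as defined in the excerpt, which is a bookkeeping matter about how the Weyl group and torus of $\mathrm{Sp}(2n)$ decompose. I would state this explicitly but not belabor it, since the analogous statement over $\mathbb{R}$ (with $N$, $H$, $J$) was already used without proof in Section~\ref{sec:extendspn}.
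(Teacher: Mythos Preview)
Your proposal is correct and follows essentially the same approach as the paper: cite \cite[Section 2.2]{omeara} for the generation of $\mathrm{Sp}(2n,\mathbb{Q})$ by $N(\mathbb{Q})$, $H(\mathbb{Q})$, and $J$, invoke Lemma~\ref{lem:spqdense} for density, and then apply Corollary~\ref{cor:densityfund} for the second claim. The paper's proof is in fact even terser than yours, simply citing these three inputs without unwinding the Bruhat or density arguments.
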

\begin{proof}
The fact that $\mathrm{Sp}(2n,\mathbb{Q})$ is generated by the subgroups $H(\mathbb{Q})$, $N(\mathbb{Q})$, and $J$ follows directly from~\cite[Section 2.2]{omeara}. The density of the embedding $\mathrm{Sp}(2n,\mathbb{Q}) \subseteq \mathrm{Sp}(2n,\mathbb{R})$ from Lemma~\ref{lem:spqdense} together with Corollary~\ref{cor:densityfund} implies the second claim.
\end{proof}

\section{The embedding}

In this section, we complete the proof of Theorem~\ref{thm:main}, restated as follows:
\begin{reptheorem}{thm:main}
There is a commutative diagram
\[
\begin{tikzcd}
\widetilde{\mathcal{U}^{+}(3)} \arrow[r, hook, "i"] \arrow[d, "\pi"] & \mathrm{Stab}_{\Lambda}(X) \arrow[d,"p"]\\
\mathcal{U}^+(3) \arrow[r, hook, "j"] & \mathrm{Hom}(\Lambda, \mathbb{C})
\end{tikzcd}
\]
where $i$ is a continuous embedding onto a connected component.
\end{reptheorem}

Our goal in the subsequent sections will be to construct the map $i$, and to verify that it is indeed a continuous embedding onto a connected component.

\subsection{Preliminaries}

In the following, $X$ is a smooth projective variety and $\Lambda=\Knum(X)$ is its numerical Grothendieck group.
\begin{defn}
Given a central charge $Z$ with quadratic form $Q$ on $\Lambda\otimes\mb R$ such that $Q|_{\mathrm{ker}(Z)}<0$, we define $\PP_Z \subseteq \mathrm{Hom}(\Lambda,\mathbb{C})$ to be the connected component of the set
\[
\left\{ Z' \in \mathrm{Hom}(\Lambda, \mathbb{C}) \mid Q \vert_{\mathrm{ker}(Z')} < 0 \right\}
\]
containing $Z$. Given a phase $\varphi \in [0, 2\pi)$, we define the subset
\[(\PP_Z)_{\varphi} \coloneqq \left\{ Z' \in \PP_Z \mid \mathrm{arg}(Z'(\mc O_p)) = \varphi\right\}.\]
where $\mc O_p$ is a skyscraper sheaf on $X$.
\end{defn}

\begin{rem}
    We note that the subsets $(\PP_Z)_{\varphi}$ associated with different phases $\varphi$ are all homeomorphic to each other.
\end{rem}

We recall the following restatement of \cite[Theorem 1.2]{MR4023385}. 
We note that the explicit description of the covering map follows directly from the corresponding proof, which constructs an explicit section  $\PP_Z \rightarrow \mathrm{Stab}_{\Lambda}(X)$. Finally, we recall that the universal covering $\widetilde{\mathrm{GL}^+(2,\mathbb{R})} \rightarrow \mathrm{GL}^+(2,\mathbb{R})$ can be explicitly described as the set of pairs $(T,f)$ where $f \colon \mathbb{R} \rightarrow \mathbb{R}$ is an increasing map with $f(\varphi + 1 ) = f(\varphi) + 1$ and $T \colon \mathbb{R}^2 \rightarrow \mathbb{R}^2$ is an orientation preserving linear isomorphism such that the induced maps on $\mathbb{R}/2\mathbb{Z}$ agree. Given a phase $\varphi \in [0,1)$ and a stability condition $\sigma \in \mathrm{Stab}(X)$, we will denote by $e^{2\pi i \varphi} \cdot \sigma$ the stability condition induced via the natural action of the element $(e^{2\pi i \varphi}, f) \in \widetilde{\mathrm{GL}^+(2,\mathbb{R})}$, with $f$ uniquely specified by the condition $f(0) = \varphi$.

\begin{theorem}[{\cite[Theorem 1.2]{MR4023385}}]\label{thm:bayerdef}
Let $\sigma \in \mathrm{Stab}_{\Lambda}(X)$ be a stability condition with central charge $Z$ satisfying the support property with respect to the quadratic form $Q$ on $\Lambda \otimes \mathbb{R}$.

\begin{enumerate}
    \item 
There exists a commutative diagram 
\[
\begin{tikzcd}
U_{\sigma} \coloneqq (\PP_Z)_{\varphi=0} \times \mathbb{R} \arrow[r, hook, "i"] \arrow[d, "p_{\sigma}"]&\mathrm{Stab}_{\Lambda}(X) \arrow[d, "p"] \\
\PP_Z \arrow[r, hook] &\mathrm{Hom}(\Lambda, \mathbb{C})
\end{tikzcd}
\]
where $\sigma \in U_{\sigma}$ and $p_{\sigma}$ is a covering map. 
\begin{itemize}
    \item 
$i(Z',0)$ has central charge $Z'$ and there is an equality of stability conditions
\[
i(Z',\phi') = e^{2\pi i (\phi' - \phi)}\cdot i(Z', \phi). 
\]
\item 
$p_{\sigma}(Z',\phi) = e^{2\pi i\phi}\cdot Z'$.
\end{itemize}
\item 
Every stability condition in $U_{\sigma}$ satisfies the support property with respect to $Q$.
\end{enumerate}
\end{theorem}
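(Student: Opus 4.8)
This is a restatement of \cite[Theorem 1.2]{MR4023385}, so the plan is to recall the structure of Bayer's argument and then extract the explicit bookkeeping asserted in~(1). The linear-algebraic starting point is the equivalence: for $W\in\mathrm{Hom}(\Lambda,\mathbb C)$ one has $Q|_{\ker W}<0$ if and only if $W$ does not vanish on any nonzero $v$ with $Q(v)\ge 0$; thus $\PP_Z$ is exactly the connected component of the locus of central charges which are nonvanishing on the $Q$-positive cone. Since every $\sigma$-semistable class lies in that cone (this is the support property), semistable classes are uniformly controlled, and the key finiteness input of \emph{loc.\ cit.}\ is that $\{v\in\Lambda\mid Q(v)\ge 0,\ |Z(v)|\le C\}$ is finite for each $C$: on the slab $|Z(v)|\le C$ the form $Q$ is bounded above by a function tending to $-\infty$ in the negative-definite directions of $\ker Z$. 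I would import this finiteness statement and the deformation lemma built on it rather than reprove them.

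First I would check the purely linear assertions: $\PP_Z$ is open and path-connected, $Q|_{\ker W}<0$ for every $W\in\PP_Z$, and $\PP_Z$ has dimension $2\,\mathrm{rk}\,\Lambda$. Next, for $W$ in a small neighbourhood of $Z$, Bayer's deformation lemma produces a stability condition with central charge $W$ close to $\sigma$: because only finitely many classes can occur as the classes of semistable factors of an object of bounded mass, the slicing rearranges continuously with $W$, Harder--Narasimhan filtrations persist, and local finiteness holds; hence $p$ is a local homeomorphism near $\sigma$, and its restriction to $\mathrm{Stab}^Q_\Lambda(X)$, the locus of stability conditions satisfying (SP) with respect to the fixed $Q$, is a covering onto its open image. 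Globalising along paths inside $\PP_Z$ (using that $\PP_Z$ is a connected component of the valid locus) upgrades this to a covering onto all of $\PP_Z$.

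It then remains to package this as the stated square. The $\mathbb R$-factor is produced by the central subgroup of $\widetilde{\mathrm{GL}^+(2,\mathbb R)}$: composing a stability condition with the lift $(e^{2\pi i\phi},f)$ of the rotation by $\phi$ rescales the central charge by $e^{2\pi i\phi}$ and shifts the phase of every object, in particular of $\OO_p$, by $\phi$, without changing the underlying slicing up to relabelling. Fixing $\arg(Z'(\OO_p))=0$ therefore selects a section of $p$ over $(\PP_Z)_{\varphi=0}$, and the covering just discussed shows that $i\colon(\PP_Z)_{\varphi=0}\times\mathbb R\to\mathrm{Stab}_\Lambda(X)$, $(Z',\phi)\mapsto e^{2\pi i\phi}\cdot(\text{section at }Z')$, is a covering onto its image with deck group the $\mathbb Z$-shift. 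Commutativity of the square, the formula $p_\sigma(Z',\phi)=e^{2\pi i\phi}\cdot Z'$, and the identity $i(Z',\phi')=e^{2\pi i(\phi'-\phi)}\cdot i(Z',\phi)$ are immediate from unwinding the $\widetilde{\mathrm{GL}^+(2,\mathbb R)}$-action. Part~(2) needs no new input: $Q$ is never modified, and satisfying (SP) with respect to $Q$ is preserved both by the deformation lemma and by the central $\widetilde{\mathrm{GL}^+(2,\mathbb R)}$-action, hence holds throughout $U_\sigma$.

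The genuine difficulty is entirely contained in \cite[Theorem 1.2]{MR4023385}: it is the deformation lemma asserting that a stability function on $\PP(0,1]$ whose central charge is $Q$-close to $Z$ still admits Harder--Narasimhan filtrations and defines a locally finite slicing — equivalently, that $p$ is a local homeomorphism. We do not reprove this; beyond quoting it, the only remaining work is the routine extraction of the explicit covering map and the $\widetilde{\mathrm{GL}^+(2,\mathbb R)}$-equivariance, together with the elementary linear algebra describing $\PP_Z$.
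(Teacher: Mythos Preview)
Your proposal is correct and matches the paper's treatment: the paper does not give a proof of this theorem at all, but simply cites it as a restatement of \cite[Theorem 1.2]{MR4023385}, noting only that ``the explicit description of the covering map follows directly from the corresponding proof, which constructs an explicit section $\PP_Z \rightarrow \mathrm{Stab}_{\Lambda}(X)$.'' Your sketch of how to extract the $(\PP_Z)_{\varphi=0}\times\mathbb{R}$ bookkeeping from Bayer's section together with the central $\widetilde{\mathrm{GL}^+(2,\mathbb{R})}$-action is exactly this unwinding, just made more explicit than the paper bothers to do.
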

In the case of $X = E^{3}$, we may establish the following stronger statements characterizing the subset $\PP_Z$.
\begin{lemma}\label{lem:PZimage}
Let $\sigma \in \mathrm{Stab}_{\Lambda}(X)$ be a stability condition with central charge $Z$. Then the embedding of Theorem~\ref{thm:bayerdef} induces a homeomorphism:
\[
U_{\sigma} \simeq p^{-1}(\PP_Z).
\]
In addition, the subset $\PP_Z$ lies in the image of the embedding
    \[
\begin{tikzcd}
    \UU^+(3) \arrow[r, hook,"j"] & \mathrm{Hom}(\Lambda,\mathbb{C}).
\end{tikzcd}
\]
of Proposition~\ref{prop:HodgePrimitive}. 
\end{lemma}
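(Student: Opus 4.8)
## Proof proposal for Lemma~\ref{lem:PZimage}

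The plan is to prove the two assertions separately, using the deformation theorem (Theorem~\ref{thm:bayerdef}) for the first and the results of Parts~\ref{part:amodel} and~\ref{part:bmodel} together with the uniqueness theorem (Theorem~\ref{thm:injective}) for the second.

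\textbf{The homeomorphism $U_\sigma \simeq p^{-1}(\PP_Z)$.} First I would recall that Theorem~\ref{thm:bayerdef} already gives a covering map $p_\sigma \colon U_\sigma \to \PP_Z$ together with a commutative square realizing $U_\sigma$ as an open subset of $\mathrm{Stab}_\Lambda(X)$ with $p|_{U_\sigma} = p_\sigma$. Since $U_\sigma \subseteq p^{-1}(\PP_Z)$, it suffices to show that $U_\sigma$ is closed in $p^{-1}(\PP_Z)$; connectedness of $\PP_Z$ and the fact that $p$ restricted to any connected component of $\mathrm{Stab}_\Lambda(X)$ lying over $\PP_Z$ is a covering map (Bridgeland's local homeomorphism, combined with the support property bound which is an open and closed condition on the fibre) will then force equality. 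The key input is the second assertion of Theorem~\ref{thm:bayerdef}: every stability condition in $U_\sigma$ satisfies the support property with respect to the same $Q$, so the component $\PP_Z$ is an honest "chamber" for $Q$, and the covering $p_\sigma$ extends the section $\PP_Z \to \mathrm{Stab}_\Lambda(X)$ along the $\widetilde{\mathrm{GL}^+(2,\mathbb R)}$-orbit direction. Concretely: take a sequence (or net) $\sigma_n \in U_\sigma$ converging to $\sigma' \in p^{-1}(\PP_Z)$; then $Z_n = p(\sigma_n) \to Z' = p(\sigma')$ in $\PP_Z$, the phases $\varphi(\mathcal O_p)(\sigma_n)$ converge, and using the explicit description $i(Z',\phi') = e^{2\pi i(\phi'-\phi)}\cdot i(Z',\phi)$ one identifies the limit with a point of $U_\sigma$. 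I would lean on the continuity of $\sigma \mapsto (Z, \varphi(\mathcal O_p))$ and on the fact that, by Theorem~\ref{thm:injective}, on $X = E^3$ this forgetful map is injective — so no "collapsing" can occur in the limit.

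\textbf{The inclusion $\PP_Z \subseteq j(\UU^+(3))$.} This is the substantive part. By Proposition~\ref{prop:fullsupport} there is a stability condition $\sigma_0$ with central charge $Z_0 = \langle \exp(i(D_1+D_2+D_3)),\,\cdot\,\rangle$ satisfying the full support property on $\Lambda$ with respect to a quadratic form $Q$; by Proposition~\ref{prop:quadratic} one can moreover arrange, for each $Z_{(\alpha,\beta,\gamma)} \in \Pi$, a single quadratic form $Q'$ that satisfies the support property for $\sigma_0$ and is negative definite on $\ker(Z_{(\lambda\alpha,\lambda\beta,\lambda\gamma)})$ for all $\lambda \in [0,1]$. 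The strategy is: (i) take the central charge $Z = Z_{(\alpha,\beta,\gamma)}$ of an arbitrary point of $\Pi$ and the associated $Q'$; (ii) the segment $\lambda \mapsto Z_{(\lambda\alpha,\lambda\beta,\lambda\gamma)}$, $\lambda \in [0,1]$, lies in $\{Z' : Q'|_{\ker Z'} < 0\}$ and connects $Z_0$ to $Z$, so $Z \in \PP_{Z_0}$ computed with $Q'$ — i.e. $\Pi \subseteq \PP_{Z_0}$; (iii) conversely, every point of $\PP_{Z_0}$ is connected to $Z_0$ inside $\{Q'|_{\ker} < 0\}$, and I claim each such point is $\mathrm{GL}^+(2,\mathbb R)$-conjugate (acting on the target $\mathbb C$) and $\mathrm{Sp}(6,\mathbb R)$-conjugate into $\Pi$; since by Proposition~\ref{prop:u3normalform} and the identification of Section~\ref{subsec:E3corr} we have $(\mathrm{Sp}(6,\mathbb R)\times\mathrm{GL}^+(2,\mathbb R))\cdot \Pi = \UU^+(3)$, and $\UU^+(3)$ is itself a connected open subset of $\mathrm{Hom}(\Lambda,\mathbb C)$ (via $j$) characterized by non-vanishing on Lagrangians, I would identify $\PP_{Z_0}$ with a union of $Q'$-chambers all of which lie inside $j(\UU^+(3))$. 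The cleanest route is probably to show the reverse inclusion $j(\UU^+(3)) \subseteq \PP_{Z_0}$ directly and then argue $\PP_{Z_0}$ cannot be strictly larger: $j(\UU^+(3))$ is connected, contains $Z_0$, and is contained in $\{Q'|_{\ker} < 0\}$ for any $Q'$ coming from a stability condition (since Lagrangian non-vanishing is exactly what the support-property quadratic forms from product constructions detect on the mirror-primitive lattice); hence $j(\UU^+(3)) \subseteq \PP_{Z_0}$. For the opposite inclusion one uses that $\PP_{Z_0}$, being the connected component of $\{Q'|_{\ker Z'}<0\}$ through $Z_0$ for the \emph{particular} $Q'$ of Proposition~\ref{prop:quadratic}, cannot escape $j(\UU^+(3))$: on its boundary the form $Q'$ would become non-negative on some $\ker(Z')$, which by the A-model analysis (Lemma~\ref{lem:rootsindisk} and Lemma~\ref{lem:polyunstable}) corresponds precisely to $Z'$ vanishing on a Lagrangian, i.e. to leaving $\UU^+(3)$.

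\textbf{Expected main obstacle.} The hard step is (iii)/the reverse inclusion: pinning down that $\PP_{Z_0}$ is \emph{exactly} $j(\UU^+(3))$ rather than some smaller or larger connected open set. This requires matching the purely linear-algebraic "support cone" $\{Q'|_{\ker Z'}<0\}$ against the geometric condition defining $\UU^+(3)$, and the match is only clean after using the $\mathrm{Sp}(6,\mathbb R)\times\mathrm{GL}^+(2,\mathbb R)$-equivariance to reduce to the fundamental domain $\Pi$, where Propositions~\ref{prop:linalg} and~\ref{prop:quadratic} do the work. One must be careful that the quadratic form $Q'$ depends on the chosen point of $\Pi$, so the statement "$\PP_{Z_0}$ is the component through $Z_0$" should be read with a $Q'$ that works uniformly along the relevant segment; assembling these local statements into the global identification $\PP_{Z_0} = j(\UU^+(3))$ — and checking it is independent of the auxiliary choices — is where the real care is needed. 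The remaining verifications (openness, connectedness of $\UU^+(3)$, the covering-space bookkeeping) are routine given the earlier results.
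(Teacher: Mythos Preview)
For the first assertion your closedness/limit argument would eventually work, but it is more involved than necessary. The paper proves surjectivity of $U_\sigma \hookrightarrow p^{-1}(\PP_Z)$ directly: given any $\sigma' \in p^{-1}(\PP_Z)$ with central charge $Z'$, Theorem~\ref{thm:bayerdef} already produces a $\sigma'' \in U_\sigma$ with central charge $Z'' = Z'$ and with $\varphi''(\mathcal O_p) = \varphi'(\mathcal O_p)$ (by moving along the $\mathbb R$-factor), and then Theorem~\ref{thm:injective} forces $\sigma'' = \sigma'$. No limit argument is needed.

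For the second assertion there is a genuine gap. You are attempting to identify $\PP_{Z_0}$ with $j(\UU^+(3))$ \emph{exactly}, by matching the condition $\{Q'|_{\ker Z'}<0\}$ against the Lagrangian non-vanishing condition defining $\UU^+(3)$. But nothing in Part~\ref{part:bmodel} shows that the quadratic forms $Q'$ of Proposition~\ref{prop:quadratic} detect precisely the Lagrangian non-vanishing condition; those $Q'$ depend on the chosen point of $\Pi$, and your claim that degeneracy of $Q'|_{\ker Z'}$ ``corresponds precisely to $Z'$ vanishing on a Lagrangian'' is neither proved nor plausible for these particular $Q'$, which are ad hoc sums of pulled-back surface forms. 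You flag this yourself as the main obstacle and do not resolve it. Note also that the lemma is stated for \emph{arbitrary} $\sigma$, not only for the specific $\sigma_0$ at the vertex of $\Pi$, so an argument tied to one particular $Q'$ would not suffice in any case.

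The paper's route is entirely different and much shorter. It invokes Proposition~\ref{prop:central}, an independent result (proved via Polishchuk's Lagrangian-invariant objects and \cite[Proposition~4.1]{haiden20}) stating that \emph{any} central charge of a numerical stability condition on $E^n$ must lie in $\beta^{-1}(\UU^+(n))$. Given this, the inclusion $\PP_Z \subseteq j(\UU^+(3))$ is immediate: every $Z' \in \PP_Z$ is, by Theorem~\ref{thm:bayerdef}, the central charge of some stability condition, hence lies in $j(\UU^+(3))$ by Proposition~\ref{prop:central}. No analysis of the specific quadratic forms, and no equality $\PP_Z = j(\UU^+(3))$, is required here; that identification is assembled only later, in the proof of the main theorem, after gluing over all of $\UU^+(3)$.
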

\begin{proof}
For the first statement, Theorem~\ref{thm:bayerdef}(1) induces an injection
\[
U_{\sigma} \coloneqq (\PP_Z)_{\varphi = 0} \times \mathbb{R} \xhookrightarrow{} p^{-1}(\PP_Z)
\]
and it suffices to prove that this is surjective. If $\sigma' \in p^{-1}(\PP_Z)$, then the corresponding central charge $Z' \in \PP_Z$. Again by Theorem~\ref{thm:bayerdef}(1), there exists a stability condition $\sigma''$ with central charge $Z''$ such that
\[
Z'' = Z', \quad \varphi''( \OO_p) = \varphi'( \OO_p)
\]
By Theorem~\ref{thm:injective}, there is an equality of stability conditions $\sigma'' = \sigma'$, and the conclusion follows.

For the second statement, assume that $Z' \in \PP_Z$ is not contained in the image. By Theorem~\ref{thm:bayerdef}(1), there exists a stability condition $\sigma_{Z'} \in \mathrm{Stab}(X)$ with central charge $Z'$. On the other hand, Proposition~\ref{prop:central} immediately implies that this is a contradiction. 
\end{proof}

We fix a Lagrangian subset $L\subseteq \mb C^n$, and given a subspace $\UU \subseteq \UU^+(n)$ closed under the $U(1)$ action, we define the following:
\[
\UU_{\theta} \coloneqq \left\{ \Omega \in \UU \mid \mathrm{arg}(\Omega \vert_{L}) = \theta \right\}.
\]
\begin{lemma}\label{lem:covering}
The map
\[
\pi:\UU^+(n)_{\theta} \times \mathbb{R} \longrightarrow \UU^+(n), \qquad \pi( \Omega, \phi)= \mathrm{exp}(i \phi)\cdot \Omega
\]
is a universal covering of $\UU^+(n)$.

In addition, let $\UU \subseteq \UU^+(n)$ be a connected open subset closed under the $U(1)$ action. Then there is an equality:
\[
\pi^{-1}(\UU) = \UU_{\theta} \times \mathbb{R}.
\]
\end{lemma}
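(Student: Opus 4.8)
The plan is to prove Lemma~\ref{lem:covering} in two parts, corresponding to its two assertions. The first part---that $\pi$ is a universal covering---should be a formal consequence of the structure of $\UU^+(n)$ established earlier. The key observation is that $\UU^+(n)$ carries a free $U(1)$-action by rescaling $\Omega \mapsto e^{i\phi}\Omega$ (freeness follows since $e^{i\phi}\Omega = \Omega$ forces $e^{i\phi}=1$, because $\Omega \neq 0$). First I would note that for a fixed Lagrangian $L$, the function $\Omega \mapsto \Omega|_L$ is a nowhere-vanishing $\mb C^\times$-valued function on $\UU^+(n)$ (nonvanishing is precisely the defining property of $\mc U(n)$), so $\arg(\Omega|_L)$ is a well-defined $\mb R/2\pi\mb Z$-valued function which is equivariant: $\arg((e^{i\phi}\Omega)|_L) = \arg(\Omega|_L) + \phi$. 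Therefore $\UU^+(n)_\theta$ is a global slice for the $U(1)$-action, and the map $\UU^+(n)_\theta \times \mb R \to \UU^+(n)$, $(\Omega,\phi)\mapsto e^{i\phi}\Omega$, is exactly the pullback of the universal covering $\mb R \to \mb R/2\pi\mb Z = U(1)$ along the trivialized principal bundle $\UU^+(n) \cong \UU^+(n)_\theta \times U(1)$. Since $\UU^+(n)_\theta$ is simply connected (it is a deformation retract of $\UU^+(n)$ modulo the circle, or one can cite the contractibility results / connectedness plus the computation of $\pi_1(\UU^+(n)) = \mb Z$ implicit in the statement of Theorem~\ref{thm:main}), the total space $\UU^+(n)_\theta \times \mb R$ is simply connected, so $\pi$ is the universal covering.

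Next I would address the second assertion, $\pi^{-1}(\UU) = \UU_\theta \times \mb R$ for $\UU$ a connected open subset closed under the $U(1)$-action. The inclusion $\UU_\theta \times \mb R \subseteq \pi^{-1}(\UU)$ is immediate: if $\Omega \in \UU_\theta \subseteq \UU$ and $\phi \in \mb R$, then $e^{i\phi}\Omega \in \UU$ since $\UU$ is $U(1)$-stable, hence $(\Omega,\phi)\in\pi^{-1}(\UU)$. Conversely, if $(\Omega,\phi)\in\pi^{-1}(\UU)$, meaning $e^{i\phi}\Omega \in \UU$, then again by $U(1)$-stability $\Omega = e^{-i\phi}(e^{i\phi}\Omega) \in \UU$, and $\Omega \in \UU_\theta$ by definition of the slice coordinate, so $(\Omega,\phi)\in\UU_\theta\times\mb R$. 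This direction requires nothing beyond the definitions and $U(1)$-invariance of $\UU$; connectedness and openness of $\UU$ are not even needed for the set-theoretic equality, though they guarantee that $\UU_\theta$ is itself a connected open subset of the slice, which is presumably how the lemma gets applied downstream (to conclude $\pi^{-1}(\UU)$ is connected when $\UU$ is).

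The main obstacle I anticipate is the simple-connectedness of the slice $\UU^+(n)_\theta$, which is what makes $\pi$ a \emph{universal} cover rather than merely a covering map. One clean way to handle this: the map $\UU^+(n) \to U(1)$, $\Omega \mapsto \arg(\Omega|_L)$, is a fibration with fiber $\UU^+(n)_\theta$, and the long exact sequence of homotopy groups together with $\pi_1(\UU^+(n))\cong\mb Z$ (which maps isomorphically to $\pi_1(U(1))$, since the $U(1)$-orbit of a point is a loop generating $\pi_1$) forces $\pi_1(\UU^+(n)_\theta) = 0$. Alternatively one can appeal directly to whatever explicit contractibility statement for $\UU^+(n)$ (or its description via Proposition~\ref{prop:u3normalform} as a product of a contractible parameter space with $\mathrm{Sp}(6)\times\mathrm{GL}^+(2,\mb R)/(\text{stabilizer})$) is available; in the case $n=3$ relevant to this paper, the normal-form description reduces everything to an explicit, manifestly simply connected model. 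The rest of the argument is purely formal bookkeeping with the free circle action and its slice.
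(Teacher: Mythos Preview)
Your proposal is correct and follows essentially the same approach as the paper: use the $U(1)$-equivariant map $\Omega\mapsto\arg(\Omega|_L)$ to $S^1$ to trivialize $\UU^+(n)$ as $\UU^+(n)_\theta\times S^1$, then argue the slice is simply connected. One caveat: your primary route to simple-connectedness of $\UU^+(n)_\theta$ via $\pi_1(\UU^+(n))\cong\mb Z$ ``implicit in Theorem~\ref{thm:main}'' is circular, since that theorem presupposes the universal cover you are constructing here; the paper instead cites \cite[Proposition~2.5]{haiden20} directly to conclude that $q:\UU^+(n)\to S^1$ is a homotopy equivalence and hence the fiber is contractible---which is exactly the ``explicit contractibility statement'' you gesture at as an alternative, so your fallback is the correct one to take.
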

\begin{proof}
Note that $\Omega \vert_{L}\in{\bigwedge}^nL^*\otimes\mb C$ lives in the complexification of a 1-dimensional vector space over $\mb R$, so $\mathrm{arg}(\Omega \vert_{L})\in\mb R/2\pi\mb Z=S^1$ is well-defined.
Moreover the map 
\[
q:\mc U^+(n)\longrightarrow S^1,\qquad \Omega\mapsto \mathrm{arg}(\Omega \vert_{L})
\] 
is a trivial fiber bundle, since it is equivariant with respect to the $U(1)$ action on $\mc U^+(n)$ and $S^1$ given by scalar multiplication. Thus, $\mc U^+(n)=\UU^+(n)_{\theta} \times S^1$.
It follows from \cite[Proposition 2.5]{haiden20} that $q$ is a homotopy equivalence and thus its fibers, in particular $\UU^+(n)_{\theta}$, are contractible, which implies the first statement.

The second statement follows, since under the assumption on $\UU$, $\pi^{-1}(\UU)$ is closed under the $\mb R$ action by translation on the second factor.
\end{proof}

\subsection{Gluing of open embeddings}
\begin{lemma}\label{lem:liftfund}
Let $\sigma \in \mathrm{Stab}_{\Lambda}(X)$ be a stability condition with central charge $Z$. Then there exists a connected open neighborhood $\UU_{Z}\subseteq \UU^+(3)$ of $Z$, with a commutative diagram
\[
\begin{tikzcd}
 \pi^{-1}(\UU_{Z})\arrow[r, "\sim"] \arrow[d]&  p^{-1}(\PP_Z) \subseteq \mathrm{Stab}_\Lambda(X)\arrow[d, shift right = 12,"p"]\\
  \UU_{Z}\arrow[r, "\sim"]  & \PP_{Z} \subseteq \mathrm{Hom}(\Lambda,\mathbb{C}) 
\end{tikzcd}
\]
\end{lemma}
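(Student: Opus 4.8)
The plan is to glue together the local charts furnished by Theorem~\ref{thm:bayerdef} and Lemma~\ref{lem:PZimage}, using the identification $\pi^{-1}(\UU)=\UU_\theta\times\mb R$ of Lemma~\ref{lem:covering}. Concretely, we first want to construct, for each $\sigma$, the open set $\UU_Z\subseteq\UU^+(3)$ and then upgrade the homeomorphism $\UU_Z\xrightarrow{\sim}\PP_Z$ to a homeomorphism of the total spaces of the two covering maps.

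First I would use the earlier construction of a full-support stability condition to trivialize the situation at the base point. By Proposition~\ref{prop:fullsupport} and Proposition~\ref{prop:quadratic}, for each $Z=Z_{(\alpha,\beta,\gamma)}\in\Pi$ there is a stability condition $\sigma$ with central charge $Z$ satisfying the support property with respect to a quadratic form $Q$ which is negative definite on $\ker(Z')$ for all $Z'$ in the fundamental domain, and the $\mathrm{Sp}(6,\mb R)\times\mathrm{GL}^+(2,\mb R)$-action together with Proposition~\ref{prop:spendo} propagates this to all of $\UU^+(3)$ up to the sign/shift ambiguity; so it suffices to treat $Z\in\Pi$ (or, after the group action, any $Z$). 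Then $\PP_Z$ is by definition the connected component of $\{Z'\mid Q|_{\ker(Z')}<0\}$ containing $Z$, and by Lemma~\ref{lem:PZimage} it lies inside $j(\UU^+(3))$; set $\UU_Z\coloneqq j^{-1}(\PP_Z)$, an open subset of $\UU^+(3)$, and note it is connected since $\PP_Z$ is. This immediately gives the bottom horizontal homeomorphism $\UU_Z\xrightarrow{\sim}\PP_Z$ (restriction of $j$). The key remaining point is that $\UU_Z$ is closed under the $U(1)$-action: this holds because $Q|_{\ker(e^{i\phi}Z')}=Q|_{\ker(Z')}$, so the defining condition of $\PP_Z$ is $U(1)$-invariant, hence so is $\PP_Z$ (being a connected component of a $U(1)$-invariant, and in fact $\mb R$-invariant under the $\widetilde{\mathrm{GL}^+}$-rotation, set), and $j$ is $U(1)$-equivariant. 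Therefore Lemma~\ref{lem:covering} applies and $\pi^{-1}(\UU_Z)=(\UU_Z)_\theta\times\mb R$.

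Next I would match the two covering spaces. On the stability side, Lemma~\ref{lem:PZimage} gives $U_\sigma=(\PP_Z)_{\varphi=0}\times\mb R\xrightarrow{\sim}p^{-1}(\PP_Z)$, and the explicit form of the embedding in Theorem~\ref{thm:bayerdef}(1) is $i(Z',\phi)=e^{2\pi i\phi}\cdot i(Z',0)$ with $i(Z',0)$ having central charge $Z'$. On the $\UU^+(3)$ side, $\pi(\Omega,\phi)=e^{i\phi}\cdot\Omega$. Fixing the Lagrangian $L$ with $\theta$ the argument of $Z$ on the corresponding class, and noting that the phase $\varphi(\mc O_p)$ of a skyscraper corresponds under mirror symmetry to $\mathrm{arg}(\Omega|_L)$ up to a fixed affine change (this is where the mirror dictionary of Section~\ref{subsec:E3corr} enters: the skyscraper class is mirror to the class of a point-like Lagrangian, dual to the exponential factor $\exp(i(D_1+D_2+D_3))$), the slices $(\PP_Z)_{\varphi=0}$ and $(\UU_Z)_\theta$ correspond under $j$. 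Define the top map $\pi^{-1}(\UU_Z)\to p^{-1}(\PP_Z)$ on $(\UU_Z)_\theta\times\mb R$ by sending $(\Omega,\phi)\mapsto e^{2\pi i\phi/(2\pi)}\cdot i(j(\Omega),0)$, i.e. rotate the chart $i$ by the same amount; compatibility of the $\mb R$-actions (translation on the second factor $=$ $\widetilde{\mathrm{GL}^+}$-rotation) makes this well-defined and a homeomorphism, and by construction it covers $j|_{\UU_Z}$. The diagram commutes because $p(e^{2\pi i\phi}\cdot\sigma')=e^{2\pi i\phi}\cdot Z(\sigma')$ matches $\pi(\Omega,\phi)=e^{i\phi}\Omega$ after the affine reparametrization of the circle.

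The main obstacle I anticipate is not any single estimate but the bookkeeping of the two circle parametrizations: the $\mathrm{GL}^+(2,\mb R)$-rotation acts on $\mathrm{Hom}(\Lambda,\mb C)$ with ``period'' $2\pi$ in Bridgeland's normalization but the $U(1)$ on $\UU^+(n)$ has a possibly different natural period, and one must check the mirror identification sends skyscraper phase to Lagrangian-restriction argument \emph{linearly} so that the two $\mb R$-factors glue without a monodromy defect. This is exactly the content that makes $\pi^{-1}(\UU_Z)$ and $p^{-1}(\PP_Z)$ isomorphic as covering spaces rather than merely fiberwise homeomorphic; concretely it reduces to checking the compatibility on a single well-chosen class (the one mirror to $\exp(i(D_1+D_2+D_3))$, whose phase is exactly what Theorem~\ref{thm:injective} records), and then invoking Theorem~\ref{thm:injective} to conclude the glued map is a bijection. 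Once that compatibility is pinned down, the rest is formal: both vertical maps are covering maps onto the same base $\PP_Z=\UU_Z$ with the same (trivial, after the $\mb R$-factor splitting) fiber, and the explicit sections identify them.
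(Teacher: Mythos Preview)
Your approach is essentially the same as the paper's: define $\UU_Z\coloneqq j^{-1}(\PP_Z)$ using Lemma~\ref{lem:PZimage}, trivialize both covers as $(\text{slice})\times\mb R$ via Lemma~\ref{lem:covering} and Theorem~\ref{thm:bayerdef}, and send $(\Omega,\phi)\mapsto (j(\Omega),\phi)$. Two remarks on presentation:

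\textbf{The first paragraph is unnecessary.} The lemma is local: $\sigma$ is \emph{given} in the hypothesis, so there is no need to invoke Proposition~\ref{prop:fullsupport}, Proposition~\ref{prop:quadratic}, or the $\mathrm{Sp}(6,\mb Q)$-action here. Those are used later (Lemma~\ref{lem:glue} and the proof of Theorem~\ref{thm:main}) to produce enough $\sigma$'s so that the charts $\UU_Z$ cover $\UU^+(3)$. For this lemma, start directly from the given $\sigma$ and its quadratic form $Q$.

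\textbf{The ``main obstacle'' dissolves once you choose $L$ correctly.} Your worry about matching the two circle parametrizations is legitimate but more easily resolved than you suggest. Take $L$ to be the Lagrangian mirror to the skyscraper class; then by construction of $j$ (Section~\ref{subsec:E3corr}) one has $j(\Omega)(\mc O_p)=\Omega|_L$ up to a fixed real scalar, so $\arg(j(\Omega)(\mc O_p))=\arg(\Omega|_L)$ on the nose. With this choice, $j$ carries the slice $(\UU_Z)_\theta$ homeomorphically onto $(\PP_Z)_\theta$, and the map $(\Omega,\phi)\mapsto(j(\Omega),\phi)$ makes the square commute after a single global linear reparametrization of the $\mb R$-factor (to absorb the $2\pi$ in $p_\sigma(Z',\phi)=e^{2\pi i\phi}Z'$ versus $\pi(\Omega,\phi)=e^{i\phi}\Omega$). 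No appeal to Theorem~\ref{thm:injective} is needed at this stage; that theorem is already baked into Lemma~\ref{lem:PZimage}, which you have used to identify $p^{-1}(\PP_Z)$ with $(\PP_Z)_{\varphi}\times\mb R$. Your observation that $\PP_Z$ (hence $\UU_Z$) is $U(1)$-invariant because $\ker(e^{i\phi}Z')=\ker(Z')$ is a point the paper leaves implicit but does need for Lemma~\ref{lem:covering} to apply.
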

\begin{proof}
Consider the following diagram
\begin{equation}\label{eqn:square}
\begin{tikzcd}
  (\UU_{Z})_{\varphi} \times \mathbb{R} \simeq   \pi^{-1}(\UU_{Z}) \arrow[r, "\sim","i"', dashed] \arrow[d]&(\PP_{Z})_{\varphi} \times \mathbb{R} \simeq p^{-1}(\PP_Z) \arrow[d,"p"] 
    \arrow[r, hook] & \mathrm{Stab}_{\Lambda}(X)\arrow[d] \\
  \UU_{Z}\arrow[r,"\sim", "j"']  & \PP_{Z} \arrow[r, hook]& \mathrm{Hom}(\Lambda,\mathbb{C}) 
\end{tikzcd}
\end{equation}
where the existence and commutativity of the right square follows from Theorem~\ref{thm:bayerdef}. The identification 
\[
(\PP_{Z})_{\varphi} \times \mathbb{R} \simeq p^{-1}(\PP_Z) 
\]
follows directly from Lemma~\ref{lem:PZimage}; we define the subset $\UU_Z$ as the inverse image of the embedding $j$ of Lemma~\ref{lem:PZimage}, which restricts to a homeomorphism. The homeomorphism
\[
 (\UU_{Z})_{\varphi} \times \mathbb{R} \simeq  \pi^{-1}(\UU_{Z}) 
\]
follows from Lemma~\ref{lem:covering}, and it suffices to show the existence of a homeomorphism $i$ together with the commutativity of the left square.

We define the morphism $i$ by
\begin{align*}
 i \colon   (\UU_{Z})_{\varphi} \times \mathbb{R} &\rightarrow (\PP_{Z})_{\varphi} \times \mathbb{R}\\
    (\Omega, \phi) &\mapsto (j(\Omega), \phi)
\end{align*}
which implies the commutativity of the left square, from which the conclusion follows.
\end{proof}

\begin{lemma}\label{lem:glue}
Let $\{ \sigma_i \} \subset \mathrm{Stab}_{\Lambda}(X)$ be stability conditions with central charges $Z_i$. Then the corresponding commutative diagrams of Lemma~\ref{lem:liftfund} glue to give a commutative diagram
\[
\begin{tikzcd}
\pi^{-1}(\UU)\arrow[r, "\sim","i"'] \arrow[d,"\pi"]&  p^{-1}(\PP) \subseteq \mathrm{Stab}_\Lambda(X)\arrow[d, shift right = 12,"p"]\\
  \UU\arrow[r, "\sim", "j"']  & \PP \subseteq \mathrm{Hom}(\Lambda,\mathbb{C}) 
\end{tikzcd}
\]
where $\PP = \bigcup\limits_{i} \PP_{Z_i}$, and $\UU  = \bigcup\limits_{i} \UU_{Z_i}$.
\end{lemma}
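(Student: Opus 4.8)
The plan is to promote the local homeomorphisms of Lemma~\ref{lem:liftfund} to a global one by a standard gluing/sheaf-theoretic argument, the key input being that the local data agree on overlaps in a canonical way. First I would observe that for each $i$ the diagram of Lemma~\ref{lem:liftfund} is built entirely out of the canonical embedding $j\colon\UU^+(3)\hookrightarrow\mathrm{Hom}(\Lambda,\mathbb{C})$ (from Proposition~\ref{prop:HodgePrimitive}) and the Bayer section from Theorem~\ref{thm:bayerdef}: concretely, $\UU_{Z_i}=j^{-1}(\PP_{Z_i})$ as a subset of $\UU^+(3)$, the bottom map is the restriction of $j$, and the top map $i$ is $(\Omega,\phi)\mapsto(j(\Omega),\phi)$ under the covering identifications of Lemma~\ref{lem:covering} and Lemma~\ref{lem:PZimage}. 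Because all of these ingredients are restrictions of globally defined maps ($j$, $\pi$, $p$) and because $p^{-1}(\PP_{Z_i})\xrightarrow{\sim}U_{\sigma_i}$ is the \emph{canonical} homeomorphism of Theorem~\ref{thm:bayerdef}(1) (given by $p$ together with the phase of $\mathcal O_p$, which is intrinsic), the restrictions of the $i$ for different indices automatically coincide on $\pi^{-1}(\UU_{Z_i}\cap\UU_{Z_j})$. Hence the $i$ patch to a well-defined continuous map $i\colon\pi^{-1}(\UU)\to p^{-1}(\PP)$, and likewise for $j$ on $\UU=\bigcup_i\UU_{Z_i}$, with $\PP=\bigcup_i\PP_{Z_i}$; commutativity of the global square is inherited from the local ones.

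Next I would check that the glued $j$ and $i$ are homeomorphisms onto their images $\PP$ and $p^{-1}(\PP)$. For $j$: it is injective since it is the restriction of the injective map $j\colon\UU^+(3)\hookrightarrow\mathrm{Hom}(\Lambda,\mathbb{C})$; it is continuous and open because it is locally a homeomorphism onto the open subsets $\PP_{Z_i}$ of $\PP$ (each $\PP_{Z_i}$ is open in $\mathrm{Hom}(\Lambda,\mathbb{C})$, being a connected component of an open set, hence open in $\PP$), so $\UU\xrightarrow{\sim}\PP$. For $i$: injectivity and openness likewise follow locally from Lemma~\ref{lem:liftfund}; the only point needing care is that two stability conditions in $p^{-1}(\PP_{Z_i})$ and $p^{-1}(\PP_{Z_j})$ with the same image in $\mathrm{Hom}(\Lambda,\mathbb{C})$ and the same phase of $\mathcal O_p$ are equal as stability conditions — but this is exactly Theorem~\ref{thm:injective} (the injectivity of the forgetful map for products of positive-genus curves), which has already been invoked inside the proof of Lemma~\ref{lem:PZimage}. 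Thus $i$ is a well-defined homeomorphism $\pi^{-1}(\UU)\xrightarrow{\sim}p^{-1}(\PP)$, and commutativity with $\pi$ and $p$ holds since it holds on each piece.

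The main obstacle — and the step I would spend the most care on — is the compatibility of the $i$'s on overlaps, i.e. verifying that the locally defined homeomorphisms $p^{-1}(\PP_{Z_i})\xrightarrow{\sim}(\UU_{Z_i})_\varphi\times\mathbb R$ really do not depend on the choice of base stability condition $\sigma_i$. This reduces to the assertion that the homeomorphism of Theorem~\ref{thm:bayerdef}(1), which a priori is stated relative to a fixed $\sigma$, is in fact canonically $\sigma$-independent over the overlap of the deformation neighborhoods; concretely, that $i(Z',\phi)$ depends only on $(Z',\phi)$ and not on which $\sigma_i$ produced it. This is where Theorem~\ref{thm:injective} does the real work: it pins down a stability condition in $\mathrm{Stab}_\Lambda(X)$ by its central charge together with $\mathrm{arg}(Z(\mathcal O_p))$, so the two candidate lifts of a given $(Z',\phi)$ coming from $\sigma_i$ and $\sigma_j$ have the same central charge and the same phase of the skyscraper, hence agree. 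Once this is established, everything else is the routine bookkeeping of gluing continuous maps over an open cover, together with the open-map argument above; I would present it crisply rather than belabor the point-set topology.
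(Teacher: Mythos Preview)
Your proposal is correct and follows essentially the same approach as the paper: the maps $\pi$, $p$, $j$ glue because they are restrictions of globally defined maps, while the compatibility of the local $i$'s on overlaps is secured by Theorem~\ref{thm:injective} (two candidate lifts have the same central charge and the same phase of $\mathcal O_p$, hence coincide). Your treatment is somewhat more explicit than the paper's about why the glued $i$ and $j$ are homeomorphisms onto $p^{-1}(\PP)$ and $\PP$, but the core argument is identical.
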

\begin{proof}
Defining $\PP = \bigcup\limits_{i} \PP_{Z_i}$, and $\UU  = \bigcup\limits_{i} \UU_{Z_i}$, we have the commutative diagram
\[
\begin{tikzcd}
    \pi^{-1}(\UU) \arrow[r, "\sim","i"', dashed] \arrow[d,"\pi"]& p^{-1}(\PP) \arrow[d,"p"] 
    \arrow[r, hook] & \mathrm{Stab}_{\Lambda}(X)\arrow[d] \\
  \UU\arrow[r,"\sim", "j"']  & \PP \arrow[r, hook]& \mathrm{Hom}(\Lambda,\mathbb{C}) 
\end{tikzcd}
\]
where the right square clearly commutes by definition. It suffices to show that all four morphisms of Lemma~\ref{lem:liftfund} agree on their respective overlaps and hence that $\pi, p, j$, and $i$ are well-defined, and that the left square commutes. The claim is clear for the morphisms $\pi, p$, and $j$, as these are induced from the global morphisms 
\[
\widetilde{\UU^+(3)} \rightarrow \UU^+(3), \quad \mathrm{Stab}_{\Lambda}(X) \rightarrow \mathrm{Hom}(\Lambda,\mathbb{C}), \quad \UU^+(3) \xhookrightarrow{} \mathrm{Hom}(\Lambda,\mathbb{C})
\]
respectively. For the morphism $i$, it suffices to show that given two elements
\[
\Omega_1 \in \pi^{-1}(\UU_{Z_1}), \ \Omega_2 \in \pi^{-1}(\UU_{Z_2}), \quad \Omega_1 = \Omega_2,
\]
these induce the same stability conditions in $\mathrm{Stab}_{\Lambda}(X)$. By the construction in the proof of Lemma~\ref{lem:liftfund}, these induce stability conditions $\sigma_1, \sigma_2$ with the same central charges and phases for the skyscraper sheaf, and so $\sigma_1 = \sigma_2$ by Theorem~\ref{thm:injective}, and the corresponding morphisms glue.

Finally, the left diagram commutes as inverse images commute with restrictions, together with the commutativity of Lemma~\ref{lem:liftfund}.
\end{proof}

\subsection{Proof of Theorem~\ref{thm:main}}

Given a stability condition $\sigma = ( Z, \PP)\in \mathrm{Stab}_{\Lambda}(X)$ with quadratic form $Q$ and a transformation $g \in \mathrm{Sp}(6,\mathbb{Q})$, we constructed a new stability condition $g \cdot \sigma$ in Proposition~\ref{prop:inducedstab} as follows:
\[
Z' \coloneqq Z \circ g, \quad \PP'(\varphi) \coloneqq \{ \EE \in D^b(X)\  \vert \ g(\EE) \in \PP(\varphi)\}, \quad Q' \coloneqq Q \circ g
\]
\begin{lemma}\label{lem:gaction}
    Let $\sigma \in \mathrm{Stab}_{\Lambda}(X)$ be a stability condition with central charge $Z$, quadratic form $Q$, and $g \in \mathrm{Sp}(6,\mathbb{Q})$. Then there is an equality of sets
    \[
\PP_{g\cdot Z} = g \cdot \PP_Z.
    \]
In particular, this implies
\[
 \UU_{g \cdot Z} = g \cdot \UU_Z.
\]
\end{lemma}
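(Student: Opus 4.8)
The plan is to unwind the definitions of $\PP_Z$ and $\UU_Z$ and check the two claimed equalities directly, the first being the main content and the second an immediate consequence.

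First I would recall that by definition $\PP_Z$ is the connected component of $\{Z' \in \mathrm{Hom}(\Lambda,\mathbb{C}) \mid Q|_{\ker(Z')} < 0\}$ containing $Z$, while $g \cdot \sigma$ has central charge $g\cdot Z = Z \circ g$ and quadratic form $Q' = Q \circ g$ (here $g$ acts on $\Lambda \otimes \mathbb{R}$ via the mirror identification of Section~\ref{sec:mirrorspn}). The key observation is that precomposition by the invertible linear map $g$ gives a linear automorphism $g^* \colon \mathrm{Hom}(\Lambda,\mathbb{C}) \to \mathrm{Hom}(\Lambda,\mathbb{C})$, $Z' \mapsto Z' \circ g$, which is in particular a homeomorphism. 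So the first step is to check set-theoretically that $g^*$ carries the locus $\{Q|_{\ker(Z')} < 0\}$ onto the locus $\{Q'|_{\ker(Z')} < 0\}$: for any $Z'$ one has $\ker(Z' \circ g) = g^{-1}(\ker Z')$, and $Q'|_{g^{-1}(\ker Z')} = (Q\circ g)|_{g^{-1}(\ker Z')}$, which is negative definite exactly when $Q|_{\ker Z'}$ is, since $g$ restricts to a linear isomorphism $g^{-1}(\ker Z') \to \ker Z'$. Thus $g^*$ maps the full (non-connected) locus for $(Z,Q)$ bijectively onto the full locus for $(g\cdot Z, Q')$.

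Next, since $g^*$ is a homeomorphism it carries connected components to connected components, and it sends $Z$ to $g\cdot Z$; hence it sends the component of $Z$ — namely $\PP_Z$ — onto the component of $g\cdot Z$ — namely $\PP_{g\cdot Z}$. Unwinding $g^*$, this says precisely $\PP_{g\cdot Z} = \{Z' \circ g \mid Z' \in \PP_Z\} = g \cdot \PP_Z$, where on the right $g$ acts on $\mathrm{Hom}(\Lambda,\mathbb{C})$ by precomposition; this is the first claimed equality. For the second, recall from Lemma~\ref{lem:liftfund} that $\UU_Z$ was defined as $j^{-1}(\PP_Z)$, where $j \colon \UU^+(3) \hookrightarrow \mathrm{Hom}(\Lambda,\mathbb{C})$ is the embedding of Proposition~\ref{prop:HodgePrimitive}; by Lemma~\ref{lem:PZimage}, $\PP_Z$ lies in the image of $j$, and likewise $\PP_{g\cdot Z}$ lies in the image of $j$. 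The action of $g \in \mathrm{Sp}(6,\mathbb{Q}) \subseteq \mathrm{Sp}(6,\mathbb{R})$ on $\mathrm{Hom}(\Lambda,\mathbb{C})$ is compatible via $j$ with the $\mathrm{Sp}(6,\mathbb{R})$-action on $\UU^+(3)$ (this is exactly the $\mathrm{Sp}(2n,\mathbb{R})$-equivariance of the mirror identification established in Part~\ref{part:mirror}, together with Lemma~\ref{lem:Umirror} and Proposition~\ref{prop:spendo}), so $j(g \cdot \UU_Z) = g \cdot j(\UU_Z) = g \cdot \PP_Z = \PP_{g\cdot Z} = j(\UU_{g\cdot Z})$, and injectivity of $j$ gives $\UU_{g\cdot Z} = g \cdot \UU_Z$.

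The only genuine subtlety — and the step I would be most careful about — is making sure all three actions in play are literally the same one: the action of $\mathrm{Sp}(6,\mathbb{Q})$ on $\Knum(X)\otimes\mathbb{R}$ coming from autoequivalences (Proposition~\ref{prop:spendo}), the induced precomposition action on central charges and quadratic forms used to define $g\cdot\sigma$, and the $\mathrm{Sp}(6,\mathbb{R})$-action on $\UU^+(3)$ transported through $\beta$ and $j$. Each of these has already been set up in the excerpt (Proposition~\ref{prop:endomirror}, Proposition~\ref{prop:spendo}, and the discussion following Proposition~\ref{prop:HodgePrimitive}), so the proof amounts to citing that they agree and then running the formal homeomorphism argument above; there is no hard estimate involved, only bookkeeping of which map acts where.
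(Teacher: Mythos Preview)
Your proof is correct and follows essentially the same approach as the paper: verify that precomposition by $g$ matches up the negative-definiteness loci for $Q$ and $Q'=Q\circ g$, then use $\mathrm{Sp}(6,\mb Q)$-equivariance of $j$ (via Lemma~\ref{lem:Umirror}) to transfer the first equality to $\UU_Z$. If anything, your formulation is slightly cleaner, since by packaging the map $Z'\mapsto Z'\circ g$ as a homeomorphism you make the connected-component bookkeeping explicit, whereas the paper checks the two set inclusions directly and leaves the component-matching implicit.
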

\begin{proof}
By definition, $\PP_{g \cdot Z}$ can be identified with the subset of central charges $Z'$ such that if $v \in \mathrm{ker}(Z')$, then $Q(g(v)) < 0$. Let $Z' \in \PP_{g \cdot Z}$. We claim that $Z' \circ g^{-1} \in \PP_Z$. Indeed, if $v \in \mathrm{ker}(Z' \circ g^{-1})$, then $g^{-1}(v) \in \mathrm{ker}(Z')$ and consequently,
\[
(Q \circ g)(g^{-1} v) = Q(v) < 0
\]
implying the claim. Conversely, assume that $Z' \in g \cdot \PP_Z$, and we claim that $Z' \in \PP_{g \cdot Z}$. By definition, this implies that $g^{-1} \cdot Z' \in \PP_{Z}$. Thus, if $v$ satisfied $Z' \circ g^{-1}(v) = 0$, then $Q(v) < 0$. To verify the claim, let $w \in \mathrm{ker}(Z')$; this implies that $g(w) \in \mathrm{ker}(Z' \circ g^{-1})$ and we immediately obtain $Q(g(w)) < 0$, giving the equality of sets:
    \[
\PP_{g\cdot Z} = g \cdot \PP_Z.
    \]

By Lemma~\ref{lem:PZimage}, we have isomorphisms:
\begin{equation}\label{eqn:gziso}
\UU_Z \simeq \PP_Z, \quad \UU_{g\cdot Z} \simeq \PP_{g \cdot Z} = g \cdot \PP_Z.
\end{equation}
By Lemma~\ref{lem:Umirror}, this identification is $\mathrm{Sp}(6,\mb Q)$-equivariant, and so the first isomorphism implies
\[
g \cdot \UU_Z \simeq g \cdot \PP_Z.
\]
Combining this with the second equation in (\ref{eqn:gziso}) implies the conclusion.
\end{proof}

To conclude the proof of Theorem~\ref{thm:main}, we recall the mirror isomorphism of~(\ref{eq:totalHiso}) and its restriction via Proposition~\ref{prop:HodgePrimitive}.
\[
\beta \colon \NN(E^n) \otimes \mathbb{C} \xlongrightarrow{\sim} H^n_{\mathrm{pr}}((E^\circ)^n;\mb C)
\]
We note that the central charge of any stability condition must lie in the pre-image under $\beta$ of the subspace $\UU^+(n)$.
\begin{prop}\label{prop:central}
Assume that $Z \not\in \beta^{-1}(\mc U^+(n))$. Then $Z$ cannot be the central charge of a stability condition on $X$.
\end{prop}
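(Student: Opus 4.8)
The plan is to assume, for contradiction, that there is a stability condition $\sigma = (Z,\mathcal A)\in\mathrm{Stab}(D^b(X))$, and to translate the existence of $\sigma$ into two facts about the corresponding primitive form. Write $\Omega\in H^n_{\mathrm{pr}}((E^\circ)^n;\mathbb C)$ for the form with $Z(v)=\langle\Omega,\beta(v)\rangle$ (this exists since $\beta$ identifies $\Knum(X)\otimes\mathbb C$ with $H^n_{\mathrm{pr}}$ and the Mukai pairing is non-degenerate, so $\Omega$ is automatically primitive). As $\mathcal U(n)=\mathcal U^+(n)\sqcup\mathcal U^-(n)$, the hypothesis $Z\notin\beta^{-1}(\mathcal U^+(n))$ means either (a) $\Omega|_L=0$ for some Lagrangian subspace $L\subset\mathbb R^{2n}$, or (b) $\Omega\in\mathcal U^-(n)$; I will rule out each.

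The key lemma to prove first is: for every primitive integral Lagrangian class $\ell\in H_{n,\mathrm{pr}}(T^{2n};\mathbb Z)$, the class $\beta^{-1}(\ell)\in\Knum(X)$ is the class of a $\sigma$-semistable object, with central charge $\langle\Omega,\ell\rangle$. Indeed, $\mathcal O_p$ corresponds under $\beta$ to the class $\ell_0$ of the linear Lagrangian subtorus $\{y_1=\dots=y_n=0\}$; for $g\in\mathrm{Sp}(2n,\mathbb Q)$, Proposition~\ref{prop:spendo} gives an endofunctor $F_g$ with $F_g^{-1}\sigma\in\mathrm{Stab}(D^b(X))$ and $(F_g)_*=\beta^{-1}\circ g_*\circ\beta$ on $\Knum$; by Theorem~\ref{thm:injective} the skyscraper $\mathcal O_p$ is $F_g^{-1}\sigma$-stable, which by construction of $F_g^{-1}\sigma$ is exactly the statement that $F_g(\mathcal O_p)$ is $\sigma$-semistable, and $[F_g(\mathcal O_p)]=\beta^{-1}(g_*\ell_0)$. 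Since $\mathrm{Sp}(2n,\mathbb Q)$ acts transitively on rational Lagrangian subspaces, and $\beta$ is an integral isometry (Lemma~\ref{lem:Umirror}), as $g$ varies one obtains a $\sigma$-semistable object whose class is a nonzero rational multiple of $\beta^{-1}(\ell)$ for every $\ell$ as above. Feeding this into the support property of $\sigma$ — in its standard quantitative form, $\lVert v\rVert\le C\,|Z(v)|$ for all classes $v$ of $\sigma$-semistable objects and a fixed norm on $\Knum(X)\otimes\mathbb R$ — and cancelling the scalar, one gets a single constant $C'$ with $\lVert\ell\rVert\le C'\,|\langle\Omega,\ell\rangle|$ for all primitive rational Lagrangian classes $\ell$.

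This bound kills case (a). Assume $\Omega|_{L_\infty}=0$; we may take $L_\infty$ irrational (a rational $L_\infty$ is covered by the lemma directly). Using density of $\mathrm{Sp}(2n,\mathbb Q)$ in $\mathrm{Sp}(2n,\mathbb R)$ (Lemma~\ref{lem:spqdense}), choose rational Lagrangian subspaces $L_k\to L_\infty$ with primitive subtorus classes $\ell_k$; then $\lVert\ell_k\rVert=\mathrm{covol}(L_k\cap\mathbb Z^{2n})\to\infty$ while $|\langle\Omega,\ell_k\rangle|/\lVert\ell_k\rVert=|\Omega|_{\ell_k/\lVert\ell_k\rVert}|\to|\Omega|_{L_\infty}|=0$, contradicting $\lVert\ell_k\rVert\le C'|\langle\Omega,\ell_k\rangle|$. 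Hence $\Omega$ is non-vanishing on every Lagrangian subspace, i.e. $\Omega\in\mathcal U(n)$.

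It remains to exclude case (b), and I expect this to be the main obstacle. The two components of $\mathcal U(n)$ differ by the sign of a Maslov-type winding number of $\arg(\Omega|_L)$ along a generator of $\pi_1$ of the Lagrangian Grassmannian; on the categorical side this sign should be pinned by the $\mathbb R$-valued phases of the semistable objects $F_g(\mathcal O_p)$ together with the one-sided nature of the $\mathrm{Ext}$-groups between branes of nearby Lagrangians. Turning this into a clean B-side statement seems to require either a direct computation with $\mathrm{Ext}^\bullet\big(F_g(\mathcal O_p),F_{g'}(\mathcal O_p)\big)$ or an appeal to homological mirror symmetry for abelian varieties~\cite{msabelian}. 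A shortcut I would attempt first: the image of $\mathrm{Stab}(D^b(X))\to\mathrm{Hom}(\Knum(X),\mathbb C)$ is open and $\mathrm{Sp}(2n,\mathbb Q)\times\mathrm{GL}^+(2,\mathbb R)$-stable, so one can try to use density to reduce case (b) to showing that the specific form $d\bar z_1\wedge d\bar z_2\wedge d\bar z_3=\beta(\exp(-i(D_1+D_2+D_3)))$ is not a central charge; this last point is plausible because the corresponding functional equals $\langle\exp(i(D_1+D_2+D_3)),(\,\cdot\,)^\vee\rangle$, i.e. it factors through the duality anti-autoequivalence, which cannot underlie a stability condition since it violates the shift axiom.
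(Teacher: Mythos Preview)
Your treatment of case~(a) is correct and closely parallels the paper's argument. The paper obtains $\sigma$-semistable objects in every rational Lagrangian class by citing Polishchuk's theory of Lagrangian-invariant objects \cite[Example~3.2.6, Proposition~3.1.4]{Polishchuk_2014}, whereas you obtain them by transporting the skyscraper via the $\mathrm{Sp}(2n,\mathbb Q)$-action of Proposition~\ref{prop:spendo} together with Theorem~\ref{thm:injective}; both routes yield the same support inequality on rational Lagrangian classes. The paper then invokes \cite[Proposition~4.1]{haiden20} for the contradiction, which is precisely the density/limit argument you spell out by hand. (A small wrinkle in your version: as you note, the endofunctor $F_g$ realises $g$ only up to a positive rational scalar on $\Knum$, since for instance $\otimes V$ multiplies by $\mathrm{rk}(V)$; you handle this correctly by passing to ``a nonzero rational multiple of $\beta^{-1}(\ell)$'', and the scalar cancels in the support bound.)

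The gap you flag in case~(b) is genuine and not repaired by your sketches. The support inequality $\|\ell\|\le C'|\langle\Omega,\ell\rangle|$ is invariant under $\Omega\mapsto\bar\Omega$, hence cannot by itself distinguish $\mathcal U^+(n)$ from $\mathcal U^-(n)$; and your duality idea for $dz_{\overline{123}}$ does not work as stated, since $(\,\cdot\,)^\vee$ is an anti-autoequivalence, so precomposing a stability function with it does not produce a would-be central charge violating the shift axiom. In fact the paper's written proof also treats only case~(a): it opens by assuming $\Omega$ vanishes on some real Lagrangian subspace and derives a contradiction from \cite[Proposition~4.1]{haiden20}. This is all that is needed for the sole use of the proposition, in the proof of Theorem~\ref{thm:main}: there one applies it at a point of $\partial\,\mathcal U^+(3)$, and since $\mathcal U^+(3)$ and $\mathcal U^-(3)$ are disjoint open subsets of the space of primitive forms, any boundary point of $\mathcal U^+(3)$ lies outside $\mathcal U(3)$, i.e.\ vanishes on some Lagrangian. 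So the statement of the proposition is slightly stronger than what is proved (or required), and your honest acknowledgment of the difficulty in case~(b) is well placed.
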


\begin{proof}
Assume on the contrary that there exists a stability condition $\sigma$ with such a central charge $Z$, vanishing along the image under $\beta$ of the class $\Gamma$ of a real Lagrangian subspace. From the identification of the Lagrangian Grassmanian of \cite[Example 3.2.6 \& Section 3]{Polishchuk_2014}, together with \cite[Proposition 3.1.4]{Polishchuk_2014}, we conclude that the image under $\beta$ of the classes of rational Lagrangian subspaces must contain $\sigma$-semistable objects. In particular, $Z$ satisfies the support property with respect to all such classes. This implies that the image of $Z$ under $\beta$ gives an $n$-form $\Omega \in H^n_{pr}((E^o)^n; \mathbb{C})^\vee$ satisfying the support property with respect to rational Lagrangian subspaces, but vanishing along a real Lagrangian subspace, in contradiction with~\cite[Proposition 4.1]{haiden20}.
\end{proof}

Let $\Pi \subseteq \UU^+(3)$ be the fundamental domain.

\begin{proof}[Proof of Theorem~\ref{thm:main}]
We first claim that there exists stability conditions $\sigma$ with central charges $Z$, whose corresponding open neighborhoods $\UU_{Z}$ cover $\UU^+(3)$. By Proposition~\ref{prop:fullsupport}, Proposition~\ref{prop:quadratic}, and Theorem~\ref{thm:bayerdef}, there exists a stability condition corresponding to every element $Z \in \Pi$. In particular, the union over all open neighborhoods $\UU_Z$ gives an open subset $\UU \subseteq \UU^+(3)$ containing $\Pi$. By Lemma~\ref{lem:gaction} and Proposition~\ref{prop:fstability}, it suffices to show that $\mathrm{Sp}(6,\mathbb{Q}) \cdot \UU = \UU^+(3)$. On the other hand, this follows directly from Proposition~\ref{prop:densesub}, establishing the claim.

Together with Lemma~\ref{lem:glue}, we obtain a commutative diagram
\[
\begin{tikzcd}
\widetilde{\mathcal{U}^{+}(3)} \arrow[r, hook, "i"] \arrow[d, "\pi"] & \mathrm{Stab}_{\Lambda}(X) \arrow[d,"p"]\\
\mathcal{U}^+(3) \arrow[r, hook, "j"] & \mathrm{Hom}(\Lambda, \mathbb{C})
\end{tikzcd}
\]
where $i$ is an open embedding. We claim that this is an isomorphism onto a connected component, and we argue as in \cite[Theorem 9.1]{MR3573975}. Indeed, assume that there exists a stability condition $\sigma \in \partial\, \widetilde{\UU^+(3)} \subseteq \mathrm{Stab}_{\Lambda}(X)$. As $\pi$ is a covering map, the corresponding central charge $Z$ must lie on the boundary $\partial \, \UU^+(3) \subseteq \mathrm{Hom}(\Lambda,\mathbb{C})$. In particular, this implies that $Z \not\in \UU^+(3)$, which is a contradiction to Proposition~\ref{prop:central}, concluding the proof of the Theorem.
\end{proof}

\bibliographystyle{amsplain}
\bibliography{stability}

\providecommand{\bysame}{\leavevmode\hbox to3em{\hrulefill}\thinspace}
\providecommand{\MR}{\relax\ifhmode\unskip\space\fi MR }
\providecommand{\MRhref}[2]{%
  \href{http://www.ams.org/mathscinet-getitem?mr=#1}{#2}
}
\providecommand{\href}[2]{#2}
\begin{thebibliography}{10}

\bibitem{abramovich2006sheaveststructuresvaluativecriteria}
Dan Abramovich and Alexander Polishchuk, \emph{Sheaves of {$t$}-structures and
  valuative criteria for stable complexes}, J. Reine Angew. Math. \textbf{590}
  (2006), 89--130.

\bibitem{MR4023385}
Arend Bayer, \emph{A short proof of the deformation property of {B}ridgeland
  stability conditions}, Math. Ann. \textbf{375} (2019), no.~3-4, 1597--1613.

\bibitem{MR4292740}
Arend Bayer, Mart\'{\i} Lahoz, Emanuele Macr\`\i, Howard Nuer, Alexander Perry,
  and Paolo Stellari, \emph{Stability conditions in families}, Publ. Math.
  Inst. Hautes \'{E}tudes Sci. \textbf{133} (2021), 157--325.

\bibitem{MR3573975}
Arend Bayer, Emanuele Macr\`\i, and Paolo Stellari, \emph{The space of
  stability conditions on abelian threefolds, and on some {C}alabi-{Y}au
  threefolds}, Invent. Math. \textbf{206} (2016), no.~3, 869--933.

\bibitem{Birkenhake1992ComplexAV}
Christina Birkenhake and Herbert Lange, \emph{Complex abelian varieties},
  second ed., Grundlehren der mathematischen Wissenschaften [Fundamental
  Principles of Mathematical Sciences], vol. 302, Springer-Verlag, Berlin,
  2004.

\bibitem{bohm_lafuente21}
Christoph B{\"o}hm and Ramiro~A. Lafuente, \emph{Real geometric invariant
  theory}, Differential geometry in the large. Proceedings of the
  Australian-German workshop on differential geometry in the large, Creswick,
  Australia, February 4--15, 2019, Cambridge: Cambridge University Press, 2021,
  pp.~11--49 (English).

\bibitem{bridgeland07}
Tom Bridgeland, \emph{Stability conditions on triangulated categories}, Ann.
  Math. (2) \textbf{166} (2007), no.~2, 317--345 (English).

\bibitem{bridgeland08}
\bysame, \emph{Stability conditions on {{\(K3\)}} surfaces}, Duke Math. J.
  \textbf{141} (2008), no.~2, 241--291 (English).

\bibitem{BS15}
Tom Bridgeland and Ivan Smith, \emph{Quadratic differentials as stability
  conditions}, Publ. Math., Inst. Hautes {\'E}tud. Sci. \textbf{121} (2015),
  155--278 (English).

\bibitem{PRA}
Cameron Calcluth, Alessandro Ferraro, and Giulia Ferrini, \emph{Vacuum provides
  quantum advantage to otherwise simulatable architectures}, Phys. Rev. A
  \textbf{107} (2023), 062414.

\bibitem{CHQ2}
Merlin Christ, Fabian Haiden, and Yu~Qiu, \emph{Perverse schobers, stability
  conditions and quadratic differentials {II}: relative graded {B}rauer graph
  algebras}, 2024, arXiv:2407.00154.

\bibitem{MR4469233}
Lie Fu, Chunyi Li, and Xiaolei Zhao, \emph{Stability manifolds of varieties
  with finite {A}lbanese morphisms}, Trans. Amer. Math. Soc. \textbf{375}
  (2022), no.~8, 5669--5690.

\bibitem{msabelian}
V.~Golyshev, V.~Lunts, and D.~Orlov, \emph{Mirror symmetry for abelian
  varieties}, J. Algebr. Geom. \textbf{10} (2001), no.~3, 433--496 (English).

\bibitem{haiden20}
Fabian {Haiden}, \emph{{An extension of the Siegel space of complex abelian
  varieties and conjectures on stability structures}}, {Manuscr. Math.}
  \textbf{163} (2020), no.~1-2, 87--111.

\bibitem{haiden24}
Fabian Haiden, \emph{3-d {C}alabi–{Y}au categories for {T}eichm\"uller
  theory}, Duke Math. J. \textbf{173} (2024), no.~2, 277--346.

\bibitem{henrici}
Peter Henrici, \emph{Applied and computational complex analysis. {Vol}. 1:
  {Power} series, integration, conformal mapping, location of zeros}, Pure and
  {Applied} {Mathematics}. {New} {York} etc.: {John} {Wiley}\&{Sons}, a
  {Wiley}- {Interscience} {Publication}. {XV}, 682 p. {{\textsterling}} 13.50
  (1974)., 1974.

\bibitem{huybrechts}
Daniel Huybrechts, \emph{Fourier--{M}ukai transforms in algebraic geometry},
  Oxford Mathematical Monographs, The Clarendon Press, Oxford University Press,
  Oxford, 2006.

\bibitem{kontsevich_lille}
Maxim Kontsevich, \emph{Lecture at the {University of Lille}},
  \texttt{https://www.dailymotion.com/video/xpx1pv}, 2012.

\bibitem{kontsevich2008stabilitystructuresmotivicdonaldsonthomas}
Maxim Kontsevich and Yan Soibelman, \emph{Stability structures, motivic
  {D}onaldson--{T}homas invariants and cluster transformations}, 2008,
  arxiv:0811.2435.

\bibitem{Lewis1999ASO}
James~D. Lewis, \emph{A survey of the {H}odge conjecture}, second ed., CRM
  Monograph Series, vol.~10, American Mathematical Society, Providence, RI,
  1999, Appendix B by B. Brent Gordon.

\bibitem{li19}
Chunyi Li, \emph{On stability conditions for the quintic threefold}, Invent.
  Math. \textbf{218} (2019), no.~1, 301--340 (English).

\bibitem{LPMSZ}
Chunyi Li, Alexander Perry, Emanuele Macr\`\i, Paolo Stellari, and Xiaolei
  Zhao, \emph{Deformations of stability conditions and applications to abelian,
  {F}ano and hyper-{K}ähler varieties}, to appear.

\bibitem{liu21}
Yucheng Liu, \emph{Stability conditions on product varieties}, J. Reine Angew.
  Math. \textbf{770} (2021), 135--157 (English).

\bibitem{Macr2007InducingSC}
Emanuele Macr\`\i, Sukhendu Mehrotra, and Paolo Stellari, \emph{Inducing
  stability conditions}, J. Algebraic Geom. \textbf{18} (2009), no.~4,
  605--649.

\bibitem{Macri2017}
Emanuele Macr\`\i and Benjamin Schmidt, \emph{Lectures on {B}ridgeland
  stability}, Moduli of curves, Lect. Notes Unione Mat. Ital., vol.~21,
  Springer, Cham, 2017, pp.~139--211.

\bibitem{Mukai1978SemihomogeneousVB}
Shigeru Mukai, \emph{Semi-homogeneous vector bundles on an {A}belian variety},
  J. Math. Kyoto Univ. \textbf{18} (1978), no.~2, 239--272.

\bibitem{Murty}
V.~Kumar Murty, \emph{Computing the {H}odge group of an abelian variety},
  S\'{e}minaire de {T}h\'{e}orie des {N}ombres, {P}aris 1988--1989, Progr.
  Math., vol.~91, Birkh\"{a}user Boston, Boston, MA, 1990, pp.~141--158.

\bibitem{Neeman1996TheGD}
Amnon Neeman, \emph{The {G}rothendieck duality theorem via {B}ousfield's
  techniques and {B}rown representability}, J. Amer. Math. Soc. \textbf{9}
  (1996), no.~1, 205--236.

\bibitem{Oberdieck:2018uqa}
Georg Oberdieck, Dulip Piyaratne, and Yukinobu Toda, \emph{Donaldson-{T}homas
  invariants of abelian threefolds and {B}ridgeland stability conditions}, J.
  Algebraic Geom. \textbf{31} (2022), no.~1, 13--73.

\bibitem{Oda}
Tadao Oda, \emph{Vector bundles on abelian surfaces}, Invent. Math. \textbf{13}
  (1971), 247--260.

\bibitem{omeara}
O.~T. O'Meara, \emph{Symplectic groups}, Math. Surv., vol.~16, American
  Mathematical Society (AMS), Providence, RI, 1978 (English).

\bibitem{orlov02}
D.~O. Orlov, \emph{Derived categories of coherent sheaves on abelian varieties
  and equivalences between them}, Izv. Math. \textbf{66} (2002), no.~3,
  569--594 (English).

\bibitem{Perry2024StabilityCO}
Alexander Perry and Saket Shah, \emph{Stability conditions on crepant
  resolutions of quotients of product varieties}, 2024, arxiv:2404.09121.

\bibitem{Polishchuk2006ConstantFO}
Alexander Polishchuk, \emph{Constant families of {$t$}-structures on derived
  categories of coherent sheaves}, Mosc. Math. J. \textbf{7} (2007), no.~1,
  109--134, 167.

\bibitem{polishchuk2011lagrangianinvariantsheavesfunctorsabelian}
\bysame, \emph{Lagrangian-invariant sheaves and functors for abelian
  varieties}, Derived categories in algebraic geometry, EMS Ser. Congr. Rep.,
  Eur. Math. Soc., Z\"{u}rich, 2012, pp.~197--250.

\bibitem{Polishchuk_2014}
\bysame, \emph{Phases of {L}agrangian-invariant objects in the derived category
  of an abelian variety}, Kyoto J. Math. \textbf{54} (2014), no.~2, 427--482.

\bibitem{qin2022coisotropicbranestorihomological}
Yingdi Qin, \emph{Coisotropic {B}ranes on {T}ori and {H}omological {M}irror
  {S}ymmetry}, ProQuest LLC, Ann Arbor, MI, 2020, Thesis (Ph.D.)--University of
  California, Berkeley.

\bibitem{richardson_slodowy}
R.W. {Richardson} and P.J. {Slodowy}, \emph{{Minimum vectors for real reductive
  algebraic groups.}}, {J. Lond. Math. Soc., II. Ser.} \textbf{42} (1990),
  no.~3, 409--429 (English).

\end{thebibliography}
\end{document}